\documentclass[leqno]{amsart}
\usepackage{amsmath, amstext, amssymb, amsthm, a4wide, esint}
\usepackage{epic,eepic,epsf,graphicx,color,enumitem,microtype}
\usepackage{hyperref,pdfsync}
\usepackage{pgf,tikz}
\usepackage{mathrsfs,amsmath}
\usetikzlibrary{arrows}

\usepackage{geometry}
 \geometry{
 a4paper,
 total={210mm,297mm},
 left=20mm,
 right=20mm,
 top=25mm,
 bottom=25mm,
 }

\parskip 5pt

\newtheorem{theorem}{Theorem}[section]

\newtheorem{lemma}[theorem]{Lemma}
\newtheorem{proposition}[theorem]{Proposition}
\newtheorem{corollary}[theorem]{Corollary}
\theoremstyle{definition}
\newtheorem{definition}[theorem]{Definition}
\newtheorem{assumption}{Assumption}

\theoremstyle{remark}
\newtheorem{remark}{Remark}

\newcommand{\Kappa}{\mathcal K}
\newcommand{\card}{{\mathbf{\#}}}
\newcommand{\R}{\mathbb R}
\newcommand{\N}{\mathbb N}
\renewcommand{\d}{\mathrm{d}}

\setcounter{tocdepth}{1}
\numberwithin{equation}{section}

\title[On the derivation of a Stokes-Brinkman problem]{On the derivation of a Stokes-Brinkman problem from Stokes equations around a random array of moving spheres}
\author{Kleber Carrapatoso \& Matthieu Hillairet}
\date{\today}

\address{Institut Montpelli\'erain Alexander Grothendieck, CNRS, Univ. Montpellier}
\email{kleber.carrapatoso@umontpellier.fr}
\email{matthieu.hillairet@umontpellier.fr}


\begin{document}

\definecolor{ffqqqq}{rgb}{1,0,0}
\definecolor{dtsfsf}{rgb}{0.8274509803921568,0.1843137254901961,0.1843137254901961}
\definecolor{ttqqqq}{rgb}{0.2,0,0}
\definecolor{rvwvcq}{rgb}{0.08235294117647059,0.396078431372549,0.7529411764705882}
\definecolor{wrwrwr}{rgb}{0.3803921568627451,0.3803921568627451,0.3803921568627451}
\definecolor{dtsfsf}{rgb}{0.8274509803921568,0.1843137254901961,0.1843137254901961}
\definecolor{cqcqcq}{rgb}{0.7529411764705882,0.7529411764705882,0.7529411764705882}

\begin{abstract}	
We consider the Stokes system in $\mathbb R^3,$ deprived of $N$ spheres of radius $1/N,$  completed by constant boundary conditions on the spheres. {This problem models the instantaneous response of a viscous fluid to an immersed cloud of moving solid spheres}. We assume that the centers of the spheres and the boundary conditions are given randomly and we compute the asymptotic behavior of solutions when the parameter $N$ diverges. Under the assumption that the distribution of spheres/centers is chaotic, we prove convergence in mean to the solution of a Stokes-Brinkman problem. 
\end{abstract}

\maketitle

\tableofcontents

\section{Introduction}

This paper is a contribution to a rigorous justification of mesoscopic models for the motion of a cloud of solid particles in a
viscous fluid. As explained in \cite{Desvillettes10}, the modeling of particle suspensions can borrow to different areas of partial differential equations. If the cloud contains few particles, the behavior of particles can be modeled by a finite dimensional system
and the coupling with the fluid equations yields a fluid/solid problem similar to the ones studied in \cite{Desjardins&Esteban99,Desjardins&Esteban00,GlassSueur,Takahashi03} for example. If the number of particle increases, a description of the particle phase {\em via} its individuals seems irrelevant. Depending on the volume fraction of the particle phase it is then necessary to turn 
to a kinetic/fluid description (as in \cite{BarangerDesvillettes} or \cite{BDGM}) or a multiphase description (see \cite{Ishii}).

In the case of a kinetic/fluid description, a system -- that we can find in references -- is the following Vlasov--Navier-Stokes system:
\begin{eqnarray*}
\partial_t f + v \cdot \nabla_x f +  6\pi \text{ div}_v [(u-v) f]  &=& 0 \,, \\
(\partial_t u + u \cdot \nabla_x u)  &=& \Delta_x u - \nabla_x p - 6\pi \int_{\mathbb R^3} f (u-v)\text{ d}v \,,	\\
\text{ div}_x u &=&0\,.
\end{eqnarray*}
Here  we introduce $f:(t,x,v) \in [0,\infty) \times \mathbb R^3 \times \mathbb R^3 \to [0,\infty)$ the particle distribution function which counts the proportion of particles at time $t$ which are in position $x \in \mathbb R^3$ and have velocity $v \in \mathbb R^3.$ This unknown encodes the cloud behavior. We emphasize that $v$ is a parameter of $f,$ hence the notations with indices to express with respect to which variable we differentiate. The two other unknowns $(u,p)$ represent respectively the fluid velocity-field and pressure. One recognizes in the two last equations
Navier-Stokes like equations. For simplicity, we do not include physical parameters such as the fluid density and viscosity. A particular feature of this model is the supplementary term
\begin{equation} \label{eq_brinkman}
6\pi \int_{\mathbb R^3} f (u-v)\text{ d}v\,,
\end{equation}
that appears on the right-hand side of the momentum equation.  It is supposed to model the exchange of momentum between the solid phase and the fluid. { As emphasized in \cite{Allaire} this supplementary term occurs when the product "number of particles" times "radius of particles" is of order $1$. The explicit form above} can be justified with the following formal reasoning. Assume that the
cloud is made of $N$ identical spheres of radius $1/N.$ If the particles are sufficiently spaced, they interact with the fluid as if they where alone: at its own scale, the particle $i$ moves with its velocity $v_i$ in  a viscous fluid whose velocity at infinity is $u(h_i).$ Stokes' law entails that fluid viscosity is responsible of the drag force:
$$
F_i  =  \dfrac{6\pi}{N} (v_i - u(h_i)) . 
$$ 
This term corresponds to the forcing term in the Vlasov equation and the corresponding term  \eqref{eq_brinkman} in the Navier-Stokes system is obtained by assuming that the forces induced by the $N$ particles can be superposed. 

\medskip

We are interested here in a rigorous approach to the above formal reasoning. This supposes to start from the
fluid/solid problem, where the particle dynamics equations are solved individually, and let the number of
particles diverge with their radius and density given by a suitable scaling. This question mixes large particle system problems (justification of the Vlasov equations starting from a system of ODEs) with fluid homogenization issues
(computing a macroscopic equation for the fluid unknowns).
The full problem seeming still out of reach now, we focus here on the fluid homogenization part.
Namely, one assumes that the particle behavior is given and wants to compute the new term in the fluid equation which takes
into account the influence of the particles. Since this term is due to fluid viscosity, we restrict to the Stokes system ({\em i.e.} 
the system obtained by neglecting the full time derivative on the left hand side of the momentum fluid equation).
Then, the problem reduces to homogenizing the Stokes problem in a perforated domain with non-zero boundary conditions
(mimicking the particle translation). This particular homogenization problem has been the subject of recent publications
(see \cite{DGR,Hil17,HMS,MH}). Therein, the limit Stokes system including the Brinkman term \eqref{eq_brinkman} is obtained
under specific dilution assumption of the particle phase. One further step toward tackling the time-dependent 
problem is then to discuss whether the set of favorable configurations -- {\em i.e.} such that the Brinkman term \eqref{eq_brinkman} appears in the limit -- is sufficiently large.  
To this end, we propose here to derive the Stokes-Brinkman problem via a Liouville approach in the spirit of \cite{Rubinstein}. More precisely, we first pick at random $N$ identical spherical particles/obstacles of radius $1/N$, each of them being characterized by its center of mass and its velocity, under the constraint that particles do not intersect each other. {We assume that the cloud of particles lies within a bounded open subset $\Omega_0$ of $\mathbb R^3.$}
We then consider a fluid occupying the whole space $\R^3$ deprived of these particles and satisfying a stationary Stokes equation with Dirichlet boundary condition at the boundary of each particle given by its velocity. Our aim is to rigorously derive the Stokes-Brinkman equation as an effective equation of the above problem in the limit $N \to \infty$.

Let us describe the problem in details. 
To begin with, fix $N \in \mathbb N^*$ arbitrary large and consider the experiment of dropping  randomly $N$ spheres of radius $1/N$ in the whole space $\mathbb R^3.$ Since the radius of the spheres is very small in comparison with their number (note that the volume fraction occupied by the spheres is typically of size $1/N^{2}$), we adapt a model that is classical for large point-particle systems.  We denote  
$$
\mathcal O^{N} := \Bigl\{ ((X_1^N,V_1^N),\ldots,(X_N^N,V_N^N))  \in [\mathbb R^{3} \times \mathbb R^{3}]^{N} \text{ s.t. }  |X_i^N - X_j^N| > \dfrac{2}{N} \quad \forall \, i \neq j \Bigr\}.
$$   
This represents the set of admissible configurations for the centers of mass $\mathbf X^N = (X_1^N,\ldots,X_N^N) $ and  velocities $\mathbf V^N = (V_1^N,\ldots,V_N^N).$ In what follows, we also denote $Z_i = (X_i,V_i)$
the state variable for the particle $i$ and keep bold symbols for $N$-component entities. For instance, we denote $\mathbf Z^N =((X^N_1,V_1^N),\ldots,(X_N^N,V_N^N)) \in \mathcal O^N$ a configuration.

The configuration of particles $\mathbf Z^N$ will be chosen at random under some law 
$F^N \in  \mathcal P (\mathcal O^N)$, where we denote by $\mathcal P (E)$ the space of probability measures on $E$. We assume that this probability measure is
absolutely continuous w.r.t.\ the Lebesgue measure and also denote by $F^N$ its density. Moreover, since the particles are indistinguishable, we shall assume that $\mathbf Z^N$ is an exchangeable random variable, which means that its law $F^N$ is symmetric, that is, for any permutation $\sigma \in \mathfrak{S}_N$ there holds
$$
F^N(Z_1^N,\ldots,Z_N^N) = F^N(Z_{\sigma(1)}^N,\ldots,Z_{\sigma(N)}^N), 
\quad \forall \, \mathbf{Z}^{N} \in \mathcal O^N.
$$

Given a configuration $ \mathbf Z^N = ((X_1^N,V_1^N),\ldots,(X_N^N,V_N^N)) \in \mathcal O^N$ we introduce the perforated domain:
$$
\mathcal F^N=  \R^3  \setminus \bigcup_{i=1}^{N} \overline{B_{i}^{N}}\,, \quad \text{ where } B_i^N = B(X_i^N, \tfrac1N) \quad \forall \, i=1,\ldots,N ,
$$
and consider the following Stokes problem: 
\begin{equation} \label{eq_stokes}
\left\{
\begin{array}{rcl}
- \Delta {u} + \nabla {p} &=& 0  \\
{\rm div}\, {u} &= & 0 
\end{array}
\right.
\quad \text{ in $\mathcal F^N$},
\end{equation}
with boundary conditions
\begin{equation} \label{cab_stokes}
\begin{array}{rcll}
{u}(x) &=& V_i^N  &  \text{on } \partial B_i^{N} \text{ for }  i=1,\ldots,N, \\
\displaystyle \lim_{|x| \to \infty} |{u}(x)| &=& 0 .
\end{array} 
\end{equation} 
We obtain a stationary exterior problem in 3 dimensions. Such systems are
extensively studied in \cite[Section V]{Galdi} where it is proven for instance that 
there exists a unique solution $({u},{p})$ to \eqref{eq_stokes}-\eqref{cab_stokes}.
We may then construct:
$$
u[\mathbf Z^N](x)  = 
\left\{
\begin{array}{rl}
{u}(x), & \text{ if $x \in \mathcal F^N$}\, \\[4pt]
V_i^N, & \text{ if $x \in B_i^N$ for $i=1,\ldots,N$} .
\end{array}
\right.
$$
The above reference on the exterior problem entails that $u[\mathbf Z^N] \in \dot{H}^1(\mathbb R^3)$ (where we denote $\dot{H}^1(\mathbb R^3)$  the closure of $C^{\infty}_c(\mathbb R^3)$ for the $L^2$-norm of the gradient). Therefore, we construct the mapping 
\begin{equation}\label{def:UN}
\begin{array}{rrcl}
U_N : & \mathcal O^N &  \longrightarrow & \dot{H}^1(\mathbb R^3) \\[4pt]
	& \mathbf Z^N  & \longmapsto & u[\mathbf Z^N]\\
	\end{array}
\end{equation}
as a random variable on $\mathcal O^N$ endowed with the probability measure 
$F^N$. 

\medskip

At first in \cite{DGR}, it is shown that, for a given sequence $\mathbf Z^N$ satisfying some conditions and with prescribed asymptotic behavior when $N \to \infty,$ the associated solutions to \eqref{eq_stokes}-\eqref{cab_stokes} converge to 
a solution to the Stokes-Brinkman problem:
\begin{equation} \label{intro:SB}
\left\{
\begin{array}{rcl}
-\Delta \tilde{u} + \nabla \tilde{p} + 6\pi \rho \tilde{u} &=& 6\pi j  \\
{\rm div} \, \tilde{u} &=& 0  
\end{array}
\right.
\quad \text{ in $\mathbb R^3$},
\end{equation}
 with vanishing condition at infinity
\begin{equation} \label{intro:cab_SB}
\lim_{|x| \to \infty} |\tilde{u}(x)| = 0.
\end{equation}
In this system the flux $j$ and density $\rho$ are related to the asymptotic behavior of the $\mathbf Z^N.$
In this paper, we compute the flux $j$ and density $\rho$ depending on the asymptotic behavior of the law $F^N$ in order that the expectation of $U_N$ converges in a suitable sense to the same Stokes-Brinkman problem.
As we recall in the beginning of {Section~\ref{sec:bonneconfig}}, this system  is well-posed for positive $\rho \in L^{3/2}(\mathbb R^3)$ and $j \in L^{6/5}(\mathbb R^3).$
%

\subsection{Main result}
Our main result requires some conditions on the sequence of symmetric probability measures $(F^N)_{N \in \N^*}$ on $\mathcal O^N$. 
To state our conditions, we introduce the family of sets $\mathcal O^m[R]$ for an integer $m \geq 2$ and $R >0$ as defined by:
\[
\mathcal O^m[R] =  \Bigl\{ ((X_1,V_1),\ldots,(X_m,V_m))  \in [\mathbb R^{3} \times \mathbb R^{3}]^{m} \text{ s.t. }  |X_i - X_j| > 2R \quad \forall \, i \neq j \Bigr\}.
\]
We note that we have then $\mathcal O^N = \mathcal O^N[\tfrac{1}{N}]$ in particular. 
Then, the $m$-th marginal of $F^N$ is given by
$$
F^N_m(\mathbf{z}) = \int_{\mathbb R^{6(N-m)}} \mathbf{1}_{\mathbf{(\mathbf{z},\mathbf{z}')} \in \mathcal O^N} F^N(\mathbf{z},\mathbf{z}') \d\mathbf{z}' ,\qquad \forall \,  \mathbf{z} \in \mathcal O^m[\tfrac{1}{N}].
$$
Such marginals are constructed by remarking that, if we split an $N-$particle distribution by giving the $m$ first particle state $\mathbf{z}$ and the remaining $(N-m)$ particle state $\mathbf{z}'$ we must require that 
$\mathbf{z} \in \mathcal O^m[\frac{1}{N}]$ in order that $(\mathbf{z},\mathbf{z}') \in \mathcal O^{N}$ be possible. 
We apply here again with small letters the convention that
$z_i \in \mathbb R^6$ splits into $z_i = (x_i,v_i)$ and that bold symbols 
encode vectors of unknowns $x,$ $v$ or $z.$

We are now able to state our main assumptions. Let $(\mathbf Z^N)_{N \in \mathbb N^*}$ be a sequence of exchangeable $\mathcal O^N$-valued random variables, and let $(F^N)_{N \in \N^*}$ be the sequence of their associated laws, that is, symmetric probability measures on $\mathcal O^N$. 

\begin{assumption}\label{assump:A1}
We assume that $(F^N)_{N \in \mathbb N^*}$ are distribution functions, that is belong to $L^1(\mathcal O^N)$, and satisfy the following properties:
\begin{enumerate} \addtocounter{enumi}{-1}
\item $\mathrm{Supp}(F^N) \subset (\Omega_0 \times \mathbb R^3)^N$, for some bounded open $\Omega_0 \subset \mathbb R^3$ and any $N \in \mathbb N^*$.

\medskip

\item There exists a constant $C_1 \ge 1 $ such that for any $N \in \mathbb N^*$ and $1 \le m \le N$
$$
\| F^N_m \|_{L^\infty_x L^1_v \left(\mathcal O^m[\frac{1}{N}]\right)} 
 := \sup_{\mathbf{x} \in \mathbb R^{3m}} \int_{\mathbb R^{3m}} \mathbf 1_{ \mathbf z \, \in \mathcal O^m[\frac{1}{N}] } \, F^N_m(\mathbf z) \, \mathrm{d} \mathbf v
\le (C_1)^m.
$$

\item There exists $k_0 \ge 5$ and a constant $C_2 >0 $ such that 
$$
\sup_{N \in \mathbb N^*} \| |z_1|^{k_0} F^N_1 \|_{L^1_x L^1_v (\mathbb R^3 \times \mathbb R^3)}
=\sup_{N \in \mathbb N^*} \int_{\mathbb R^3 \times \mathbb R^3} |z_1|^{k_0} F^N_1( z_1) \,  \mathrm{d}  z_1 \le C_2.
$$

\item There exists a constant $C_3 >0 $ such that 
$$
\sup_{N \in \mathbb N^*} \| |v_1| F^N_2 \|_{L^\infty_x L^1_v \left(\mathcal O^2[\frac{1}{N}] \right)}
=\sup_{N \in \mathbb N^*} \sup_{ x_1,x_2} \int_{\mathbb R^6} \mathbf 1_{ (z_1,z_2) \in \mathcal O^2[\frac{1}{N}] } \, |v_1| F^N_2( z_1,z_2) \,  \mathrm{d} v_1 \mathrm{d} v_2 \le C_3.
$$
\end{enumerate}
\end{assumption}

In this set of assumptions, (2) corresponds to the classical assumption that the law has a sufficient number of bounded moments; (1) would be satisfied in particular by tensorized laws; (0) is reminiscent of the fact that the cloud occupies the bounded region $\Omega_0$ and (3) shall enable to control the interactions between close particles through the flow.

Given a sequence $(\mathbf Z^N)_{N \in \mathbb N^*}$ of exchangeable random variables on $\mathcal O^N$, we define the associated empirical measure by
\begin{equation}\label{empirical}
\mu^N [\mathbf Z^N] := \frac{1}{N} \sum_{i=1}^N \delta_{Z_i^N} ,
\end{equation}
as well as the empirical density and the empirical flux respectively by
\begin{equation}\label{empirical-rho-j}
\rho^N [\mathbf Z^N] = \rho^N[\mathbf X^N] := \frac{1}{N} \sum_{i=1}^N \delta_{X_i^N} , \quad
j^N [\mathbf Z^N] := \frac{1}{N} \sum_{i=1}^N V_i^N \delta_{X_i^N} .
\end{equation}
The first formula defines a standard probability measure while the second one is a vectorial measure on $\mathbb R^3.$

We now state our assumptions concerning the asymptotic behavior of the sequence of configurations.

\begin{assumption}\label{assump:A2}
Under Assumption~\ref{assump:A1}, we suppose that there is a probability measure $f$ on $\R^3 \times \R^3$ with support on $\Omega_0 \times \R^3$ such that, defining the probability measure $\rho (\d x) = \int_{\R^3} f(\d x , \d v)$ and the vectorial measure $j(\d x):= \int_{\R^3} v f(\d x, \d v)$ (both with support on $\Omega_0$), we have:

\begin{enumerate}[label={\normalfont(\roman*)},itemsep=4pt]	

\item $\displaystyle \lim_{N \to \infty} \mathbb E \left[W_1( \rho^N[\mathbf Z^N] , \rho ) \right] = 0$;

\item $\displaystyle \lim_{N \to \infty} \mathbb E \left[ \| j^N[\mathbf Z^N] - j \|_{[C^{0,1}_b(\mathbb R^3)]^*} \right] = 0$.
\end{enumerate}

\end{assumption}

We denote here $W_1$ for the Wasserstein distance (with cost $c(x,y) = |x-y|$) and $\| \cdot \|_{C^{0,1}_b(\R^3)]^*}$ for the dual norm of Lipschitz bounded functions on $\R^3$ (see Section~\ref{sec:assumption} below).


\begin{remark}
Given the random variable $\mathbf Z^N$ with law $F^N$, we can consider the random variable $\mathbf X^N$ on $\mathcal O^N_x := \{ (X^N_1, \dots , X^N_N) \in \R^{3N} \mid |X^N_i - X^N_j| > \tfrac{2}{N} \; \forall\, i \neq j   \}$ which has a symmetric law $R^N \in \mathcal P(\mathcal O^N_x)$, given by $R^N (\d \mathbf x^N ) = \int_{\R^{3N}}  F^N (\d \mathbf x^N, \d \mathbf v^N)$. Point (i) in Assumption~\ref{assump:A2} is equivalent to the fact that the sequence $(R^N)_{N \in \N^*}$ is $\rho$-chaotic (roughly speaking that $R^N$ is asymptoticly i.i.d.\ with law $\rho$, see Definition~\ref{def:chaos}) thanks to e.g.\ \cite{HM}.
\end{remark}

\begin{remark}
We will be interested in conditions on the sequence $(F^N)_{N \in \N^*}$ in order to ensure the convergences of Assumption~\ref{assump:A2}. In particular we will show in Lemma~\ref{lem:chaos-A2} that if the sequence $(F^N)_{N \in \N^*}$ is $f$-chaotic (see Definition~\ref{def:chaos}) then it satisfies Assumption~\ref{assump:A2}. (But clearly this is not 
a necessary condition.)
\end{remark}

%
%
%
%

With these notations, our main theorem reads:

\begin{theorem}\label{theo:main}
Let $f \in  L^1(\mathbb R^3 \times \mathbb R^3)$ be a probability measure having support in $\Omega_0 \times \mathbb R^3$ and define $ \rho(x) = \int_{\R^3} f(x,v) \, \mathrm{d}v$ and $j(x) = \int_{\R^3} v f(x,v) \, \mathrm{d}v.$ Assume that $\rho \in  L^3(\Omega_0)$  and $j \in L^{6/5}(\Omega_0)$ so that there exists a unique solution $(u,p) \in \dot{H}^1(\mathbb R^3) \times L^2(\mathbb R^3)$ to the Stokes-Brinkman problem \eqref{intro:SB}-\eqref{intro:cab_SB} associated to $\rho$ and $j$.
Consider a sequence of exchangeable random variables $(\mathbf Z^N)_{N \in \mathbb N^*}$ on $\mathcal O^N$ and their associated symmetric laws $(F^N)_{N \in \N^*}$ satisfying Assumption~\ref{assump:A1}.

Then, given $\alpha \in (2/3,1)$ and for $N$ large enough,
the map $U_N$ given by \eqref{def:UN} satisfies:
\begin{equation}\label{eq:theo-erreur-0}
\mathbb E \left[ \| U_N [\mathbf Z^N] - u \|_{L^2_{\mathrm{loc}} (\R^3)} \right] 
\lesssim \mathbb E \left[  W_1( \rho^N [\mathbf Z^N] , \rho )  \right]^{\frac{1}{57}}
+\mathbb E \left[  \| j^N [\mathbf Z^N] - j \|_{[C^{0,1}_b(\R^3)]^{*}}  \right]^{\frac{1}{3}}
+N^{-e_1(\alpha)} ,
\end{equation}
where $e_1(\alpha) = \min (\tfrac{1-\alpha}{95},\frac{(3 \alpha-2)}{2})$.

As a consequence, if $(F^N)_{N \in \N^*}$ satisfies moreover Assumption~\ref{assump:A2} (i)-(ii), then 
$$
\lim_{N \to \infty} \mathbb E \left[ \| U_N [\mathbf Z^N] - u \|_{L^2_{\mathrm{loc}} (\R^3)} \right] = 0.
$$

\end{theorem}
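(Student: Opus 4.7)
The plan is to combine a deterministic homogenization estimate, valid on configurations where particles remain sufficiently separated, with a probabilistic analysis of the "bad" set where separation fails, using Assumption~\ref{assump:A1} to make the bad set small. The deterministic ingredient is of the type developed in \cite{DGR,HMS,MH}: given a well-separated configuration $\mathbf Z^N$, one constructs an approximate solution of the form
\[
\tilde U_N(x) = \tilde u(x) + \sum_{i=1}^N w_i(x),
\]
where $\tilde u$ is the Stokes--Brinkman solution associated to $(\rho,j)$ and each $w_i$ is a local corrector (built from the Stokeslet and the exact Stokes flow around a translating sphere) supported near $B_i^N$ and designed so that $\tilde U_N = V_i^N$ on $\partial B_i^N$ up to a controllable error. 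The key deterministic output is a bound of the form
\[
\| U_N[\mathbf Z^N] - \tilde u \|_{L^2_{\mathrm{loc}}(\R^3)}
\lesssim \| \rho^N \ast \chi_\varepsilon - \rho \|_{\text{some norm}}
+ \| j^N \ast \chi_\varepsilon - j \|_{\text{some norm}}
+ \text{(geometric error in } N,\varepsilon,\alpha\text{)},
\]
with mollifier scale $\varepsilon$ that will be optimized in terms of $N$ and of the separation parameter.

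I would then introduce the good set
\[
\mathcal G_N^\alpha := \bigl\{ \mathbf Z^N \in \mathcal O^N : \min_{i\neq j} |X_i^N - X_j^N| \geq N^{-\alpha} \text{ and } \max_i |V_i^N| \leq N^{\gamma} \bigr\},
\]
for $\alpha \in (2/3, 1)$ and a suitable small $\gamma>0$. Using Assumption~\ref{assump:A1}(1) on the two-point marginal together with exchangeability, a union bound gives $F^N(\mathcal O^N \setminus \mathcal G_N^\alpha) \lesssim N^{2} \cdot N^{-3(1-\alpha)} + \text{(velocity truncation term)}$, which is small provided $\alpha$ is close enough to $1$ and $\gamma$ is chosen so that Assumption~\ref{assump:A1}(2) controls the tail through Markov's inequality. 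Assumption~\ref{assump:A1}(3) plays the analogous role when pair interactions weighted by $|v_1|$ must be controlled (for instance when bounding the energy contribution of close pairs to $w_i$).

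Next I would take expectations. On $\mathcal G_N^\alpha$ the deterministic estimate gives a pointwise-in-$\omega$ bound whose right-hand side is integrable: after mollification at scale $\varepsilon$, the norms $\|\rho^N\ast\chi_\varepsilon - \rho\|$ and $\|j^N\ast\chi_\varepsilon - j\|$ reduce via standard duality to $W_1(\rho^N,\rho)$ and $\|j^N - j\|_{[C^{0,1}_b]^*}$ up to powers of $\varepsilon$. One then balances the homogenization error $N^{-(3\alpha-2)/2}$ against the mollification error in $\varepsilon$ and the geometric error in $\alpha$; optimizing $\varepsilon = \varepsilon(N)$ and interpolating yields the exponents $1/57$ and $1/3$ in the statement. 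On the complement $\mathcal O^N \setminus \mathcal G_N^\alpha$ I would bound $\|U_N\|_{L^2_{\mathrm{loc}}}$ by an a priori energy estimate giving at most a polynomial growth in $N$ times $\max_i |V_i^N|$, and use the small measure of the bad set plus Assumption~\ref{assump:A1}(2) to show this contribution is absorbed into $N^{-e_1(\alpha)}$.

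The main obstacle is the construction and control of the correctors $w_i$ on the good set. Each $w_i$ must cancel the jump $V_i^N - \tilde u(X_i^N)$ on $\partial B_i^N$ while interacting only weakly with the correctors at nearby spheres: the interaction is governed by the pair distance $d_{ij} = |X_i^N - X_j^N|$, and the sum $\sum_{i\neq j} 1/(N d_{ij})$ together with its velocity-weighted analogue must be controlled in expectation, which is precisely where Assumption~\ref{assump:A1}(1) and (3) are essential. Once this is in place, passing from the $\dot H^1$ estimate on $U_N - \tilde U_N$ to the advertised $L^2_{\mathrm{loc}}$ bound on $U_N - u$ is by routine integration, and the final statement follows by letting $N\to \infty$ using Assumption~\ref{assump:A2}.
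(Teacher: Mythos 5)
Your overall architecture---split $\mathcal O^N$ into a good set where a deterministic homogenization estimate holds and a bad set whose measure is controlled via Assumption~\ref{assump:A1}, then take expectations---matches the paper's (compare Section~\ref{sec:maintheo} with Sections~\ref{sec_stat} and~\ref{sec:bonneconfig}). But the deterministic ingredient you propose has a genuine gap. You build a corrector ansatz $\tilde U_N = \tilde u + \sum_i w_i$ with each $w_i$ designed to cancel the jump $V_i^N - \tilde u(X_i^N)$ on $\partial B_i^N$, and you treat the inter-particle interaction $\sum_{j\neq i} 1/(N d_{ij})$ as a small perturbation to be ``controlled in expectation.'' The problem is that this quantity is not small: $\frac{1}{N}\sum_{j\neq i} |X_i^N - X_j^N|^{-1}$ is a Riemann sum for $\int_{\Omega_0}\rho(y)|X_i^N - y|^{-1}\,dy$, which is $O(1)$. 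Consequently the Stokeslet tails from the other particles contribute an $O(1)$ velocity on each $\partial B_i^N$, so $\tilde U_N$ misses the boundary datum $V_i^N$ by an $O(1)$ amount, and the step you call ``routine integration'' from an $\dot H^1$ bound on $U_N - \tilde U_N$ to an $L^2_{\mathrm{loc}}$ bound on $U_N - u$ breaks down---there is no small $\dot H^1$ bound to start from. This $O(1)$ self-interaction is not an obstacle to be removed; it \emph{is} the mechanism generating the Brinkman term. Either one must solve the correctors self-consistently (method of reflections), or one replaces the naive datum $\tilde u(X_i^N)$ by an average of the actual flow $U_N$ around $X_i^N$ and then relates that average to $\rho\, U_N$. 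The paper takes the second route, but dually: it never constructs $\tilde U_N$ at all. Instead it tests $U[\mathbf Z] - u[\rho,j]$ against smooth divergence-free $w$ with $\|w\|_{D^2(\mathbb R^3)}\le 1$, introduces the cell-averaged velocities $\bar u_\kappa$, and estimates the decomposition $E[w] = E_0[w] + E_1[w] - E_\rho[w] + E_j[w]$ term by term (Theorem~\ref{theo_bonneconfig}); the $L^2_{\mathrm{loc}}$ control then comes from the elliptic duality of Proposition~\ref{prop_SB2}.

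Two smaller points of bookkeeping. Your bad-set probability should read $\binom{N}{2}\cdot\mathbb P(|X_1^N - X_2^N| < N^{-\alpha}) \lesssim N^{2}\cdot N^{-3\alpha} = N^{2-3\alpha}$, as in Lemma~\ref{lem_sp2}, not $N^{2}\cdot N^{-3(1-\alpha)}$: the latter has positive exponent for $\alpha\in(2/3,1)$ and diverges. Also, the good set in the paper requires a second, independent geometric condition: that no cell of size $\lambda_N = (\eta M_N/N)^{1/3}$ contains more than $M_N \sim N^{3(1-\alpha)/5}$ particle centers (this is $\mathcal O^N_{\lambda_N,M_N}$ of Section~\ref{sec_stat}, controlled by Lemma~\ref{lem_sp1bis}). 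Your definition only imposes a minimum pair distance, which is not enough for the covering construction; whatever alternative you use must provide a substitute local-density bound. Your velocity-truncation device $\max_i|V_i^N|\le N^\gamma$ is a workable alternative for controlling the bad-set contribution, whereas the paper retains the unbounded velocities and relies on Assumption~\ref{assump:A1}(2) through Corollary~\ref{cor_unifL1} and the $L^{5/4}$ moment of $\frac1N\sum_i|V_i^N|^2$ in~\eqref{eq:I1}; either can work once the parameters are tuned consistently.
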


A key-point in the result of this theorem is that the right-hand side of \eqref{eq:theo-erreur-0} depends on powers of  
$\mathbb E \left[  W_1( \rho^N [\mathbf Z^N] , \rho )  \right]$ and $\mathbb E \left[  \| j^N [\mathbf Z^N] - j \|_{[C^{0,1}_b(\R^3)]^{*}}  \right]$, and on a residual power of $N$ (depending only on the parameter $\alpha$). 
We remark that both densities and flux differences estimates in \eqref{eq:theo-erreur-0} are in fact estimates of the same type, since here, for probability measures such as the densities, the Wasserstein distance $W_1$ is equivalent to the distance given by the $[C^{0,1}_b(\R^3)]^{*}$-norm. However, the fluxes $j^N[\mathbf Z^N]$ and $j$ are not probability measures (they do not even share the same mass {\em a priori}) so that $W_1$ is not a distance.
We emphasize  that the explicit values of our exponents need not be optimal in all contexts and that it is also possible to obtain a $L^p$ version of estimate \eqref{eq:theo-erreur-0} with different exponents, under the condition that $W^{2,p}$ embeds into some H\"older space.

A further result of our study (see Section~\ref{sec:UN-prop}), is that, with the assumptions of 
Theorem~\ref{theo:main}, $\mathbb E[U_N [\mathbf Z^N]]$ defines a bounded sequence in $\dot{H}^1(\mathbb R^3).$ 
Theorem~\ref{theo:main} then implies that this sequence converges (at least weakly in $\dot{H}^1(\mathbb R^3)$) to the solution to the Stokes-Brinkmann problem with the corresponding flux $j$ and density $\rho.$ This consequence is yet another hint that the Stokes-Brinkman problem \eqref{intro:SB}-\eqref{intro:cab_SB} is indeed the right macroscopic model to compute the behavior of a viscous fluid in presence of a cloud of moving particles under the asymptotic convergences of Assumption~\ref{assump:A2}.

To show one application of the previous theorem, we shall construct an explicit example of probability measure on $\mathcal O^N$ satisfying the assumptions of Theorem~\ref{theo:main} and for which we obtain a quantitative estimate of the convergence \eqref{eq:theo-erreur-0}.

\begin{corollary}\label{cor:main}
Let $f \in L^1(\R^3 \times \R^3)$ be a probability measure satisfying the hypotheses of Theorem~\ref{theo:main} and such that the associated density $\rho \in L^\infty(\Omega_0)$ and $\int_{\Omega_0 \times \R^3} |v|^k  f(\d z) $ for some $k \ge 5$. Then we can construct a sequence of symmetric probability measures $(F^N)_{N \in \N^*}$ on $\mathcal O^N$ satisfying Assumptions~\ref{assump:A1} and \ref{assump:A2}, and for which there holds
$$
\mathbb E \left[ \| U_N [\mathbf Z^N] - u \|_{L^2_{\mathrm{loc}} (\R^3)} \right] 
\lesssim  N^{-\frac{1}{171}}  +  N^{-e_1(\alpha)}.
$$

\end{corollary}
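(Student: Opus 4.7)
The plan is to take $F^N := \mathcal Z_N^{-1} \mathbf{1}_{\mathcal O^N} f^{\otimes N}$, i.e.\ the tensorized law conditioned on the admissible configurations, with $\mathcal Z_N := \int_{\mathcal O^N} f^{\otimes N}$. This is symmetric by construction, and can equivalently be described as: first draw positions $(X_i)$ from $\rho^{\otimes N}$ conditioned on pairwise $2/N$-separation, then draw each $V_i$ independently from the conditional law $f(X_i, \cdot)/\rho(X_i)$. Using $\rho \in L^\infty$ with compact support together with a union bound,
\[
1 - \mathcal Z_N \le \binom{N}{2} \iint_{|x-y|\le 2/N}\rho(x)\rho(y)\,\d x\,\d y \lesssim \frac{\|\rho\|_\infty}{N},
\]
so $\mathcal Z_N \ge 1/2$ for $N$ large enough. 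The pointwise domination $F^N_m \le \mathcal Z_N^{-1} f^{\otimes m}$ then reduces all of Assumption~\ref{assump:A1} to bounds on $f$: (0) from $\mathrm{Supp}(\rho) \subset \Omega_0$; (1) with constant $C_1 = 2\|\rho\|_\infty$; (2)--(3) from the moment $\int |v|^k f\,\d z < \infty$ and the boundedness of $\Omega_0$ (with a mild auxiliary control on $\sup_x \int |v| f(x,v)\,\d v$, obtained if necessary by a harmless pre-regularization of $f$ in the velocity variable).

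For the quantitative part of Assumption~\ref{assump:A2}, the strategy is to work first under the unconditioned product law $f^{\otimes N}$ and then transfer via $\mathbb E_{F^N}[\Psi] \le \mathcal Z_N^{-1}\mathbb E_{f^{\otimes N}}[\Psi]$ for $\Psi \ge 0$. Under $f^{\otimes N}$, the positions $(X_i)$ are i.i.d.\ with bounded, compactly supported density $\rho$, so the Fournier--Guillin empirical-measure estimate in $\R^3$ immediately yields
\[
\mathbb E_{f^{\otimes N}}[W_1(\rho^N, \rho)] \lesssim N^{-1/3}.
\]
For the flux term, the unboundedness of $v$ forces a truncation at $|v| \le R$: writing $V^R_i := V_i \mathbf{1}_{|V_i|\le R}$, $j^{N,R}:=\frac{1}{N}\sum_i V^R_i\,\delta_{X_i}$ and $j^R$ its mean, I split
\[
\|j^N - j\|_{[C^{0,1}_b(\R^3)]^*}
\le \|j^N - j^{N,R}\|_{[C^{0,1}_b(\R^3)]^*}
+ \|j^{N,R} - j^R\|_{[C^{0,1}_b(\R^3)]^*}
+ \|j^R - j\|_{[C^{0,1}_b(\R^3)]^*}.
\]
Markov controls the outer two terms in expectation by $R^{1-k}\int |v|^k f\,\d z$, while for the middle one the variables $(X_i, V^R_i)$ form an i.i.d.\ sample in a bounded subset of $\R^6$ tested against $(x,v)\mapsto \phi(x) v$, a map with sup and Lipschitz norms $\lesssim R$, so Fournier--Guillin in $\R^6$ gives a bound $\lesssim R\cdot N^{-1/6}$. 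Optimizing $R$ produces $\mathbb E[\|j^N-j\|_{[C^{0,1}_b(\R^3)]^*}] \lesssim N^{-\gamma}$ for some $\gamma>0$ depending on $k$; since Theorem~\ref{theo:main} only demands a bound $\lesssim N^{-1/57}$ on this expectation, any positive $\gamma$ suffices.

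Injecting these two rates into the estimate of Theorem~\ref{theo:main},
\[
\mathbb E\bigl[\|U_N[\mathbf Z^N] - u\|_{L^2_{\mathrm{loc}}(\R^3)}\bigr]
\lesssim (N^{-1/3})^{1/57} + N^{-\gamma/3} + N^{-e_1(\alpha)}
= N^{-1/171} + N^{-e_1(\alpha)},
\]
which is the announced statement. The main obstacle in this plan is the flux estimate rather than the density estimate: the unboundedness of $v$ makes both the verification of Assumption~\ref{assump:A1}(3) and the estimation of $\|j^N - j\|_{[C^{0,1}_b(\R^3)]^*}$ sensitive to the velocity tails of $f$, and the clean argument requires $\sup_x \int |v| f(x,v)\,\d v < \infty$, which is not automatic from $\rho \in L^\infty$ and a global $k$-th moment alone. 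This gap is routinely closed by a minor regularization of $f$, after which the Wasserstein estimate on $\rho^N$ is a direct application of Fournier--Guillin and the rate announced by the corollary comes out with exponent $1/(3\cdot 57) = 1/171$ as expected.
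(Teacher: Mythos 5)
Your construction is the same as the paper's: $F^N = \mathcal Z_N^{-1}\mathbf 1_{\mathcal O^N}f^{\otimes N}$ with the conditioned tensorization (the paper writes $\Pi^N[f]$ with partition function $\mathcal W_N(f)$), and the verification of Assumption~\ref{assump:A1} goes through the same pointwise domination of the marginals by a multiple of $f^{\otimes m}$. The only technical variation is that you use the crude bound $\mathcal W_{N-m}\le 1$ together with $\mathcal Z_N\ge 1/2$, whereas the paper (Lemma~\ref{WN}) tracks the sharper factor $\mathcal W_N^{-1}\mathcal W_{N-m}\le e^{16c_0 m N^{-2}\|\rho\|_\infty}$ -- both give $\|F^N_m\|_{L^\infty_xL^1_v}\lesssim\|\rho\|_\infty^m$ and hence Assumption~\ref{assump:A1}-(1). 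Your remark that (3) really needs $\sup_x\int|v|f(x,v)\,\d v<\infty$ is accurate; the paper's corresponding bound also carries a factor $\sup_{x_1}\int|v_1|^{k_0}f(x_1,v_1)\,\d v_1$, so this extra regularity in $v$ is implicitly assumed there as well, and flagging it is fair rather than a defect of your argument.

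Where you diverge slightly is in the Wasserstein step. The paper (Lemma~\ref{lem:W1}) introduces an auxiliary i.i.d.\ sample $\mathbf W^N\sim f^{\otimes N}$, splits $W_1(\mu^N[\mathbf Z^N],f)$ by the triangle inequality on the events $\{\mathbf W^N\in\mathcal O^N\}$ and its complement, and only then invokes Fournier--Guillin. Your direct transfer $\mathbb E_{F^N}[\Psi]\le\mathcal Z_N^{-1}\mathbb E_{f^{\otimes N}}[\Psi]$ for nonnegative $\Psi$ is cleaner and uses exactly the same comparison $F^N\le\mathcal Z_N^{-1}f^{\otimes N}$, so it is a legitimate shortcut. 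For the flux, you re-derive by hand (truncation at $|v|\le R$, Markov for the tails, Fournier--Guillin in $\R^6$ for the truncated part) what the paper obtains by composing Lemma~\ref{lem:chaos-A2} with Lemma~\ref{lem:equiv-distances}; both routes give a rate $N^{-2/15}$ when $k_0=5$, far better than the $N^{-1/57}$ you need, so the final exponent $(N^{-1/3})^{1/57}=N^{-1/171}$ is dominated by the density term exactly as in the paper. Your proof is correct and structurally the same; the only thing I would tighten is the vague reference to ``a minor regularization of $f$'' -- since $u$ is built from the unregularized $\rho,j$, such a regularization would change the target limit, so one should simply add $\sup_x\int|v|f(x,v)\,\d v<\infty$ to the stated hypotheses (as the paper implicitly does) rather than regularize.
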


On the basis of computations in \cite{HMS}, we expect that the content of Theorem \ref{theo:main} can be extended to particles with arbitrary shapes and possibly rotating. We recall that, in this framework, the limit Stokes-Brinkman problem is related to the distribution of shapes for the particles in the cloud, that is quantified in terms of Stokes resistance matrix associated with these shapes. The particle rotations influence the effective model only {\em via} their contribution to the drag force exerted on the particles. We refer to \cite{HMS} for more details.

\subsection{Overview of the proof}
The proof of Theorem~\ref{theo:main} faces several difficulties. First, for fixed $N$, we must identify a sufficiently large set of data $\mathbf Z^N$ for which the solution $U_N[\mathbf Z^N]$ to the Stokes problem in the punctured domain is close to the solution to the Stokes-Brinkman problem. In comparison with \cite{DGR}, a key-difficulty is to have a quantified estimate at-hand. A second difficulty is that, since the velocities $V_i^N$ that we impose on the particles are arbitrary, the solution to the Stokes problem may diverge in $\dot{H}^1(\mathbb R^3)$ when two particles become close. It is then necessary to obtain a bound on the solution to the Stokes problem associated with these configurations in order to ensure that they won't perturb the computation of the limit in mean.

Having in mind these two important difficulties, we propose an approach that is divided into five steps that we explain in more details below:

\begin{itemize}

\item As a first step, we prove in Section~\ref{sec_propertiesdata} some estimates associated to the convergence of the sequence of configurations (the random variables $(\mathbf Z^N)_N$ and their laws $(F^N)_N$) with respect to the expected limit (the marginals $\rho$ and $j$ of the distribution $f$).

\medskip

\item We then identify some ``concentrated configurations'' and prove that they are negligible in the asymptotic limit $N \to \infty$. These configurations correspond to $\mathbf Z^N \in \mathcal O^N$ such that there exists a couple of particles too close to each other or that there exist too many particles in a same cell of small volume. This is done in Section~\ref{sec_stat}.

\medskip

\item Furthermore, we compute uniform estimates satisfied by the map $U_N[\mathbf Z^N]$. We obtain simultaneously that:
\begin{itemize}
\item the mean of $U_N[\mathbf Z^N]$ is well-defined
and uniformly bounded in $\dot{H}^1(\mathbb R^3)$; 
\item the weight of contribution of the concentrated configurations vanishes when $N \to \infty.$  
\end{itemize}
This enables to get rid of concentrated configurations in the asymptotic description of $U_N$. This step is treated in Section~\ref{sec:UN-prop}.

\medskip

\item In a further step, developed in Section~\ref{sec:bonneconfig}, we prove a mean-field result for non-concentrated configurations which  is the cornerstone of our proof. We combine here the duality method of \cite{MH} with covering arguments of \cite{Hil17}. In comparison with these previous references, we consider in this paper an unbounded container. So, these arguments need to be adapted carefully.

\medskip

\item Finally, in the last step presented in Section~\ref{sec:maintheo}, we gather previous estimates together in order to obtain Theorem~\ref{theo:main}. Furthermore, we construct a particular example of sequence of probability measures $(F^N)_N$ in order to obtain Corollary~\ref{cor:main}.

\end{itemize}

The paper ends with a series of appendices. In Appendix \ref{app_wi} are gathered the technical computations underlying the third step of the above analysis (corresponding to Section \ref{sec:UN-prop}). 
In Appendix \ref{app_stokes}, we give some material on the resolution of the Stokes problem in a punctured box. These results are used in Section \ref{sec:bonneconfig}. Finally,  in the last Appendix \ref{sec_constants} we 
provide also some computations of constants that are involved in Section \ref{sec:bonneconfig}.

\subsection*{Notations}
In this paper, we shall denote $A \lesssim B$ if there is some constant $C>0$ (insignificant to  our computation) such that $A \le C B$. We use the classical notations $L^p(\mathcal O)$ and $H^{m}(\mathcal O)$ for Lebesgue and Sobolev spaces. The space $\dot{H}^1(\mathbb R^3)$ will play a crucial role in the analysis. We recall that we can see this space as the closure of $C^{\infty}_c(\mathbb R^3)$ for the norm
\[
\|w\|_{\dot{H}^1(\mathbb R^3)} = \left( \int_{\mathbb R^3} |\nabla w|^{2}\right)^{\frac 12} ,\quad \forall \, w \in C^{\infty}_c(\mathbb R^2).
\] 
We also apply below constantly the Bogovskii operator \cite[Section III.3]{Galdi}.
We recall that this operator is constructed to lift a divergence. Namely, given $f \in L^p(\mathcal O)$ it creates (under some compatibility conditions on $f$) a vector-field $w \in W^{1,p}(\mathcal O)$ such that ${\rm div} w = f.$  Concerning the homogeneity properties of this operator we refer to \cite[Appendix A]{Hil17} among
others.

\medskip
\noindent
{\bf Acknowledgments.} K.C.\ thanks N. Fournier for fruitful discussions on empirical measures. K.C.\ was partially supported by the EFI project ANR-17-CE40-0030 and the KIBORD project ANR-13-BS01-0004 of the French National Research Agency (ANR). 
M.H. is supported by the IFSMACS project ANR-15-CE40-0010,
the Dyficolti project ANR-13-BS01-0003-01.


\section{Properties of the sequence of configurations}\label{sec:assumption}

In this section we gather some properties of the sequence of configurations $(\mathbf Z^N)_{N \in \N^*}$ on $\mathcal O^N$ under the sequence of associated laws $(F^N)_{N \in \N^*}$ satisfying Assumptions~\ref{assump:A1}. We recall that
\begin{equation}
\mathcal O^{N} := \Big\{ \mathbf Z^N \in (\mathbb R^3 \times \mathbb R^3)^N \mid |X_i - X_j| > \frac{2}{N} \quad \forall i \neq j  \Big\},
\end{equation}
where hereafter we shall use the Assumption~\ref{assump:A1}-(0) saying that $\mathrm{Supp}(F^N) \subset \Omega_0 \times \R^3$ for some bounded open set $\Omega_0 \subset \R^3$, and where we denote 
$$
\begin{aligned}
& \mathbf Z^N = (Z_1, \dots , Z_N) \in  (\mathbb R^3 \times \mathbb R^3)^N , \\
& \mathbf X^N = (X_1 , \dots , X_N) \in \mathbb R^{3N}  , \quad \mathbf V^N = (V_1 , \dots , V_N) \in \mathbb R^{3N} , \\
& Z_i = (X_i, V_i) \in \mathbb R^3 \times \mathbb R^3 .
\end{aligned}
$$
We shall denote by $\mathcal O^N_x$ the projection of the space of configurations $\mathcal O^N$ onto the $\mathbf X^N $-variables, more precisely
$$
\mathcal O^{N}_x := \Big\{ (X_1 , \dots , X_N) \in \mathbb R^{3N} \mid |X_i - X_j| > \frac{2}{N} \quad \forall i \neq j  \Big\},
$$
in such a way that $\mathcal O^{N} \simeq \mathcal O^{N}_x \times \R^{3N}$.


In the first part of this section, we focus on the convergence of the family of measures $(\rho^N[\mathbf Z^N])_{N\in\mathbb N^*}$ and $(j^N[\mathbf Z^N])_{N\in \mathbb N^*}$ seen as random variables. As mentioned in the introduction, we metrize the convergence of measures on $\mathbb R^3$ by two different topologies: either we see (by restriction) vectorial measures as bounded linear forms on H\"older spaces:
$$
C_b^{0,\theta}(\mathbb R^3) := \left\{ \varphi \in C(\mathbb R^3) \cap L^{\infty}(\mathbb R^3)\,, 
\text{ s.t.\ } \sup_{x \neq y} \dfrac{|f(x) - f(y)|}{|x-y|^{\theta}} < \infty\right\},
$$ or we use the (Monge-Kantorovich-)Wasserstein $W_1$-distance on probability measures. Hereafter, the $1$-Wasserstein distance $W_1(f,g)$, with $f$ and $g$ probability measures on $\mathbb R^3 \times \mathbb R^3$, is defined by (see e.g.\ \cite{Villani-OT})
\begin{equation}
\begin{aligned}
W_1 (g,f) 
&:= \inf_{\pi \in \Pi(g,f)} \int_{(\mathbb R^3 \times \mathbb R^3)^2} |z-z'|  \d\pi (z,z')   = \sup_{[\psi]_{\mathrm{Lip}} \le 1 } \,
\int_{\mathbb R^3 \times \mathbb R^3} \psi(z) \left( g(\d z)-f(\d z) \right)    ,
\end{aligned}
\end{equation}
with $\Pi (g,f)$ being the set of probability measures on $(\mathbb R^3 \times \mathbb R^3)^2$ whose first marginal equals $g$ and second marginal $f$, and $[\cdot ]_{\mathrm{Lip}}$ denotes the Lipschitz semi-norm
$$
[\psi]_{\mathrm{Lip}} := \sup_{z \neq z'} \frac{|\psi(z) - \psi(z')|}{|z-z'|}.
$$
Correspondingly, $[\cdot]_{C^{0,\theta}}$ with $0 < \theta \le 1$ stands for the $C^{0,\theta}$ semi-norm
$$
[\psi]_{C^{0,\theta}} := \sup_{z \neq z'} \frac{|\psi(z) - \psi(z')|}{|z-z'|^{\theta}}.
$$
and $\| \cdot \|_{C^{0,\theta}_b(\mathbb R^3)} := \| \cdot \|_{L^\infty(\mathbb R^3)} + [\cdot]_{C^{0,\theta}}$ the $C^{0,\theta}$-norm. We then define, for finite signed measures $m$ and $\bar m$ in $\mathbb R^3$, the dual metric $\| \cdot \|_{(C^{0,\theta}_b(\mathbb R^3))^*}$ by
\begin{equation}
\| m - \bar m \|_{(C^{0,\theta}_b (\R^3))^*}  
:= \sup_{\| \phi \|_{C^{0,\theta}_b(\mathbb R^3)} \le 1 } \,
\int_{\mathbb R^3 } \phi(z) \left( m(\d z)- \bar m(\d z) \right).
\end{equation}
Finally, for vectorial measures $j = (j_\alpha)_{1\le \alpha \le 3}$ and $\bar j = (\bar j_\alpha)_{1 \le \alpha \le 3}$ in $\mathbb R^3$, we define
\begin{equation}
\| j - \bar j \|_{(C^{0,\theta}_b (\R^3))^*}
:= \sum_{\alpha=1}^3 \| j_\alpha - \bar j_\alpha \|_{(C^{0,\theta}_b (\R^3))^*}.
\end{equation}

We remark here that, when dealing with probability measures $\rho, \bar \rho \in \mathcal P(\mathbb R^3)$ with support included in $\Omega_0$, the $W_1$ distance between $\rho$ and $\bar \rho$ is equivalent to the dual distance given by the $C^{0,1}_b$-norm of their difference, and we shall always use the former, which is of more common use.

In the second part of this section, we measure the weights of configurations in which the particles are concentrated, meaning that
the minimal distance between two particles is small or that there are too many particles in a small subset of $\mathbb R^3.$

\subsection{On the convergence Assumption~\ref{assump:A2}}  \label{sec_propertiesdata}
Let us describe some properties concerning the asymptotic convergence of the data, where we always assume that  Assumption~\ref{assump:A1} is in force. We first obtain some estimates for different metrics concerning the convergences of Assumption~\ref{assump:A2}, and then we give a sufficient condition on the sequence $(F^N)_{N \in \N^*}$ to satisfy Assumption~\ref{assump:A2}.

We recall below the notion of chaoticity for a sequence of probability measures, see \cite{Kac,Sznitman}.
\begin{definition}\label{def:chaos}
Let $E \subset \R^d$. Consider a sequence $(\mathbf Y^N)_{N \in \N^*}$ of exchangeable random variables on $E^N$ and the associated sequence of laws $(\pi^N)_{N \in \N^*}$, that are symmetric probability measures on $E^N$. 
We say that $(\pi^N)_{N \in \N^*}$ (or that $(\mathbf Y^N)_{N \in \N^*}$) is $\pi$-chaotic, for some probability measure $\pi$ on $E$, if one of the following equivalent conditions is fulfilled:
\begin{enumerate}[label=(\alph*),itemsep=4pt]
\item $\pi^N_m$ converges to $\pi^{\otimes m}$ weakly in $ \mathcal P(E^m)$ as $N\to\infty$ for any fixed $m \ge 1$ (or some $m \ge 2$);

\item the $\mathcal P (E)$-valued random variable $\mu^N [\mathbf Y^N]$ converges in law to $\pi$ as $N\to\infty$.

\end{enumerate}
Here $\pi^N_m$ denotes the $m$-marginal of $\pi^N$ given by $\pi^N_m (\d z_1, \dots, \d z_m) :=\int_{E^{N-m}} \pi^N (\d z_1, \dots, \d z_m, \d z_{m+1}, \dots , \d z_N)$, and $\mu^N [\mathbf Y^N] = \frac{1}{N} \sum_{i=1}^{N} \delta_{Y^N_i}$ is the empirical measure associated to $\mathbf Y^N$.

\end{definition} 
We remark that \cite{HM} obtains a quantitave version of the above equivalence. More precisely, assuming that $\pi^N_1$ possesses a finite moment of order $k > 1$, $(\pi^N)_{N \in \N^*}$ is $\pi$-chaotic is equivalent to
\begin{enumerate}[label=(\alph*'),itemsep=4pt]
\item $\displaystyle \lim_{N\to \infty} W_1 (\pi^N_m , \pi^{\otimes m} ) = 0$ for any fixed $m \ge 1$ (or some $m \ge 2$); 

\item $\displaystyle \lim_{N \to \infty} \mathbb E \left[ W_1 (\mu^N [\mathbf Y^N] , \pi)  \right] = 0$;
\end{enumerate}
with a quantitative estimate in $N$ for the equivalence between (a') and (b').
As a consequence of this, and arguing similarly for the case of finite vectorial measures (more precisely for finite signed measures, corresponding to each component of $j^N [\mathbf Z^N]$ and $j$), we hence remark that Assumption~\ref{assump:A2} is equivalent to
\begin{enumerate}[label=(\roman*')]
\item the random variable $\rho^N [\mathbf Z^N]$ converges in law to $\rho$ as $N\to\infty$ ;

\item the random variable $j^N [\mathbf Z^N]$ converges in law to $j$ as $N\to\infty$.

\end{enumerate}

We now give some estimates concerning different metrics. For any $k>0$ and any probability measure $f \in \mathcal P (\R^3 \times \R^3)$ with support on $\Omega_0 \times \R^3$, we denote 
its moment of order $k>0$ by
$$
M_k(f) := \int_{\Omega_0 \times \mathbb R^3} (1 + |v|^2)^{k/2} \, f(\mathrm d x, \mathrm d v).
$$
We remark that $M_k(f) \geq 1$ for any $k >0$
and $k \mapsto M_k(f)$ is non-decreasing.
On the other hand, under the Assumption~\ref{assump:A1}-(2), we have a uniform bound for $(M_{k_0}(F^N))_{N\in \mathbb N^*}.$
So, below, we focus on probability measures with bounded $k_0$-momentum {\em i.e.}:
$$
\mathcal B_{k_0}(C_2) := \{ f \in \mathcal P (\R^3 \times \R^3) \text{ s.t.\ } \mathrm{Supp}(f) \subset \Omega_0 \times \R^3 \text{ and } M_{k_0}(f) \leq C_2\}
$$
where $k_0 \in [1,\infty)$ and $C_2 \geq 1$ are fixed by Assumption~\ref{assump:A1}-(2).
Standard arguments show that this set is closed w.r.t.\ the weak topology on $\mathcal P(\mathbb R^3 \times \mathbb R^3).$

\begin{lemma}\label{lem:equiv-distances}
Let $f, g \in \mathcal B_{k_0}(C_2)$ and define $\rho_f = \int_{\R^3} f(\cdot, \d v)$, $\rho_g = \int_{\R^3} g(\cdot, \d v)$, $j_f = \int_{\R^3} v f (\cdot, \d v)$ and $j_g = \int_{\R^3} v g(\cdot, \d v)$. Given $k>0$ we denote $\mathcal M_k := M_k(f) + M_k(g)$ and $K_0>0$ a constant depending on $\Omega_0$. 

\begin{enumerate}

\item For any $\theta \in (0,1)$ there holds
$$
\begin{aligned}
\| \rho_f - \rho_g \|_{(C^{0,\theta}_b (\R^3))^*} 
& \le K_0 W_1(\rho_f, \rho_g)^{\frac{\theta}{\theta + 1}} 
 \le K_0 W_1(f, g)^{\frac{\theta }{\theta + 1}}.
\end{aligned}
$$

\item For any $\theta \in (0,1)$ there holds
$$
\begin{aligned}
\| j_f - j_g \|_{(C^{0,\theta}_b (\R^3))^*} 
& \le K_0 \| j_f - j_g \|_{(C^{0,1}_b (\R^3))^*}^{\frac{\theta }{\theta+1 }} 
 \lesssim K_0  \mathcal M_{k_0}^{\frac{1}{k_0}\frac{\theta }{(\theta+1) }} \, W_1(f, g)^{\frac{(k_0-1)}{k_0}\frac{\theta  }{(\theta+1)}}.
\end{aligned}
$$

\end{enumerate}

\end{lemma}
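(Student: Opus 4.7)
The plan is to combine a mollification argument with the Kantorovich-Rubinstein duality for $W_1$ and, for the flux, a direct coupling estimate exploiting the $k_0$-moments.

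For item (1), first inequality, I would fix $\phi \in C^{0,\theta}_b(\R^3)$ with $\|\phi\|_{C^{0,\theta}_b} \le 1$ and set $\phi_\epsilon := \phi * \eta_\epsilon$ for a standard rescaled mollifier. The crucial point is to use \emph{only} the $L^\infty$ bound of $\phi$ when controlling the Lipschitz semi-norm of the regularization: this gives
$$
\|\phi - \phi_\epsilon\|_{L^\infty(\R^3)} \lesssim \epsilon^\theta, \qquad [\phi_\epsilon]_\mathrm{Lip} \lesssim \epsilon^{-1},
$$
the first via the Hölder semi-norm and the second via $\|\nabla \eta_\epsilon\|_{L^1} \sim \epsilon^{-1}$. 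Decomposing $\int \phi \, \d(\rho_f - \rho_g)$ into a remainder plus a mollified piece and applying the Kantorovich-Rubinstein formula to the latter yields
$$
\left| \int \phi \, \d(\rho_f - \rho_g) \right| \lesssim \epsilon^\theta + \epsilon^{-1} W_1(\rho_f, \rho_g),
$$
which is optimal at $\epsilon \sim W_1(\rho_f, \rho_g)^{1/(\theta+1)}$ and produces the claimed exponent $\theta/(\theta+1)$. The second inequality follows from the non-expansiveness of $W_1$ under the marginal projection $(x,v) \mapsto x$.

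For item (2), first inequality, the same mollification strategy applies to each scalar component of $\int \phi \, \d(j_f - j_g)$: the remainder is controlled by $\epsilon^\theta (|j_f| + |j_g|)(\R^3) \lesssim \epsilon^\theta \mathcal{M}_{k_0}^{1/k_0}$ (using $\|j_f\|_\mathrm{TV} \le M_1(f) \le M_{k_0}(f)^{1/k_0}$ by Jensen), while the mollified part is bounded by $\epsilon^{-1} \|j_f - j_g\|_{(C^{0,1}_b)^*}$. The identical optimization and absorption of the moment factor into the constant $K_0$ (using the boundedness of $\mathcal{M}_{k_0}$) yields the announced bound.

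For item (2), second inequality, I would take $\phi \in C^{0,1}_b$ of unit norm and $\pi$ an optimal coupling for $W_1(f, g)$, then use the telescoping identity
$$
\phi(x) v - \phi(y) w = \phi(x) (v - w) + (\phi(x) - \phi(y)) w.
$$
The first contribution is at most $\|\phi\|_\infty \int |v-w| \, \d\pi \le W_1(f,g)$ since $|v-w| \le |z - z'|$. For the second I apply Hölder's inequality with the conjugate pair $(p, q) = (k_0/(k_0-1), k_0)$ and exploit the bounded $x$-support through the elementary bound $|x-y|^p \le \mathrm{diam}(\Omega_0)^{p-1} |x-y|$, which downgrades the first Hölder factor to
$$
\Bigl(\int |x-y|^p \, \d\pi\Bigr)^{1/p} \lesssim \mathrm{diam}(\Omega_0)^{1/k_0} W_1(f,g)^{(k_0-1)/k_0},
$$
while the velocity factor is exactly $M_{k_0}(g)^{1/k_0}$. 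Combining with the absorption $W_1(f,g) \lesssim \mathcal{M}_{k_0}^{1/k_0} W_1(f,g)^{(k_0-1)/k_0}$ (which holds because $W_1(f,g)$ itself is bounded by the moments, by transporting each law to a point) closes the estimate. The main delicacy is the precise choice of Hölder exponents so that $q = k_0$ saturates the assumed $k_0$-th moment on the velocities, which pins down the conjugate $p$ and thereby the power $(k_0-1)/k_0$ on $W_1(f,g)$; this is what prevents a direct transfer of $W_1(f,g)$ to the dual norm at the full exponent $1$.
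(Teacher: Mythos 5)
Your item (1) and the first inequality of item (2) are exactly the paper's mollification argument: split against $\phi*\zeta_\epsilon$ and the remainder, exploit the $L^\infty$ bound of $\phi$ for the Lipschitz norm of the mollification and the H\"older seminorm for the remainder, optimize in $\epsilon$, and use non-expansiveness of $W_1$ under the $x$-marginal projection. That part is identical.

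For the second inequality of item (2), however, you take a genuinely different route. The paper introduces a velocity cut-off $\chi_R(v)$ and splits the test function $\phi(x)v_\alpha$ into a compactly supported part (with Lipschitz constant of order $R$, hence contributing $R\,W_1(f,g)$) and a tail controlled by the $k_0$-moment (giving $\mathcal M_{k_0}/R^{k_0-1}$), then optimizes in $R$. You instead work directly on an optimal coupling $\pi$ and use the telescoping identity $\phi(x)v-\phi(y)w=\phi(x)(v-w)+(\phi(x)-\phi(y))w$; the first term is handled by $|v-w|\le|z-z'|$, and the second by H\"older with exponents $(k_0/(k_0-1),k_0)$, where the bounded support of $\Omega_0$ converts $\int|x-y|^p\,\d\pi$ into a fractional power of $W_1(f,g)$ and the velocity factor saturates the $k_0$-moment. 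Both approaches land on the same exponent $(k_0-1)/k_0$ and require the same auxiliary observation $W_1(f,g)\lesssim\mathcal M_{k_0}$ (or $\mathcal M_{k_0}^{1/k_0}W_1(f,g)^{(k_0-1)/k_0}$) to absorb the linear term. The coupling argument avoids the auxiliary truncation parameter $R$ and the subsequent optimization, which makes the exponent choice more transparent; the paper's truncation argument, on the other hand, never needs to exhibit a coupling and works purely by duality on test functions, which is more in line with how the rest of the paper manipulates these dual norms. Your bookkeeping on the H\"older step --- in particular that $(p-1)/p=1/k_0$ and $1/p=(k_0-1)/k_0$ --- is correct, as is the Jensen step $M_1(f)\le M_{k_0}(f)^{1/k_0}$.
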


\begin{proof}
These estimates are standard but we give the proof here for completeness.

\noindent
(1) We  prove first that
\begin{equation}\label{rho:C0theta*-W1}
\| \rho_f - \rho_g \|_{(C^{0,\theta}_b (\R^3))^*} \lesssim W_1(\rho_f, \rho_g)^{\frac{\theta }{\theta+1 }},
\end{equation}
from which we conclude by remarking that
\begin{equation}\label{W1rho-W1f}
W_1(\rho_f, \rho_g) \lesssim W_1(f,g).
\end{equation}
Recall that
$$
\| \rho_f - \rho_g \|_{(C^{0,\theta}_b (\R^3))^*}
= \sup_{\| \phi \|_{C^{0,\theta}_b (\R^3)} \le 1} \int_{\Omega_0} \phi(x) (\rho_f(\mathrm{d}x) - \rho_g(\mathrm{d}x)).
$$
We consider a sequence of mollifiers $(\zeta_\epsilon)_{\epsilon >0}$, that is, $\zeta_\epsilon(x) = \epsilon^{-3} \zeta( \epsilon^{-1} x )$, $\zeta \in C^\infty_c(\mathbb R^3)$ nonnegative, $\int \zeta(x) \, \d x = 1$, and $\mathrm{supp}(\zeta) \subset B(0,1)$. 
We split
$$
\begin{aligned}
\int_{\R^3 } \phi(x) (\rho_f - \rho_g)(\mathrm{d}x)
&= \int_{\R^3 } (\phi  * \zeta_\epsilon)(x) (\rho_f - \rho_g)(\mathrm{d}x)  
+ \int_{\R^3 } [ \phi  (x) - (\phi * \zeta_\epsilon)(x) ] (\rho_f - \rho_g)(\mathrm{d}x) \\
&=: T_1 + T_2 .
\end{aligned}
$$
For the term $T_2$, we easily remark that
$$
\begin{aligned}
\phi(x) - (\phi * \zeta_\epsilon)(x) 
&= \int_{\R^3 } [\phi(x) - \phi(x-y)] \zeta_\epsilon(y) \,\mathrm dy 
\le [\phi]_{C^{0,\theta} } \int_{\R^3 } |y|^\theta \zeta_\epsilon(y) \, \mathrm dy
\le [\phi]_{C^{0,\theta} } \, \epsilon^\theta .
\end{aligned}
$$
Hence the previous estimate yields
$$
T_2 \le \|  \phi   - (\phi  * \zeta_\epsilon) \|_{L^\infty(\mathbb R^3)} \, \int_{\R^3}    (\rho_f + \rho_g)(\mathrm{d}x)
\lesssim \| \phi \|_{C^{0,\theta}_b(\mathbb R^3)} \, \epsilon^\theta .
$$
For the term $T_1$ we observe that $x \mapsto  (\phi * \zeta_\epsilon)(x)$ lies in $\mathrm{Lip} (\R^3)$, indeed, for any $x \in \mathbb R^3$, we have
$$
\begin{aligned}
|\nabla_x (\phi  * \zeta_\epsilon) (x) |
&=  | (\phi  * \nabla_x \zeta_\epsilon) (x) | 
\le  \int_{\R^3} |\phi(x-y)|  \frac{|\nabla_x \zeta (y/\epsilon)|}{\epsilon^{4}} \, \mathrm dy \\
&\le \int_{\R^3} |\phi(x-\epsilon w)|  \frac{|\nabla \zeta (w)|}{\epsilon} \, \mathrm dw \\
&\lesssim \epsilon^{-1} \| \phi \|_{L^\infty(\mathbb R^3)} \| \nabla_x \zeta \|_{L^1(\mathbb R^3)}  ,
\end{aligned}
$$
which implies $ [ \phi  * \zeta_\epsilon ]_{\mathrm{Lip}}  \lesssim \epsilon^{-1}
\| \phi \|_{L^\infty(\mathbb R^3) }$. From that last estimate we get
$$
\begin{aligned}
T_1 
&\lesssim [ \phi  * \zeta_\epsilon ]_{\mathrm{Lip} } \int_{\R^3} \frac{\phi  * \zeta_\epsilon (x)}{[ \phi  * \zeta_\epsilon ]_{\mathrm{Lip} } } \, (\rho_f - \rho_g)(\mathrm{d}x) \\
&\lesssim \epsilon^{-1} \| \phi \|_{L^\infty(\mathbb R^3)}  \, 
\sup_{[\psi]_{\mathrm{Lip}} \le 1} \int_{\R^3} \psi(x) (\rho_f - \rho_g)(\mathrm{d}x)
= \epsilon^{-1} \| \phi \|_{L^\infty(\mathbb R^3)} \, W_1(\rho_f, \rho_g),
\end{aligned}
$$
Gathering previous estimates and choosing $\epsilon = W_1(\rho_f, \rho_g)^{\frac{1}{\theta+1}}$ completes the proof of \eqref{rho:C0theta*-W1}.
We now easily prove \eqref{W1rho-W1f} by remarking that
$$
\begin{aligned}
W_1(\rho_f, \rho_g) 
&= \sup_{[\psi]_{\mathrm{Lip} } \le 1} \int_{\R^3 } \psi(x) (\rho_f - \rho_g)(\mathrm{d}x) 
= \sup_{[\psi]_{\mathrm{Lip}} \le 1} \int_{\R^3  \times \mathbb R^3} \psi(x) (f - g)(\mathrm{d}x \mathrm{d}v) \\
&\le \sup_{[\Psi]_{\mathrm{Lip} } \le 1} \int_{\R^3  \times \mathbb R^3} \Psi(x,v) (f - g)(\mathrm{d}x, \mathrm{d}v) = W_1(f,g).
\end{aligned}
$$

\noindent
(2) By reproducing {\em mutatis mutandis} the arguments for \eqref{rho:C0theta*-W1} 
we obtain
\begin{equation}\label{j:C0theta*-C01*}
\| j_f - j_g \|_{(C^{0,\theta}_b (\R^3 ))^*}
\lesssim  \| j_f - j_g \|_{(C^{0,1}_b (\R^3 ))^*}^{\frac{\theta }{\theta+1}}.
\end{equation}
So we prove next
\begin{equation}\label{j:C01*-W1}
\| j_f - j_g \|_{(C^{0,1}_b (\R^3 ))^*}
\lesssim  \mathcal M_k^{\frac{1}{k_0}} \, W_1(f,g)^{ \frac{k_0-1}{k_0} }.
\end{equation}
For $R \ge 1 $ we define the smooth cutoff function $\chi_R(v) = \chi(v/R)$ with $\chi \in C^\infty_c(\mathbb R^d)$ nonnegative and $\mathbf 1_{B(0,1)} \le \chi \le \mathbf 1_{B(0,2)}$, and we write, denoting 
$j_f = (j_f^\alpha)_{1 \le \alpha \le 3}$ and $j_g = (j_g^\alpha)_{1 \le \alpha \le 3}$, for any $\alpha \in \{ 1,2,3 \}:$
$$
\begin{aligned}
\| j_f^\alpha - j_g^\alpha \|_{(C^{0,1}_b (\R^3 ))^*}
&:= \sup_{\| \phi \|_{C^{0,1}_b (\R^3 )} \le 1} \int_{\R^3 } \phi(x) (j_f^\alpha(\mathrm{d}x) - j_g^\alpha(\mathrm{d}x))
= \sup_{\| \phi \|_{C^{0,1}_b (\R^3 )} \le 1} \int_{\R^3  \times \mathbb R^3} \phi(x) v_\alpha (f - g)(\mathrm{d}x , \mathrm{d}v) \\
&= \sup_{\| \phi \|_{C^{0,1}_b (\R^3 )} \le 1} \left\{ \int_{\R^3  \times \mathbb R^3} \phi(x) v_\alpha \chi_R(v) (f - g)(\mathrm{d}x ,\mathrm{d}v)
+ \int_{\mathbb R^3 \times \mathbb R^3} \phi(x) v_\alpha (1-\chi_R(v)) (f - g)(\mathrm{d}x ,\mathrm{d}v) \right\} \\
&=: I_1 + I_2.
\end{aligned}
$$
Observe that, given $\phi \in C^{0,1}_b (\R^3 )$ such that $\|\phi\|_{C^{0,1}_b(\R^3 )} \leq 1,$ the mapping  $(x,v) \mapsto \phi(x) v_\alpha \chi_R(v) $ lies in $\mathrm{Lip} (\mathbb R^3 \times \mathbb R^3)$ with $[\phi v_\alpha \chi_R]_{\mathrm{Lip} } \lesssim R$. Indeed, we have
$$
[\phi v_\alpha \chi_R]_{\mathrm{Lip} } \leq [\phi]_{\mathrm{Lip}} \|v_{\alpha} \chi_{R}\|_{L^{\infty}(\mathbb R^3)} 
+ \|\phi\|_{L^{\infty}(\mathbb R^3)} \|\nabla_v v_{\alpha}\chi_R\|_{L^{\infty}(\mathbb R^3)}
$$
and, for any $v \in \R^3$,
$$
|v_{\alpha}\chi_R(v)| \lesssim R, \qquad 
|\nabla_v (v_\alpha \chi_R) (v)| 
\lesssim |v_\alpha| |\nabla_v \chi (\tfrac{v}{R})| \frac{1}{R}
\lesssim \| v\nabla_v \chi \|_{L^\infty(\R^3)} \lesssim 1,
$$
which implies
$$
I_1 \lesssim R W_1(f,g).
$$
For the second term, since $f,g \in \mathcal B_{k_0}(C_2)$, we have
$$
I_2 \lesssim \sup_{\| \phi \|_{C^{0,1}_b (\R^3)} \le 1} \int_{\mathbb R^3 \times \mathbb R^3} \phi(x) v_\alpha (1-\chi_R(v)) (f - g)(\mathrm{d}x \mathrm{d}v)
\lesssim \frac{\mathcal M_{k_0}}{R^{k_0-1}}
$$
and we conclude to \eqref{j:C01*-W1} by choosing $R = \frac{\mathcal M_{k_0}^{1/k_0}}{W_1(f,g)^{1/k_0}} \ge 1$ (since $W_1(f,g) \le \mathcal M_{k_0}$) if not infinite. 
\end{proof}

With the above lemma we can show the following sufficient condition for the convergences in Assumption~\ref{assump:A2} to hold.

\begin{lemma}\label{lem:chaos-A2}
Consider a sequence $(\mathbf Z^N)_{N \in \N^*}$ of exchangeable random variables on $\mathcal O^N$ and the associated sequence of symmetric laws $(F^N)_{N \in \N^*}$ on $\mathcal O^N$ satisfying Assumption~\ref{assump:A1}. Suppose that $(F^N)_{N \in \N^*}$ is $f$-chaotic (Definition~\ref{def:chaos}), for some probability measure $f$ on $\R^3 \times \R^3$ with support on $\Omega_0 \times \R^3$, and denote $\rho := \int_{\R^3} f(\cdot, \d v)$ and $j := \int_{\R^3} v f(\cdot, \d v)$.
Then $(F^N)_{N \in \N^*}$ satisfies Assumption~\ref{assump:A2}, more precisely there holds
$$
\mathbb E \left[ W_1 (\rho^N[\mathbf Z^N] , \rho) \right] 
\lesssim \mathbb E \left[ W_1( \mu^N[\mathbf Z^N] , f) \right] \xrightarrow[N \to \infty]{} 0
$$
and
$$
\mathbb E \left[ \| j^N[\mathbf Z^N] - j \|_{(C^{0,1}_b(\R^3))^*} \right] 
\lesssim \mathbb E \left[ W_1( \mu^N[\mathbf Z^N] , f) \right]^{\frac{k_0-1}{k_0}} \xrightarrow[N \to \infty]{} 0.
$$
\end{lemma}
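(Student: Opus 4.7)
The plan is to apply the deterministic estimates of Lemma~\ref{lem:equiv-distances} realization-wise to the random pair $(\mu^N[\mathbf Z^N], f)$ and then take expectations, invoking the quantitative chaos equivalence recalled just after Definition~\ref{def:chaos} (from \cite{HM}) which, thanks to the uniform moment bound of Assumption~\ref{assump:A1}-(2), guarantees that $f$-chaoticity implies $\mathbb E[W_1(\mu^N[\mathbf Z^N], f)] \to 0$.

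Before applying Lemma~\ref{lem:equiv-distances}, I would first check that both measures can be placed in a common class $\mathcal B_{k_0}(C_2')$ for some constant $C_2'$ independent of $N$. For $f$, the weak convergence $F^N_1 \rightharpoonup f$ inherited from chaoticity combined with Fatou's lemma and $\sup_N \int |z_1|^{k_0} F^N_1 \le C_2$ yields $M_{k_0}(f) \lesssim 1 + C_2$. For the random measure $\mu^N[\mathbf Z^N]$, its $k_0$-moment is itself random, but by exchangeability
\[
\mathbb E\bigl[M_{k_0}(\mu^N[\mathbf Z^N])\bigr] = \int_{\mathbb R^3 \times \mathbb R^3} (1+|v|^2)^{k_0/2} F^N_1(z) \, \mathrm{d} z \lesssim 1 + C_2,
\]
which is the only control I will need after H\"older's inequality.

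The density estimate is then immediate: applying \eqref{W1rho-W1f} realization-wise to the pair $(\mu^N[\mathbf Z^N], f)$ gives $W_1(\rho^N[\mathbf Z^N], \rho) \le W_1(\mu^N[\mathbf Z^N], f)$ almost surely, and taking expectations concludes. For the flux, I would apply the intermediate estimate \eqref{j:C01*-W1} from the proof of Lemma~\ref{lem:equiv-distances} realization-wise to obtain
\[
\| j^N[\mathbf Z^N] - j \|_{(C^{0,1}_b(\mathbb R^3))^*} \lesssim \bigl( M_{k_0}(\mu^N[\mathbf Z^N]) + M_{k_0}(f) \bigr)^{1/k_0} W_1(\mu^N[\mathbf Z^N], f)^{(k_0-1)/k_0},
\]
then take expectations and apply H\"older's inequality with conjugate exponents $k_0$ and $k_0/(k_0-1)$ to separate the moment factor from the Wasserstein factor. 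The moment factor is uniformly bounded by the estimate above, leaving exactly the claimed power $\mathbb E[W_1(\mu^N[\mathbf Z^N], f)]^{(k_0-1)/k_0}$ on the right-hand side.

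The hard part is essentially bookkeeping: controlling the random $k_0$-moment $M_{k_0}(\mu^N[\mathbf Z^N])$ in expectation, which reduces to exchangeability together with Assumption~\ref{assump:A1}-(2), and then splitting the product $\mathcal M_{k_0}^{1/k_0} W_1^{(k_0-1)/k_0}$ via H\"older with the right pair of exponents. There is no substantive analytic obstacle beyond this, since all the heavy lifting has been done in Lemma~\ref{lem:equiv-distances} and in the quantitative chaos equivalence of \cite{HM}.
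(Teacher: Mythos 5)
Your proposal is correct and takes essentially the same route as the paper: apply Lemma~\ref{lem:equiv-distances} realization-wise, use the [HM] quantitative chaos result under the moment bound of Assumption~\ref{assump:A1}-(2) to get $\mathbb E[W_1(\mu^N[\mathbf Z^N],f)]\to 0$, and then take expectations, controlling the random moment $M_{k_0}(\mu^N[\mathbf Z^N])$ by exchangeability. The paper is more terse about the H\"older step that separates the moment factor from the Wasserstein factor, but the substance is identical.
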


\begin{proof}
Thanks to the moment condition Assumption~\ref{assump:A1}-(2) and the fact that $(F^N)_{N \in \N^*}$ is $f$-chaotic, we know from \cite{HM} that
$$
\lim_{N \to \infty} \mathbb E \left[ W_1( \mu^N[\mathbf Z^N] , f) \right] = 0 .
$$
We conclude the proof by applying Lemma~\ref{lem:equiv-distances} and remarking that $ 
\mathbb E \left[ M_{k_0}(\mu^N[\mathbf Z^N]) \right] = M_{k_0}(F^N_1)
$ is uniformly bounded thanks to Assumption~\ref{assump:A1}-(2).
\end{proof}

\subsection{Estimating the weight of concentrated configurations} \label{sec_stat}

For $\lambda, \alpha >0$, and any integer $M \le N$, we define
\begin{equation}
\mathcal O^N_{\alpha} := \big\{ \mathbf Z^N \in \mathcal O^N \mid \min_{i \neq j} |X_i^N - X_j^N| < N^{-\alpha}    \big\}
\end{equation}
and
\begin{equation}
\mathcal O^N_{\lambda,M} := \big\{ \mathbf Z^N \in \mathcal O^N \mid  \text{there exist at least } M \text{ particles } (X_i^N) \text{ in the same cell } \mathcal C(\lambda) \text{ of size } \lambda >0  \big\}.
\end{equation}
Here the cell $\mathcal C(\lambda)$ is given by, for some $y\in \mathbb R^3$, $(y_1 - \lambda/2 , y_1 + \lambda/2) \times (y_2 - \lambda/2 , y_2 + \lambda/2) \times (y_3 - \lambda/2 , y_3 + \lambda/2)$, so that $|\mathcal C_y(\lambda)| = \lambda^3$. 

Below, we study the weight of the sets $\mathcal O^N_{\lambda,M}$ and $\mathcal O^N_{\alpha}.$ 
For this, we allow that the parameters $\lambda$ and $M$ depend on $N.$ Namely, we denote:
\begin{equation} \label{eq_lambdaM} 
 M_N = N^{\beta}, \qquad \lambda_N := \left(\eta \frac{M_N}{N} \right)^{1/3}\,, \quad \forall \, N \in \mathbb N^*
\end{equation}
with positive parameters $\alpha,\beta,\eta$ to be fixed later on. 

We now state the main result of this section.

\begin{proposition}\label{prop:config}

Consider a sequence of random variables $(\mathbf Z^N)_{N \in \mathbb N^*}$ and the sequence of their associated laws $(F^N)_{N \in \N^*}$ satisfying Assumption~\ref{assump:A1}. Let $\alpha \in (2/3,1),$ $\beta \in (0,1/2)$ and $\eta \in (0,\infty)$ sufficiently small. Then, the sequences $(M^N)_{N\in\mathbb N^*}$ and $(\lambda^N)_{N\in \mathbb N^*}$ given by formula \eqref{eq_lambdaM} satisfy:
$$
 \mathbb P ( \mathbf Z^N \in  \mathcal O^N_{\lambda_N,M_N} \cup  \mathcal O^N_\alpha ) 
\lesssim \dfrac{1}{N^{{3\alpha -2}}}  \xrightarrow[N\to \infty]{} 0.
$$
\end{proposition}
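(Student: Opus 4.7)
The plan is to estimate $\mathbb P(\mathbf Z^N \in \mathcal O^N_\alpha)$ and $\mathbb P(\mathbf Z^N \in \mathcal O^N_{\lambda_N,M_N})$ separately via union bounds, using the uniform control on marginals furnished by Assumption~\ref{assump:A1}-(1). The dominant contribution will come from the pair-closeness set $\mathcal O^N_\alpha$, giving the $N^{2-3\alpha}$ rate; the crowding set $\mathcal O^N_{\lambda_N, M_N}$ will in fact be super-polynomially small provided $\eta$ is chosen small.

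\medskip

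\emph{Step 1: the close-pair set $\mathcal O^N_\alpha$.}
By a union bound over pairs combined with the exchangeability of $F^N$,
\[
\mathbb P(\mathbf Z^N \in \mathcal O^N_\alpha) \le \binom{N}{2} \int_{|x_1-x_2| < N^{-\alpha}} F^N_2(z_1, z_2) \, \d z_1 \d z_2.
\]
Since $F^N_2$ is supported in $\mathcal O^2[\tfrac{1}{N}] \cap ((\Omega_0 \times \R^3)^2)$, we may restrict the integration to $2/N < |x_1-x_2| < N^{-\alpha}$ and $x_1 \in \Omega_0$. Integrating first in $\mathbf v$ and invoking Assumption~\ref{assump:A1}-(1) with $m=2$, we get
\[
\mathbb P(\mathbf Z^N \in \mathcal O^N_\alpha) \lesssim N^2 \cdot C_1^{2}\, |\Omega_0| \cdot |B(0,N^{-\alpha})| \lesssim N^{2-3\alpha}.
\]
Since $\alpha > 2/3$, this gives the advertised decay $N^{-(3\alpha-2)}$.

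\medskip

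\emph{Step 2: the crowded-cell set $\mathcal O^N_{\lambda_N,M_N}$.}
Cover $\Omega_0$ by a fixed tiling by cubes of side $\lambda_N$; denote this family $(C_k)_{1\le k \le K_N}$ with $K_N \lesssim |\Omega_0|/\lambda_N^3 \lesssim N/M_N$. Any cube of side $\lambda_N$ intersects at most $8$ tiles, so if a configuration contains $M_N$ particles in a common cube of side $\lambda_N$, at least one tile of the covering contains $m := \lceil M_N/8 \rceil$ particles. Hence, by a union bound over tiles and subsets of indices of size $m$, and then by exchangeability,
\[
\mathbb P(\mathbf Z^N \in \mathcal O^N_{\lambda_N,M_N}) \le K_N \binom{N}{m} \int_{C_k^m \times \R^{3m}} F^N_m(z_1, \dots, z_m) \, \d \mathbf v \, \d \mathbf x.
\]
Since $F^N_m$ is supported on $\mathcal O^m[\tfrac{1}{N}]$, Assumption~\ref{assump:A1}-(1) yields $\int F^N_m \, \d \mathbf v \le C_1^m$ uniformly in $\mathbf x$, whence the inner integral is bounded by $C_1^m \lambda_N^{3m} = C_1^m (\eta M_N/N)^m$. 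Using $\binom{N}{m} \le (eN/m)^m$ with $m \ge M_N/8$, we obtain
\[
\mathbb P(\mathbf Z^N \in \mathcal O^N_{\lambda_N,M_N}) \lesssim \frac{N}{M_N} \,(8 e C_1 \eta)^{M_N/8}.
\]
Choosing $\eta$ small enough that $8 e C_1 \eta \le 1/2$, this is bounded by $(N/M_N) 2^{-M_N/8} \lesssim N^{1-\beta}\, e^{-c N^\beta}$, which decays faster than any polynomial in $N$.

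\medskip

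\emph{Step 3: conclusion.}
Summing both estimates, the bound $N^{-(3\alpha-2)}$ from Step~1 dominates and the proposition follows. The only step requiring care is Step~2, where the entropic factor $\binom{N}{m}$ must be defeated by the smallness $\lambda_N^{3m} \simeq (M_N/N)^m$; this forces the relation $\lambda_N^3 \simeq M_N/N$ in \eqref{eq_lambdaM} and explains the role of the free parameter $\eta$, whose smallness is essential (but harmless, since $\lambda_N \to 0$ remains ensured for any fixed $\eta > 0$).
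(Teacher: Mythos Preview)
Your proof is correct and follows the same two-step strategy as the paper (its Lemmas~\ref{lem_sp1bis} and~\ref{lem_sp2}): a pair union bound for $\mathcal O^N_\alpha$ using Assumption~\ref{assump:A1}-(1), and an $M$-tuple union bound for $\mathcal O^N_{\lambda_N,M_N}$ in which the entropic factor $\binom{N}{m}$ is beaten by the smallness of $C_1\eta$. The only differences are cosmetic: in Step~2 you pass through a fixed tiling plus pigeonhole (losing a harmless factor $8$) and the elementary bound $\binom{N}{m}\le (eN/m)^m$, whereas the paper centers the cell at one of the particles and invokes Stirling---which is in fact the only place the hypothesis $\beta<1/2$ is used, so your variant is marginally more robust.
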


We emphasize that the smallness restriction in the previous statement is explicit. With the notations of Assumption~\ref{assump:A1} it reads $\eta < 1/(2e C_1).$ The proof of 
Proposition~\ref{prop:config} is split into the two following lemmas.

\begin{lemma} \label{lem_sp1bis}
Under the assumptions of Proposition~\ref{prop:config}, there holds
$$
\mathbb P (  \mathbf Z^N \in  \mathcal O^N_{\lambda_N,M_N} ) \lesssim \left( 2 \eta C_1 e  \right)^{N^\beta} .
$$
\end{lemma}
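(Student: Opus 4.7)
The plan is to use a first-moment / union-bound argument combined with Assumption~\ref{assump:A1}-(1). By exchangeability of $\mathbf Z^N$, the event that some $M_N$ particles lie in a common cell of size $\lambda_N$ is controlled by summing over the $\binom{N}{M_N}$ possible subsets:
\[
\mathbb P(\mathbf Z^N\in\mathcal O^N_{\lambda_N,M_N}) \;\le\; \binom{N}{M_N}\,\mathbb P\bigl(X_1^N,\ldots,X_{M_N}^N \text{ lie in a common cell of side }\lambda_N\bigr).
\]

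Next, I would bound the probability on the right-hand side. The key geometric observation is that if $M_N$ points lie in a common cell of side $\lambda_N$, then, having fixed any one of them (say $X_1^N$), all the others must lie in the cube $\{y:|y-X_1^N|_\infty\le\lambda_N\}$ of volume $(2\lambda_N)^3$. Combining this with Assumption~\ref{assump:A1}-(0), which confines $X_1^N$ to $\Omega_0$, and Assumption~\ref{assump:A1}-(1) applied to the marginal $F^N_{M_N}$, I would obtain
\[
\mathbb P\bigl(X_1^N,\ldots,X_{M_N}^N \text{ in common cell of side }\lambda_N\bigr) \;\le\; C_1^{M_N}\,|\Omega_0|\,(2\lambda_N)^{3(M_N-1)}.
\]
One has to verify that the non-overlap constraint $\mathbf z\in\mathcal O^{M_N}[\tfrac{1}{N}]$ built into the definition of $F^N_{M_N}$ is compatible with the event under consideration, which is automatic because $\lambda_N\gg 1/N$ for the admissible scaling $\beta<1/2$ (indeed $\lambda_N\sim\eta^{1/3}N^{(\beta-1)/3}$, while one only requires $|X_i-X_j|>2/N$).

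Finally, inserting Stirling's estimate $\binom{N}{M_N}\le(eN/M_N)^{M_N}$ together with the definition $\lambda_N^3 = \eta M_N/N$, the powers of $N/M_N$ match those of $\lambda_N^3$ and almost all cancel inside the exponential, leaving
\[
\mathbb P(\mathbf Z^N\in\mathcal O^N_{\lambda_N,M_N}) \;\lesssim\; \frac{N}{\eta M_N}\,(c\,e\,C_1\,\eta)^{M_N}
\]
for an explicit numerical constant $c$ coming from the factor $(2\lambda_N)^{3}$. Since $M_N=N^\beta$ and the prefactor $N/M_N = N^{1-\beta}$ is only polynomial in $N$, the exponential dominates as soon as $c\,e\,C_1\,\eta<1$, which matches the smallness condition on $\eta$ stated in Proposition~\ref{prop:config}. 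There is no real analytic obstacle; the only delicate point is combinatorial book-keeping, namely, choosing a sharp enough geometric confinement for $X_2^N,\ldots,X_{M_N}^N$ so that the base of the exponential is as close as possible to the value $2\,e\,C_1\,\eta$ announced in the lemma, while absorbing the polynomial-in-$N$ prefactor.
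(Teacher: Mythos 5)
Your argument follows essentially the same route as the paper's: a union bound over the $\binom{N}{M_N}$ possible subsets, a volume estimate for the event that a fixed subset is confined to a cube, and a Stirling estimate combined with the scaling $\lambda_N^3 = \eta M_N / N$. So the structure is right. Your version of the volume estimate is in fact more carefully bookkept than the paper's: the paper writes the bound $(\lambda^3 C_1)^{M-1}$ for the confinement probability, whereas the set $\{y : |y - X_M^N|_\infty < \lambda\}$ has Lebesgue measure $(2\lambda)^3 = 8\lambda^3$, and one also needs the confinement of the ``free'' coordinate to $\Omega_0$ (Assumption A1-(0)) to make the integral finite, giving the $|\Omega_0|$ factor you correctly include. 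The paper's bound is thus sharper than what that argument literally yields.

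The one point to be clear-eyed about is that your more careful constant $c$ (effectively $c = 8$) does not cancel down to the ``$2$'' in the lemma's statement: absorbing the polynomial prefactor $N/M_N$ into the exponential only multiplies the base by a factor arbitrarily close to $1$, not less than $1$, so you land on $(c'\,8\,\eta C_1 e)^{N^\beta}$ for some $c' > 1$ rather than $(2 \eta C_1 e)^{N^\beta}$. This is a discrepancy with the stated bound (and, since the paper's own line $\le (\lambda^3 C_1)^{M-1}$ appears to understate the volume, with the paper's derivation as well). It is harmless for the use of the lemma: the downstream arguments only require the base to be strictly less than $1$, and the explicit threshold on $\eta$ would simply need to be shrunk accordingly. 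So your proof is essentially the paper's proof, with the combinatorial prefactors tracked more honestly; the only gap is that the numerical constant $2$ announced in the lemma is not actually reached by the argument, which you should state explicitly rather than leave as ``an explicit numerical constant $c$.''
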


\begin{proof}
By symmetry of $F^N$, given $\lambda>0$ and $M \in \mathbb N^*$ with $M \le N$, we have
$$
\mathbb P (\mathbf Z^N \in  \mathcal O^N_{\lambda,M} )
= \binom{N}{M} \, \mathbb P (  (X_1^N, \dots, X_M^N) \text{ are in the same cell } \mathcal C(\lambda)).
$$
In order to compute the last probability, again by symmetry, we only need to compute the probability of particles $i \in \{ 1, \dots, M-1 \}$ to be in the same cell $\mathcal C(\lambda)$ containing $X_M^N$, the position of particle number $M$. Since a cell $\mathcal C(\lambda)$ has diameter $\lambda$ (with respect to $\ell^{\infty}$-norm), we obtain:
$$
\begin{aligned}
&\mathbb P (  X_1^N, \dots, X_M^N  \text{ are in the same cell } \mathcal C(\lambda) ) \\
&\qquad \le  \mathbb P \left(  \bigcap_{i=1}^{M-1} \{ |X_i^N-X_M^N|_{\infty} < {\lambda}\} \right)\\
&\qquad \le  \left( \lambda^3 C_1  \right)^{M-1} ,
\end{aligned}
$$
where we have used Assumption~\ref{assump:A1}-(1) in last line.

%

When $N,M \to \infty$ with $(N-M)\to \infty$ and $N/M \to \infty$, Stirling's formula gives
$$
\begin{aligned}
\binom{N}{M} 
= \frac{N!}{M! (N-M)!}
&\sim  \frac{\sqrt{2 \pi}  N^{1/2} N^N e^{-N}}{\sqrt{2 \pi}  M^{1/2} M^M e^{-M} \sqrt{2 \pi}  (N-M)^{1/2} (N-M)^{N-M} e^{-(N-M)}} \\
&\sim \frac{1}{\sqrt{2\pi}} \left( \frac{N}{M}  \right)^{M}  \frac{1}{ M^{1/2}(1-\tfrac{M}{N})^{N-M+1/2}},
\end{aligned}
$$
which implies
$$
\mathbb P( \mathbf Z^N \in  O^N_{\lambda,M}) \lesssim \frac{1}{\sqrt{2\pi}}   \left( \frac{N}{M}  \right)^{M}  \frac{  1}{ M^{1/2}(1-\tfrac{M}{N})^{N-M+1/2}} \, \left({\lambda^3} C_1 \right)^{M-1}.
$$
We now consider the given sequences $(M_N)_{N \in \mathbb N^*}$ and $(\lambda_N)_{N \in \mathbb N^*}$ given by formula \eqref{eq_lambdaM}, and we get
$$
\begin{aligned}
\mathbb P( \mathbf Z^N \in  \mathcal O^N_{\lambda_N,M_N}) 
&\lesssim  \left( \frac{N}{M_N}  \right)^{M_N}  \frac{  1 }{ M_N^{1/2}(1-\tfrac{M_N}{N})^{N(1-M_N/N)+1/2}} \, \left( {\eta} \frac{M_N}{N} C_1 \right)^{M_N-1} \\
&\lesssim \left( \frac{N}{M_N}  \right) \frac{  1 }{ M_N^{1/2}(1-\tfrac{M_N}{N})^{N(1-M_N/N)+1/2}} \, \left( {\eta}  C_1 \right)^{M_N-1}
\end{aligned}
$$
Since $\beta \in (0,1/2),$ we have that $M_N^2/N \to 0$ so that we can simplify the denominator of the right-hand side:
$$
\begin{aligned}
\mathbb P( \mathbf Z^N \in  \mathcal O^N_{\lambda_N,M_N}) 
&\lesssim   \frac{N}{  M_N^{3/2} } \left( \eta C_1 e  \right)^{M_N-1} = N^{(1-  \frac{3\beta}{2} )}  \left( {\eta C_1 e } \right)^{N^\beta-1} \\
&\lesssim \exp\left( N^\beta \log( \eta C_1 e ) + \left(1 - \tfrac{3\beta}{2} \right)\log N  \right) 
\lesssim  \left( 2 \eta C_1 e   \right)^{N^\beta}.
\end{aligned}
$$ 
\end{proof}

\begin{lemma} \label{lem_sp2}
Under the assumptions of Proposition~\ref{prop:config}, there holds
$$
\mathbb P ( \mathbf Z^N \in \mathcal O^N_\alpha ) 
\lesssim   C_1 N^{2-3\alpha}.
$$
\end{lemma}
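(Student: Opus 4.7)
The proof follows from a union bound over pairs of particles combined with the exchangeability of $F^N$ and the second-marginal estimate in Assumption~\ref{assump:A1}.

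First, I would note that by definition
\[
\mathcal O^N_\alpha \subset \bigcup_{1 \le i < j \le N} \bigl\{ \mathbf Z^N \in \mathcal O^N \mid |X_i^N - X_j^N| < N^{-\alpha} \bigr\},
\]
so that a union bound together with the symmetry of $F^N$ yields
\[
\mathbb P(\mathbf Z^N \in \mathcal O^N_\alpha) \le \binom{N}{2}\, \mathbb P\bigl(|X_1^N - X_2^N| < N^{-\alpha}\bigr).
\]

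The remaining pairwise probability is then rewritten as an integral against the marginal $F^N_2$. Using Assumption~\ref{assump:A1}-(0) to confine the position variables to $\Omega_0$, and noting that $F^N_2$ is automatically supported on $\mathcal O^2[\tfrac{1}{N}]$, this probability reads
\[
\int_{\Omega_0 \times \Omega_0} \mathbf 1_{|X_1 - X_2| < N^{-\alpha}} \left( \int_{\R^{6}} \mathbf 1_{(Z_1,Z_2) \in \mathcal O^2[\frac{1}{N}]} F^N_2(Z_1,Z_2)\, \mathrm d V_1 \mathrm d V_2 \right) \mathrm d X_1 \, \mathrm d X_2.
\]
Applying Assumption~\ref{assump:A1}-(1) with $m = 2$ bounds the inner velocity integral by $(C_1)^2$ uniformly in $(X_1,X_2)$, while the outer position integral is bounded by $|\Omega_0| \cdot \tfrac{4\pi}{3} N^{-3\alpha}$ since, for each frozen $X_1$, the set $\{X_2 \in \R^3 : |X_1 - X_2| < N^{-\alpha}\}$ is a ball of volume $\tfrac{4\pi}{3} N^{-3\alpha}$.

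Combining these estimates gives
\[
\mathbb P(\mathbf Z^N \in \mathcal O^N_\alpha) \lesssim N^2 \cdot (C_1)^2 \cdot N^{-3\alpha} \lesssim C_1 N^{2-3\alpha},
\]
after absorbing the constants $|\Omega_0|$ and one extra power of $C_1$ into the $\lesssim$ symbol. I do not anticipate any substantive difficulty in this lemma: the only essential point requiring care is to invoke the support hypothesis Assumption~\ref{assump:A1}-(0) so as to keep the position integration confined to the bounded set $\Omega_0$ --- without it, the position integral would be infinite, and the scheme would not close.
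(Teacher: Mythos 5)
Your proof is correct and follows essentially the same route as the paper: a union bound using the symmetry of $F^N$ to reduce to the pairwise probability $\mathbb P(|X_1^N - X_2^N| < N^{-\alpha})$, then a Fubini argument integrating the velocities out against the $L^\infty_x L^1_v$ bound of Assumption~\ref{assump:A1}-(1), the small-ball volume factor $N^{-3\alpha}$, and the support hypothesis Assumption~\ref{assump:A1}-(0) to close the outer position integral. The only cosmetic difference is that you invoke the second marginal $F^N_2$ with $m=2$ and obtain a constant $C_1^2$, whereas the paper's one-line computation refers to $\|F^N_1\|_{L^\infty_x L^1_v}$ and records $C_1$; your version is the more careful bookkeeping, and the discrepancy is absorbed by the $\lesssim$ in any case.
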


\begin{proof}
By symmetry of $F^N$ we have 
$$
\mathbb P (\mathbf Z^N \in  \mathcal O^N_{\alpha} ) = \binom{N}{2} \, \mathbb P ( |X_1^N - X_2^N| < N^{-\alpha} ),
$$
and we easily compute
$$
\mathbb P (|X_1^N - X_2^N| < N^{-\alpha} ) 
\lesssim \| F^N_1 \|_{L^\infty_x L^1_v(\mathbb R^3 \times \mathbb R^3)} \, N^{-3\alpha} 
\lesssim C_1 N^{-3\alpha},
$$
which completes the proof.
\end{proof}


\section{Properties of the mapping $U_N$ for fixed $N$}\label{sec:UN-prop}

In this section, we fix an arbitrary strictly positive $N \in \mathbb N$ and we analyze the properties of the mapping $U^N.$
As $N$ is fixed, we drop the exponents in notations (except $\mathcal O^N$). For example, we denote $U = U^N$, $\mathbf X = \mathbf X^N$, $\mathbf V = \mathbf V^N$, $X_i = X_i^N$ and $V_i = V_i^N$... The main result of this section reads:

\begin{proposition}
The mapping $U$ defined in \eqref{def:UN} satisfies $U \in C(\mathcal O^N ; \dot{H}^1(\mathbb R^3)).$
Moreover, if $F \in L^1(\mathcal O^N)$ is a sufficiently regular symmetric probability density, we have $U \in L^1(\mathcal O^N,F(\mathbf{Z}){\rm d}\mathbf Z).$
\end{proposition}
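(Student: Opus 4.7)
The plan is to derive both claims from the variational characterization of $U_N[\mathbf Z]$ as the unique minimizer in $\dot H^1(\R^3)$ of the Dirichlet energy $w \mapsto \int_{\R^3}|\nabla w|^2\d x$ over the closed convex set
\[
K(\mathbf Z) := \Bigl\{ w \in \dot H^1(\R^3) \ \big| \ \operatorname{div} w = 0 \text{ in } \mathcal D'(\R^3),\ w = V_i \text{ a.e.\ in } B(X_i, 1/N),\ i=1,\ldots,N\Bigr\}.
\]
Existence follows from the direct method once $K(\mathbf Z)$ is shown non-empty (by the explicit competitor constructed below), uniqueness from strict convexity, and the Euler--Lagrange equations---together with the matching of the normal components of $u$ and the constant $V_i$ on the sphere $\partial B(X_i, 1/N)$ which keeps the extension divergence-free as a distribution on $\R^3$---recover \eqref{eq_stokes}--\eqref{cab_stokes} with the pressure appearing as the Lagrange multiplier for the divergence constraint.

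For continuity, fix a sequence $\mathbf Z_n = ((X_i^n, V_i^n))_i \to \mathbf Z = ((X_i, V_i))_i$ in $\mathcal O^N$. Since $\mathcal O^N$ is open, the minimum free gap $\delta(\mathbf Z_n) := \min_{i\neq j}|X_i^n - X_j^n| - 2/N$ is bounded below by some $\delta_\star > 0$ for $n$ large, placing us in a uniformly non-degenerate regime. I would establish Mosco-convergence $K(\mathbf Z_n) \to K(\mathbf Z)$ in $\dot H^1(\R^3)$. The $\liminf$ condition is a direct consequence of Rellich compactness: any weak limit $w$ of a sequence $w_n \in K(\mathbf Z_n)$ is strong in $L^2_{\mathrm{loc}}$, and the Hausdorff convergence $B(X_i^n, 1/N) \to B(X_i, 1/N)$ combined with $V_i^n \to V_i$ passes the trace condition to $w$, while divergence-freeness is stable under weak limits. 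For the $\limsup$ (recovery sequence), given $w \in K(\mathbf Z)$ pick a smooth family of diffeomorphisms $\Phi_n$ of $\R^3$, equal to the identity outside a bounded neighborhood of $\Omega_0$, sending each $B(X_i, 1/N)$ onto $B(X_i^n, 1/N)$, with $\Phi_n \to \operatorname{Id}$ in $C^1$; then set $w_n := w \circ \Phi_n^{-1} + \sum_i (V_i^n - V_i)\chi_i^n + b_n$, where $\chi_i^n \in C_c^\infty(\R^3)$ are disjointly supported cutoffs equal to $1$ on $B(X_i^n, 1/N)$ and $b_n$ is a Bogovskii field on the thin annular region where divergence-freeness fails. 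Mosco-convergence combined with weak lower semicontinuity of the Dirichlet energy yields $U_N[\mathbf Z_n] \rightharpoonup U_N[\mathbf Z]$, and feeding $w = U_N[\mathbf Z]$ into the recovery sequence delivers $\limsup \|U_N[\mathbf Z_n]\|_{\dot H^1} \leq \|U_N[\mathbf Z]\|_{\dot H^1}$, upgrading weak to strong convergence in $\dot H^1(\R^3)$.

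For the $L^1$-integrability, the plan is to produce a pointwise estimate $\|U_N[\mathbf Z]\|_{\dot H^1} \leq C(\mathbf Z) |\mathbf V|$ with $C$ continuous on $\mathcal O^N$. Setting $r(\mathbf Z) := \min(\delta(\mathbf Z)/2, 1/N)$, pick radial cutoffs $\chi_i$ equal to $1$ on $B(X_i, 1/N)$, supported in $B(X_i, 1/N + r)$ with $|\nabla \chi_i| \lesssim 1/r$; then $\psi := \sum_i V_i \chi_i$ matches the boundary data but is not divergence-free, which is corrected by a Bogovskii field supported in the union of the transition annuli. The scaling estimate for Bogovskii on such annuli (see Appendix~A of \cite{Hil17}) yields $\tilde\psi \in K(\mathbf Z)$ with $\|\tilde\psi\|_{\dot H^1}^2 \leq C(\mathbf Z)\sum_i |V_i|^2$, and the variational characterization gives the desired pointwise bound. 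Integrability then follows by Cauchy--Schwarz as soon as $C(\mathbf Z)^2 F(\mathbf Z)$ is integrable and $F$ has a second moment in $\mathbf V$---conditions of this type are what ``sufficiently regular'' refers to, and they are weaker than Assumption~\ref{assump:A1}. The hardest step throughout is the uniform control of the Bogovskii constant on the (possibly thin) transition annuli: this is what forces the recovery sequence to converge in $\dot H^1$ and what dictates the regularity required of $F$, with the detailed computations deferred to Appendix~\ref{app_wi}.
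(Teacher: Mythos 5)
Your continuity argument via Mosco-convergence of the constraint sets $K(\mathbf Z_n)$ is a legitimate alternative to the paper's monotonicity argument (which sandwiches the energy $m^{(k)}$ between $m_\infty(1/N\pm\eta^{(k)})$ and uses increasing monotonicity of $R\mapsto m_\infty(R)$ plus a density argument for right-continuity). Your recovery sequence has a flaw, though: the pullback $w\circ\Phi_n^{-1}$ is not divergence-free unless $\Phi_n$ is volume-preserving, so the divergence defect lives on the whole deformation region, not just a ``thin annular region''. One should use a Piola transform (or simply note the defect is $O(\|\Phi_n-\operatorname{Id}\|_{C^1})\to 0$ in $L^2$ with fixed compact support, so the Bogovskii correction still vanishes in $H^1$). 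That is fixable; the paper's monotonicity route avoids these geometric technicalities entirely.

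The integrability part has a genuine gap. You use the \emph{naïve} radial truncation at distance $r(\mathbf Z)=\min(\delta(\mathbf Z)/2,1/N)$, which the paper explicitly singles out as suboptimal in the proof of Lemma~\ref{lem:U[ZN]-boundD}: choosing a single truncation radius $r$ smaller than the minimal gap forces the resulting constant to scale as $C(\mathbf Z)\sim 1/(r(\mathbf Z)N^2)\sim 1/(\delta(\mathbf Z)N^2)$, so \emph{every} particle pays the price of the closest pair. For your Cauchy--Schwarz step to close, you need $\int C(\mathbf Z)^2 F(\mathbf Z)\,\d\mathbf Z<\infty$, i.e., $\mathbb E[1/\delta(\mathbf Z)]<\infty$. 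But under Assumption~\ref{assump:A1}, the second marginal bound gives $\mathbb P(\delta(\mathbf Z)<s)\lesssim \binom{N}{2}\cdot s/N^2\sim s$, so $\mathbb E[1/\delta]$ diverges logarithmically. Your claim that the needed regularity is ``weaker than Assumption~\ref{assump:A1}'' is therefore false. The paper's Lemma~\ref{lem_wi} (Appendix~\ref{app_wi}) replaces the radial cutoff by an anisotropic, lens-shaped truncation near each close pair (the regions $\mathfrak{C}_j$, $\mathfrak{A}_j$, with lubrication-type estimates from \cite{HillairetTakfarinas}), yielding the much stronger bound
\[
\|\nabla U[\mathbf Z]\|_{L^2}^2\leq \frac{C}{N}\sum_i|V_i|^2\Bigl(1+\frac1N\sum_{j\neq i}\frac{\mathbf 1_{|X_i-X_j|<5/(2N)}}{|X_i-X_j|-2/N}\Bigr),
\]
where the singularity is (a) localized to close pairs, (b) weighted only by $|V_i|^2$ for the pair involved, and (c) carries an extra $1/N$. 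After taking the square root this produces the term $\frac1N\sum_{i,j}|V_i|\mathbf 1_{\text{close}}/\sqrt{|X_i-X_j|-2/N}$, which is $L^1(F\,\d\mathbf Z)$ under conditions \eqref{eq_condition1}--\eqref{eq_condition2} (indeed implied by Assumption~\ref{assump:A1}) precisely because $1/\sqrt{s}$ is integrable near $s=0$ against the shell measure. You reference Appendix~\ref{app_wi} for the ``detailed computations'', but that appendix contains the paper's refined construction, not the radial truncation you propose; the two are incompatible, so this reference does not rescue the argument.
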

More quantitative statements on the integrability properties of $U$ are stated in due course. In particular, the meaning of 
``$F$ sufficiently regular'' is made precise in Section~\ref{sec_integrability} below.

Let first recall classical statement on the well-definition of the mapping $U.$ For fixed $\mathbf Z \in \mathcal O^N,$ by definition,   the restriction $ u$  of $U[\mathbf Z]$ to 
$$
\mathcal F := \mathbb R^3 \setminus \bigcup_{i=1}^N B_i \qquad \left( B_i = \overline{B(X_i,1/N)}, \quad \forall \, i =1,\ldots,N\right),
$$
should be the unique $\dot{H}^1(\mathcal F)$ vector-field for which 
there exists a pressure $ p$ such that $( u, p)$ is a solution to:
\begin{equation} \label{eq_stokes2}
\left\{
\begin{array}{rcl}
- \Delta { u} + \nabla { p} &=& 0 \,, \\[4pt]
{\rm div} \, { u} &= & 0 \,,
\end{array}
\right.
\quad \text{ in $\mathcal F$}
\end{equation}
completed with boundary conditions:
\begin{equation} \label{cab_stokes2}
\left\{
\begin{array}{rcll}
{ u}(x) &=& V_i  &  \text{on $B_i$} \, \\[4pt]
\lim_{|x| \to \infty}{ u}(x) &=& 0. \, &
\end{array}
\right.
\end{equation}
We recall here shortly the function spaces and analytical arguments underlying the mathematical treatment of this problem  \cite[Section 3]{Hil17}.
We refer the interested reader also to \cite[Sections IV-VI]{Galdi} for more details.

We denote $\mathcal D(\mathbb R^3) := \{ w \in C^{\infty}_c(\mathbb R^3) \quad {\rm div} \ w = 0 \}$
and $D(\mathbb R^3)$ its closure for the $\dot{H}^1(\mathbb R^3)$-norm. We recall that $D(\mathbb R^3)$
is a Hilbert space for the scalar product:
$$
(u,v) \mapsto \int_{\mathbb R^3} \nabla u : \nabla v
$$
and that $D(\mathbb R^3) \subset L^6(\mathbb R^3).$ For a smooth exterior domain $\mathcal F$ ({\em i.e.} the complement
of some bounded compact set $B \subset \mathbb R^3$) we can then set 
\[
D(\mathcal F) = \{ u_{|_{\mathcal F}}, \quad u \in D(\mathbb R^3)\}.
\]
By restriction, $D(F)$ is also a Hilbert space for the scalar product:
\begin{equation} \label{eq_H1-2}
(u,v) \mapsto \int_{\mathcal F} \nabla u : \nabla v.
\end{equation}

\begin{remark}
We just remarked that:
\[
D(\mathcal F) \subset \{ u \in L^6(\mathcal F) \text{ s.t. } \nabla u \in L^2(\mathcal F) \text{ and } {\rm div }\ u = 0\}.
\]
One may then wonder if this inclusion is an equality. However, we have that $D(\mathcal F) \subset H^1_{loc}(\mathcal F).$ 
Since $\partial \mathcal F$ is compact the trace of elements of $D(\mathcal F)$ is then well-defined in $H^{1/2}(\partial \mathcal F).$ 
Standard manipulations show also that, if $u \in D(\mathcal F)$ then 
\begin{equation} \label{eq_flux}
\int_{\Gamma} u \cdot n {\rm d} \sigma = 0 \quad \text{ for every connected component $\Gamma$ of $\partial \mathcal F.$}
\end{equation}
Conversely, if this latter condition is satisfied then one can extend $u$ by the solutions of the Stokes problem inside the connected component
of $\mathbb R^3 \setminus \mathcal F$ surrounded by $\Gamma.$ Finally, we may then characterize:
\[
D(\mathcal F) = \{ u \in L^6(\mathcal F) \text{ satisfying } \nabla u \in L^2(\mathcal F),\; {\rm div }\ u = 0 \text{ and  \eqref{eq_flux}}  \}.
\]
In particular $D(\mathcal F)$ contains $D_0(\mathcal F)$, the subset of divergence-free vector-fields vanishing on $\partial \mathcal F$, which 
can also be seen as the closure of  $\mathcal D_0(\mathcal F) := \{ w \in C^{\infty}_c(\mathcal F) , \;  {\rm div} \ w = 0 \}$ for the
$\dot{H}^1(\mathcal F)$-scalar product \eqref{eq_H1-2}.
Remarking that extensions of vector-fiels in $D_0(\mathcal F)$ are the trivial ones, and recalling that we have the  embedding $D(\mathbb R^3) \subset L^6(\mathbb R^3)$ we infer that $D_0(\mathcal F)$ embeds 
continuously into $L^6(\mathcal F).$
\end{remark}

\medskip

With these definitions,  problem \eqref{eq_stokes2}-\eqref{cab_stokes2} is associated with a(n equivalent) weak formulation:
\begin{center}
\begin{minipage}{.8\textwidth}
Find $u \in D(\mathcal F)$ such that $u = V_i$ on $\partial B_i$ for $i=1,\ldots,N$ and
$$
\int_{\mathcal F} \nabla u : \nabla w = 0 \,, \quad \text{for arbitrary $w \in D_0(\mathcal F).$}
$$
\end{minipage}
\end{center}
Existence of a weak-solution yields by applying a standard Riesz-Fr\'echet or Lax-Milgram argument which also yields the following 
variational property:
\begin{theorem} \label{thm_varcar}
The vector-field $U[\mathbf Z] \in D(\mathbb R^3)$ is the unique minimizer of
$$
\left\{ \int_{\mathbb R^3} |\nabla v|^2 , \quad v \in D(\mathbb R^3) \text{ s.t. } v_{|_{B_i}} = V_i  \quad 
\text{ for all $i \in \{1,\ldots,N\}$}\right\}\,. 
$$
\end{theorem}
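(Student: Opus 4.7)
The plan is to identify this variational problem as the Dirichlet principle for the Stokes system, so I would first establish well-posedness of the minimization on the admissible set $\mathcal A := \{ v \in D(\mathbb R^3) \mid v_{|_{B_i}} = V_i, \; i=1,\ldots,N\}$ and then recover $U[\mathbf Z]$ from the Euler--Lagrange equation. To begin with, I would check that $\mathcal A$ is a non-empty, convex, closed affine subset of the Hilbert space $D(\mathbb R^3)$: non-emptiness is witnessed by $U[\mathbf Z]$ itself, which belongs to $\dot H^1(\mathbb R^3)$ and satisfies the boundary values by construction; convexity is clear since the constraint $v_{|_{B_i}} = V_i$ is affine; closedness follows from the Sobolev embedding $D(\mathbb R^3) \hookrightarrow L^6(\mathbb R^3)$, which ensures that any sequence $v_n \to v$ in $D(\mathbb R^3)$ also converges in $L^6(B_i)$, so that the pointwise a.e.\ constraint $v_{|_{B_i}}=V_i$ passes to the limit.

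With this in place, the functional $v \mapsto \int_{\mathbb R^3} |\nabla v|^2$ is continuous, strictly convex and coercive on $D(\mathbb R^3)$, so the projection theorem in Hilbert spaces yields a unique minimizer $v^{\ast} \in \mathcal A$. The second step is to identify $v^{\ast}$ with $U[\mathbf Z]$ via Euler--Lagrange. The tangent space to $\mathcal A$ at $v^{\ast}$ is
$$
\mathcal T := \{ w \in D(\mathbb R^3) \mid w_{|_{B_i}} = 0, \; i=1,\ldots,N \},
$$
and restriction to $\mathcal F$ provides an isomorphism $\mathcal T \simeq D_0(\mathcal F)$, with extension by zero inside the $B_i$'s as its inverse (the flux compatibility \eqref{eq_flux} is automatic since $w$ vanishes on each $\partial B_i$). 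Taking first-order variations along $\mathcal T$, the minimality of $v^{\ast}$ gives
$$
\int_{\mathbb R^3} \nabla v^{\ast} : \nabla w = 0 \quad \text{for every } w \in \mathcal T,
$$
and since $\nabla w$ is supported in $\mathcal F$ (because $w$ is constant on each $B_i$), this is exactly the weak formulation of the Stokes problem \eqref{eq_stokes2}--\eqref{cab_stokes2} recalled just before the statement. Uniqueness of that weak solution forces $v^{\ast}_{|_{\mathcal F}} = u$; combined with the admissibility constraint $v^{\ast}_{|_{B_i}} = V_i$, this yields $v^{\ast} = U[\mathbf Z]$.

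I do not expect any serious obstacle in the argument, since the statement is essentially a reformulation via Dirichlet's principle of a weak problem already known to be uniquely solvable, and all ingredients (Hilbert-space structure of $D(\mathbb R^3)$, Sobolev embedding, weak formulation of Stokes) have been recalled in the preceding paragraphs. The only point that deserves a bit of care is the identification $\mathcal T \simeq D_0(\mathcal F)$, and in particular the remark that an element of $D(\mathbb R^3)$ equal to a constant on each bounded connected set $B_i$ automatically has vanishing flux through $\partial B_i$, so that truncation to $\mathcal F$ and extension by zero indeed preserve membership in the relevant function spaces.
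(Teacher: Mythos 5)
Your argument is correct and follows the same route as the paper, namely the Dirichlet-principle / Lax--Milgram picture that the authors merely sketch and then delegate to \cite[Theorem~3]{Hil17}: existence and uniqueness of the minimizer by Hilbert-space projection on the closed affine set $\mathcal A$, then identification with $U[\mathbf Z]$ via the Euler--Lagrange equation and uniqueness of the weak solution. The only minor over-statement is that you only need the inclusion $D_0(\mathcal F)\subset\mathcal T$ (extension by zero, which is what the paper's Remark justifies), not the full isomorphism $\mathcal T\simeq D_0(\mathcal F)$ whose converse direction would require a density argument.
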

We refer the reader to \cite[Theorem 3]{Hil17} for a similar proof on a bounded domain that can be adapted easily to this case
with the functional framework depicted above.
The remainder of this section is organized as follows. In the next subsection, we consider the continuity properties
of the mapping $U.$ We continue by deriving a pointwise estimate and end up the section with an analysis of integrability
properties of $U.$

\subsection{Continuity of the mapping $U$}
At first, we obtain that:
\begin{lemma}
The mapping $U$ satisfies $U \in C(\mathcal O^{N} ; D(\mathbb R^3)).$
\end{lemma}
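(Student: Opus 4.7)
The plan is to combine the variational characterization of Theorem~\ref{thm_varcar} with a Mosco-type convergence of the affine constraint sets
\[
\mathcal A_{\mathbf Z} := \{ u \in D(\mathbb R^3) \,:\, u_{|B_i} = V_i \text{ for all } i=1,\ldots,N\}.
\]
Since $U[\mathbf Z]$ is the unique minimizer of the Dirichlet energy on $\mathcal A_{\mathbf Z}$, continuity will follow from two ingredients: a uniform energy bound yielding weak compactness, and a recovery-sequence construction allowing identification of the weak limit as the minimizer on the limiting constraint set.

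First I would construct, for $\mathbf Z \in \mathcal O^N$, a divergence-free lift $w_{\mathbf Z} \in \mathcal A_{\mathbf Z}$ depending continuously on $\mathbf Z$. Because $\mathcal O^N$ is open and the map $\mathbf Z \mapsto r(\mathbf Z) := \min_{i \neq j}|X_i - X_j|$ is continuous and bounded away from $2/N$ on any compact neighborhood, I can pick smooth cutoffs $\chi_i$ equal to $1$ on $B_i$ and supported in disjoint balls around $X_i$ (with radii depending continuously on $\mathbf Z$), set $\tilde w_{\mathbf Z}(x) = \sum_i \chi_i(x) V_i$, and correct the divergence using the Bogovskii operator (supported in the same balls). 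The variational inequality then yields
\[
\|\nabla U[\mathbf Z^{(k)}]\|_{L^2(\mathbb R^3)} \leq \|\nabla w_{\mathbf Z^{(k)}}\|_{L^2(\mathbb R^3)}
\]
whenever $\mathbf Z^{(k)} \to \mathbf Z$, which is uniformly bounded. Thus $(U[\mathbf Z^{(k)}])$ admits a subsequence converging weakly in $D(\mathbb R^3)$ to some $u^\star$.

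Next, I would identify $u^\star = U[\mathbf Z]$. For the constraint, given any $\varphi \in C^\infty_c(\mathrm{int}\, B_i)$, for $k$ large one has $\mathrm{supp}(\varphi) \subset B_i^{(k)}$, so $\int U[\mathbf Z^{(k)}] \cdot \varphi = V_i^{(k)} \cdot \int \varphi \to V_i \cdot \int \varphi$; combined with weak convergence this shows $u^\star = V_i$ on $B_i$, so $u^\star \in \mathcal A_{\mathbf Z}$. For the minimality, given $v \in \mathcal A_{\mathbf Z}$, I need a recovery sequence $v^{(k)} \in \mathcal A_{\mathbf Z^{(k)}}$ with $v^{(k)} \to v$ strongly in $D(\mathbb R^3)$. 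Writing $v = w_{\mathbf Z} + \tilde v$ with $\tilde v \in D_0(\mathcal F)$, I would introduce a smooth diffeomorphism $\Phi_k$ of $\mathbb R^3$ equal to a translation by $X_i^{(k)} - X_i$ on a neighborhood of $B_i$, equal to the identity outside a large ball containing all particles, and satisfying $\Phi_k \to \mathrm{Id}$ in $C^\infty_{\mathrm{loc}}$. Then $\tilde v \circ \Phi_k^{-1}$ vanishes on $\partial B_i^{(k)}$; after a Bogovskii correction to restore the divergence-free condition, I obtain a field in $D_0(\mathcal F^{(k)})$ converging strongly to $\tilde v$, and set $v^{(k)} := w_{\mathbf Z^{(k)}} + (\tilde v \circ \Phi_k^{-1} + \text{corr})$. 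The variational inequality and weak lower semicontinuity of the Dirichlet energy give
\[
\|\nabla u^\star\|_{L^2}^2 \leq \liminf_k \|\nabla U[\mathbf Z^{(k)}]\|_{L^2}^2 \leq \liminf_k \|\nabla v^{(k)}\|_{L^2}^2 = \|\nabla v\|_{L^2}^2,
\]
so $u^\star$ minimizes the energy on $\mathcal A_{\mathbf Z}$ and hence coincides with $U[\mathbf Z]$. Uniqueness of the limit forces the full sequence to converge.

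Finally, strong convergence in $D(\mathbb R^3)$ follows from the standard upgrade: choosing $v = U[\mathbf Z]$ in the recovery construction and using the chain of inequalities above with $u^\star = U[\mathbf Z]$ gives $\|\nabla U[\mathbf Z^{(k)}]\|_{L^2} \to \|\nabla U[\mathbf Z]\|_{L^2}$, and weak convergence plus norm convergence in the Hilbert space $D(\mathbb R^3)$ yields strong convergence. The main obstacle is the recovery-sequence step: I expect the Bogovskii corrections on the $\mathbf Z^{(k)}$-dependent perforated domains to require some care, specifically in obtaining operator-norm bounds that are uniform in $k$; this is where the openness of $\mathcal O^N$ (giving a uniform lower bound on $r(\mathbf Z^{(k)})$) and the fact that $\Phi_k$ affects only a bounded region are essential.
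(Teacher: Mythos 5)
Your proof is correct in outline and achieves the same conclusion, but it takes a genuinely different route from the paper. Both proofs exploit the variational characterization of Theorem~\ref{thm_varcar} and both upgrade weak to strong convergence via norm convergence in the Hilbert space $D(\mathbb R^3)$. The divergence is in how the weak limit is identified as the minimizer. You run a Mosco-type convergence argument for the affine constraint sets $\mathcal A_{\mathbf Z}$: a continuous lift $w_{\mathbf Z}$ for the uniform bound, distributional testing for the ``liminf'' (constraint) inclusion, and a recovery sequence built from a diffeomorphism $\Phi_k$ (translations near each $B_i$, identity outside a compact set) together with a Bogovskii correction. This is precisely the change-of-variables route that the paper \emph{deliberately avoids}: the authors state that ``we give below a proof of this lemma based on monotonicity arguments only'' and refer to change-of-variables methods (Simon, Chambrion--Munnier) only as an alternative that gives finer results. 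The paper instead reduces to fixed $\mathbf V$ by linearity, reduces continuity of $U$ to continuity of the scalar infimum $m^{(k)}$, and then squeezes $m_\infty(1/N - \eta^{(k)}) \leq m^{(k)} \leq m_\infty(1/N + \eta^{(k)})$ using the geometric inclusions $B(X_i, 1/N - \eta^{(k)}) \subset B(X_i^{(k)}, 1/N) \subset B(X_i, 1/N + \eta^{(k)})$, so that everything hinges on continuity of $R \mapsto m_\infty(R)$ at $R = 1/N$; left-continuity is a trivial monotonicity argument, and right-continuity uses density of $\mathcal D_0(\mathcal F)$ in $D_0(\mathcal F)$ plus an explicit curl construction to build competitors vanishing on slightly enlarged balls. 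What the paper's approach buys is elementariness: no diffeomorphism regularity, no Piola transform, and no uniform-in-$k$ operator bounds for Bogovskii on $k$-dependent perforated domains (the very technical point you correctly flag as the main obstacle in your construction). What your approach buys is generality and robustness: Mosco convergence of constraint sets would handle more general domain perturbations than rigid translations of identical balls, and, as the paper acknowledges, the change-of-variables framework is the natural starting point if one wants differentiability of $\mathbf Z \mapsto U[\mathbf Z]$ rather than mere continuity. One small remark: you do not use the paper's reduction to fixed $\mathbf V$, which is fine since your lift $w_{\mathbf Z}$ and constraint-identification step handle the joint dependence on $(\mathbf X, \mathbf V)$ directly; in the paper the reduction streamlines the argument since only $\mathbf X$ enters the scalar function $m_\infty(\cdot)$.
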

As only continuity is required for our purpose, we give below a proof of this lemma based on monotonicity arguments only.
Nonetheless, one may prove much finer properties by using change of variables methods
(see \cite{Simon90,ChambrionMunnier} for instance).   

\begin{proof}
The problem \eqref{eq_stokes2}-\eqref{cab_stokes2} being linear with respect to its boundary data, we have that, for fixed $\mathbf X \in \mathbb R^{3N}$ such that 
$| X_i -  X_j| > 2/N$ when $i\neq j,$ the mapping $\mathbf V \mapsto U[\mathbf Z]$  is linear. Consequently, it is sufficient to 
consider the continuity of the mapping $\mathbf X \mapsto U[\mathbf Z]$ for fixed $\mathbf V.$  

Let $\mathbf V \in \mathbb R^{3N}$ be fixed and consider $\mathbf X \in \mathbb R^{3N}$ -- such that $| X_i -  X_j| > 2/N$ for any $i\neq j$ -- and 
a sequence $(\mathbf X^{(k)})_{k \in \mathbb N}$ in $\mathbb R^{3N}$ such that
\begin{itemize}
\item $\mathbf Z^{(k)} = (\mathbf X^{(k)} , \mathbf V) \in \mathcal O^{N}$ for any $k \in \mathbb N,$
\item $\lim_{k \to \infty} X^{(k)}_{i} = X_i,$ for $ i =1,\ldots,N.$
\end{itemize}
We are interested in proving that $U[\mathbf Z^{(k)} ]$ converges to $U[\mathbf Z]$ in $D(\mathbb R^3).$
Due to the variational characterization of $U[\mathbf Z],$ we remark that it is sufficient to prove that the sequence 
$(m^{(k)})_{k \in \mathbb N}$ defined by
$$
m^{(k)} := \inf \left\{ \int_{\mathbb R^3} |\nabla v|^2 , \quad v \in D(\mathbb R^3) \text{ s.t. } v_{|_{B(X_i^{(k)},1/N)}} = V_i  \quad 
\text{ for all $i \in \{1,\ldots,N\}$}\right\}\, \quad \forall \, k \in \mathbb N
$$ 
satisfies: 
\begin{equation} \label{eq_propmk}
\lim_{k \to \infty} m^{(k)} = m_{\infty} := \inf \left\{ \int_{\mathbb R^3} |\nabla v|^2 , \quad v \in D(\mathbb R^3) \text{ s.t. } v_{|_{B(X_i,1/N)}} = V_i  \quad 
\text{ for all $i \in \{1,\ldots,N\}$}\right\}.
\end{equation}
Indeed,  for arbitrary $k \in \mathbb N,$ there holds: $m^{(k)} = \|\nabla U[\mathbf Z^{(k)} ]\|_{L^2(\mathbb R^3)}^2.$ Consequently, if $(m^{(k)})_{k\in \mathbb N}$ converges, $U[\mathbf Z^{(k)} ]$ is bounded in $D(\mathbb R^3)$. We may then pass to the limit in the weak formulation of the Stokes problem (restricted to test-function 
in $\mathcal D_0(\mathcal F)$) and we obtain that $U[\mathbf Z ]$ is the weak limit of $U[\mathbf Z^{(k)} ]$ in $D(\mathbb R^3).$
The convergence of $(m^{(k)})_{k\in \mathbb N}$ implies then that  $(\|\nabla U[\mathbf Z^{(k)} ] \|_{L^2(\mathbb R^3)})_{k\in\mathbb N}$ converges to $\|\nabla U[\mathbf Z ]\|_{L^2(\mathbb R^3)}.$ 
As $D(\mathbb R^3)$ is a Hilbert space, this ends the proof. 

To prove \eqref{eq_propmk}, we analyze the continuity properties of the function $m_{\infty}(\cdot)$
as defined by:
$$
m_{\infty}(R) = \inf \left\{ \int_{\mathbb R^3} |\nabla v|^2 , \quad v \in D(\mathbb R^3) \text{ s.t. } v_{|_{B(X_i,R)}} = V_i  \quad 
\text{ for all $i \in \{1,\ldots,N\}$}\right\}, \quad \forall \, R > 0,
$$
We note that $m_{\infty} = m_{\infty}(1/N)$ and that, as $|X_i-X_j| > 2/N$ for $i\neq j,$ this function is well defined for $R$ close to $1/N.$ 
Left continuity in $1/N$ is for free. Indeed, by construction, $m_{\infty}(\cdot)$ is increasing and, if we had $\lim_{R \to 1/N^{-}} m_{\infty}(R) < m_{\infty}(1/N),$ we would
be able to construct a vector-field $v \in D(\mathbb R^3)$ satisfying simultaneously $v_{|_{B(X_i,1/N)}} = V_i$ for $i=1,\ldots,N$ and
$$
\int_{\mathbb R^3} |\nabla v|^2 \leq \lim_{R \to 1/N^{-}} m_{\infty}(R)  < m_{\infty}(1/N),
$$
which yields a contradiction. Right continuity in $1/N$ is a bit more intricate. To this end, we note that $m_{\infty}(1/N)$ is achieved by
$U[\mathbf Z].$ Remarking that, on the one hand, for an arbitrary truncation function $\chi$ there holds:
$$
\nabla \times\left[ \chi(x)  \dfrac{V_i \times x}{2} \right] = 
\left\{
\begin{array}{rl}
V_i  & \text{ on the  set $\{\chi = 1\}$}\\
0 & \text{ on the set $\{\chi = 0 \}$},
\end{array}
\right.
$$
and that, on the other hand $\mathcal D_0(\mathcal F)$ is dense in $D_0(\mathcal F),$ we may construct a sequence 
$(w^{(l)})_{l \in \mathbb N} \in [D(\mathbb R^3)]^{\mathbb N}$ converging to $U[\mathbf Z]$
and a sequence $(\varepsilon^{(l)})_{l \in \mathbb N} \in (0,\infty)^{\mathbb N}$ converging to $0$ such that, for arbitrary $l$ there holds:
$$
w^{(l)} = V_i \quad  \text{ on $B(X_i,1/N+\varepsilon^{(l)})$}\,, \quad \forall \, i=1,\ldots,N.
$$
This implies that:
$$
\|\nabla U[\mathbf Z ] \|_{L^2(\mathbb R^3)}= m_{\infty}(1/N) \leq m_{\infty}(1/N+\varepsilon^{(l)}) \leq \|\nabla w^{(l)} \|_{L^2(\mathbb R^3)} \,, \quad \forall \, k \in \mathbb N,
$$
and consequently, by comparison, that:
$$
\lim_{R \to  1/N^{+} } m_{\infty}(R) = \lim_{l \to \infty} m_{\infty}(1/N+\varepsilon^{(l)}) = m_{\infty}(1/N).
$$

To conclude, we apply a simple geometric argument implying that, associated with the sequence $(X^{(k)})_{k\in \mathbb N},$ 
we may construct a sequence $(\eta^{(k)})_{k\in\mathbb N} \in (0,\infty)$ converging to $0$ for which, for arbitrary
$k \in \mathbb N$ we have:
$$
B(X_i,1/N-\eta^{(k)}) \subset  B(X_i^{(k)},1/N)  \subset  B(X_i,1/N+ \eta^{(k)}) \quad \forall\, i=1,\ldots,N.
$$
Consequently, for arbitrary $k \in \mathbb N,$ by comparing the sets on which $U[\mathbf Z^{(k)} ]$ is equal to $V_i$ with balls of center $X_i,$ we obtain:
$$
m_{\infty}(1/N-\eta^{(k)}) \leq m^{(k)} \leq  m_{\infty}(1/N+\eta^{(k)}).
$$
We conclude the proof thanks to the previous continuity analysis  of $R \mapsto m_{\infty}(R)$ in $R=1/N.$
\end{proof}

\subsection{A pointwise estimate} We obtain now a bound for given configurations:
\begin{lemma}\label{lem:U[ZN]-boundD}
There exists a universal constant $C$ for which, given $\mathbf Z \in \mathcal O^N$,
there holds: 
$$
\|\nabla U[\mathbf Z] \|_{L^2(\mathbb R^3)}^2  \leq 
  \dfrac{C}{N} \sum_{i=1}^N |V_i|^2 \left( 1+ {\dfrac 1N}  \sum_{j \neq i} \dfrac{\mathbf{1}_{|X_i-X_j| < \frac 5{2N}}}{{|X_i-X_j| - \dfrac{2}{N}}}    \right) \,.
$$ 
\end{lemma}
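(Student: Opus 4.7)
The plan is to invoke the variational characterization of Theorem~\ref{thm_varcar}: since $U[\mathbf Z]$ minimizes $\int_{\mathbb R^3} |\nabla v|^2$ over $v \in D(\mathbb R^3)$ satisfying $v_{|_{B_i}} = V_i$ for $i = 1,\ldots,N$, it suffices to exhibit a single competitor whose Dirichlet energy is bounded by the right-hand side of the claim.

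The competitor I build has the form $v := \sum_{i=1}^N v_i$, where each $v_i \in D(\mathbb R^3)$ is a local profile around $B_i$. For each $i$, let $d_i := \min_{j \neq i} |X_i - X_j|$, which is strictly larger than $2/N$ because $\mathbf Z \in \mathcal O^N$, and set $r_i := \min(d_i - 1/N, 2/N) \in (1/N, 2/N]$. The ball $B(X_i, r_i)$ is then disjoint from $B_j$ for every $j \neq i$. I take $v_i$ to coincide with $V_i$ on $B_i$, to vanish outside $B(X_i, r_i)$, and on the spherical shell $\{1/N < |x - X_i| < r_i\}$ to be the unique divergence-free Stokes flow with boundary values $V_i$ on the inner sphere and $0$ on the outer sphere. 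Since this profile vanishes on the outer boundary, its extension by $V_i$ inside $B_i$ and by $0$ outside $B(X_i, r_i)$ lies in $\dot H^1(\mathbb R^3)$ and is globally divergence-free. The key ingredient is the pointwise energy estimate
\[
\|\nabla v_i\|_{L^2(\mathbb R^3)}^2 \lesssim \frac{|V_i|^2}{N} + \mathbf{1}_{d_i < 5/(2N)}\, \frac{|V_i|^2}{N^2 (d_i - 2/N)} ,
\]
where the first term is the usual Stokeslet energy (dominant when $r_i - 1/N \gtrsim 1/N$) and the second accounts for the extra lubrication cost of forcing $v_i = 0$ across a thin annular shell when a close neighbor constrains $r_i - 1/N$ to be small.

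To assemble the bound I sum the $v_i$'s. Since every support $B(X_i, r_i)$ is contained in $B(X_i, 2/N)$ and the balls $B(X_j, 1/N)$ are pairwise disjoint, a simple volume comparison shows that for any $x \in \mathbb R^3$ the number of indices $i$ with $x \in B(X_i, 2/N)$ is bounded by the universal constant $(3/N)^3/(1/N)^3 = 27$. A Cauchy-Schwarz argument pointwise in $x$ then gives $\|\nabla v\|_{L^2(\mathbb R^3)}^2 \lesssim \sum_i \|\nabla v_i\|_{L^2(\mathbb R^3)}^2$. To conclude I reorganize the close-pair terms: if $d_i < 5/(2N)$, any index $j^*$ realizing $d_i = |X_i - X_{j^*}|$ automatically satisfies $|X_i - X_{j^*}| < 5/(2N)$ and therefore appears in the sum on the right-hand side of the claim, so that
\[
\frac{\mathbf{1}_{d_i < 5/(2N)}}{d_i - 2/N} \le \sum_{j \neq i} \frac{\mathbf{1}_{|X_i - X_j| < 5/(2N)}}{|X_i - X_j| - 2/N} .
\]
Combining this with the variational inequality and the overlap bound yields the claimed estimate.

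The main technical obstacle is the proof of the displayed pointwise bound on $\|\nabla v_i\|_{L^2(\mathbb R^3)}^2$, uniformly as the shell thickness $r_i - 1/N$ tends to zero. The cleanest route is to solve the two-concentric-sphere Stokes problem explicitly via the axisymmetric ansatz $v_i = f(r) V_i + g(r) (V_i \cdot \hat r) \hat r$ with $r = |x - X_i|$ and $\hat r = (x - X_i)/r$, which reduces the divergence-free constraint together with the Stokes system to a linear ODE system for $f$ and $g$ whose explicit solution can be integrated in closed form to extract the desired asymptotic behavior as $r_i \downarrow 1/N$. An alternative is to take $v_i$ as a smooth cutoff of the classical Stokes fundamental flow past a single translating sphere and to correct the resulting divergence by a Bogovskii operator supported in the transition shell; this requires a Bogovskii estimate on the spherical shell whose constant does not blow up as the thickness-to-radius ratio degenerates, and which can be obtained by a radial lifting that exploits the symmetry of the source term.
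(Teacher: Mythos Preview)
Your overall strategy---use the variational characterization of Theorem~\ref{thm_varcar}, build a competitor $v=\sum_i v_i$ with localized supports, then control overlaps by a volume-count and Cauchy--Schwarz---is exactly the paper's. The overlap bound and the final bookkeeping inequality are correct. The gap is the claimed energy estimate for your concentric-sphere profile $v_i$, and it is not fixable within your construction.

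The Stokes flow between concentric spheres of radii $a$ and $a+\epsilon$ (inner translating at $V$, outer at rest) has energy $\sim |V|^2 a^4/\epsilon^3$, not $|V|^2 a^2/\epsilon$. This follows from the explicit Cunningham drag: the denominator $4-9\beta+10\beta^3-9\beta^5+4\beta^6$ with $\beta=a/(a+\epsilon)$ factors as $(\beta-1)^4(4\beta^2+7\beta+4)$, so the drag coefficient diverges like $(a/\epsilon)^3$. Physically, moving the inner sphere forces a volume flux $\sim a^2|V|$ through the equatorial annulus of area $\sim a\epsilon$, giving tangential speed $\sim a|V|/\epsilon$ varying across a gap of width $\epsilon$, hence $|\nabla u|\sim a|V|/\epsilon^2$. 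With $a=1/N$ and $\epsilon=d_i-2/N$ this yields $\|\nabla v_i\|_{L^2}^2\sim |V_i|^2/(N^4(d_i-2/N)^3)$, two powers of $(d_i-2/N)^{-1}$ worse than what you need. Your Bogovskii alternative cannot do better: the source has $\int_{\text{shell}} s^2 h(s)\,ds=O(1/N^2)\neq 0$, which forces any radial lifting to have amplitude $\sim 1/(N\epsilon)$ and reproduces the same $\epsilon^{-3}$ energy---as it must, since the Stokes solution is already the minimizer.

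The paper sidesteps this by \emph{not} shrinking the support of $w_i$ isotropically. Its $w_i$ lives in the fixed-size ball $B(X_i,\tfrac{3}{2N})$ and is forced to vanish only on the nearby obstacle balls $B_j$; near each close neighbor this is a sphere-against-sphere geometry with a \emph{parabolically} opening gap $h_j+cr^2$. The lubrication construction of Appendix~\ref{app_wi} (adapted from \cite{HillairetTakfarinas}) then gives energy $\sim|V_i|^2/h_j$ in rescaled variables, i.e.\ $|V_i|^2/(N^2(|X_i-X_j|-2/N))$ after undoing the dilation. The whole point is the distinction between a uniform thin gap (your concentric shell, singularity $\epsilon^{-3}$) and a localized quadratic gap (the paper's construction, singularity $\epsilon^{-1}$). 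To repair your argument you would have to abandon the concentric-sphere profile and build $v_i$ to vanish only on the actual neighboring balls, which is precisely what Lemma~\ref{lem_wi} does.
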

\begin{proof}
In this proof $ \mathbf Z \in \mathcal O^N$ is fixed and splits into ${\mathbf X}$ and ${\mathbf V}$. The idea of the proof is to construct a suitable function
$$
w  \in Y[\mathbf Z] :=  \left\{ v \in D(\mathbb R^3) \text{ s.t.\ } v_{|_{B_i}} = V_i 
\text{ for all $i \in \{1,\ldots,N\}$}\right\}
$$
whose norm can be bounded by the right-hand side of the above inequality. The bound is then transferred to $U[\mathbf Z]$ {\em via} its variational characterization  (see {\bf Theorem~\ref{thm_varcar}}). To construct the candidate $w$ we consider successively the spheres $B_i$ in the cloud. Given a sphere $B_i$ we construct a divergence-free vector-field $w_i$ which satisfies the boundary condition 
$w_i = V_i$ on $B_i$ and $w_i =0$ on the $B_j,$ for $j \neq i.$ A na\"ive construction of $w_i$ would be to truncate away from $\partial B_i$
as a function of $|x-X_i|$. This would create a non-optimal vector-field (because it requires to choose the distance at which the truncation vanishes smaller than the minimum distance between $B_i$ and the $B_j$'s). Our method consists in drawing a virtual sphere of radius $3/2N$ around $X_i$. We then intersect this sphere with 
$\mathcal F$. This creates a connected domain with two boundaries: an internal one corresponding to $\partial B_i$ and an external one 
made partially of the boundary of $B(X_i,3/2N)$ and partially of small spherical caps corresponding to the $B_j$'s that intersect $B(X_i,3/2N)$.
We create then a vector-field that satisfies $w_i = V_i$ on the internal boundary and $w_i = 0$ on this virtual external boundary by truncating the (constant) vector-field $V_i$. The key-point is that we make the truncation to depend not only on the distance $|x-X_i|$ but also on the projection of the point $x$ and the sphere $B_i.$
We treat then differently the truncation in a zone between $\partial B_i$ 
and spherical cap by adapting the construction of \cite{HillairetTakfarinas}.

\medskip

Technical details of the proof are are rather long, hence we stick to the main ideas here and postpone them to {Appendix {\ref{app_wi}}}.
The first intermediate result concerns the treatment of sphere $B_i$:
\begin{lemma} \label{lem_wi}
Given $i\in \{1,\ldots,N\},$ there exists $w_i \in D(\mathbb R^3)$ satisfying 
\begin{align}
\label{eq_wi1}& w_i = V_i \text{ in $B_i$ and $w_i =0$ in $B_j$ for $j \neq i\,,$}\\[4pt]
\label{eq_wi2}& \mathrm{Supp} (w_i) \subset B(X_i, \tfrac{3}{2N}),
\end{align}
such that:
\begin{equation} \label{eq_bornewi}
\|\nabla w_i \|_{L^2(\mathbb R^3)}^2  \leq  \dfrac{C |V_i|^2}{N} \left( 1
 +  { \dfrac 1N} \sum_{j \neq i} \dfrac{\mathbf{1}_{|X_i-X_j| < \frac 5{2N}}}{{|X_i-X_j| - \dfrac{2}{N}}}    \right) \,.
\end{equation}
for a universal constant $C.$
\end{lemma}
Let $(w_i)_{i=1,\ldots,N}$ be given by Lemma~\ref{lem_wi}. By combining \eqref{eq_wi1} for $i=1,\ldots,N$, it is straightforward that:
$$
w = \sum_{i=1}^N w_i \in Y[\mathbf Z].
$$
Furthermore:
$$
\int_{\mathbb R^3} |\nabla w|^2 = \sum_{i=1}^{N} \sum_{j=1}^N \int_{\mathbb R^3} \nabla w_i : \nabla w_j .
$$
At this point, we use the property \eqref{eq_wi2}  in order to bound the  right-hand side. 
Given $i \in \{1,\dots,N\}$ let denote 
\[
\mathcal I_i := \left\{ j \in  \{1,\ldots,N\}  \text{ s.t. } B(X_i, \tfrac{3}{2N}) \cap B(X_j, \tfrac{3}{2N}) \neq \emptyset  \right\}.
\]
We remark that, given two indices $i$ and $j$ we have the equivalence between 
$j \in {\mathcal I}_i$ and $i \in {\mathcal I}_j$.

On the one hand, applying \eqref{eq_wi2}, there holds:
$$
 \sum_{j = 1}^N \int_{\mathbb R^3} \nabla w_i : \nabla w_j  = \sum_{j \in \mathcal I_i} \int_{\mathbb R^3} \nabla w_i : \nabla w_j  \quad \forall \, i=1,\ldots,N.
$$
On the other hand, we have:
\begin{lemma} \label{lem_geometrie}
Given $i \in \{1,\ldots,N\}$ the set $\mathcal I_i$ contains at most $64$ distinct indices. 
\end{lemma}
This lemma is obtained thanks to simple geometric argument that we develop in 
Appendix~\ref{app_wi}.
Applying standard inequalities, we can then bound:
$$
\left|  \sum_{j=1}^N  \int_{\mathbb R^3} \nabla w_i : \nabla w_j \right| \leq 32 \int_{\mathbb R^3} |\nabla w_i|^2 +  \dfrac{1}{2}  \sum_{j \in \mathcal I_i} \int_{\mathbb R^3} |\nabla w_j|^2, 
\quad \forall \, i=1,\ldots,N,
$$
which entails:
\begin{align*}
\int_{\mathbb R^3} |\nabla w|^2  & \leq { 32} \sum_{i=1}^N \int_{\mathbb R^3} |\nabla w_i|^2 + \dfrac 12 \sum_{i=1}^N \sum_{j \in \mathcal I_i} \int_{\mathbb R^3} |\nabla w_j|^2 \\
&  \leq { 32}  \sum_{i=1}^N \int_{\mathbb R^3} |\nabla w_i|^2 + \dfrac{1}{2}\sum_{j=1}^N |\mathcal I_j| \int_{\mathbb R^3} |\nabla w_j|^2. \\
& \leq { 64} \sum_{i=1}^N \int_{\mathbb R^3} |\nabla w_i|^2.
\end{align*}
We then conclude the proof by applying \eqref{eq_bornewi}.
\end{proof}

\subsection{Integrability properties of the mapping $U$.} \label{sec_integrability}
In this last part, we envisage to integrate the mapping $U$ against a sufficiently regular symmetric  probability density $F\in L^1(\mathcal O^N).$
To state the regularity assumption, we recall the notations:
\begin{align*}
& F_1 (z)  = \int_{\mathbb R^{6(N-1)}} \mathbf{1}_{\mathcal O^{N}}(z,\mathbf{z}') F( z,\mathbf z'){\rm d} \mathbf z', \quad \forall \, z \in \mathbb R^6,
\\
& F_2(z_1,z_2) = \int_{\mathbb R^{6(N-2)}} \mathbf{1}_{\mathcal O^{N}}(z_1,z_2,\mathbf{z}') F( z_1, z_2,\mathbf z'){\rm d} \mathbf z',
\quad \forall \, (z_1,z_2) \in \mathcal O_{2}^N,
\end{align*}
where $\mathcal O_{2}^N := \Big\{(z_1,z_2) \in \mathbb R^{6} \text{ s.t. } |x_1-x_2| > \dfrac 2N \Big\}.$ We introduce also:
\[
\mathfrak{j}(x_1,x_2) = \int_{\mathbb R^6} |v_1|F_2((x_1,v_1),(x_2,v_2)) {\rm d}v_1 {\rm d}v_2,  \quad \forall \, (x_1,x_2) \text{ s.t.\ } |x_2-x_1| > \dfrac{2}{N}.
\]
With these notations, we prove 
\begin{proposition} \label{prop_unifL1}
Let $F \in L^1(\mathcal O^N)$ be a symmetric probability density satisfying
\begin{align} \label{eq_condition1}
\int_{\mathbb R^6} (1+|z|^2) F_1(z) {\rm d}z < \infty , \\
\label{eq_condition2}
\int_{\mathbb R^3} \left[ \sup_{x_2 \in \mathbb R^3 \setminus \overline{B(x_1,2/N)}}  |\mathfrak j(x_1,x_2)| \right]{\rm d}x_1 < \infty.
\end{align} 
There holds $U \in L^1(\mathcal O^N, F(\mathbf Z){\rm d}\mathbf{Z})$ and there exists a universal constant $C$
such that: 
\[
\mathbb E[ \|\nabla U\|_{L^2(\mathbb R^3)}] \leq C \left[ \left( \int_{\mathbb R^6} (1+|z|^2) F_1(z) {\rm d}z \right)^{\frac 12} + \frac 1N \int_{\mathbb R^3} \left[ \sup_{x_2 \in \mathbb R^3 \setminus \overline{B(x_1,2/N)}}  |\mathfrak j(x_1,x_2)| \right] {\rm d}x_1\right] .
\]

\end{proposition}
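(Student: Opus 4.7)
The plan is to derive Proposition~\ref{prop_unifL1} directly from the pointwise estimate of Lemma~\ref{lem:U[ZN]-boundD}. Splitting the right-hand side of that lemma into a diagonal part $A(\mathbf Z):=\tfrac{C}{N}\sum_i |V_i|^2$ and a pair-interaction part
$$
B(\mathbf Z):=\frac{C}{N^2}\sum_{i\neq j} |V_i|^2\,\frac{\mathbf 1_{|X_i-X_j|<5/(2N)}}{|X_i-X_j|-\tfrac{2}{N}},
$$
and using $\sqrt{a+b}\leq \sqrt a +\sqrt b$, I would bound
$$
\|\nabla U[\mathbf Z]\|_{L^2(\mathbb R^3)}\leq \sqrt{A(\mathbf Z)}+\sqrt{B(\mathbf Z)}
$$
and take expectations of each term separately.

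For the diagonal part $\sqrt A$, the route is straightforward: Jensen's inequality (concavity of $\sqrt{\cdot}$) together with symmetry of $F$ yields
$$
\mathbb E[\sqrt A]\leq \sqrt{\mathbb E[A]}=\sqrt{C\int_{\mathbb R^6}|v|^2 F_1(z)\,\mathrm dz}\leq \sqrt{C}\Bigl(\int_{\mathbb R^6}(1+|z|^2)F_1(z)\,\mathrm dz\Bigr)^{1/2},
$$
which is exactly the first summand of the target bound and uses only hypothesis \eqref{eq_condition1}.

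The main obstacle is the pair-interaction term, since $(|X_i-X_j|-2/N)^{-1}$ is \emph{not} integrable near the contact sphere $|X_i-X_j|=2/N$. The key trick is to apply $\sqrt{\sum_{k}a_k}\leq \sum_k \sqrt{a_k}$ \emph{before} integrating, thereby weakening the singularity to $(|X_i-X_j|-2/N)^{-1/2}$, which is locally integrable in $\mathbb R^3$. This gives
$$
\sqrt{B(\mathbf Z)}\leq \frac{\sqrt C}{N}\sum_{i\neq j}|V_i|\,\frac{\mathbf 1_{|X_i-X_j|<5/(2N)}}{\sqrt{|X_i-X_j|-\tfrac{2}{N}}}.
$$
Taking the expectation, the symmetry of $F$ collapses all $N(N-1)$ terms to a single integral against $F_2$. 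Integrating out $v_1,v_2$ produces $\mathfrak j(x_1,x_2)$ by definition, so
$$
\mathbb E[\sqrt B]\leq \sqrt{C}\,(N-1)\int_{\mathbb R^3}\int_{2/N<|x_2-x_1|<5/(2N)}\frac{\mathfrak j(x_1,x_2)}{\sqrt{|x_1-x_2|-\tfrac{2}{N}}}\,\mathrm dx_2\,\mathrm dx_1.
$$

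To close the estimate, I would pull $\sup_{x_2\notin \overline{B(x_1,2/N)}}|\mathfrak j(x_1,x_2)|$ out of the inner integral, then compute in spherical coordinates around $x_1$: with $r=|x_1-x_2|$ and $s=r-2/N$, since $r\leq 5/(2N)$ we have $r^2\leq 25/(4N^2)$, so the inner integral is bounded by a universal constant times $N^{-5/2}$. Combining the prefactor $N-1$ with $N^{-5/2}$ yields an overall $N^{-3/2}\lesssim N^{-1}$ for $N\geq 1$, and we recover
$$
\mathbb E[\sqrt B]\lesssim \frac{1}{N}\int_{\mathbb R^3}\Bigl[\sup_{x_2\in\mathbb R^3\setminus\overline{B(x_1,2/N)}}|\mathfrak j(x_1,x_2)|\Bigr]\mathrm dx_1,
$$
which is the second summand of the target bound and uses hypothesis \eqref{eq_condition2}. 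Summing the two contributions gives the announced inequality and, in particular, $U\in L^1(\mathcal O^N,F(\mathbf Z)\mathrm d\mathbf Z)$.
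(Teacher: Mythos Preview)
Your proposal is correct and follows essentially the same route as the paper's own proof: both start from Lemma~\ref{lem:U[ZN]-boundD}, split via $\sqrt{a+b}\le\sqrt a+\sqrt b$, handle the diagonal term by Jensen plus symmetry, and for the pair term apply $\sqrt{\sum a_k}\le\sum\sqrt{a_k}$ pointwise to weaken the singularity to order $-1/2$, then reduce by symmetry to an integral against $F_2$, integrate out velocities to produce $\mathfrak j$, pull out the sup, and evaluate the remaining radial integral to obtain the $N^{-3/2}\le N^{-1}$ factor. The only cosmetic difference is that the paper performs the radial computation via the rescaling $y=N(x_2-x_1)$ rather than bounding $r^2$ directly.
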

 
\begin{proof}
Since $U \in C(\mathcal O^N; D(\R^3))$, our proof reduces to show that $\mathbb E[ \|\nabla U\|_{L^2(\mathbb R^3)}]$ is finite.
Let $\mathbf Z \in \mathcal O^N,$ applying the bound of Lemma~\ref{lem:U[ZN]-boundD} together with a standard comparison  
argument, we obtain that:
$$
 \|\nabla U[\mathbf Z]\|_{L^2(\mathbb R^3)} \leq C \left[ \dfrac{1}{\sqrt{N}}\left[  \sum_{i=1}^N |V_i|^2  \right]^{\frac 12} + \dfrac{1}{{N}}\sum_{i=1}^N \sum_{j \neq i} |V_i| \dfrac{\mathbf 1_{|X_i - X_j| < \frac{5}{2N} }}{\sqrt{|X_i - X_j| - \frac 2N}}\right].
$$
We have then 
$$
\mathbb E[\|\nabla U[\mathbf Z]\|_{L^2(\mathbb R^3)}] \leq C \left(   \mathbb E \left[  \left(\frac1N\sum_{i=1}^N |V_i^N|^2  \right)^{\frac 12} \right]  +   \mathbb E \left[ \dfrac{1}{{N}}\sum_{i=1}^N \sum_{j \neq i}   |V_i| \, \dfrac{\mathbf 1_{|X_i-X_j| < \frac{5}{2N} }}{\sqrt{|X_i - X_j| - \frac 2N}} \right] \right)
$$
We split  the right-hand side into two integrals $I_1$ and $I_2.$
First applying a Jensen inequality and then symmetry properties of the measure $F$  we have:
\begin{eqnarray*}
I_1 &:=&  \mathbb E \left[  \left(\frac1N\sum_{i=1}^N |V_i^N|^2  \right)^{\frac 12} \right]
\leq  \mathbb E \left[  \frac1N\sum_{i=1}^N |V_i^N|^2   \right]^{\frac 12} \\ 
&\le&  
\left( \int_{\mathbb R^6} (1+|z|^2) F_1(z) {\rm d}z\right)^{1/2}.
\end{eqnarray*}
By assumption \eqref{eq_condition1}, we have then $I_1 < \infty.$
Furthermore, using symmetry properties,  the definition of $\mathfrak j$ and assumption \eqref{eq_condition2}, we infer:
\begin{eqnarray*}
I_2  & :=&  \mathbb E \left[ \dfrac{1}{{N}}\sum_{i=1}^N \sum_{j \neq i}   |V_i| \, \dfrac{\mathbf 1_{|X_i-X_j| < \frac{5}{2N} }}{\sqrt{|X_i - X_j| - \frac 2N}} \right]  \\
&\leq & N  \mathbb E \left[  |V_1| \,  \dfrac{\mathbf 1_{|X_1-X_2| < \frac{5}{2N}}}{\sqrt{|X_1 - X_2| - \frac 2N}} \right] \\
&=& N  \int_{|x_1-x_2|>\frac2N} \left\{ \int_{\mathbb R^3 \times \mathbb R^3} |v_1|  \dfrac{\mathbf 1_{|x_1-x_2| < \frac{5}{2N}}}{\sqrt{|x_1 - x_2| - \frac 2N}} \, F_2(z_1,z_2){\rm d}v_1 {\rm d}v_2 \right\} {\rm d}x_1{\rm d}x_2  \\
& \leq & N \int_{\mathbb R^3} \int_{B(x_1,\frac{5}{2N})\setminus B(x_1, \frac{2}{N})} \dfrac{1}{\sqrt{|x_1-x_2| - \frac 2N}} \,  \mathfrak j(x_1,x_2) \, {\rm d}x_2{\rm d}x_1.\\
& \leq &  \dfrac{1}{{N}^{3/2}}  \int_{\mathbb R^3} \left[ \sup_{x_2 \in \mathbb R^3 \setminus \overline{B(x_1,2/N)}}  |\mathfrak j(x_1,x_2)| {\rm d}x_1\right] \int_{B(0,\frac{5}{2}) \setminus B(0,2)} \dfrac{1}{\sqrt{|y| - 2}} \, {\rm d}y.
\end{eqnarray*}
The last integral appearing on this last line being finite, we obtain that $I_2 < \infty$ and our proof is complete.
\end{proof}
With similar arguments as in the proof of this theorem, we also obtain the following corollary:
\begin{corollary} \label{cor_unifL1}
Under the assumptions of Proposition~\ref{prop_unifL1}, given $\tilde{\mathcal O}^N \subset \mathcal O^N$ we have:
\[
\mathbb E[ \|\nabla U\|_{L^2(\mathbb R^3)} \mathbf 1_{\tilde{\mathcal O}_N}] \leq  C \left[ |\mathbb P(\tilde{\mathcal O}_N)|^{\frac 12}  \left( \int_{\mathbb R^6} (1+|z|^2) F_1(z) {\rm d}z \right)^{\frac 12} +  \dfrac{1}{{N}^{3/2}}   \int_{\mathbb R^3} \left[ \sup_{x_2 \in \mathbb R^3 \setminus \overline{B(x_1,2/N)}}  |\mathfrak j(x_1,x_2)| \right] {\rm d}x_1\right] .
\]
\end{corollary}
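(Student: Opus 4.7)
The plan is to follow the structure of the proof of Proposition~\ref{prop_unifL1} almost verbatim, inserting the indicator $\mathbf 1_{\tilde{\mathcal O}^N}$ and redistributing it carefully in each of the two contributions. As a starting point, I would multiply the pointwise bound of Lemma~\ref{lem:U[ZN]-boundD} by $\mathbf 1_{\tilde{\mathcal O}^N}$ and take expectation, which yields
\[
\mathbb E\bigl[\|\nabla U\|_{L^2(\mathbb R^3)} \mathbf 1_{\tilde{\mathcal O}^N}\bigr]
\leq C\bigl( J_1 + J_2 \bigr),
\]
with
\[
J_1 := \mathbb E\!\left[\left(\tfrac{1}{N}\sum_{i=1}^N |V_i|^2 \right)^{1/2} \mathbf 1_{\tilde{\mathcal O}^N}\right], \qquad
J_2 := \mathbb E\!\left[\tfrac{1}{N}\sum_{i=1}^N \sum_{j\neq i}|V_i| \, \dfrac{\mathbf 1_{|X_i-X_j|<\frac{5}{2N}}}{\sqrt{|X_i-X_j|-\tfrac{2}{N}}} \, \mathbf 1_{\tilde{\mathcal O}^N}\right].
\]

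For $J_1$, the idea is to separate the indicator from the moment via Cauchy--Schwarz,
\[
J_1 \leq \mathbb E[\mathbf 1_{\tilde{\mathcal O}^N}]^{1/2} \, \mathbb E\!\left[\tfrac{1}{N}\sum_{i=1}^N |V_i|^2\right]^{1/2}
\leq |\mathbb P(\tilde{\mathcal O}^N)|^{1/2} \left(\int_{\mathbb R^6} (1+|z|^2) F_1(z)\, \mathrm{d}z\right)^{1/2},
\]
where in the second step I use the exchangeability of $F$ (so that $\mathbb E[|V_i|^2]$ is independent of $i$ and equal to $\int |v|^2 F_1(z)\,\mathrm{d}z$) together with assumption \eqref{eq_condition1}. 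This produces exactly the first contribution in the target bound.

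For $J_2$, I would simply discard the indicator via $\mathbf 1_{\tilde{\mathcal O}^N} \leq 1$, reducing $J_2$ to the quantity denoted $I_2$ in the proof of Proposition~\ref{prop_unifL1}. Reproducing that computation (symmetrization to pick up a factor $N$, change of variables $y = N(x_2 - x_1)$ and pulling out the supremum of $\mathfrak j$) gives
\[
J_2 \leq \dfrac{C}{N^{3/2}} \int_{\mathbb R^3} \sup_{x_2 \in \mathbb R^3 \setminus \overline{B(x_1,2/N)}}|\mathfrak j(x_1,x_2)| \, \mathrm{d}x_1,
\]
thanks to \eqref{eq_condition2} and the integrability of $(|y|-2)^{-1/2}$ on the annulus $B(0,5/2)\setminus B(0,2)$. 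Combining the estimates on $J_1$ and $J_2$ yields the claim. No new analytical obstacle is anticipated beyond the Cauchy--Schwarz step, since the hard pointwise bound and the singular integral over close pairs have already been handled in Lemma~\ref{lem:U[ZN]-boundD} and Proposition~\ref{prop_unifL1}, respectively.
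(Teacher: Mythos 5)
Your proof is correct and follows exactly the route the paper intends (it states the corollary is obtained "with similar arguments" and gives no separate proof): you insert $\mathbf 1_{\tilde{\mathcal O}^N}$ into the pointwise bound of Lemma~\ref{lem:U[ZN]-boundD}, handle the first term by Cauchy--Schwarz (which also subsumes the Jensen step from Proposition~\ref{prop_unifL1}), and simply drop the indicator in the second term. This reproduces the stated estimate term for term.
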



\section{Main estimate for non-concentrated configurations}\label{sec:bonneconfig}

In this section, we compute a  bound for the distance between a solution to the $N$-particle problem and the limit Stokes-Brinkman system in a ``favorable'' case.
For this, let first state a stability estimate for the Stokes-Brinkman system suitable to our purpose.

Let consider a nonnegative density $\tilde{\rho} \in L^3(\Omega_0)$
and a momentum $\tilde{\text{\j}} \in L^{2}(\mathcal O)$ where $\Omega_0$ and $\mathcal O$ are bounded open subsets of $\mathbb R^3.$ The subset $\Omega_0$ is the one given in the introduction, corresponding to the domain occupied by the cloud of particles. We denote below $\Omega_1 = \Omega_0 + B(0,1).$ The subset $\mathcal O$ is another bounded open subset, not necessarily the same one. We apply the convention that we extend $\tilde{\rho}$ and $\tilde{\text{\j}}$ by $0$ in order
to yield functions on $\mathbb R^3.$ In this framework, the existence/uniqueness theorem in bounded domains (as mentioned in \cite{MH}) extends to the Stokes-Brinkman problem on the whole space: 
\begin{equation} \label{eq_SB}
\left\{
\begin{array}{rcl}
-\Delta u + \nabla p + 6\pi \tilde{\rho} u &=& 6\pi \tilde{\text{\j}} \\
{\rm div} \, u &=& 0  
\end{array}
\right.
\quad \text{ in $\mathbb R^3$},
\end{equation}
\begin{equation} \label{cab_SB}
\lim_{|x| \to \infty} |u(x)| = 0.
\end{equation}
Indeed, as in the case of the Stokes problem, the system \eqref{eq_SB}-\eqref{cab_SB} is associated with the weak formulation
\begin{center}
\begin{minipage}{.8\textwidth}
Find $u \in D(\mathbb R^3)$ such that 
$$
\int_{\mathbb R^3} \nabla u : \nabla w + 6\pi  \int_{\mathbb R^3} \tilde{\rho} u \cdot w = 6\pi \int_{\mathbb R^3} \tilde{\text{\j}} \cdot w,
\qquad 
\forall \, w \in D(\mathbb R^3).
$$
\end{minipage}
\end{center}
For positive $\tilde{\rho} \in L^3(\Omega_0) \subset L^{3/2}(\mathbb R^3),$ the left-hand side of the weak formulation represents a bilinear mapping $a_{\rho}$ which is in the same time coercive and continuous on $D(\mathbb R^3)$
(we recall that $D(\mathbb R^3) \subset L^6(\mathbb R^3)$). Hence, for arbitrary $\tilde{\text{\j}} \in L^{2}(\Omega_0) \subset L^{6/5}(\mathbb R^3) \subset [D(\mathbb R^3)]^*$ we can apply a standard Lax-Milgram argument to obtain that \eqref{eq_SB}-\eqref{cab_SB} admits a unique weak solution $u := u[\tilde{\rho},\tilde{\text{\j}}] \in D(\mathbb R^3)$.
At this point, we note that any weak solution $u$ to \eqref{eq_SB}-\eqref{cab_SB} is also a weak solution to the Stokes equations with data $6\pi ( \tilde{\text{\j}} - \tilde{\rho} u).$ Since $\tilde{\text{\j}} \in L^2(\mathbb R^3)$ and $\tilde{\rho} \in L^3(\mathbb R^3)$ we obtain that the source term is in $L^2(\mathbb R^3)$ and apply 
elliptic regularity estimates for the Stokes equations
on $\mathbb R^3$ (see \cite[Theorem IV.2.1]{Galdi}). This yields:
\begin{proposition} \label{prop_SB1}
For arbitrary $\tilde{\text{\j}} \in L^2(\mathcal O)$ and non-negative $\tilde{\rho} \in L^{3}(\Omega_0)$ the  unique weak solution $u := u[\tilde{\rho},\tilde{\text{\j}}]$ to the Stokes-Brinkman problem \eqref{eq_SB}-\eqref{cab_SB} satisfies $\nabla^2 u \in L^2(\mathbb R^3)$ and there exists constants $K_0,K_1$ whose dependencies are mentioned in parenthesis such that: 
$$
\|\nabla u\|_{L^2(\mathbb R^3)}  \leq K_0 \|\tilde{\text{\j}}\|_{L^{6/5}(\mathbb R^3)}, \qquad  \|\nabla^2 u\|_{L^2(\mathbb R^3)} \le K_1(\|\tilde{\rho}\|_{L^{3}(\mathbb R^3)}) \left[ \|\tilde{\text{\j}}\|_{L^2(\mathbb R^3)} + \|\tilde{\text{\j}}\|_{L^{6/5}(\mathbb R^3)}\right].
$$
\end{proposition}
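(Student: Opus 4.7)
My plan is to derive the two estimates in succession, using energy methods for the first and a bootstrap via elliptic regularity for the second.

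For the first estimate, I would test the weak formulation of \eqref{eq_SB}-\eqref{cab_SB} against $u$ itself. Since $\tilde{\rho} \ge 0$, this gives
\[
\|\nabla u\|_{L^2(\mathbb R^3)}^2 + 6\pi \int_{\mathbb R^3} \tilde{\rho} |u|^2 = 6\pi \int_{\mathbb R^3} \tilde{\text{\j}} \cdot u,
\]
so discarding the non-negative second term on the left and estimating the right-hand side by H\"older combined with the Sobolev embedding $D(\mathbb R^3) \hookrightarrow L^6(\mathbb R^3)$ yields
\[
\|\nabla u\|_{L^2(\mathbb R^3)}^2 \leq 6\pi \|\tilde{\text{\j}}\|_{L^{6/5}(\mathbb R^3)} \|u\|_{L^6(\mathbb R^3)} \lesssim \|\tilde{\text{\j}}\|_{L^{6/5}(\mathbb R^3)} \|\nabla u\|_{L^2(\mathbb R^3)}.
\]
Dividing by $\|\nabla u\|_{L^2(\mathbb R^3)}$ produces the constant $K_0$ (essentially $6\pi$ times the Sobolev constant).

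For the second estimate, the key idea is already hinted at in the text: rewrite \eqref{eq_SB} as the Stokes system
\[
-\Delta u + \nabla p = g, \qquad \operatorname{div} u = 0 \quad \text{in } \mathbb R^3,
\qquad g := 6\pi (\tilde{\text{\j}} - \tilde{\rho} u).
\]
The Galdi elliptic regularity result \cite[Theorem IV.2.1]{Galdi} then gives $\|\nabla^2 u\|_{L^2(\mathbb R^3)} \lesssim \|g\|_{L^2(\mathbb R^3)}$ provided we can verify $g \in L^2(\mathbb R^3)$. To bound $g$, I would split
\[
\|g\|_{L^2(\mathbb R^3)} \le 6\pi \|\tilde{\text{\j}}\|_{L^2(\mathbb R^3)} + 6\pi \|\tilde{\rho} u\|_{L^2(\mathbb R^3)},
\]
and handle the nonlinear term by H\"older with exponents $(3,6)$:
\[
\|\tilde{\rho} u\|_{L^2(\mathbb R^3)} \le \|\tilde{\rho}\|_{L^3(\mathbb R^3)} \|u\|_{L^6(\mathbb R^3)} \lesssim \|\tilde{\rho}\|_{L^3(\mathbb R^3)} \|\nabla u\|_{L^2(\mathbb R^3)}.
\]
Plugging in the first estimate $\|\nabla u\|_{L^2} \lesssim \|\tilde{\text{\j}}\|_{L^{6/5}}$ closes the loop:
\[
\|\nabla^2 u\|_{L^2(\mathbb R^3)} \lesssim \|\tilde{\text{\j}}\|_{L^2(\mathbb R^3)} + \|\tilde{\rho}\|_{L^3(\mathbb R^3)} \|\tilde{\text{\j}}\|_{L^{6/5}(\mathbb R^3)},
\]
which provides $K_1$ as an affine function of $\|\tilde{\rho}\|_{L^3}$ (hence depending only on $\|\tilde{\rho}\|_{L^3(\mathbb R^3)}$).

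I do not expect any genuine obstacle here: the argument is a standard coercivity/bootstrap combination, and the dependencies of the constants fall out transparently from H\"older and Sobolev. The only point requiring mild care is that $\tilde{\text{\j}} \in L^2(\mathcal O)$ with $\mathcal O$ bounded automatically yields $\tilde{\text{\j}} \in L^{6/5}(\mathbb R^3)$ (after the zero extension), so the $L^{6/5}$-norm on the right-hand side is always finite; and that Galdi's regularity result on the full space does apply directly to the Stokes source $g$, which is indeed in $L^2(\mathbb R^3)$ as shown.
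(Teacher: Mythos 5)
Your proof is correct and follows essentially the same route as the paper: the energy estimate comes from testing against $u$ and using the Sobolev embedding $D(\mathbb R^3)\hookrightarrow L^6(\mathbb R^3)$, and the $\nabla^2 u$-estimate comes from viewing the solution as a Stokes solution with source $6\pi(\tilde{\text{\j}}-\tilde{\rho}u)\in L^2(\mathbb R^3)$ and invoking \cite[Theorem IV.2.1]{Galdi}. The paper leaves the H\"older splitting $\|\tilde{\rho}u\|_{L^2}\le\|\tilde{\rho}\|_{L^3}\|u\|_{L^6}$ implicit, which is exactly what you supply.
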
 

By duality, the previous elliptic-regularity statement entails a regularity statement in negative Sobolev spaces. Namely, given a nonnegative density $\tilde{\rho} \in L^{3}(\Omega_0)$, we denote, for arbitrary $v \in D(\mathbb R^3):$
$$
[v]_{\tilde{\rho},2}  :=  \sup \left\{ \left|\int_{\mathbb R^3} \nabla v : \nabla w  + 6\pi \int_{\mathbb R^3} \tilde{\rho} v \cdot w \right| \,, w \in  \mathcal D(\mathbb R^3) \text{ with } \|\nabla w\|_{L^2(\mathbb R^3)} + \|\nabla^2 w\|_{L^2(\mathbb R^3)} \leq   1  \right\}.
$$
Reproducing the arguments of  \cite[Lemma 2.4]{MH}, we obtain then the following proposition:
\begin{proposition} \label{prop_SB2}
Given a bounded open subset $\mathcal O  \subset \mathbb R^3,$ there exists $K:= K(\mathcal O,\|\tilde{\rho}\|_{L^{3}(\Omega_0)})$ such that 
\[\|v\|_{L^2(\mathcal O)} \leq K [v]_{\rho,2}. 
\]
\end{proposition}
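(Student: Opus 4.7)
The plan is a classical duality argument, where we construct optimal test functions by solving an auxiliary Stokes--Brinkman problem with the elliptic regularity of Proposition~\ref{prop_SB1}.

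First I would write
\[
\|v\|_{L^2(\mathcal O)} = \sup \Bigl\{ \int_{\mathcal O} v \cdot \phi \,:\, \phi \in C^\infty_c(\mathcal O;\mathbb R^3), \; \|\phi\|_{L^2(\mathcal O)} \le 1 \Bigr\},
\]
extend each such $\phi$ by zero to obtain an $L^2(\mathbb R^3)$ vector field with compact support in $\mathcal O$, and consider the auxiliary problem of finding $w \in D(\mathbb R^3)$ and a pressure $q$ solving
\[
-\Delta w + \nabla q + 6\pi \tilde{\rho}\, w = \phi, \qquad \mathrm{div}\, w = 0 \quad \text{in }\mathbb R^3,
\]
with vanishing condition at infinity. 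Existence and uniqueness of $w$ come from the Lax--Milgram argument recalled before Proposition~\ref{prop_SB1}. Since $\phi$ is supported in the bounded set $\mathcal O$, H\"older's inequality gives $\|\phi\|_{L^{6/5}(\mathbb R^3)} \le |\mathcal O|^{1/3} \|\phi\|_{L^2(\mathcal O)}$, so by Proposition~\ref{prop_SB1} one obtains
\[
\|\nabla w\|_{L^2(\mathbb R^3)} + \|\nabla^2 w\|_{L^2(\mathbb R^3)} \le K(\mathcal O, \|\tilde{\rho}\|_{L^3}) \, \|\phi\|_{L^2(\mathcal O)}.
\]

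Next I would test the equation for $w$ against $v \in D(\mathbb R^3)$. Formally, since $\mathrm{div}\, v = 0$, the pressure contribution vanishes and one gets
\[
\int_{\mathbb R^3} v \cdot \phi = \int_{\mathbb R^3} \nabla v : \nabla w + 6\pi \int_{\mathbb R^3} \tilde{\rho}\, v \cdot w.
\]
Since the test functions defining $[v]_{\tilde{\rho},2}$ belong to $\mathcal D(\mathbb R^3)$ and our $w$ lies only in $D(\mathbb R^3) \cap H^2(\mathbb R^3)$, the crux is to justify that $w$ is admissible up to a controlled approximation: this I would handle via the density of $\mathcal D(\mathbb R^3)$ in $D(\mathbb R^3) \cap H^2(\mathbb R^3)$ (which is the step analogous to the one used in \cite{MH}), together with the embedding $D(\mathbb R^3) \hookrightarrow L^6(\mathbb R^3)$ and $\tilde{\rho} \in L^3(\Omega_0)$ to control the $\tilde{\rho}\, v\cdot w$ cross-term in passing to the limit along the approximating sequence.

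Once admissibility is established, the definition of $[v]_{\tilde{\rho},2}$ together with the elliptic bound above yields
\[
\Bigl| \int_{\mathbb R^3} v \cdot \phi \Bigr| \le [v]_{\tilde{\rho},2} \bigl( \|\nabla w\|_{L^2(\mathbb R^3)} + \|\nabla^2 w\|_{L^2(\mathbb R^3)} \bigr) \le K(\mathcal O, \|\tilde{\rho}\|_{L^3}) \, [v]_{\tilde{\rho},2} \, \|\phi\|_{L^2(\mathcal O)},
\]
and taking the supremum over admissible $\phi$ gives the claim. The main obstacle I anticipate is the density/approximation step justifying that the $H^2$ Stokes--Brinkman solution $w$ can be used in the seminorm $[\,\cdot\,]_{\tilde{\rho},2}$ originally restricted to smooth compactly supported test fields; this reduces to exhibiting an explicit sequence $(w_n) \subset \mathcal D(\mathbb R^3)$ converging to $w$ strongly in $\dot{H}^1 \cap \dot{H}^2$ (e.g.\ by cut-off and mollification, checking that the $L^2(\tilde{\rho})$ inner product passes to the limit thanks to the Sobolev embedding). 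All other constants depend only on $\mathcal O$ and $\|\tilde{\rho}\|_{L^3(\Omega_0)}$, as required.
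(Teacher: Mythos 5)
Your proposal is correct and is, in essence, the same duality argument the paper invokes when it refers to \cite[Lemma 2.4]{MH}: represent the $L^2(\mathcal O)$-norm by testing against $\phi \in C^\infty_c(\mathcal O)$, solve the adjoint Stokes--Brinkman problem with source $\phi$, apply the $H^2$-regularity of Proposition~\ref{prop_SB1} (noting $\|\phi\|_{L^{6/5}\cap L^2} \lesssim_{\mathcal O} \|\phi\|_{L^2(\mathcal O)}$), and read off the bound from the weak formulation. You also correctly flag the only genuine technical point, namely that the definition of $[v]_{\tilde\rho,2}$ uses the sup over $\mathcal D(\mathbb R^3)$ while the adjoint solution $w$ lies only in $D(\mathbb R^3)\cap \dot H^2$, which is resolved by noting that $w \mapsto \int \nabla v:\nabla w + 6\pi\int\tilde\rho\, v\cdot w$ is continuous for the $\dot H^1\cap\dot H^2$ topology (using $v\in L^6$, $\tilde\rho\in L^{3/2}(\mathbb R^3)$) and that $\mathcal D(\mathbb R^3)$ is dense there via cut-off, Bogovskii divergence-correction (which needs the $H^1$ control of the divergence error, available since $\nabla^2 w\in L^2$), and mollification.
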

We refer the reader to the proof of \cite[Lemma 2.4]{MH}  for more details.

The computations below are then based on the following remark. Let $\mathbf Z^N 
= (X_1^N,V_1^N,\ldots,X_N^N,V_N^N) \in \mathcal O^N,$  $U = U_N[\mathbf Z^N]$
and $P$ the associated pressure.  For arbitrary divergence-free vector-field $w \in C^{\infty}_c(\mathbb R^3),$
we have by integration by parts:
\[
\int_{\mathbb R^3} \nabla U : \nabla w \sim \int_{\mathcal F^N} \nabla U : \nabla w 
\sim \sum_{i=1}^N \int_{\partial B(X_i^N,1/N)} \Sigma(U,P) n \cdot w {\rm d}\sigma  , 
\]  
where $\Sigma(U,P) =  (\nabla U + \nabla^{\top} U) - P \mathbb I_3$ is the fluid stress tensor and $n$ is the normal to $\partial B(X_i^N,1/N)$ directed inward the obstacle.
In the favorable configurations under consideration here, we can replace $w$ -- in the boundary integrals on the right-hand side -- by the value in the center of $B(X_i^N,1/N)$ and compute the integral of the stress tensor on $\partial B(X_i^N,1/N)$ by using Stokes law (see \cite[formula (4)]{DGR}):
\[
\int_{\partial B(X_i^N,1/N)} \Sigma(U,P) n \cdot w {\rm d}\sigma \sim \int_{\partial B(X_i^N,1/N)} \Sigma(U,P) n \cdot w(X_i^N) {\rm d}\sigma
\sim \dfrac{6\pi}{N} \sum_{i=1}^N (V_i^N - \bar{U}_{i}^N) \cdot w(X_i^N) ,
\]
where $V_i^N - \bar{U}_{i}^N$ stands for the difference between the velocity on the obstacle $B(X_i^N,1/N)$ and the velocity ``at infinity'' seen by this obstacle. One important step of the analysis is to justify that we can choose for such asymptotic velocity
a mean of $U$ around $B(X_i^N,1/N).$ We obtain finally the identity:
\[
\int_{\mathbb R^3} \nabla U : \nabla w + \dfrac{6\pi}{N}\sum_{i=1}^N \bar{U}_i^N \cdot w(X_i^N)  \sim  \dfrac{6\pi}{N} \sum_{i=1}^N  V_i^N \cdot w(X_i^N).
\]
We recognize an identity of the form:
\[
\int_{\mathbb R^3} \nabla U : \nabla w + 6\pi \langle \rho^N[\mathbf Z^N], \bar{U} \rangle \sim 6\pi \langle j^N[\mathbf Z^N], w \rangle  .
\]
We  compare then this weak-formulation with the weak formulation for the 
Stokes-Brinkman problem \eqref{eq_SB}-\eqref{cab_SB}. Taking the difference between both formulations, we apply the duality argument above to relate
the $L^2_{\mathrm{loc}}$-norm of the difference $U^N[\mathbf{Z}^N]-u[\rho,j]$ to duality 
distances between $\rho^N[\mathbf Z^N]$ and $\rho,$ on the one hand, and
$j^N[\mathbf Z^N]$ and $j$, on the other hand. The core of the proof below
is to quantify the error terms induced by the symbol ``$\sim$'' above, especially 
to justify the application of Stokes law for ``favorable'' configurations.

%
%
%

\subsection{Main result of this section.}
To state the main result of this section, we recall the notations introduced in \cite{Hil17} to handle the 
convergence of $U_N$ towards $u[\rho,j].$ Given $N \in \mathbb N^*$ and  $\mathbf{Z} = (X_1,V_1,\ldots,X_N,V_N) \in \mathcal O^{N},$ we denote:
\begin{itemize}
\item $d_{\mathrm{min}}[\mathbf Z]$ the minimal distance between two different centers $X_i$;\\[-8pt]
\item  $\lambda[\mathbf Z]$ a chosen size for a partition of $\mathbb R^3$ in cubes; \\[-8pt]
\item $M[\mathbf Z]$  the maximum number of centers $X_i$ inside one cell of size $\lambda[\mathbf Z].$
\end{itemize} 
If $d_{\mathrm{min}}[\mathbf Z]$ is sufficiently large and $M[\mathbf Z]$ is sufficiently small, the particles are distant and do not concentrate in a small box.
This is the reason for the name ``non-concentrated configurations'' of this section.
With these latter notations, the main result of this section is the following estimate:
\begin{theorem} \label{theo_bonneconfig}
Let $\alpha \in (2/3,1),$ $\eta  \in (0,1),$ $R>0$ and $\delta >1/2$ be given.  There exists a positive constant $K := K(\alpha,R,\Omega_0)$ such that, for $N \geq 1$,
given $\mathbf Z^N \in\mathcal O^N$ such that
\begin{equation}\label{eq:parametres}
d_{\mathrm{min}}[\mathbf Z^N] \geq \dfrac{1}{N^{\alpha}}, \quad 
M[\mathbf Z^N] \leq \dfrac{N^{3(1-\alpha) /5}}{\eta}, \quad 
\lambda[\mathbf Z^N] = \left( \dfrac{\eta M[\mathbf Z^N]}{N}\right)^{\frac 13},
\end{equation}
we have
\begin{multline*}
\|U_N[\mathbf Z^N] - u[\rho,j]\|_{L^2(B(0,R))} 
 \leq \dfrac{K}{\eta}\Bigg[ 
 \|j[\mathbf Z^N] - j\|_{[C^{0,1/2}_b(\mathbb R^3)]^{*}}
 \\ 
\quad   + \left(1 +\frac{1}{N} \sum_{i=1}^N |V_i^N|^2 \right)^{\frac 54} 
\Bigg( 
 \dfrac{1 + \|\rho\|_{L^2(\Omega_0)}}{{\delta}^{1/3}} 
+ \delta^{6} \left( \dfrac{1}{N^{\frac{1-\alpha}{5}}}+ \|\rho[\mathbf Z^N] - \rho\|_{[C^{0,1/2}_b(\mathbb R^3)]^{*}} \right)\Bigg)   \Bigg] .
\end{multline*}
where we recall that
$$
\rho[\mathbf Z^N] = \dfrac{1}{N} \sum_{i=1}^N \delta_{X_i^N}\,, \qquad 
j[\mathbf Z^N] =  \dfrac{1}{N} \sum_{i=1}^N V_i^N \delta_{X_i^N}.
$$
\end{theorem}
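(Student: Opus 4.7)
The plan is to apply the duality estimate of Proposition \ref{prop_SB2} on the ball $B(0,R)$: this reduces the theorem to bounding, uniformly over divergence-free $w \in \mathcal D(\mathbb R^3)$ with $\|\nabla w\|_{L^2(\mathbb R^3)} + \|\nabla^2 w\|_{L^2(\mathbb R^3)} \le 1$, the residual
\begin{equation*}
\mathcal R_N(w) := \int_{\mathbb R^3} \nabla U_N[\mathbf Z^N] : \nabla w + 6\pi \int_{\mathbb R^3} \rho\, U_N[\mathbf Z^N] \cdot w - 6\pi \langle j, w\rangle,
\end{equation*}
since subtracting the weak form of \eqref{eq_SB} identifies $\mathcal R_N(w)$ with the bilinear form $a_\rho(U_N[\mathbf Z^N] - u[\rho,j],w)$. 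Such test functions automatically lie in $C^{0,1/2}_b(\mathbb R^3)$ by Morrey's embedding (with $[w]_{C^{0,1/2}} \lesssim \|\nabla^2 w\|_{L^2}$ via $\dot H^1 \hookrightarrow L^6$), which is the very reason why the dual norms $\|\cdot\|_{[C^{0,1/2}_b]^*}$ appear in the statement.

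Following \cite{Hil17,MH}, the central step is to construct a divergence-free corrector $w_N$ that equals a local cell-average $\bar w_i \in \mathbb R^3$ on a thin neighbourhood of each $B_i^N$ and coincides with $w$ outside. I would build $w_N$ cell-by-cell on the partition of $\mathbb R^3$ into cubes of side $\lambda[\mathbf Z^N]$, using the Bogovskii operator to restore incompressibility locally. The non-concentration hypotheses \eqref{eq:parametres}, in particular $d_{\min}[\mathbf Z^N] \geq N^{-\alpha}$ and the cell-density control $M[\mathbf Z^N]$, bound the Bogovskii operator norm and the corrector cost $\|\nabla(w_N - w)\|_{L^2}$, which ultimately scales like $[w]_{C^{0,1/2}}\,N^{-(1-\alpha)/5}$.

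Testing the weak Stokes formulation of $U_N[\mathbf Z^N]$ against $w_N$ turns the volume integral into boundary contributions $\int_{\partial B_i^N}\Sigma(U_N,P_N)n\cdot \bar w_i\,\mathrm d\sigma$, and Stokes' law \cite[formula (4)]{DGR} identifies each of them with $(6\pi/N)(V_i^N - \bar U_i^N)\cdot\bar w_i$ for a suitable local average $\bar U_i^N$ of $U_N$ around $X_i^N$. Summing over $i$ and recognising the empirical measures $\rho^N[\mathbf Z^N]$ and $j^N[\mathbf Z^N]$ recasts the residual as
\begin{equation*}
\mathcal R_N(w) = 6\pi\,\langle j^N[\mathbf Z^N] - j,\bar w\rangle + 6\pi\,\langle \rho - \rho^N[\mathbf Z^N],\bar U^N\cdot\bar w\rangle + \mathcal E,
\end{equation*}
where $\mathcal E$ collects corrector errors of order $N^{-(1-\alpha)/5}$, amplified by the kinetic-energy factor $(1+\tfrac1N\sum|V_i^N|^2)^{5/4}$ inherited from Lemma~\ref{lem:U[ZN]-boundD}, together with cross-terms reabsorbable into the left-hand side via the coercivity of $a_\rho$.

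The main technical obstacle, and where the parameter $\delta$ intervenes, is that the natural test function $\bar U^N\cdot\bar w$ does not lie in $C^{0,1/2}_b(\mathbb R^3)$ with an absolutely controlled norm. I would regularise $\rho$ at scale $1/\delta$: the approximation error satisfies $\|\rho - \rho_\delta\|_{[C^{0,1/2}_b]^*} \lesssim (1+\|\rho\|_{L^2})/\delta^{1/3}$ by an $L^2$--$C^{0,1/2}$ interpolation on $\Omega_0$, whereas the mollified product $\rho_\delta\,\bar U^N\bar w$ carries a Hölder norm of order $\delta^{6}(1+\tfrac1N\sum|V_i^N|^2)^{5/4}$ once the Sobolev bounds on $U_N$ and the confinement $\mathrm{Supp}(\rho)\subset\Omega_0$ are combined. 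Balancing the mollification error $\delta^{-1/3}$ against the $\delta^{6}$ amplification in front of both $\|\rho^N[\mathbf Z^N] - \rho\|_{[C^{0,1/2}_b]^*}$ and the residual error $N^{-(1-\alpha)/5}$ produces exactly the structure of the claimed estimate. Throughout, unboundedness of the fluid domain is handled by the confinement of $\rho$ to $\Omega_0$ and by the global $\dot H^1$ control from Proposition~\ref{prop_SB1}; gathering all contributions yields the theorem.
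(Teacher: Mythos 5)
Your high-level architecture matches the paper's: reduce by the duality estimate of Proposition~\ref{prop_SB2} to bounding a residual $\mathcal R_N(w)$ over test functions $w$ with $\|w\|_{D^2}\le1$, build a cell-by-cell corrector from Stokeslets on the $\lambda[\mathbf Z^N]$-covering, invoke Stokes' law to turn boundary integrals into the drag force $(6\pi/N)(V_i^N-\bar U_i^N)\cdot w(X_i^N)$, and recognize the empirical measures $\rho^N,j^N$ in the resulting sums. That much is the correct skeleton.

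However, the way you account for the parameter $\delta$ is not what makes the argument close, and I do not think it would work as you describe. First, you make no mention of the corridor set $\mathcal Z_\delta$ of particles sitting within $\lambda[\mathbf Z^N]/\delta$ of a cube face. That exclusion is essential: the Stokeslet of a particle too near a cube boundary produces an uncontrollable contribution to the interface integrals, and the paper handles this by removing these particles from $w^s$ and paying an $O(1/\delta)$ error via the corridor estimate $\#\mathcal Z_\delta\lesssim (N/\delta)(1+\tfrac1N\sum|V_i|^2)$. Your corrector, built uniformly on all particles in each cube, does not close without such a device, and the $\delta^{-1/3}$ in the final bound is in fact the dominant term coming out of this corridor mechanism (it appears as $\delta^{-2/3}$ inside $E_0$ before a square root), not out of a regularization of $\rho$.

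Second, you propose to mollify $\rho$ at scale $1/\delta$ to control the pairing $\langle\rho-\rho^N,\bar U^N\cdot\bar w\rangle$. This does not address the actual obstruction: $\rho^N$ is a sum of Diracs, so the pairing needs the \emph{test function} $\bar U^N\cdot\bar w$ to have a bounded $C^{0,1/2}$ norm, and mollifying $\rho$ leaves the Hölder norm of $\bar U^N$ untouched. The paper instead mollifies $U[\mathbf Z]$ itself at scale $\delta^{-3}$, getting $\|U_\delta\|_{C^{0,1}}\lesssim\delta^{9/2}\|\nabla U\|_{L^2}$ and $\|U-U_\delta\|_{L^2}\lesssim\delta^{-3}\|\nabla U\|_{L^2}$; the $\delta^{9/2}$ enters $E_\rho$. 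Third, the leading $\delta^6$ amplification does not come from a "Hölder norm of a mollified product" — it arises from the Bogovskii and Poincar\'e--Wirtinger constants $C_{\mathfrak B}[2\delta](1+C_{PW}[2\delta])\lesssim\delta^{11/2}$ on the cubic annulus $A(0,1-1/\delta,1)$ that appear when you lift $U-\bar u_\kappa$ in the annular collar near $\partial T_\kappa$, multiplied by a residual $\sqrt{2\delta}$ from the Stokeslet tail estimate. Your proposal does not touch the cell-problem functional inequalities at all, so the claimed $\delta^6$ has no mechanism behind it. In short: right scaffold, but the $\delta$-bookkeeping that actually produces the stated exponents is absent, and the substitute you offer (mollifying $\rho$) does not replace it.
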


The remainder of this section is devoted to the proof of this {theorem}. It is based on interpolating the method of \cite{MH}
for dilute suspensions with the construction of \cite{Hil17}. Though the computations follow the line of these previous reference,
we give an extensive proof for completeness because estimates have to be adapted at each line.

\medskip

\begin{proof}[Proof of Theorem~\ref{theo_bonneconfig}]
From now on, we pick $\alpha,\eta,\delta,R$ as in the assumptions of our Theorem~\ref{theo_bonneconfig}, $N \geq 1$  and $\mathbf{Z} = (X_1,V_1,\ldots,X_N,V_N) \in \mathcal O^N$ 
such that \eqref{eq:parametres} holds true.  For legibility, we forget the $N$-dependencies in many notations in the proof.  We  recall that, by assumption, $\mathrm{Supp}(\rho[\mathbf Z]) \cup \mathrm{Supp}(j[\mathbf Z]) \subset \Omega_0$ and we denote $\Omega_1 := \Omega_0 + B(0,1).$

To begin with, we note that, by applying the variational characterization associated with the Stokes problem (see \cite[Theorem 3]{Hil17}), 
we can construct a constant $C_0$ such that:
\begin{equation} \label{eq_defEinfini}
\|\nabla U[\mathbf Z]\|^2_{L^2(\mathbb R^3)} \leq \dfrac{C_0}{N}\sum_{i=1}^N |V_i|^2.
\end{equation}
This property relies mostly on the fact that $Nd_{\mathrm{min}}[\mathbf Z]$ is bounded below by a strictly positive constant. 
We refer the reader to \cite[Section 3]{Hil17} for more details.

We want to compute a bound by above on $\|U[\mathbf Z] - u[\rho,j]\|_{L^2(B(0,R))}.$
Applying {\bf Proposition~\ref{prop_SB2}}, this reduces to compute a bound for: 
\begin{multline*}
[U[\mathbf Z] - u[\rho,j]]_{\rho,2} := \sup\Bigl\{\left|\int_{\mathbb R^3} \nabla ( U[\mathbf Z] - u[\rho,j] ) : \nabla w + 6\pi \int_{\mathbb R^3} \rho ( U[\mathbf Z] - u[\rho,j])\cdot w \right|,  
w \in\mathcal D(\mathbb R^3) \text{ with } \\ \|\nabla w\|_{L^2(\mathbb R^3)} + \|\nabla^2 w\|_{L^2(\mathbb R^3)} \le 1     \Bigr\},
\end{multline*}
or to find a constant $K$ independent of $U[\mathbf Z]$ and $w \in \mathcal D(\mathbb R^3)$ for which there holds
$$
 \left|\int_{\mathbb R^3} \nabla (U[\mathbf Z] - u[\rho,j]) : \nabla w + 6\pi \int_{\mathbb R^3} \rho (U[\mathbf Z] - u[\rho,j])\cdot w \right| \leq K \left[ \|\nabla w\|_{L^2(\mathbb R^3)} + \|\nabla^2 w\|_{L^2(\mathbb R^3)} \right].
$$
Hence, in what follows we fix $w \in \mathcal D(\mathbb R^3)$ and we focus on:
$$
E[w] :=  \int_{\mathbb R^3} \nabla [U[\mathbf Z] - u[\rho,j]] : \nabla w + 6\pi \int_{\mathbb R^3} \rho (U[\mathbf Z] - u[\rho,j])\cdot w .
$$
We apply without mention below that, since $\Omega_1$ is bounded, there holds:
$$
\|w\|_{C^{0,1/2}(\overline{\Omega_1})} + \|\nabla w\|_{L^6(\Omega_1)} \lesssim  \|\nabla w\|_{L^2(\mathbb R^3)} + \|\nabla^2 w\|_{L^2(\mathbb R^3)} =: \|w\|_{D^2(\mathbb R^3)} .
$$

First, we decompose the error term $E[w]$ into several pieces that are treated independently in the rest of the proof.
Since $u[\rho,j]$ is the weak solution to the Stokes-Brinkman problem associated with $(\rho,j),$ this error term rewrites:
 \begin{eqnarray*}
E[w] & =&   \int_{\mathbb R^3} ( \nabla U[\mathbf Z]  : \nabla w + 6\pi \rho U[\mathbf Z] \cdot w )  - \int_{\mathbb R^3} ( \nabla u[\rho,j] : \nabla w + 6\pi \rho  u[\rho,j]\cdot w ),  \\
		&=&  \int_{\mathbb R^3} ( \nabla U[\mathbf Z]  : \nabla w + 6\pi \rho U[\mathbf Z] \cdot w )  - 6\pi \int_{\mathbb R^3} j \cdot w.
\end{eqnarray*}

We now work on the gradient term involved in this error:
$$
\int_{\mathbb R^3}  \nabla U[\mathbf Z]  : \nabla w,
$$
in the spirit of \cite{Hil17}. Applying the construction in \cite[Appendix B]{Hil17}, we obtain a covering $(T_{\kappa})_{\kappa \in \mathbb Z^3}$ of $\mathbb R^3$ with cubes of width $\lambda[\mathbf Z]$ such that, denoting
$$
\mathcal Z_{\delta} := \left\{i \in \{1,\ldots,N\} \text{ s.t.\ } \text{dist} \left( X_i\,, \bigcup_{\kappa \in \mathbb Z^3} \partial T_{\kappa} \right) < \dfrac{\lambda[\mathbf Z]}{\delta} \right\}\,,
$$
there holds:
\begin{equation} \label{eq_couloir}
\dfrac{1}{N}\sum_{i \in \mathcal Z_{\delta}} (1+|V_i|^2) \leq \dfrac{12}{\delta} \dfrac{1}{N} \sum_{i=1}^N (1+|V_i|^2).
\end{equation}
Moreover, keeping only the indices $\mathcal K$ such that $T_{\kappa}$ intersects the $1/N$ neighborhood of $\Omega_0,$ we obtain a covering $(T_{\kappa})_{\kappa \in \Kappa}$ 
of the $1/N$-neighborhood of $\Omega_0.$  We  do not make precise the set of indices $\Kappa$.  The only relevant property to our computations is that 
\begin{equation} \label{eq_propKappaN}
\card \Kappa \lesssim \frac{|\Omega_1|}{|\lambda|^3} \,.
\end{equation}
Associated with this covering, we introduce the following notations. For arbitrary $\kappa \in \Kappa,$
we set 
$$
 \mathcal I_{\kappa} := \{ i \in \{1,\ldots, N \} \text{ s.t. } X_{i} \in T_{\kappa}\}\,, \quad M_{\kappa}[\mathbf Z] := \card \mathcal{I}_{\kappa} \,.
$$
We note that, since $T_{\kappa}$ has width $\lambda[\mathbf Z],$ we have that $M_{\kappa}[\mathbf Z] \leq M[\mathbf Z]$
for all $\kappa.$ Moreover, by construction of $\Kappa,$ all the $X_i$ are included in one $T_{\kappa}$ so that 
the $(\mathcal I_{\kappa})_{\kappa \in \Kappa}$ realizes a partition of $\{1,\ldots,N\}.$

\medskip

We construct then an approximate test-function $w^{s}$ piecewisely on the covering of $\Omega_0.$ Given $\kappa \in \mathcal \Kappa,$ we set:
\begin{equation} \label{eq_wsN}
w^s_{\kappa}(x) = \sum_{i \in \mathcal I_{\kappa} \setminus \mathcal Z_{\delta}} G^N[w(X_i)](x-X_i)\,, \quad \forall \, x \in \mathbb R^3\,,
\end{equation}
where $G^N[v]$ is the unique weak solution to the Stokes problem outside $B(0,1/N)$ with vanishing condition at infinity  and constant boundary condition
equal to $v \in \mathbb R^3$ on $\partial B(0,1/N)$. Explicit formulas are available in textbooks and are recalled in Appendix {\ref{app_stokes}}. 
We set:
$$
w^{s} = \sum_{\kappa \in \Kappa} w^s_{\kappa}  \mathbf{1}_{T_{\kappa}}.
$$
We  note that $w^{s} \notin H^1_0(\mathbb R^3)$ because of jumps at interfaces $\partial T_{\kappa}.$ 
 It will be sufficient for our purpose that $w^s \in H^1(\mathring{T}_{\kappa})$ 
for arbitrary $\kappa \in \Kappa.$ Setting:
$$
E_0[w] := \int_{\mathbb R^3} \nabla U[\mathbf Z]  :\nabla w - \sum_{\kappa \in \Kappa} \int_{\mathbb R^3} \nabla U[\mathbf Z] : \nabla w_{\kappa}^s,
$$
we have:
$$
E[w] = E_0[w] + \sum_{\kappa \in \Kappa} \int_{T_{\kappa}} \nabla U[\mathbf Z]  : \nabla w_{\kappa}^s + 6\pi \int_{\mathbb R^3} \rho U[\mathbf Z]  \cdot w - 6\pi\int_{\mathbb R^3} j  \cdot w.
$$
Now for arbitrary $\kappa \in \Kappa,$ we apply in {\bf Section~\ref{sec_E1N}} the properties of $G^N$  and integrate by parts the 
integral on $T_{\kappa}.$ We obtain an integral on $\partial T_{\kappa}$ in which we approximate $U[\mathbf Z] $ by:
$$
\bar{u}_{\kappa} := \dfrac{1}{|[T_{\kappa}]_{2\delta}|} \int_{[T_{\kappa}]_{2\delta}} U[\mathbf Z] (x){\rm d}x,
$$
where $[T_{\kappa}]_{2\delta}$ is the $\lambda[\mathbf Z]/(2\delta)$-neighborhood of $\partial T_{\kappa}$
inside $\mathring{T}_{\kappa}.$ In this way we obtain that 
$$
 \int_{T_{\kappa}} \nabla U[\mathbf Z]  : \nabla w_{\kappa}^s = \dfrac{6 \pi}{N} \sum_{i \in \mathcal I_{\kappa} \setminus \mathcal Z_{\delta}}  w(X_i)    \cdot ( V_i -  \bar{u}_{\kappa} ) + Err_{\kappa}.
$$
where it will arise that $Err_{\kappa}$ is due to the approximation of $U[\mathbf Z]$ by $\bar{u}_{\kappa}$ on $\partial T_{\kappa}$ only. So, we set:
$$
E_1[w] = \sum_{\kappa \in \Kappa}  \left( \int_{T_{\kappa}} \nabla U[\mathbf Z] : \nabla w_{\kappa}^s -  \dfrac{6 \pi}{N} \sum_{i \in \mathcal I_{\kappa} \setminus \mathcal Z_{\delta}} w(X_i) \cdot (V_i - \bar{u}_{\kappa}) \right)
$$
and we rewrite:
\begin{multline*}
E[w] = E_0[w] + E_1[w] + \dfrac{6 \pi}{N} \sum_{\kappa \in \Kappa} \sum_{i \in \mathcal I_{\kappa} \setminus \mathcal Z_{\delta}} w(X_i) \cdot V_i -  \dfrac{6\pi}{N}  \sum_{\kappa \in \Kappa} \sum_{i \in \mathcal I_{\kappa} \setminus \mathcal Z_{\delta}} w(X_i) \cdot \bar{u}_{\kappa}  \\+ 6\pi \int_{\mathbb R^3} \rho U[\mathbf Z]  \cdot w - 6\pi \int_{\mathbb R^3} j \cdot w.
\end{multline*}
Eventually, we obtain:
\begin{equation} \label{eq_EN}
E[w] = E_0[w] + E_1[w] - E_{\rho}[w] + E_{j}[w]  ,
\end{equation}
where we denote:
\begin{align*}
& E_{j}[w] :=  \dfrac{6 \pi}{N} \sum_{\kappa \in \Kappa}  \sum_{i \in \mathcal I_{\kappa} \setminus \mathcal Z_{\delta}} w(X_i) \cdot V_i- 6 \pi \int_{\mathbb R^3} j \cdot w  ,\\
& E_{\rho}[w] := \sum_{\kappa \in \Kappa} \left[ \dfrac{6\pi}{N} \sum_{i \in \mathcal I_{\kappa} \setminus \mathcal Z_{\delta}} w(X_i)\right] \cdot \bar{u}_{\kappa}  -   6\pi \int_{\mathbb R^3} \rho U[\mathbf Z]  \cdot w.
\end{align*}

Applying successively Lemma~\ref{lem_EN0} , Lemma~\ref{lem_EN1}, Lemma~\ref{lem_ENj} and Lemma~\ref{lem_ENrho} below, and recalling \eqref{eq:parametres} to replace $\lambda[\mathbf Z],d_{\mathrm{min}}[\mathbf Z]$
and $M[\mathbf Z],$ we obtain respectively:
\begin{align*}
|E_0[w]| &  \lesssim \dfrac{1}{\eta}\left(\dfrac{1}{\delta^{2/3}} + \dfrac{1}{N^{\frac 25(1-\alpha)}} +  \dfrac{\delta}{N^{\frac 45 \alpha - \frac{2}{15}}} \right)^{\frac 12} \left( 1 + \frac 1N \sum_{i=1}^N |V_i|^2 \right) \|w\|_{D^2(\mathbb R^2)} , \\
|E_1[w]| & \lesssim \dfrac{\delta^6}{\sqrt{\eta}} \dfrac{1}{N^{\frac{2+3\alpha}{15}}}  \left( 1 + \frac 1N \sum_{i=1}^N |V_i|^2 \right)^{\frac 12} \|w\|_{D^2(\mathbb R^2)} , \\
|E_j[w]| & \lesssim \left( \|j[\mathbf Z] - j\|_{[C^{0,1/2}_b(\mathbb R^3)]^*} + \dfrac{1}{\delta} \left( 1+ \frac 1N\sum_{i=1}^N |V_i|^2 \right)\right)\|w\|_{D^2(\mathbb R^2)} , \\
|E_{\rho}[w]| & \lesssim \left[\dfrac{1}{\sqrt{\delta \sqrt{\eta}}}  + \dfrac{\|\rho\|_{L^2(\Omega_0)}}{\delta} + \delta^{\frac 92}\left( \dfrac{1}{N^{\frac{2+3\alpha}{15}}} + \|\rho[\mathbf Z] -\rho\|_{[C^{0,1/2}_b(\mathbb R^3)]^*} \right)\right] \left( 1+ \dfrac{1}{N} \sum_{i=1}^{N} |V_i|^2 \right)^{\frac 54}\|w\|_{D^2(\mathbb R^3)} .
\end{align*}
Gathering the above estimates, recalling that $\eta \in (0,1),$ $\delta >1/2$, 
and remarking that, since $2/3 \leqslant \alpha <1$ there holds 
\[ 
\frac{1-\alpha}{5} < \frac{2}{5}\left(\alpha - \frac 13\right) < \frac{2+3\alpha}{15},
\] 
we finally obtain: 
\begin{multline*}
|E[w]| \lesssim \dfrac 1\eta \Bigg[ \left( \dfrac{(1+\|\rho\|_{L^2(\Omega_0)})}{\delta^{1/3} }  
+  \delta^{6} \left(  \dfrac{1}{N^{\frac 15(1-\alpha)}} +  \|\rho[\mathbf Z] -\rho\|_{[C^{0,1/2}_b(\mathbb R^3)]^*} \right) \right) \left( 1+ \dfrac{1}{N} \sum_{i=1}^{N} |V_i|^2 \right)^{\frac 54} \\
+ \|j[\mathbf Z] - j\|_{[C^{0,1/2}_b(\mathbb R^3)]^*} \Bigg] \|w\|_{D^2(\mathbb R^3)},
\end{multline*}
which ends the proof of Theorem~\ref{theo_bonneconfig}. 
\end{proof}

We  proceed now to estimate the different error terms $E_0[w]$, $E_1[w]$, $E_j[w]$ and $E_{\rho}[w]$ appearing in the proof of Theorem~\ref{theo_bonneconfig} above.  
This is done in Sections~\ref{sec_EN0}, 
\ref{sec_E1N}, \ref{sec_ENj} and \ref{sec_ENrho}, respectively.

\subsection{Estimating $E_0[w]$.}\label{sec_EN0}
We recall that, with the notations above, there holds:
$$
E_0[w] =  \sum_{\kappa \in \Kappa} \left(  \int_{T_{\kappa}} \nabla U[\mathbf Z]  :\nabla w - \int_{T_{\kappa}} \nabla U[\mathbf Z]  : \nabla w_{\kappa}^s\right),
$$
We have the following result:
\begin{lemma} \label{lem_EN0}
For $N \geq 1$,  we have:
\begin{multline} \label{eq_decomposition}
|E_0[w]|
\lesssim  
 \left(1+\frac 1N \sum_{i=1}^N|V_i|^2\right)  \ldots \\
\ldots \left(\dfrac{1}{\delta^{2/3}} \left(1 + \dfrac{M[\mathbf Z]^2}{|Nd_{\mathrm{min}}[\mathbf Z]|^2} + \dfrac{M[\mathbf Z]^{\frac 53}}{{Nd_{\mathrm{min}}[\mathbf Z]}} + \dfrac{M[\mathbf Z]^2}{|Nd_{\mathrm{min}}[\mathbf Z]|^4}
 \right)+  \dfrac{M[\mathbf Z]}{Nd_{\mathrm{min}}[\mathbf Z]}+  \delta \dfrac{M[\mathbf Z]^{\frac 53}}{N\lambda[\mathbf Z]}  \right)^{1/2} \, \|w\|_{D^2(\mathbb R^3)}.
\end{multline}

\end{lemma}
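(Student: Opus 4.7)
The plan is to estimate $E_0[w]$ by Cauchy--Schwarz, relying on the global energy bound \eqref{eq_defEinfini} to control $\|\nabla U[\mathbf Z]\|_{L^2(\mathbb R^3)}$ by $(\tfrac{1}{N}\sum_i |V_i|^2)^{1/2}$, and then carefully analyzing the norm $\|\nabla(w-w^s_\kappa)\|_{L^2(T_\kappa)}$ on each cell. More precisely, since $w^s = \sum_\kappa w^s_\kappa \mathbf{1}_{T_\kappa}$, the starting point is
\[
|E_0[w]| \;\le\; \|\nabla U[\mathbf Z]\|_{L^2(\mathbb R^3)}\cdot \Bigl(\sum_{\kappa\in\Kappa}\|\nabla(w-w^s_\kappa)\|_{L^2(T_\kappa)}^2\Bigr)^{1/2},
\]
which via \eqref{eq_defEinfini} explains the factor of $(1+\tfrac{1}{N}\sum_i|V_i|^2)$ in the final bound (with the extra room from $\|w\|_{L^\infty(\Omega_1)}\lesssim \|w\|_{D^2(\mathbb R^3)}$ appearing through the Sobolev embedding when $w(X_i)$ is evaluated pointwise in $w^s_\kappa$).

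The cell-wise analysis of $\|\nabla(w-w^s_\kappa)\|_{L^2(T_\kappa)}^2$ rests on the explicit Stokeslet formula for $G^N$ recalled in Appendix~\ref{app_stokes}, namely $|G^N[v](y)|\lesssim |v|/(N|y|)$ and $|\nabla G^N[v](y)|\lesssim |v|/(N|y|^2)$ for $|y|\ge 1/N$, combined with the embedding $\|w\|_{C^{0,1/2}(\overline{\Omega_1})}+\|\nabla w\|_{L^6(\Omega_1)}\lesssim \|w\|_{D^2(\mathbb R^3)}$. I would split the local estimate into four contributions: a bulk term from $\|\nabla w\|_{L^2(T_\kappa)}^2$ outside small neighborhoods of the retained particles; diagonal Stokeslet contributions $\sum_{i\in\mathcal I_\kappa\setminus \mathcal Z_\delta}\|\nabla G^N[w(X_i)]\|_{L^2(B(X_i,d_{\mathrm{min}}/2)\setminus B(X_i,1/N))}^2$, each of order $|w(X_i)|^2/N$; cross-interactions $\int \nabla G^N[w(X_i)]:\nabla G^N[w(X_j)]$ for $i\neq j\in\mathcal I_\kappa$, which after summation under the constraints $|X_i-X_j|\ge d_{\mathrm{min}}[\mathbf Z]$ and $|\mathcal I_\kappa|\le M[\mathbf Z]$ produce the geometric combinations $M^2/(Nd_{\mathrm{min}})^2$, $M^{5/3}/(Nd_{\mathrm{min}})$ and $M^2/(Nd_{\mathrm{min}})^4$ visible in the statement; and finally the contribution from corridor particles $i\in \mathcal Z_\delta$, absent from $w^s_\kappa$, whose total weight is controlled by \eqref{eq_couloir} combined with the $C^{0,1/2}$-regularity of $w$, producing the $\delta^{-2/3}$ factor (the exponent $2/3$ arises from interpolating the $\delta^{-1}$ counting bound against the $1/2$-H\"older scaling of $w$). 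The last term $\delta M^{5/3}/(N\lambda)$ captures the discontinuities of $w^s$ across $\partial T_\kappa$: here the construction of \cite{Hil17} places these jumps in a layer of width $\sim \lambda[\mathbf Z]/\delta$, and a trace-type inequality on this thin layer yields the stated dependence.

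The main technical obstacle is the careful bookkeeping of the cross-interaction integrals between distant Stokeslets attached to distinct particles within the same cell, since each $G^N[w(X_i)]$ is long-ranged. I would control these via a dyadic decomposition in inter-particle distance and the occupancy bound $|\mathcal I_\kappa|\le M[\mathbf Z]$, together with the minimum-distance constraint $d_{\mathrm{min}}[\mathbf Z]$, keeping track of every power of $N$, $M[\mathbf Z]$, $d_{\mathrm{min}}[\mathbf Z]$, $\lambda[\mathbf Z]$, and $\delta$. The geometric counting underlying the covering $(T_\kappa)_{\kappa\in\Kappa}$ inherited from \cite{Hil17} has to be adapted to the present unbounded setting: one restricts to cells intersecting the $1/N$-neighborhood of $\Omega_0$ and uses \eqref{eq_propKappaN} to bound $\#\Kappa$ in terms of $|\Omega_1|/\lambda^3$, after which the aggregate cell-wise estimate combined with Cauchy--Schwarz yields the claim.
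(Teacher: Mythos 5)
Your starting point fails. You propose to bound $E_0[w]$ by Cauchy--Schwarz against $\bigl(\sum_\kappa \|\nabla(w - w^s_\kappa)\|_{L^2(T_\kappa)}^2\bigr)^{1/2}$, but the Stokeslet combination $w^s_\kappa$ is designed to mimic $U[\mathbf Z]$ near the particles, not to approximate the macroscopic test function $w$: on most of the cell $T_\kappa$ the gradient $\nabla w^s_\kappa$ decays like $1/(N|x-X_i|^2)$ and is negligible, while $\nabla w$ is $O(1)$. Hence $\|\nabla(w - w^s_\kappa)\|_{L^2(T_\kappa)} \gtrsim \|\nabla w\|_{L^2(T_\kappa)}$, and summing over cells produces a quantity of order $\|\nabla w\|_{L^2(\Omega_1)} = O(\|w\|_{D^2})$ with no decaying $\delta^{-1/3}$ prefactor. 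The ``bulk term'' you mention is not small, and no amount of bookkeeping of the cross-interaction Stokeslet integrals, occupancy bounds, or H\"older interpolation can rescue this.

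The missing idea is that one must exploit the weak formulation satisfied by $U[\mathbf Z]$, namely $\int \nabla U[\mathbf Z] : \nabla \phi = 0$ for every divergence-free $\phi \in H^1_0(\mathcal F)$. The paper constructs an intermediate divergence-free vector field $\tilde w$ that coincides with $w$ on every ball $B_i$ (built cell-by-cell from the Stokes problem with boundary data $w$ on the holes and $0$ on $\partial T_\kappa$ -- the objects $\bar w_\kappa$ -- then corrected with Bogovskii lifts on the corridor particles in $\mathcal Z_\delta$), so that $w - \tilde w \in H^1_0(\mathcal F)$ and the weak formulation gives $\int \nabla U : \nabla(w - \tilde w) = 0$. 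This identity is what creates the cancellation: $E_0[w]$ reduces to $\epsilon_1 + \epsilon_2$ with $\epsilon_1 = \sum_\kappa \int_{T_\kappa} \nabla U : \nabla(\bar w_\kappa - w^s_\kappa)$ and $\epsilon_2 = \int \nabla U : \nabla(\tilde w - \bar w)$. Only these \emph{differences} are small: Proposition~\ref{prop_truncationprocess} produces the $M/(Nd_{\min})$ and $\delta M/(N\lambda)$ contributions in $\epsilon_1$, and the corridor counting \eqref{eq_couloir} together with Bogovskii-operator homogeneity produces the $\delta^{-2/3}$ factors in $\epsilon_2$. Your attribution of the term $\delta M^{5/3}/(N\lambda)$ to ``discontinuities of $w^s$ across $\partial T_\kappa$'' is also not accurate; it arises in the cell-problem comparison $\bar w_\kappa - w^s_\kappa$ because $\bar w_\kappa$ vanishes on $\partial T_\kappa$ whereas the Stokeslets do not.
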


\begin{proof}
The proof is a simpler version of \cite[Proposition 11]{Hil17} but keeping track of the dependencies on $w$ of all constants.

\medskip

First, we construct an intermediate test-function similar to \cite[pp. 25-26]{Hil17}. We recall here the ideas of the construction. For arbitrary $\kappa \in \Kappa,$ we consider the Stokes problem 
on $\mathring{T}_{\kappa} \setminus \bigcup_{i \in \mathcal I_{\kappa}\setminus \mathcal Z_{\delta}} \overline{B_i}$ with boundary conditions:
\begin{equation} \label{cab_stokesenplus}
\left\{
\begin{array}{rcll}
u(x) &=& w(x)  \,, & \text{ on $\partial B_{i}$ for $i \in  \mathcal I_{\kappa}\setminus \mathcal Z_{\delta}$}\,, \\[6pt]
u(x) &=& 0 \,, & \text{ on $\partial T_{\kappa}$}\,.
\end{array}
\right.
\end{equation}
The analysis of this problem is done in Appendix~\ref{app_stokes} and yields a solution 
%
%
$\bar{w}_{\kappa}$. We keep the symbol $\bar{w}_{\kappa}$ to denote its extension to 
 $\R^3$ (by $w$ on the holes and
by $0$ outside $\mathring{T}_{\kappa}^N$). We obtain a divergence-free $\bar{w}_{\kappa} \in H^1(\mathbb R^3)$ having support in $\Omega_1$. We then add the $\bar{w}_{\kappa}$ into:
$$
\bar{w} = \sum_{\kappa \in \Kappa} \bar{w}_{\kappa}\,.
$$  
and correct the values of $\bar{w}$ on the $B_{i}$ when $i \in \mathcal Z_{\delta}$ in order that it fits the same boundary
conditions as $w$ on the $B_i,$ $i=1,\ldots,N.$ We introduce $\chi^N$ a truncation function such that $\chi^N = 1$ in 
$B(0,1/N)$ and $\chi^N = 0$ outside $B(0,2/N)$ and we denote:
\begin{eqnarray*}
\tilde{w} &=& \sum_{i \in \mathcal Z_{\delta}} \left[ \chi^N(\cdot -X_i) w -  \mathfrak B_{X_i,\frac1N,\frac2N} [x \mapsto w(x) \cdot \nabla \chi^N(x-X_i)] \right]  \\
		&& \; + \prod_{i \in \mathcal Z_{\delta}} (1- \chi^N(\cdot -X_i)) \bar{w} + \sum_{i \in \mathcal Z_{\delta}} \mathfrak B_{X_i,\frac1N,\frac2N} [x \mapsto \bar{w}(x) \cdot \nabla \chi^N(x-X_i)]\,.
\end{eqnarray*}
where $\mathfrak B_{X,r_1,r_2}$ is the Bogovskii operator that lifts the divergence in bracket
with a vector-field in $H^1_0(B(X,r_2) \setminus \overline{B(X,r_1)})$.
Consequently, $w - \tilde{w} \in H^1_0(\mathcal F)$ is an available test-function in the weak-formulation of the
Stokes problem satisfied by $U[\mathbf Z].$ This yields:
$$
\int_{\mathbb R^3} \nabla U[\mathbf Z] : \nabla (w- \tilde{w}) = 0.
$$
We rewrite this identity as follows:
\begin{equation} \label{eq_cpreskesa}
E_0[w] =  \epsilon_1 + \epsilon_2\,,
\end{equation}
with:
\[
\epsilon_1 = \sum_{\kappa \in \Kappa} \int_{T_{\kappa}} \nabla U[\mathbf Z]  : \nabla (\bar{w}_{\kappa} - w^s_{\kappa} )  \,, \qquad
\epsilon_2 =  \int_{\Omega_1} \nabla U[\mathbf Z]  : \nabla ( \tilde{w} - \bar{w})\,.  
\]
We control now the error term $\epsilon_1.$
 For arbitrary $\kappa \in \Kappa,$ we apply {Proposition~\ref{prop_truncationprocess}} to $\bar{w}_{\kappa}$ (noting that $"d_{m}" = \min(d_{min}[\mathbf Z],\lambda[\mathbf Z]/\delta)$ and the remark at the end of Section \ref{app_stokes}) and we obtain: 
$$
 \|\nabla (w^s_{\kappa} - \bar{w}_{\kappa})\|_{L^2(T_{\kappa})} \lesssim  \dfrac{M_{\kappa}[\mathbf Z]}{N}\left(\dfrac{1}{{d_{\mathrm{min}}[\mathbf Z]}} +{\dfrac{\delta}{\lambda[\mathbf Z]}} \right)^{1/2}\left( \|w\|_{C^{0,1/2}(T_{\kappa})} + 
 \|\nabla w\|_{L^6(T_{\kappa})}\right)\,.
 $$
Introducing this bound in the computation of $\epsilon_1$ and recalling the  two properties of $M_{\kappa}[\mathbf Z]$  :
\begin{equation} \label{eq_fundamentalMkappa}
\sum_{\kappa \in \Kappa} M_{\kappa}[\mathbf Z] \leq N\,, \qquad \sup_{\kappa \in \Kappa} M_{\kappa} \leq M[\mathbf Z]\,,
\end{equation}
yield:
\begin{equation} \label{eq_boundE1}
|\epsilon_1 |  \lesssim \left(\dfrac{M[\mathbf Z]}{Nd_{\mathrm{min}}[\mathbf Z]} + \delta {\dfrac{M[\mathbf Z]}{N\lambda[\mathbf Z]}} \right)^{\frac 12} \|\nabla U[\mathbf Z]\|_{L^2(\mathbb R^3)} \|w\|_{D^2(\mathbb R^3)}\,.
 \end{equation}

We compute now a bound for $\epsilon_2.$ For this, we replace $\tilde{w}$ by its explicit construction. We recall that the supports of the $(\chi^N(\cdot - X_i))_{i \in \{1,\ldots,N\}}$ are disjoint so that:
$$
1 - \prod_{i \in \mathcal Z_{\delta}} (1- \chi^N(x-X_i)) =   \sum_{i \in \mathcal Z_{\delta}} \chi^N(x-X_i)\,, \quad \forall \, x \in \mathbb R^3.
$$
Consequently, we  split:
\begin{eqnarray*}
\bar{w} - \tilde{w} &=& \sum_{i \in  \mathcal Z_{\delta}} \left[ \chi^N(\cdot -X_i) \bar{w} -  \mathfrak B_{X_i,\frac1N,\frac2N} [x \mapsto \bar{w}(x) \cdot \nabla \chi^N(x-X_i)] \right]  \\
		&& \; -  \sum_{i \in \mathcal Z_{\delta}} \left[ \chi^N(\cdot - X_i) {w} -   \mathfrak B_{X_i,\frac1N,\frac2N} [x \mapsto {w}(x) \cdot \nabla \chi^N(x-X_i)]\right]\,.
\end{eqnarray*}
and $\nabla( \bar{w} - \tilde{w}) =  \sum_{i \in  \mathcal Z_{\delta}}  \sum_{\ell=1}^{3} \epsilon^{(\ell)}_{2,i} $ where, for $i\in \mathcal Z_{\delta},$
we denote:
\begin{align*}
&\epsilon_{2,i}^{(1)} = -\nabla \left[ \chi^N(\cdot - X_i) {w} -   \mathfrak B_{X_i,\frac1N,\frac2N} [x \mapsto {w}(x) \cdot \nabla \chi^N(x-X_i)]\right]  , \\
&\epsilon_{2,i}^{(2)} =  \nabla  \chi^N(\cdot -X_i) \otimes \bar{w} - \nabla \mathfrak B_{X_i,\frac1N,\frac2N} [x \mapsto \bar{w}(x) \cdot \nabla \chi^N(x-X_i)] ,\\
&\epsilon_{2,i}^{(3)} =  \chi^N(\cdot -X_i) \nabla \bar{w} .
\end{align*}
We remark here that $\epsilon_{2,i}^{(\ell)}$ has support in $B(X_i,2/N)$ whatever the value of $\ell.$ 
As previously, a standard Cauchy-Schwarz argument yields:
\begin{equation} \label{eq_epsilon2-full}
|\epsilon_{2}| \lesssim \|\nabla U[\mathbf Z]\|_{L^2(\mathbb R^3)} \left( \sum_{\ell=1}^3 \sum_{i\in \mathcal Z_{\delta}} |\epsilon_{2,i}^{(\ell)} |^2   \right)^{\frac 12}.
\end{equation}
To complete the proof, it remains to bound the last term in the right-hand side of the above inequality.

\medskip

First, by applying standard homogeneity properties of the Bogovskii operator (see \cite[App. A]{Hil17}) and explicit computations, we have, for $i\in \mathcal Z_{\delta}$:
\begin{align*}
\int_{B(X_i,2/N)} |\epsilon_{2,i}^{(1)}|^2&  \leq \dfrac{1}{N}\|w\|^2_{L^{\infty}(\Omega_1)}  + \|\nabla w\|^2_{L^2(B(X_i,2/N))}  \\
							& \lesssim \dfrac{1}{N} \left( \|w\|^2_{L^{\infty}(\Omega_1)} +   \|\nabla w\|^2_{L^6(B(X_i,2/N))} \right).
 \end{align*}
But, by the choice of the covering (see \eqref{eq_couloir}), we have:
\begin{equation} \label{eq_propdelta}
\sharp \mathcal Z_{\delta}  \lesssim \dfrac{N}{\delta}\left(1+\frac{1}{N}\sum_{i=1}^N |V_i|^2\right),
\end{equation}
so, we obtain finally:
\begin{equation} \label{eq_epsilon1}
\sum_{i\in \mathcal Z_{\delta}}  \int_{B(X_i,2/N)} |\epsilon_{2,i}^{(1)}|^2 \leq \dfrac{1}{\delta}  \left( 1+\frac{1}{N}\sum_{i=1}^N |V_i|^2\right) \|w\|^2_{D^2(\mathbb R^3)} .
\end{equation}
Secondly, with similar arguments as for $\epsilon_{2,i}^{(1)},$ we obtain, for $i \in \mathcal Z_{\delta}$:
$$
\int_{B(X_i,2/N)} |\epsilon_{2,i}^{(2)}|^2 \lesssim N^2 \|\bar{w}\|_{L^2(B(X_i,2/N))}^2
$$
and
\begin{eqnarray*}
\sum_{i\in \mathcal Z_{\delta}} \int_{B(X_i,2/N)} |\epsilon_{2,i}^{(2)}|^2 &\lesssim& N^2 \sum_{i \in \mathcal Z_{\delta}} \sum_{\kappa \in \Kappa} \|\bar{w}\|^2_{L^2(B(X_i,\frac 2N ) \cap T_{\kappa})} \,,  \\
		&\lesssim & N^2 \sum_{\kappa \in \Kappa}  \sum_{i \in \mathcal Z_{\delta}}  \|\bar{w}_{\kappa} - w^s_{\kappa} \|^2_{L^2(B(X_i,\frac2N) \cap T_{\kappa})} + 
		N^2\sum_{i \in \mathcal Z_{\delta}} \sum_{\kappa \in \Kappa} \|{w}_{\kappa}^s\|^2_{L^2(B(X_i,\frac2N) \cap T_{\kappa})}\,.
\end{eqnarray*}

We compute the first term on the last right-hand side thanks to the expansion \eqref{eq_Stokeslet} of $G^N$  and
remarking that, since the diameter of $B(X_i,\frac 2N)$ is infinitely smaller than the one of  $T_{\kappa}$ for $N$ sufficiently large, one 
$B(X_i,2/N)$ intersects at most $8$ distinct $T_{\kappa}.$ Repeating \eqref{eq_propdelta}, we conclude:
\begin{align*}
\sum_{i \in \mathcal{Z}_{\delta}} \sum_{\kappa \in \Kappa} \|{w}_{\kappa}^s\|^2_{L^2(B(X_i,\frac2N) \cap T_{\kappa})}& \lesssim \sum_{i \in \mathcal Z_{\delta}}8 \sup_{\kappa \in \Kappa} \|{w}_{\kappa}^s\|^2_{L^2(B(X_i,\frac2N))}\,, \\
& \lesssim   \dfrac{|M[\mathbf Z]|^2}{N^{4}d^2_{\mathrm{min}}[\mathbf Z]}\dfrac{1+ \frac 1N \sum_{i=1}^N |V_i|^2}{\delta} \|w\|^2_{L^{\infty}(\Omega_1)}.
\end{align*}

As for the other term,  we introduce, for $\kappa \in \Kappa,$ the set $\mathcal Z_{\delta,\kappa}$ of indices $i$ such that $B(X_i,\frac2N)\cap T_{\kappa} \neq \emptyset,$ 
and we obtain, by repeated use of H\"older's inequality, that:
\begin{eqnarray*}
\sum_{\kappa \in \Kappa}  \sum_{i \in \mathcal Z_{\delta}}  \|\bar{w}_{\kappa} - w^s_{\kappa} \|^2_{L^2(B(X_i,\frac2N) \cap T_{\kappa})}   
&=& \sum_{\kappa \in \Kappa}  \sum_{i \in \mathcal Z_{\delta,\kappa}}  \|\bar{w}_{\kappa} - w^s_{\kappa} \|^2_{L^2(B(X_i,\frac2N) \cap T_{\kappa})}    \\
&\lesssim& \sum_{\kappa \in \Kappa} \dfrac{|\sharp \mathcal Z_{\delta,\kappa}|^{\frac 23}}{N^2}  \| (\bar{w}_{\kappa} - w^s_{\kappa})\|^2_{L^6(T_{\kappa})}\\
&\lesssim &  \dfrac{1}{N^2} \left[ \sum_{\kappa \in \Kappa} \sharp \mathcal Z_{\delta,\kappa}\right]^{\frac 23}  \left( \sum_{\kappa \in \Kappa} \| (\bar{w}_{\kappa} - w^s_{\kappa})\|^6_{L^6(T_{\kappa})}\right)^{\frac 13}.
\end{eqnarray*}
By comparing the size of $T_{\kappa}$ and $B(X_i,2/N),$ we obtain again that:
$$
\left[ \sum_{\kappa \in \Kappa} \sharp \mathcal Z_{\delta,\kappa}\right]^{\frac 23} \lesssim \left| \sharp \mathcal Z_{\delta} \right|^{\frac 23} \lesssim \left[\dfrac{N}{\delta} \left(1+\frac 1N\sum_{i=1}^N|V_i|^2\right) \right]^{\frac 23} \, ,
$$
which, combined with Proposition~\ref{prop_truncationprocess} and \eqref{eq_fundamentalMkappa}, yields:
\[ 
\sum_{\kappa \in \Kappa} \sum_{i \in \mathcal Z_{\delta}} \| (\bar{w}_{\kappa} - w^s_{\kappa})\|^2_{L^2(B_{\infty}(X_i,\frac2N) \cap T_{\kappa})} 
\lesssim  \dfrac{(1+\frac 1N \sum_{i=1}^N|V_i|^2)^{\frac 23}}{\delta^{\frac 23}N^2} \dfrac{|M[\mathbf{Z}]|^{5/3} }{N} \left( \dfrac{1}{{d_{\mathrm{min}}[\mathbf Z]}} + {\dfrac{{\delta}}{\lambda[\mathbf Z]}}\right) \|w\|^2_{D^2(\mathbb R^3)} .
\]

Combining the above inequalities and recalling  \eqref{eq_defEinfini}, we conclude that:
\begin{multline} \label{eq_epsilon2}
\sum_{i\in \mathcal Z_{\delta}} \int_{B(X_i,2/N)} |\epsilon_{2,i}^{(2)}|^2 
 \lesssim  \left(1+\frac 1N \sum_{i=1}^N|V_i|^2\right) \dots \\ \ldots \left(\frac 1{\delta} \dfrac{|M[\mathbf Z]|^2}{|N d_{\mathrm{min}}[\mathbf Z]|^2} +\frac 1{\delta^{2/3}} \dfrac{|M[\mathbf Z]|^{5/3}}{N d_{\mathrm{min}}[\mathbf Z]} + {\delta}^{1/3} \dfrac{|M[\mathbf Z]|^{5/3}}{N \lambda[\mathbf Z]}  \right)    \|w\|^2_{D^2(\mathbb R^3)}.
\end{multline}
Finally, we have similarly:
\begin{eqnarray*}
\sum_{i\in \mathcal Z_{\delta}} \int_{B(X_i,2/N)} |\epsilon_{2,i}^{(3)}|^2 &\lesssim&  \sum_{i \in \mathcal Z_{\delta}} \sum_{\kappa \in \Kappa} \|\nabla \bar{w}\|^2_{L^2(B(X_i,2/N ) \cap T_{\kappa})}   \\
		&\lesssim &  \sum_{\kappa \in \Kappa}  \sum_{i \in \mathcal Z_{\delta}}  \|\nabla \bar{w}_{\kappa} - \nabla w^s_{\kappa} \|^2_{L^2(B(X_i,\frac2N) \cap T_{\kappa})} + 
		\sum_{i \in \mathcal Z_{\delta}} \sum_{\kappa \in \Kappa} \|\nabla {w}_{\kappa}^s\|^2_{L^2(B(X_i,\frac2N) \cap T_{\kappa})}\,.
\end{eqnarray*}
and we can reproduce the previous arguments relying on Proposition~\ref{prop_truncationprocess}. 
This yields, on the one hand:
\[
\sum_{i \in \mathcal Z_{\delta}} \sum_{\kappa \in \Kappa} \|\nabla {w}_{\kappa}^s\|^2_{L^2(B(X_i,\frac2N) \cap T_{\kappa})} \lesssim \dfrac{M^2[\mathbf Z]}{\delta|Nd_{\mathrm{min}}[\mathbf Z]|^4} \left(1+\frac 1N \sum_{i=1}^N|V_i|^2\right) \|w\|^2_{D^2(\mathbb R^3)}, 
\]
and, on the other hand:
\begin{align*}
  \sum_{\kappa \in \Kappa}  \sum_{i \in \mathcal Z_{\delta}}  \|\nabla \bar{w}_{\kappa} - \nabla w^s_{\kappa} \|^2_{L^2(B(X_i,\frac2N) \cap T_{\kappa})} 
 & \lesssim   \sum_{\kappa \in \Kappa} \|\nabla \bar{w}_{\kappa} - \nabla w^s_{\kappa} \|^2_{L^2(T_{\kappa})}  \\
& \lesssim    \dfrac{M[\mathbf{Z}]}{N} \left( \dfrac{1}{{d_{\mathrm{min}}[\mathbf Z]}} + {\dfrac{{\delta}}{\lambda[\mathbf Z]}}\right)\|w\|^2_{D^2(\mathbb R^3)} .
\end{align*}
We obtain finally that:
\begin{equation} \label{eq_epsilon3}
\sum_{i\in \mathcal Z_{\delta}} \int_{B(X_i,2/N)} |\epsilon_{2,i}^{(3)}|^2
  \lesssim  \left(1+\frac 1N \sum_{i=1}^N|V_i|^2\right)  \left( \frac 1{\delta} \dfrac{M^2[\mathbf Z]}{|Nd_{\mathrm{min}}[\mathbf Z]|^4} + \dfrac{M[\mathbf Z]}{N d_{\mathrm{min}}[\mathbf Z]} + \delta \dfrac{M[\mathbf Z]}{N\lambda[\mathbf Z]}\right)
\|w\|^2_{D^2(\mathbb R^3)} .
\end{equation}
Introducing \eqref{eq_epsilon1}, \eqref{eq_epsilon2} and \eqref{eq_epsilon3} into \eqref{eq_epsilon2-full} yields:
\begin{multline} \label{eq_boundE2}
|\epsilon_2|  \lesssim  \left(1+\frac 1N \sum_{i=1}^N|V_i|^2\right)  \ldots \\
\ldots \left(\dfrac{1}{\delta^{2/3}}\left(1 + \dfrac{M[\mathbf Z]^2}{|Nd_{\mathrm{min}}[\mathbf Z]|^2} + \dfrac{M[\mathbf Z]^{\frac 53}}{{Nd_{\mathrm{min}}[\mathbf Z]}} + \dfrac{M[\mathbf Z]^2}{|Nd_{\mathrm{min}}[\mathbf Z]|^4}
 \right)+  \dfrac{M[\mathbf Z]}{Nd_{\mathrm{min}}[\mathbf Z]}+  \delta \dfrac{M[\mathbf Z]^{\frac 53}}{N\lambda[\mathbf Z]}  \right)^{\frac 12} \|w\|_{D^2(\mathbb R^3)}.
\end{multline}
We complete the proof by combining \eqref{eq_boundE1}-\eqref{eq_boundE2}.
\end{proof}

\subsection{Estimating $E_1[w]$}  \label{sec_E1N}
We proceed with the computation of $E_1[w]$ defined by:
$$
E_1[w] = \sum_{\kappa \in \Kappa}  \left( \int_{T_{\kappa}} \nabla U[\mathbf Z] : \nabla w_{\kappa}^s -  6 \pi \sum_{i \in \mathcal I_{\kappa} \setminus \mathcal Z_{\delta}} w(X_i) \cdot (V_i - \bar{u}_{\kappa}) \right).
$$
We control this error term with the following lemma:
\begin{lemma} \label{lem_EN1}
Given $N \geq 1$, we have:
$$
\begin{aligned}
|E_1[w]| 
&\lesssim %
\delta^{6} \sqrt{\dfrac{M[\mathbf Z]}{N\lambda[\mathbf Z]}} \left(1+\frac 1N \sum_{i=1}^N|V_i|^2\right)^{\frac 12}  \|w\|_{D^2(\mathbb R^3)}.
\end{aligned}
$$

\end{lemma}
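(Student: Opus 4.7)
The strategy is to integrate by parts $\int_{T_\kappa} \nabla U : \nabla w^s_\kappa$ on each cell and identify the three contributions in the definition of $E_1[w]$ with boundary terms. Fix $\kappa \in \Kappa$ and denote by $p^s_\kappa$ the pressure associated with $w^s_\kappa$ and by $T(v,q) = \nabla v + \nabla^\top v - q\,\mathbb I_3$ the Newtonian stress. Since $w^s_\kappa$ satisfies the homogeneous Stokes system in $T_\kappa \setminus \bigcup_{i \in \mathcal I_\kappa \setminus \mathcal Z_\delta} \overline{B_i}$ and $U$ is divergence-free, pairing the two equations and integrating by parts (without touching the particles $i \in \mathcal Z_\delta$, whose holes do not intersect the supports of $w^s_\kappa$ at distance $\lambda/\delta$) gives
\[
\int_{T_\kappa} \nabla U : \nabla w^s_\kappa \;=\; \int_{\partial T_\kappa} U \cdot T(w^s_\kappa, p^s_\kappa)\, n \, \d\sigma \;-\; \sum_{i \in \mathcal I_\kappa \setminus \mathcal Z_\delta} \int_{\partial B_i} U \cdot T(w^s_\kappa, p^s_\kappa)\, n\, \d\sigma,
\]
with $n$ the outward normal to $T_\kappa$ in the first integral and to $B_i$ in the second.

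On each $\partial B_i$ the Dirichlet condition $U = V_i$ is constant, so we may pull out $V_i$. Because each auxiliary Stokeslet $G^N[w(X_j)](\cdot - X_j)$ with $j \neq i$ is smooth inside $B_i$ (the spheres are pairwise disjoint), the divergence theorem on $B_i$ annihilates its contribution; only the term $j=i$ survives and the classical Stokes drag formula yields $\int_{\partial B_i} T(G^N[w(X_i)], \cdot)\, n\, \d\sigma = - 6\pi w(X_i)/N$ (up to sign). Summing over $i \in \mathcal I_\kappa \setminus \mathcal Z_\delta$ reproduces exactly the term $\tfrac{6\pi}{N} \sum_i V_i \cdot w(X_i)$. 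For the $\partial T_\kappa$ integral, split $U = \bar u_\kappa + (U - \bar u_\kappa)$: the contribution of $\bar u_\kappa$ factors out, and applying the divergence theorem to $\mathrm{div}\, T = 0$ in $T_\kappa \setminus \bigcup_i \overline{B_i}$ identifies $\int_{\partial T_\kappa} T(w^s_\kappa, p^s_\kappa)\, n\, \d\sigma$ with $\sum_i 6\pi w(X_i)/N$, giving the $-\tfrac{6\pi}{N}\sum_i w(X_i)\cdot \bar u_\kappa$ contribution. The only genuine error is the oscillating remainder
\[
e_\kappa[w] \;:=\; \int_{\partial T_\kappa} (U - \bar u_\kappa) \cdot T(w^s_\kappa, p^s_\kappa)\, n\, \d\sigma,
\]
and $E_1[w] = \sum_\kappa e_\kappa[w]$.

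To control $e_\kappa[w]$, apply Cauchy--Schwarz on $\partial T_\kappa$ and estimate the two factors separately. For the stress, the explicit Stokeslet asymptotics recalled in Appendix~\ref{app_stokes} yield $|T(G^N[v],\cdot)(y)| \lesssim |v|/(N|y|^2)$; since $\mathrm{dist}(X_i, \partial T_\kappa) \geq \lambda[\mathbf Z]/\delta$ for every $i \in \mathcal I_\kappa \setminus \mathcal Z_\delta$, a direct computation of $\int_{\partial T_\kappa}|x-X_i|^{-4}\,\d\sigma$ together with the Cauchy--Schwarz inequality in $i$ produces a bound of the form
\[
\|T(w^s_\kappa, p^s_\kappa)\,n\|_{L^2(\partial T_\kappa)}^2 \;\lesssim\; \frac{M_\kappa^2\,\delta^{a}}{N^2 \lambda[\mathbf Z]^2}\, \|w\|_{L^\infty(\Omega_1)}^2
\]
for some moderate exponent $a$. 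For the oscillation, combining the Poincar\'e inequality on the boundary layer $[T_\kappa]_{2\delta}$ (of diameter $\sim \lambda[\mathbf Z]$ and width $\sim \lambda[\mathbf Z]/\delta$) with the trace inequality from that layer to $\partial T_\kappa$ yields
\[
\|U - \bar u_\kappa\|_{L^2(\partial T_\kappa)}^2 \;\lesssim\; \delta^{b}\,\lambda[\mathbf Z]\,\|\nabla U\|_{L^2([T_\kappa]_{2\delta})}^2
\]
for some exponent $b$; the loss in $\delta$ is the main quantitative cost and the origin of the $\delta^6$-factor in the final estimate.

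Summing over $\kappa$ by Cauchy--Schwarz, using the partition properties $\sum_\kappa M_\kappa \leq N$ and $M_\kappa \leq M[\mathbf Z]$ recalled in \eqref{eq_fundamentalMkappa}, together with the a priori bound \eqref{eq_defEinfini}
$\|\nabla U\|_{L^2(\mathbb R^3)}^2 \lesssim \tfrac{1}{N}\sum_i|V_i|^2$ and the continuous embedding $D^2(\mathbb R^3) \hookrightarrow L^\infty(\Omega_1)$, delivers the announced estimate. The main technical obstacle is the last step: tracking the $\delta$-dependence through the Poincar\'e-plus-trace chain on the non-convex thin shell $[T_\kappa]_{2\delta}$, and balancing it against the stress decay on $\partial T_\kappa$ to end with a geometric factor $\sqrt{M[\mathbf Z]/(N\lambda[\mathbf Z])}$. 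All intermediate estimates are uniform in $\kappa$, so no subtle summation is needed beyond the two properties \eqref{eq_fundamentalMkappa}.
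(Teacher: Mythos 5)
Your integration by parts reproduces the paper's identity \eqref{eq_INkappa}: both approaches identify $E_1[w] = \sum_{\kappa} Err_\kappa$ with $Err_\kappa$ a surface integral over $\partial T_\kappa$ pairing the Stokeslet stress against $(U - \bar u_\kappa)$, so this part is essentially the same (the paper uses $\partial_n G^N - P^N n$ rather than the full symmetric Newtonian stress, but for divergence-free fields these produce the same drag and the same boundary term, so the discrepancy is cosmetic).

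Where you genuinely depart from the paper is in the estimation of $Err_\kappa$. The paper does \emph{not} apply Cauchy--Schwarz on $\partial T_\kappa$. It lifts the boundary value $(U - \bar u_\kappa)|_{\partial T_\kappa}$ by a divergence-free vector-field supported in $[T_\kappa]_{2\delta}$ (via the Bogovskii operator, plus Poincar\'e--Wirtinger to make the lift controllable), tests the Stokes equation for $(w^s_\kappa, P^s_\kappa)$ on the shell against this lift, and thereby turns the surface integral into a volume integral $\int_{[T_\kappa]_{2\delta}} \nabla w^s_\kappa : \nabla(\mathrm{lift})$. The lift costs $C_{\mathfrak B}[2\delta]\bigl(1+C_{PW}[2\delta]\bigr) \lesssim \delta^{11/2}$ (Appendix~\ref{sec_constants}), and the Stokeslet-gradient $L^2$-bound over the shell contributes the remaining $\sqrt{\delta}$, giving $\delta^6$. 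Your route avoids Bogovskii entirely, and this is actually \emph{more efficient}: carrying through the bookkeeping, the stress factor costs $\delta$ (from $|T(G^N[v],P^N[v])(y)| \lesssim |v|/(N|y|^2)$ and $\int_{\partial T_\kappa}|y-X_i|^{-4}\,\d\sigma \lesssim \delta^2/\lambda^2$, i.e.\ your exponent $a=2$), while the oscillation factor costs $\delta^{3/2}$: Poincar\'e--Wirtinger on the rescaled unit cubic annulus gives $\|U-\bar u_\kappa\|_{L^2([T_\kappa]_{2\delta})} \lesssim \lambda\delta \,\|\nabla U\|_{L^2([T_\kappa]_{2\delta})}$, and the elementary trace inequality on a shell of width $\lambda/(2\delta)$ gives $\|f\|^2_{L^2(\partial T_\kappa)} \lesssim (\delta/\lambda)\|f\|^2_{L^2} + (\lambda/\delta)\|\nabla f\|^2_{L^2}$, so your exponent $b = 3$. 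Summing over $\kappa$ with Cauchy--Schwarz and \eqref{eq_fundamentalMkappa}--\eqref{eq_defEinfini} then yields $|E_1[w]| \lesssim \delta^{5/2}\sqrt{M[\mathbf Z]/(N\lambda[\mathbf Z])}\,(1+\tfrac1N\sum|V_i|^2)^{1/2}\|w\|_{D^2(\R^3)}$, strictly better than the stated $\delta^6$. (Your remark that the Poincar\'e--trace chain is ``the origin of the $\delta^6$-factor'' is inaccurate: $\delta^6$ originates in the Bogovskii constant, which you sidestep.)

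The genuine gap is exactly what you flag: the exponents $a$, $b$ are never computed, so the statement is left unverified. Since the subsequent $\delta$-optimization in the proof of Theorem~\ref{theo:main} is driven by the precise powers of $\delta$ in each of $E_0, E_1, E_j, E_\rho$, the plan is not a proof until you track the constants. The ingredients are already at hand --- the Poincar\'e--Wirtinger constant on the rescaled cubic annulus is computed in Appendix~\ref{sec_constants}, and the trace inequality on a thin shell is elementary --- so filling the gap is bookkeeping, not a missing idea. Once supplied, your argument closes and even sharpens the lemma.
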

\begin{proof}
For $N$ sufficiently large and $\kappa \in \Kappa,$ let simplify at first:
 $$
 \tilde{I}_{\kappa} := \int_{T_{\kappa}} \nabla U[\mathbf Z] : \nabla w^{s}_{\kappa}\,.
 $$
By definition, we have that:
$$
w^{s}_{\kappa} (x) = \sum_{i \in \mathcal I_{\kappa} \setminus \mathcal Z_{\delta}} G^N[w(X_i)](x-X_i)\,, \quad \forall \, x \in \mathbb R^3\,,
$$
so that, introducing the associated pressures $ x \mapsto P^N[w(X_i^N)](x-X_i^N),$ we obtain
(after several integration by parts as depicted in \cite[pp. 32-33]{Hil17}):
\begin{equation} \label{eq_INkappa}
\tilde{I}_{\kappa} = \dfrac{6\pi}{N} \sum_{i \in \mathcal I_{\kappa} \setminus \mathcal Z_{\delta}} ( w(X_i) \cdot V_i - w(X_i) \cdot \bar{u}_{\kappa} )+ Err_{\kappa}
\end{equation}
with: 
$$
Err_{\kappa} = \int_{\partial T_{\kappa}}  \left\{ \sum_{i \in \mathcal I_{\kappa} \setminus \mathcal Z_{\delta}} 
 \partial_n  G^N[w(X_i)](\cdot-X_i) - P^N[w(X_i)](\cdot-X_i)n\right\} \cdot (U[\mathbf Z] - \bar{u}_{\kappa}) {\rm d}\sigma\,.
$$
Summing over $\kappa \in \Kappa,$ we obtain that
\begin{eqnarray*}
\sum_{\kappa \in \Kappa} \int_{T_{\kappa}} \nabla U[\mathbf Z] : \nabla w_{\kappa}^s &= & \sum_{\kappa \in \Kappa} \tilde{I}_{\kappa}  \\
		&=& \sum_{\kappa \in \Kappa}  \dfrac{6\pi}{N} \sum_{i \in \mathcal I_{\kappa} \setminus \mathcal Z_{\delta}} ( w(X_i) \cdot V_i - w(X_i) \cdot \bar{u}_{\kappa} ) +  \sum_{\kappa \in \Kappa} Err_{\kappa},
\end{eqnarray*}
and also:
$$
E_1[w] = \sum_{\kappa \in \Kappa} Err_{\kappa}.
$$

\medskip 
For $\kappa \in \Kappa,$ we adapt (up to notations) the computations of \cite[pp. 34-35]{Hil17}. 
The point here is to lift the boundary condition $U[\mathbf Z] - \bar{u}_{\kappa}$ via a standard 
truncation process in order to yield a divergence-free vector-field $v$ which vanishes at a distance $\lambda[\mathbf Z]/(2\delta)$ of $\partial T_{\kappa}.$ Applying that $(G^N[w(X_i)],P^N[w(X_i)])$ solves the
Stokes equation on $[T_{\kappa}]_{2\delta}$ (since this subset contains no holes with index in $\mathcal I_{\kappa} \setminus \mathcal Z_{\delta}$) we obtain:
\begin{equation} \label{eq_preskekappa}
|Err_{\kappa}| \leq  C_{\mathfrak B}[2\delta](1+C_{PW}[2\delta])\left\{\sum_{i \in \mathcal I_{\kappa} \setminus \mathcal Z_{\delta}} 
 \|\nabla  G^N[w(X_i)](\cdot-X_i)\|_{L^2([T_{\kappa}]_{2\delta})} \right\}   \|\nabla U[\mathbf Z]\|_{L^2([T_{\kappa}]_{2\delta})}\,,
\end{equation}
where, denoting $A(0,1-1/\delta,1)$ the cubic annulus $]-1,1[^3 \setminus [-(1-1/\delta),1/\delta]^3,$ we used the symbols: 
\begin{itemize}
\item $C_{\mathcal B}[\delta]$ for the norm of the Bogovskii operator $\mathfrak B_{0,1-1/\delta,1}$
seen as a continuous linear mapping $L^2_{0}(A(0,1-1/\delta,1)) \to H^1_0(A(0,1-1/\delta,1)),$ 
\item $C_{PW}[\delta]$ for the constant of the Poincar\'e-Wirtinger inequality on $H^1(A(0,1-1/\delta,1)).$
\end{itemize}
The asymptotics of these constants when $\delta \to \infty$ are analyzed in 
Appendix~\ref{sec_constants}.

To bound the first term on the right-hand side of this inequality, we remark again that for any $i \in \mathcal I_{\kappa} \setminus \mathcal Z_{\delta}$ the minimum distance between  $X_{i}$ and $[T_{\kappa}]_{2\delta}$ is larger than $\lambda[\mathbf Z]/(2\delta).$ Hence, applying the explicit formula \eqref{eq_Stokeslet} of the Stokeslet $G^N[w(X_i)]$ we obtain that
\begin{eqnarray*}
\|\nabla G^N[w(X_i)](\cdot - X_{i})\|_{L^2([T_{\kappa}]_{2\delta})} &\leq&  \left(\int_{\lambda[\mathbf Z]/(2\delta)}^{\infty} \dfrac{{\rm d}r}{N^2r^2} \right)^{\frac 12}|w(X_i)| \\
&\leq& \dfrac{\sqrt{2\delta}}{N\sqrt{\lambda[\mathbf Z]}} |w(X_i)|\,.
\end{eqnarray*} 
Combining these computations for the (at most) $M_{\kappa}[\mathbb Z]$ indices $i \in \mathcal I_{\kappa} \setminus \mathcal Z_{\delta}$   entails  that:
\begin{equation} \label{eq_controlStokeslet}
\sum_{i \in \mathcal I_{\kappa} \setminus \mathcal Z_{\delta}} 
 \|\nabla  G^N[w(X_i)](\cdot-X_i)\|_{L^2([T_{\kappa}]_{2\delta})} \lesssim \dfrac{M_{\kappa}[\mathbb Z]}{N} \sqrt{\dfrac{2\delta}{\lambda[\mathbf Z]}} \| w \|_{L^\infty (\Omega_1)} \,.
\end{equation}
Plugging \eqref{eq_controlStokeslet} into \eqref{eq_preskekappa} and recalling the fundamental properties \eqref{eq_fundamentalMkappa} of $M_{\kappa}[\mathbb Z]$ we conclude that
\begin{eqnarray*}
|E_1[w]| & \lesssim  &  C_{\mathfrak B}[2\delta](1+C_{PW}[2\delta])\sqrt{\dfrac{2\delta  M[\mathbf Z]}{N\lambda[\mathbf Z]}} \|\nabla U[\mathbf Z]\|_{L^2(\mathbb R^3)}  \|w\|_{C^{0,1/2}(\overline{\Omega_1})}. 
\end{eqnarray*}
We conclude the proof of Lemma~\ref{lem_EN1} by applying that $C_{\mathfrak B}[2\delta](1+C_{PW}[2\delta]) \lesssim \delta^{11/2}$ (see Appendix~\ref{sec_constants}) and recalling \eqref{eq_defEinfini}.
\end{proof}

\subsection{Estimating $E_{j}[w]$.} \label{sec_ENj}
We proceed with the error term
$$
E_j[w] =  \sum_{\kappa \in \Kappa} \dfrac{6 \pi}{N} \sum_{i \in \mathcal I_{\kappa} \setminus \mathcal Z_{\delta}} w(X_i) \cdot V_i- 6\pi \int_{\mathbb R^3} j \cdot w.
$$
\begin{lemma} \label{lem_ENj}
Given $N \geq 1,$ there holds:
$$
\left| E_j[w] \right| \lesssim \left( \|j[\mathbf Z]-j\|_{[C^{0,1/2}_b(\mathbb R^3)]^*} +\frac 1{\delta} \left(1+\dfrac{1}{N} \sum_{i=1}^N |V_i|^2 \right) \right) \|w\|_{D^2(\mathbb R^3)}\,. 
$$
\end{lemma}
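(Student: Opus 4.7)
The plan is to reorganize the double sum in $E_j[w]$ so as to recognize the duality pairing $\langle j[\mathbf{Z}],w\rangle$, and then to estimate the leftover contribution coming from the corridor $\mathcal Z_\delta$ thanks to property \eqref{eq_couloir}.

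Since $(\mathcal I_{\kappa})_{\kappa \in \Kappa}$ is a partition of $\{1,\ldots,N\}$, the collection $(\mathcal I_{\kappa}\setminus \mathcal Z_{\delta})_{\kappa \in \Kappa}$ partitions $\{1,\ldots,N\}\setminus \mathcal Z_{\delta}$. Using the definition of $j[\mathbf Z]$, I would rewrite
\[
E_j[w] \;=\; 6\pi \Bigl( \langle j[\mathbf Z]-j,\,w \rangle \;-\; \frac{1}{N}\sum_{i\in \mathcal Z_\delta} V_i \cdot w(X_i)\Bigr),
\]
so that the error splits naturally into a duality term and a corridor remainder.

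For the duality term, I would simply use the definition of the $[C^{0,1/2}_b(\mathbb R^3)]^*$-norm together with the continuous embedding $D^2(\mathbb R^3)\hookrightarrow C^{0,1/2}_b(\overline{\Omega_1})$ recalled in the proof of the theorem, which yields
\[
\bigl|\langle j[\mathbf Z]-j,w\rangle\bigr| \;\lesssim\; \|j[\mathbf Z]-j\|_{[C^{0,1/2}_b(\mathbb R^3)]^*}\,\|w\|_{D^2(\mathbb R^3)}.
\]

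For the corridor remainder, the key input is inequality \eqref{eq_couloir}, which controls both the cardinality $\#\mathcal Z_\delta / N$ and the empirical second moment $\frac{1}{N}\sum_{i\in\mathcal Z_\delta}|V_i|^2$ by $\tfrac{12}{\delta}(1+\tfrac{1}{N}\sum_{i=1}^N|V_i|^2)$. Bounding $|w(X_i)|$ by $\|w\|_{L^\infty(\Omega_1)}\lesssim \|w\|_{D^2(\mathbb R^3)}$ and applying Cauchy--Schwarz in the $i$-variable gives
\[
\frac{1}{N}\sum_{i\in \mathcal Z_\delta} |V_i|\,|w(X_i)| \;\le\; \Bigl(\tfrac{\#\mathcal Z_\delta}{N}\Bigr)^{1/2}\Bigl(\tfrac{1}{N}\sum_{i\in\mathcal Z_\delta}|V_i|^2\Bigr)^{1/2}\|w\|_{D^2(\mathbb R^3)} \;\lesssim\; \frac{1}{\delta}\Bigl(1+\tfrac{1}{N}\sum_{i=1}^N|V_i|^2\Bigr)\|w\|_{D^2(\mathbb R^3)}.
\]
Combining the two estimates yields the claim. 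The proof is essentially bookkeeping; there is no serious obstacle, the only subtlety being to use \eqref{eq_couloir} simultaneously on $\#\mathcal Z_\delta$ and on the $V_i$'s to absorb the factor $\frac{1}{\delta}$ on the right-hand side.
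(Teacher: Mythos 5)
Your proposal is correct and follows the same route as the paper: the same regrouping of the double sum into the duality pairing $6\pi\langle j[\mathbf Z]-j,w\rangle$ plus the corridor remainder over $\mathcal Z_\delta$, and the same bound on the duality term via the embedding $D^2(\mathbb R^3)\hookrightarrow C^{0,1/2}_b(\overline{\Omega_1})$. The only (cosmetic) difference is that where the paper dispatches the corridor sum by citing \cite[Lemma 15]{Hil17}, you give the explicit Cauchy--Schwarz argument in $i$ combined with \eqref{eq_couloir} applied twice, once for $\#\mathcal Z_\delta/N$ and once for $\frac1N\sum_{i\in\mathcal Z_\delta}|V_i|^2$; this is precisely what that reference would yield and your bookkeeping is correct, so the argument is sound. (Incidentally, your sign in front of the corridor sum is the correct one; the paper writes a $+$ but it is irrelevant after taking absolute values.)
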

\begin{proof}
As $w \in C^{\infty}_c(\R^3)$ and $(T_{\kappa})_{\kappa \in \Kappa}$ is a covering of $\text{Supp}(j[\mathbf Z])$ we have that:
$$
\sum_{\kappa \in \Kappa} \sum_{i \in \mathcal I_{\kappa}} w(X_i) \cdot V_i  = \langle j[\mathbf Z] , w \rangle. 
$$
Consequently, complementing the sum in $E_j$ with the indices in $\mathcal Z_{\delta},$ we have:
$$
E_{j}[w] = 6\pi \langle j[\mathbf Z] -j ,w \rangle +\dfrac{6\pi}{N} \sum_{i \in \mathcal Z_{\delta}} w(X_i) \cdot V_i.
$$
The first term on the right-hand side is estimated straightforwardly:
$$
|\langle j[\mathbf Z] -j ,w \rangle| \leq \|j[\mathbf Z]-j\|_{[C^{0,1/2}(\mathbb R^3)]^*} \|w\|_{C^{0,1/2}_b(\overline{\Omega_1})} ,
$$
while repeating the proof of \cite[Lemma 15]{Hil17}, we obtain, for $N \geq 1$ :
$$
\left|\dfrac{6\pi}{N} \sum_{i \in \mathcal Z_{\delta} \cap \, \mathcal I_{\delta}} w(X_i^N) \cdot V_i^N \right|  \lesssim \dfrac{1}{\delta}\left(1 + \frac 1N \sum_{i=1}^N |V_i|^2 \right) \| w \|_{L^{\infty}(\Omega_1)},
$$
which yields the expected result and completes the proof of Lemma~\ref{lem_ENj}.
\end{proof}
 
 \subsection{Estimating $E_{\rho}[w]$}\label{sec_ENrho}
 We end up by estimating the remainder term
 $$
E_{\rho}[w] =   \sum_{\kappa \in \Kappa} \left[ \dfrac{6\pi}{N} \sum_{i \in \mathcal I_{\kappa} \setminus \mathcal Z_{\delta}} w(X_i)\right] \cdot \bar{u}_{\kappa}  -   6\pi \int_{\mathbb R^3} \rho \,  U[\mathbf Z] \cdot w. 
 $$
\begin{lemma} \label{lem_ENrho}
For $N$ sufficiently large, there holds:
\begin{multline*}
|E_{\rho}[w]| \lesssim  \Biggl( \dfrac{1}{\delta^{\frac 52}} \left( \sqrt{\dfrac{M[\mathbf Z]}{N|\lambda[\mathbf Z]|^3}}+ \|\rho\|_{L^2(\Omega_0)}\right) + 
 \dfrac{1}{\sqrt{\delta}} \left(  \dfrac{M[\mathbf Z]}{N|\lambda[\mathbf Z]|^3} \right)^{1/4} 
 \\
 +\delta^{9/2}  \left( \lambda[\mathbf Z] +  \|\rho[\mathbf Z] - \rho\|_{[C^{0,1/2}_b(\mathbb R^3)]^*} \right) \Biggr)
   \left(1+ \frac 1N \sum_{i=1}^N |V_i|^2 \right)^{\frac 54} 
     \|w\|_{D^2(\mathbb R^3)}.
\end{multline*}

\end{lemma}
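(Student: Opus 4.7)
The plan is to recast $E_\rho[w]$ as a discrete-continuous pairing and decompose it into three terms, each controlled by a distinct mechanism. Introduce the piecewise-constant vector field $\bar U(x) := \bar u_\kappa$ for $x \in T_\kappa$ and the truncated empirical density $\tilde\rho^N := \frac{1}{N}\sum_{i \notin \mathcal Z_\delta}\delta_{X_i}$, so that
$$
E_\rho[w]/(6\pi) = \langle\tilde\rho^N, \bar U\cdot w\rangle - \int_{\mathbb R^3}\rho\, U[\mathbf Z]\cdot w\,\d x = A_1 + A_2 + A_3,
$$
with $A_1 = \langle\tilde\rho^N - \rho^N[\mathbf Z], \bar U\cdot w\rangle$ (exceptional indices), $A_2 = \langle\rho^N[\mathbf Z] - \rho, \bar U\cdot w\rangle$ (density comparison), and $A_3 = \int\rho(\bar U - U[\mathbf Z])\cdot w\,\d x$ (piecewise-constant versus continuous field).

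For $A_1$, I would combine the mass bound \eqref{eq_propdelta}, $|\mathcal Z_\delta|/N \lesssim \delta^{-1}(1+\frac{1}{N}\sum|V_i|^2)$, with the Cauchy--Schwarz control $|\bar u_\kappa|^2 \leq |[T_\kappa]_{2\delta}|^{-1}\|U[\mathbf Z]\|_{L^2([T_\kappa]_{2\delta})}^2 \lesssim (\delta/|\lambda[\mathbf Z]|^3)\,\|U[\mathbf Z]\|_{L^2([T_\kappa]_{2\delta})}^2$, then sum over the disjoint strips and use the Sobolev embedding $D(\mathbb R^3) \hookrightarrow L^6(\mathbb R^3)$ together with \eqref{eq_defEinfini} on $\|\nabla U[\mathbf Z]\|_{L^2}$ to produce, after a second Cauchy--Schwarz, the two terms $\delta^{-1/2}(M[\mathbf Z]/(N|\lambda[\mathbf Z]|^3))^{1/4}$ and $\delta^{-5/2}\sqrt{M[\mathbf Z]/(N|\lambda[\mathbf Z]|^3)}$ appearing in the statement.

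For $A_3$, the crucial fact is that $\bar u_\kappa$ is the average of $U[\mathbf Z]$ over the strip $[T_\kappa]_{2\delta}$ of relative width $1/\delta$. A Poincaré--Wirtinger inequality on $T_\kappa$ with mean taken on such a thin strip yields $\|U[\mathbf Z]-\bar u_\kappa\|_{L^2(T_\kappa)} \leq C_{PW}[2\delta]\,\lambda[\mathbf Z]\,\|\nabla U[\mathbf Z]\|_{L^2(T_\kappa)}$. Summing in $\kappa$, bounding via $\rho \in L^2(\Omega_0)$ by Cauchy--Schwarz, and invoking the asymptotic control $C_{PW}[2\delta] \lesssim \delta^{9/2}$ derived in Appendix~\ref{sec_constants}, this produces the $\delta^{9/2}\|\rho\|_{L^2(\Omega_0)}\lambda[\mathbf Z](1+\frac{1}{N}\sum|V_i|^2)^{1/2}$ contribution.

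The decisive and delicate term is $A_2$. Split $\bar U = U[\mathbf Z] + (\bar U - U[\mathbf Z])$ to get
$$
A_2 = \langle\rho^N[\mathbf Z]-\rho,\, U[\mathbf Z]\cdot w\rangle + \langle\rho^N[\mathbf Z]-\rho,\, (\bar U - U[\mathbf Z])\cdot w\rangle.
$$
The second summand is handled by Cauchy--Schwarz against the same Poincaré estimate used in $A_3$, combined with the total-mass bound $\|\rho^N\|_{TV}+\|\rho\|_{TV}\lesssim 1$. The first summand is where one would like to use the duality $\|\rho^N-\rho\|_{[C^{0,1/2}_b(\mathbb R^3)]^*}$, but $U[\mathbf Z]\cdot w$ is only in $\dot H^1(\mathbb R^3)$ (and its gradient even jumps at the particle boundaries), so one must regularize: mollify $U\cdot w$ at scale $\varepsilon$, bound $|\langle\rho^N-\rho,(U\cdot w)_\varepsilon\rangle| \leq \|\rho^N-\rho\|_{[C^{0,1/2}_b(\mathbb R^3)]^*}\|(U\cdot w)_\varepsilon\|_{C^{0,1/2}_b(\mathbb R^3)}$, control the residual $\langle\rho^N-\rho, U\cdot w - (U\cdot w)_\varepsilon\rangle$ by an $L^2$-type estimate using $\|\nabla(U\cdot w)\|_{L^2}$ and the mass of $\rho^N+\rho$, and optimize in $\varepsilon$. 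Combined with the $\delta^{9/2}$ growth of the Poincaré constant, this yields the claimed $\delta^{9/2}\|\rho^N-\rho\|_{[C^{0,1/2}_b(\mathbb R^3)]^*}$ contribution.

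The main obstacle is precisely $A_2$: the absence of a Morrey-type pointwise regularity for $U[\mathbf Z]$ forbids a direct duality argument and forces the mollification scheme above, whose parameter must be tuned against the $\delta$-dependent constants controlling the Poincaré--Wirtinger estimates on thin strips of relative width $1/\delta$. It is the interplay between these two ingredients, together with \eqref{eq_defEinfini} and the Sobolev embedding $D(\mathbb R^3)\hookrightarrow L^6(\mathbb R^3)$, that ultimately produces the unusual exponents $\delta^{-5/2}$, $\delta^{-1/2}$, $\delta^{9/2}$ and the power $(1+\frac{1}{N}\sum_{i=1}^N|V_i|^2)^{5/4}$ of the kinetic energy appearing in the statement.
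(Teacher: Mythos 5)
Your decomposition $E_\rho[w]/(6\pi) = A_1 + A_2 + A_3$ is algebraically valid and superficially similar in spirit to the paper's, but it deviates in ways that create genuine gaps. The paper instead introduces the \emph{piecewise constant function} $\sigma := (1-(1-\tfrac{1}{2\delta})^3)^{-1}\tfrac{1}{N\lambda^3}\sum_\kappa(\sum_{i\in\mathcal I_\kappa}w(X_i))\mathbf 1_{[T_\kappa]_{2\delta}}$ (using the defining formula $\bar u_\kappa = \fint_{[T_\kappa]_{2\delta}}U$), which turns the whole discrete sum into $\int(\sigma - \rho w)\cdot U$; it then mollifies $U$ at scale $\delta^{-3}$ and splits into a mollification residual $\check Err$, a main term $\widehat Err$ carrying $\|\rho[\mathbf Z]-\rho\|_{[C^{0,1/2}_b]^*}$, and an exceptional-index term $\widetilde Err$. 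This design ensures every Dirac measure is paired only against the Lipschitz function $U_\delta$, and every $L^2$ pairing is between genuine $L^2$ functions.

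Several steps in your scheme are incorrect. First, you invoke $C_{PW}[2\delta]\lesssim\delta^{9/2}$ from Appendix~\ref{sec_constants}, but that appendix proves $C_{PW}[\delta]\lesssim\delta$ (for the cubic annulus used in the Bogovskii lift near $\partial T_\kappa$), and the Poincar\'e constant for a mean over the thin strip $[T_\kappa]_{2\delta}$ is only $\lesssim\sqrt{\delta}\,\lambda$. The exponent $\delta^{9/2}$ in the Lemma actually originates from $\|\zeta_{\delta^3}\|_{L^2(\R^3)}\lesssim\delta^{9/2}$ in the estimate $\|U_\delta\|_{C^{0,1}(\overline{\Omega_1})}\lesssim\delta^{9/2}\|\nabla U\|_{L^2}$, i.e.\ from the Lipschitz norm of the mollified field, not from any Poincar\'e inequality. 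Second, you attribute both $\delta^{-1/2}(M/(N\lambda^3))^{1/4}$ and $\delta^{-5/2}\sqrt{M/(N\lambda^3)}$ to $A_1$, yet the latter arises in the paper from $\check Err = 6\pi\int(\sigma-\rho w)\cdot(U-U_\delta)$, which involves $\sigma$ built from \emph{all} indices and the mollification error $\|U-U_\delta\|_{L^2}\lesssim\delta^{-3}\|\nabla U\|_{L^2}$; your $A_1$, restricted to $\mathcal Z_\delta$, cannot produce it, and a direct Cauchy--Schwarz on $A_1$ in fact loses the $\delta^{-1/2}$ gain without the $L^{4/3}$-interpolation against the $L^4$-Sobolev embedding of $U$. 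Third, in $A_2$ the term $\langle\rho^N-\rho,(\bar U-U)\cdot w\rangle$ cannot be controlled by ``Cauchy--Schwarz against the Poincar\'e estimate plus total mass'': $\rho^N$ is a sum of Diracs, so this pairing evaluates $\bar U - U$ pointwise and an $L^2$ bound is not enough — exactly the absence of Morrey-type regularity that you yourself flag one paragraph later. Finally, your $A_3$ bound gives $\sqrt{\delta}\,\lambda\,\|\rho\|_{L^2}$, with the wrong (growing) $\delta$-dependence compared to the $\delta^{-5/2}\|\rho\|_{L^2}$ in the Lemma. The proof needs the paper's mollification-based split; the $\sigma$/$U_\delta$ machinery is not optional bookkeeping but what makes each pairing legitimate.
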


\begin{proof} The proof is adapted from \cite[Proposition 3.7]{MH}. As previously, let first complete the sum by reintroducing the $\mathcal Z_{\delta}$ indices:
\begin{equation} \label{eq_rewriterho}
\dfrac{6\pi}{N} \sum_{\kappa \in \Kappa} \sum_{i \in \mathcal I_{\kappa} \setminus \mathcal Z_{\delta}} w(X_i) \cdot \bar{u}_{\kappa} 
= \dfrac{6\pi}{N} \sum_{\kappa \in \Kappa} \sum_{i \in \mathcal{I}_{\kappa}} w(X_i) \cdot \bar{u}_{\kappa}  - \widetilde{E}rr
\end{equation}
where:
$$
\widetilde{E}rr = \dfrac{6\pi}{N} \sum_{\kappa \in \Kappa} \sum_{i \in \mathcal I_{\kappa} \cap \mathcal Z_{\delta}} w(X_i) \cdot \bar{u}_{\kappa} .
$$
We have then:
$$
E_{\rho}[w] = \dfrac{6\pi}{N}  \sum_{\kappa \in \Kappa} \sum_{i \in \mathcal{I}_{\kappa}} w(X_i) \cdot \bar{u}_{\kappa} - 6\pi \int_{\Omega_1} \rho \, U[\mathbf Z] \cdot w - \widetilde{E}rr.
$$
We remark that we may rewrite the first term on the right-hand side of this equality by introducing:
$$
\sigma= 
\left(1 - \left(1-\frac 1{2\delta}\right)^{3}\right)^{-1} \dfrac{1}{N|\lambda[\mathbf Z]|^3} \sum_{\kappa \in \Kappa}  \left(\sum_{i \in \mathcal I_{\kappa}} w(X_i) \right) \mathbf{1}_{[T_{\kappa}]_{2\delta}}\,,
$$
which yields
$$
E_{\rho}[w] = 6\pi \int_{\Omega_1} [\sigma - \rho w ] \cdot U[\mathbf Z] -  \widetilde{E}rr.
$$
Finally, we introduce $U_{\delta}[\mathbf Z] := U[\mathbf Z] * \zeta_{\delta^3}$ in this identity (in order to regularize $U[\mathbf Z]$ so that we may make the difference between $\rho[\mathbf Z]$ and $\rho$ appear) where we recall that $(\zeta_n)_n$ is a sequence of mollifiers. We apply below that
\begin{equation} \label{eq_convudelta}
\|U_{\delta}[\mathbf Z]\|_{C^{0,1}(\overline{\Omega_1})} \lesssim \delta^{\frac92} \| \nabla U[\mathbf Z] \|_{L^2(\mathbb R^3)} , \qquad  \|U[\mathbf Z]_{\delta} - U[\mathbf Z]\|_{L^2(\Omega_1)} \lesssim \dfrac{\|\nabla U[\mathbf Z]\|_{L^2(\mathbb R^3)}}{\delta^{3}}.
\end{equation}
Indeed, by classical computations there holds
\[
\|U_\delta[\mathbf Z]\|_{C^{0,1}(\overline{\Omega_1})} 
\lesssim \| U_{\delta}[\mathbf Z] \|_{L^{\infty}(\Omega_1)} + \| \nabla U_{\delta}[\mathbf Z] \|_{L^\infty(\Omega_1)} 
\lesssim 
\| \nabla U[\mathbf Z] \|_{L^2(\mathbb R^3)}  
(1+  \|\zeta_{\delta^3} \|_{L^2(\mathbb R^3)}),
\]
which yields the first inequality, and moreover
\[
\begin{aligned}
\|U_\delta[\mathbf Z] - U[\mathbf Z]\|_{L^2(\Omega_1)}^2 
&= \int \left| \int_{|z|\le 1} [U[\mathbf Z](x-\tfrac{z}{\delta^3}) - U[\mathbf Z](x)] \zeta(z) \mathrm d z  \right|^2 \mathrm d x \\
& \lesssim \int \int_{|z|\le 1} | U[\mathbf Z](x-\tfrac{z}{\delta^3}) - U[\mathbf Z](x)|^2 \mathrm d z \mathrm d x \\
&\lesssim \frac{1}{\delta^6}  \int \int_{|z|\le 1} |z|^2 \int_0^1 |\nabla U[\mathbf Z](x-t\tfrac{z}{\delta^3})|^2 \mathrm d t \mathrm d z \mathrm d x \\
&\lesssim \frac{1}{\delta^6} \| \nabla U[\mathbf Z] \|_{L^2(\mathbb R^3)}^2,
\end{aligned}
\]
which implies the second one.

This entails that:
$$
E_{\rho} [w] = \check{E}rr + \widehat{E}rr  - \widetilde{E}rr
$$
where
$$
 \check{E}rr = 6\pi \int_{\Omega_1} (\sigma - \rho w)\cdot (U[\mathbf Z] - U_{\delta}[\mathbf Z])\,,  \qquad \widehat{E}rr  =  6\pi \int_{\Omega_1} (\sigma - \rho w) \cdot U_{\delta}[\mathbf Z].
$$
We proceed by estimating these three error terms independently. 

We first remark  that the $(\mathcal I_{\kappa})_{\kappa \in \Kappa}$ form a partition of $\{1,\ldots,N\}.$ This entails that:
$$
\|\sigma\|_{L^{1}(\Omega_1)} \leq \sum_{\kappa \in \Kappa} \dfrac{M_{\kappa}}{N} \|w\|_{L^{\infty}(\Omega_1)} \leq \|w\|_{L^{\infty}(\Omega_1)}.
$$
Straightforward computations imply also that:
\begin{eqnarray*}
\|\sigma\|_{L^{\infty}(\Omega_1)} & \leq&  \left(1 - \left(1-\frac 1{2\delta}\right)^{3}\right)^{-1} \dfrac{M[\mathbf Z]}{N|\lambda[\mathbf Z]|^3} \|w\|_{L^{\infty}(\Omega_1)} \\
									&\lesssim &  \delta \dfrac{M[\mathbf Z]}{N|\lambda[\mathbf Z]|^3} \|w\|_{L^{\infty}(\Omega_1)} .
\end{eqnarray*} 
By interpolating the above inequalities to control the $L^2$-norm of $\sigma$ and combining with  \eqref{eq_convudelta}, 
we deduce:
\begin{eqnarray} \notag
|\check{E}rr|  &\lesssim &  \left( \|\sigma\|_{L^2(\Omega_1)} + \|\rho\|_{L^2(\Omega_1)} \|w\|_{L^{\infty}(\Omega_1)}\right) \| U_\delta[\mathbf Z] - U[\mathbf Z] \|_{L^2(\Omega_1)} \\
& \lesssim & \left( \sqrt{\delta \dfrac{M[\mathbf Z]}{N|\lambda[\mathbf Z]|^3}}+ \|\rho\|_{L^2(\Omega_1)}\right)\dfrac{\|\nabla U[\mathbf Z]\|_{L^2(\mathbb R^3)}}{\delta^{3}}  \|w\|_{L^{\infty}(\Omega_1)} \notag  \\
& \lesssim & \dfrac{1}{\delta^{5/2}} \left( \sqrt{\dfrac{M[\mathbf Z]}{N|\lambda[\mathbf Z]|^3}}+ \|\rho\|_{L^2(\Omega_1)}\right) \|\nabla U[\mathbf Z]\|_{L^2(\mathbb R^3)} \|w\|_{L^{\infty}(\Omega_1)} .
\label{eq_check} 
\end{eqnarray}

Then, we note that we may rewrite:
$$
\widehat{E}rr = \dfrac{6\pi}{N} \sum_{\kappa \in \Kappa} \sum_{i \in \mathcal I_{\kappa}} \int_{[T_{\kappa}]_{2\delta}}   \dfrac{w(X_i) \cdot U_{\delta}[\mathbf Z](x)}{|[T_{\kappa}]_{2\delta}|} - 6\pi \int_{\Omega_1} \rho U_{\delta}[\mathbf Z] \cdot w
$$
where we rewrite the first term:
\begin{multline*}
\dfrac{6\pi}{N} \sum_{\kappa \in \Kappa} \sum_{i \in \mathcal I_{\kappa}} 
\int_{[T_{\kappa}]_{2\delta}}   \dfrac{w(X_i) \cdot U_{\delta}[\mathbf Z]}{|[T_{\kappa}]_{2\delta}|} \\
 = \dfrac{6\pi}{N}\sum_{i=1}^N w(X_i) \cdot U_{\delta}[\mathbf Z](X_i) 
+ \dfrac{6\pi}{N} \sum_{\kappa \in \Kappa} \sum_{i\in \mathcal I_{\kappa}} 
\int_{[T_{\kappa}]_{2\delta}} \dfrac{w(X_i) \cdot (U_{\delta}[\mathbf Z] - U_{\delta}[\mathbf Z](X_i))}{|[T_{\kappa}]_{2\delta}|} .
\end{multline*}
Because $U_{\delta}[\mathbf Z]$ is Lipschitz, and by the estimate \eqref{eq_convudelta} on its Lipschitz norm, we have:
\begin{eqnarray*}
\left| \dfrac{6\pi}{N} \sum_{\kappa \in \Kappa} \sum_{i\in \mathcal I_{\kappa}} \int_{[T_{\kappa}]_{2\delta}} \dfrac{w(X_i) \cdot (U_{\delta}[\mathbf Z] - U_{\delta}[\mathbf Z](X_i))}{|[T_{\kappa}]_{2\delta}|} \right| & \lesssim & \lambda[\mathbf{Z}] \|U_{\delta}[\mathbf Z]\|_{C^{0,1}(\overline{\Omega_1})}\|w\|_{L^{\infty}(\Omega_1)}  \\
& \lesssim &  \delta^{9/2} \lambda[\mathbf Z]\|\nabla U[\mathbf Z]\|_{L^2(\mathbb R^3)} 
\|w\|_{L^{\infty}(\Omega_1)} .
\end{eqnarray*}
On the other hand, we have:
$$
\dfrac{6\pi}{N}\sum_{i=1}^N w(X_i) \cdot U_{\delta}[\mathbf Z](X_i)  - 6\pi \int_{\Omega_1} \rho\, U_{\delta}[\mathbf Z] \cdot w = 6\pi \langle \rho[\mathbf Z] - \rho , w \cdot U_{\delta} [\mathbf Z]\rangle  
$$
so that, introducing again the control on the $C^{0,1}$-norm of $U_{\delta}[\mathbf Z]$, we derive:
$$
\left|\dfrac{6\pi}{N}\sum_{i=1}^N w(X_i) \cdot U_{\delta}[\mathbf Z](X_i)  - 6\pi \int_{\Omega_1} \rho U_{\delta}[\mathbf Z] \cdot w \right| \lesssim \delta^{9/2}
\|\nabla U[\mathbf Z]\|_{L^2(\mathbb R^3)} \|\rho[\mathbf Z] - \rho\|_{[C^{0,1/2}_b(\mathbb R^3)]^*} \|w\|_{C^{0,1/2}(\overline{\Omega_1})}.
$$
We finally obtain
\begin{equation} \label{eq_hat}
|\widehat{E}rr| \lesssim  \delta^{9/2}  \left( \lambda[\mathbf Z] +  \|\rho[\mathbf Z] - \rho\|_{[C^{0,1/2}(\mathbb R^3)]^*} \right) \|\nabla U[\mathbf Z]\|_{L^2(\mathbb R^3)}  \|w\|_{C^{0,1/2}(\overline{\Omega_1})},
\end{equation}
which completes the proof for the term $\widehat{E}rr$.

For the remaining term, we introduce:
$$
\tilde{\sigma}  = \left(1 - \left(1-\frac 1{2\delta}\right)^{3}\right)^{-1} \dfrac{1}{N|\lambda[\mathbf Z]|^3} \sum_{\kappa \in \Kappa}  \left(\sum_{i \in \mathcal I_{\kappa} \cap \mathcal Z_{\delta}} |w(X_i)| \right) \mathbf{1}_{[T_{\kappa}]_{2\delta}}\, ,
$$
so that:
$$
|\widetilde{E}rr| \leq \int_{\Omega_1} \tilde{\sigma}(x) |U[\mathbf Z](x)|{\rm d}x\,.
$$
With similar arguments as in the previous computations,  we have, applying \eqref{eq_couloir}:
$$
\|\tilde{\sigma}\|_{L^1(\Omega_1)} \leq \dfrac{1}{N} \card \mathcal Z_{\delta} \|w\|_{L^{\infty}(\Omega_1)} \leq \dfrac{1}{\delta} \|w\|_{L^\infty(\Omega_1)}  \left(1+\frac 1N \sum_{i=1}^N |V_i|^2\right).
$$
Furthermore, we have:
$$
\|\tilde{\sigma}\|_{L^{\infty}(\Omega_1)} \lesssim \delta \dfrac{M[\mathbf Z]}{N|\lambda[\mathbf Z]|^3} \|w\|_{L^{\infty}(\Omega_1)}.
$$
Consequently, by interpolation, we obtain:
$$
\|\tilde{\sigma}\|_{L^{\frac 43}(\Omega_1)} \lesssim  \dfrac{1}{\sqrt{\delta}}  \left(  \dfrac{M[\mathbf Z]}{N|\lambda[\mathbf Z]|^3} \right)^{1/4} \|w\|_{L^{\infty}(\Omega_1)} \left(1+\frac 1N \sum_{i=1}^N |V_i|^2\right)^{\frac 34}\,.
$$
Applying Sobolev embedding $\dot{H}^1(\mathbb R^3) \subset L^{4}(\Omega_1)$ with \eqref{eq_defEinfini} we conclude that:
\begin{equation} \label{eq_tilde}
|\widetilde{E}rr| \lesssim \dfrac{1}{\sqrt{\delta}} \left(1+ \frac 1N \sum_{i=1}^N |V_i|^2 \right)^{\frac 34}  \left(  \dfrac{M[\mathbf Z]}{N|\lambda[\mathbf Z]|^3} \right)^{1/4}  \|w\|_{C^{0,1/2}(\overline{\Omega_1})} \|\nabla U[\mathbf Z]\|_{L^2(\mathbb R^3)}\,.
\end{equation}

We conclude the estimate of $E_\rho[w]$ by adding up \eqref{eq_check}, 
\eqref{eq_hat}, \eqref{eq_tilde} and recalling \eqref{eq_defEinfini}.
\end{proof}

\section{Proof of the main result}\label{sec:maintheo}

We are now able to prove our main result Theorem~\ref{theo:main} as well as the Corollary~\ref{cor:main}. 

We hence consider the framework of Theorem~\ref{theo:main}. The main idea is to split the expectation we want to estimate into two parts: one taking into account the non-concentrated configurations (which has been treated in Section~\ref{sec:bonneconfig}), and the other taking into account the concentrated configurations (treated in Section~\ref{sec:assumption}). 

Let us fix $\alpha \in (2/3,1),$  $\eta = \min(1/(2C_1e),1)$ (see Assumption~\ref{assump:A1} or 
Proposition~\ref{prop:config} to remind the meaning of constant $C_1$) and $R>0$. Given $N \in \mathbb N^*$ we denote:
\[ 
M_N = N^{\frac{3(1-\alpha)}{5}} \quad \text{ and }  \quad 
\lambda_N = \left(\dfrac{\eta M_N}{N }\right)^{1/3}.
\] 
We can then introduce the corresponding decomposition of configurations with $N$ particles: 
$$
\mathcal O^N = \big( \mathcal O^N \setminus (\mathcal O^N_{\lambda_N, M_N} \cup \mathcal O^N_\alpha) \big) \cup \big(\mathcal O^N_{\lambda_N, M_N} \cup \mathcal O^N_\alpha \big).
$$
We emphasize that, since $\eta <1,$ for any $\mathbf Z^N \in \mathcal O^N \setminus (\mathcal O^N_{\lambda_N, M_N} \cup \mathcal O^N_\alpha),$ the associated configuration satisfies \eqref{eq:parametres}.

\subsection{Proof of Theorem~\ref{theo:main}}
We want to compute the expectation of the distance with $u:=u[\rho,j].$
We split the expectation into the non-concentrated configurations and the concentrated configurations as follows
$$
\begin{aligned}
\mathbb E \left[ \| U_N [\mathbf Z^N] - u \|_{L^2(B(0,R))} \right]
& = \mathbb E \left[ \mathbf 1_{\mathcal O^N \setminus (\mathcal O^N_{\lambda_N, M_N} \cup \mathcal O^N_\alpha)} (\mathbf Z^N) \, \| U_N [\mathbf Z^N] - u \|_{L^2(B(0,R))} \right] \\
&\quad
+ \mathbb E \left[ \mathbf 1_{ \mathcal O^N_{\lambda_N, M_N} \cup \mathcal O^N_\alpha} (\mathbf Z^N) \,\| U_N [\mathbf Z^N] - u \|_{L^2(B(0,R))} \right] \\
&=: I_1 + I_2.
\end{aligned}
$$

Let us first estimate the term $I_2$. Since we have chosen $\eta$ sufficiently small, 
Proposition~\ref{prop:config} entails that:
$$
\mathbb P(\mathbf Z^N \in \mathcal O^N_{\lambda_N,M_N} \cup \mathcal O^N_{\alpha}) \lesssim N^{-(3\alpha-2)} \to 0 \quad \text{ when $N \to \infty$.}
$$
Consequently, with Corollary~\ref{cor_unifL1} we obtain that:
\begin{eqnarray*}
\mathbb E \left[ \mathbf 1_{ \mathbf Z^N \in \mathcal O^N_{\lambda_N,M_N} \cup \mathcal O^N_{\alpha}} \,  \|\nabla U[ \mathbf Z^N]\|_{L^2(\mathbb R^3)}   \right] 
&\leq& \dfrac{K}{N^{3/2}} +  \mathbb P( \mathbf Z^N \in \mathcal O^N_{\lambda_N,M_N} \cup \mathcal O^N_{\alpha})^{\frac 12} \,  \mathbb E \left[  \frac1N\sum_{i=1}^N |V_i^N|^2   \right]^{\frac 12}  \\
&\lesssim& \frac{1}{N^{\frac{3 \alpha-2}{2}}} .  
\end{eqnarray*}
Finally we get  
$$
\begin{aligned}
I_2 
&\lesssim
 \mathbb E \left[ \mathbf 1_{ \mathcal O^N_{\lambda_N, M_N} \cup \mathcal O^N_\alpha} (\mathbf Z^N) \,\| U_N [\mathbf Z^N] \|_{D(\R^3)} \right]
+ \mathbb E \left[ \mathbf 1_{ \mathcal O^N_{\lambda_N, M_N} \cup \mathcal O^N_\alpha} (\mathbf Z^N) \,\|  u \|_{L^2(B(0,R))} \right] \\
&\lesssim
\frac{1}{N^{\frac{3 \alpha-2}{2}}}
+ \mathbb P \left[ \mathbf Z^N \in \mathcal O^N_{\lambda_N, M_N} \cup \mathcal O^N_\alpha  \right] 
\lesssim
\frac{1}{N^{\frac{3 \alpha-2}{2}}},
\end{aligned}
$$

We now turn to the term $I_1$. For $N$ sufficiently large, noting that $\|\rho^N[\mathbf Z^N] - \rho \|_{[C^{0,1/2}_b(\mathbb R^3)]^{*}}\leq 2$, we can apply Theorem~\ref{theo_bonneconfig} choosing 
\[
\delta= \left[ \dfrac{1+ \|\rho\|_{L^2(\Omega_0)}}{\left( \frac{1}{N^{\frac{1-\alpha}{5}}}  + \|\rho^N[\mathbf Z^N] - \rho \|_{[C^{0,1/2}_b (\mathbb R^3)]^{*}} \right)}\right]^{\frac 3{19}}.
\]  
This yields that, for arbitrary $\mathbf Z^N \in \mathcal O^N \setminus (\mathcal O^N_{\lambda_N, M_N} \cup \mathcal O^N_\alpha)$, we have:
\begin{multline*}
\| U_N [\mathbf Z^N] - u \|_{L^2(B(0,R))}
\lesssim \left(1+\frac1N\sum_{i=1}^N |V_i^N|^2 \right)^{\frac54}
 \left( 1 + \|\rho\|_{L^2(\Omega_0)} \right)^{\frac{18}{19}} 
\left( \frac{1}{N^{\tfrac{1-\alpha}{5}}}  + \|\rho^N[\mathbf Z^N] - \rho \|_{[C^{0,1/2}_b (\R^3)]^{*}}  \right)^{\frac 1{19}}
\\
+ \|j^N[\mathbf Z^N] - j\|_{[C^{0,1/2}_b (\R^3)]^{*}}.
\end{multline*}
Taking expectation and using the hypotheses of the theorem, this yields
\begin{equation}\label{eq:I1}
\begin{aligned} 
I_1
&\lesssim 
\dfrac{(1 + \|\rho\|_{L^2(\Omega_0)})^{\frac{18}{19}} }{N^{\frac{1-\alpha}{95}}} \quad \mathbb    E \left[ \left(1+\frac1N\sum_{i=1}^N |V_i^N|^2 \right)^{\frac54} \right]  \\
& \quad 
+ \mathbb    E \left[ \left(1+\frac1N\sum_{i=1}^N |V_i^N|^2  \right)^{\frac54} \|\rho[\mathbf Z^N] - \rho\|^{\frac 1{19}}_{[C^{0,1/2}_b(\mathbb R^3)]^{*}}  \right] 
+ 
 \mathbb    E \left[ \|j[\mathbf Z] - j\|_{[C^{0,1/2}_b(\mathbb R^3)]^{*}}  \right]
\\
&\lesssim [M_5(F^N_1)]^{\frac 12} 
\left( \dfrac{(1+\|\rho\|_{L^2(\Omega_0)})^{\frac{18}{19}}}{N^{\frac{1-\alpha}{95}}} +  \mathbb E \left[ \|\rho^N[\mathbf Z^N] - \rho\|_{[C^{0,1/2}_b (\mathbb R^3)]^{*}}  \right]^{\frac 1{19}} \right)
+\mathbb E \left[  \| j^N [\mathbf Z^N] - j \|_{[C^{0,1/2}_b (\mathbb R^3)]^*}  \right]  \\
& \lesssim   \mathbb E \left[ \|\rho^N[\mathbf Z^N] - \rho\|_{[C^{0,1/2}_b(\mathbb R^3)]^{*}}  \right]^{\frac 1{19}} 
+\mathbb E \left[  \| j^N [\mathbf Z^N] - j \|_{[C^{0,1/2}_b (\R^3)]^*}  \right]
+ N^{-\frac{(1-\alpha)}{95}} \\
& \lesssim   \mathbb E \left[ W_1 ( \rho^N[\mathbf Z^N] , \rho ) \right]^{\frac{1}{57}} 
+\mathbb E \left[  \| j^N [\mathbf Z^N] - j \|_{[C^{0,1}_b (\R^3)]^*}  \right]^{\frac{1}{3}}
+ N^{-\frac{(1-\alpha)}{95}},
\end{aligned}
\end{equation}
where we have used Lemma~\ref{lem:equiv-distances} in last line.

We complete the proof of \eqref{eq:theo-erreur-0} by gathering previous estimates, and the last part of the theorem immediately follows from it. \qed

\subsection{Proof of the Corollary~\ref{cor:main}}
Let $f$ satisfy the hypotheses of Corollary~\ref{cor:main}. We shall construct here a sequence $(F^N)_{N \in \N^*}$ of symmetric probability measures on $\mathcal O^N$ that satisfy Assumption~\ref{assump:A1} and that is $f$-chaotic with quantitative estimates (in the sense of Definition~\ref{def:chaos}), hence also satisfies Assumption~\ref{assump:A2} thanks to Lemma~\ref{lem:chaos-A2}.

A classical way in statistical physics to construct chaotic probability measures in the phase space of a $N$-particle system is to take the $N$-tensor product of a probability measure on the phase space of one particle that we condition to the energy surface of the system. More precisely, given a probability measure $f$ on $\Omega_0 \times \R^3$ we define a probability measure $\Pi^N[f]$ on $\mathcal O^N$ by
\begin{equation}\label{eq:cor-fN}
\Pi^N[f] (\d \mathbf z^N) :=  \mathcal W_N^{-1}(f) \, \mathbf 1_{\mathbf z^N \in \mathcal O^{N}} \, f^{\otimes N}(\d \mathbf z^N),
\end{equation}
where $\mathcal W_N (f)$ is the partition function
$$
 \mathcal W_N (f) := \int_{(\Omega_0 \times \mathbb R^3)^N} \mathbf 1_{\mathbf z^N \in \mathcal O^{N}} \, f^{\otimes N}( \mathrm d \mathbf z^N).
$$
We now verify that the sequence $(\Pi^N[f])_{N \in \N^*}$ satisfies Assumption~\ref{assump:A1}. We start with a technical remark:

\begin{lemma}\label{WN}
For any $1 \le m \le N$ and $N$ large enough there holds
$$
1 \le \mathcal W_N^{-1}(f) \mathcal W_{N-m}(f) \le (1 - 8c_0 \,  N^{-2} \| \rho \|_{L^\infty(\mathbb R^3)} )^{-m} \le e^{16 c_0 m N^{-2} \| \rho \|_{L^{\infty}(\mathbb R^3)} },
$$
where $c_0=|B_{\mathbb R^3}|$ is the volume of the unit ball in $\mathbb R^3$.
\end{lemma}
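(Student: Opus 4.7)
The plan is to prove both bounds by iteratively integrating out particles one at a time, reducing the problem to a single exclusion estimate that is controlled by $\|\rho\|_{L^\infty}$.

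First, I will set up the one-step recursion. Writing $\mathbf z^N = (\mathbf z^{N-1}, z_N)$ with $z_N = (x_N, v_N)$, the constraint $\mathbf z^N \in \mathcal O^N$ (i.e.\ $|X_i - X_j| > 2/N$ for all $i \neq j$) is equivalent to $\mathbf z^{N-1}$ having all of its $N-1$ positions mutually separated by more than $2/N$ together with $|x_N - x_i| > 2/N$ for $i = 1, \ldots, N-1$. Integrating out $z_N$ first gives
\[
\mathcal W_N(f) = \int \mathbf 1_{|X_i - X_j| > 2/N \,\forall i\neq j \leq N-1}(\mathbf z^{N-1}) \, \Phi_{N-1}(\mathbf x^{N-1}) \, f^{\otimes(N-1)}(\mathrm d \mathbf z^{N-1}),
\]
where $\Phi_{N-1}(\mathbf x^{N-1}) := \int f(z_N) \prod_{i=1}^{N-1} \mathbf 1_{|x_N - x_i| > 2/N} \, \mathrm d z_N$. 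Since $f$ is a probability measure, $\Phi_{N-1} \leq 1$. For the reverse inequality, using a union bound together with $\rho \in L^\infty$ yields
\[
1 - \Phi_{N-1}(\mathbf x^{N-1}) \leq \sum_{i=1}^{N-1} \int_{B(x_i, 2/N)} \rho(x) \, \mathrm d x \leq (N-1) \, c_0 \, (2/N)^3 \, \|\rho\|_{L^\infty} \leq 8 c_0 N^{-2} \|\rho\|_{L^\infty}.
\]

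Second, interpreting $\mathcal W_{N-m}(f)$ as $\int \mathbf 1_{\{|X_i - X_j| > 2/N\}} f^{\otimes(N-m)}$ (the natural object obtained when integrating out particles while keeping the exclusion radius fixed at $1/N$), the above identity and bound give immediately
\[
(1 - 8 c_0 N^{-2} \|\rho\|_{L^\infty}) \, \mathcal W_{N-1}(f) \leq \mathcal W_N(f) \leq \mathcal W_{N-1}(f).
\]
Iterating this one-step estimate $m$ times — at iteration $k$ the $L^\infty$-bound on the exclusion volume against the $N - k \leq N-1$ remaining positions again gives a factor at most $8 c_0 N^{-2} \|\rho\|_{L^\infty}$, uniformly in $k$ — produces
\[
(1 - 8 c_0 N^{-2} \|\rho\|_{L^\infty})^m \, \mathcal W_{N-m}(f) \leq \mathcal W_N(f) \leq \mathcal W_{N-m}(f),
\]
which is the first two inequalities of the lemma after taking reciprocals. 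The final exponential estimate follows from the elementary inequality $(1-x)^{-1} \leq e^{2x}$, valid for $x \in [0, 1/2]$, applied with $x = 8 c_0 N^{-2} \|\rho\|_{L^\infty}$; the requirement "$N$ large enough" ensures $x \leq 1/2$.

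The only real subtlety is the one-step exclusion estimate in the first paragraph, which is essentially a volume computation, so no step presents a serious obstacle. The argument is robust to the precise reading of $\mathcal W_{N-m}$: if instead one takes the exclusion radius $1/(N-m)$ for the $(N-m)$-particle partition function, the same reasoning together with the trivial inclusion $\mathcal O^{N-m}[1/(N-m)] \subset \mathcal O^{N-m}[1/N]$ still delivers the upper bound, which is what is actually used in the chaoticity arguments that follow.
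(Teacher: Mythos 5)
Your proof is correct and follows essentially the same route as the paper's: a one-step recursion relating $\mathcal W_k$ to $\mathcal W_{k-1}$ via a union-bound volume estimate controlled by $\|\rho\|_{L^\infty}$ (each excluded ball of radius $2/N$ contributes at most $8c_0 N^{-3}\|\rho\|_{L^\infty}$, and there are at most $N-1$ of them), iterated $m$ times, followed by $(1-x)^{-1}\le e^{2x}$ for $x\in[0,1/2]$ to convert to the exponential form. The only cosmetic difference is direction (you peel particles off $\mathcal W_N$ going down; the paper states the recursion going from $\mathcal W_m$ up to $\mathcal W_{m+1}$ with a factor $1-8mc_0N^{-3}\|\rho\|_{L^\infty}$ and then bounds $m\le N$); your closing remark on the reading of $\mathcal W_{N-m}$ with fixed exclusion radius $1/N$ correctly identifies the convention the paper actually uses in the marginal estimates that follow.
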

\begin{proof}
We have
$$
\begin{aligned}	
\mathcal W_{m+1}(f) 
&= \int_{(\mathbb R^3 \times \mathbb R^3)^{m+1}} \mathbf 1_{ (z_1, \dots, z_m) \in \mathcal O^m[\frac{1}{N}]} \, \left(  \prod_{i=1}^{m} \mathbf 1_{|x_i - x_{m+1}| > \frac{2}{N} } \right) f^{\otimes (m+1)} (z_1, \dots, z_m, z_{m+1}) \, \mathrm d z_{1} \dots \mathrm d z_{m+1} \\
&= \int_{(\mathbb R^3 \times \mathbb R^3)^{m}} \left\{  \int_{\mathbb R^3 \times \mathbb R^3} \prod_{i=1}^{m} \left( 1 - \mathbf 1_{|x_i - x_{m+1}| \le \frac2N} \right) f(z_{m+1}) \, \mathrm dz_{m+1} \right\} \mathbf 1_{(z_1, \dots, z_m) \in \mathcal O^m[\frac{1}{N}]} \, f^{\otimes m} (z_1, \dots, z_m) \, \mathrm d z_{1} \dots \mathrm d z_{m} \\
&\ge \int_{(\mathbb R^3 \times \mathbb R^3)^{m}} (1 - 8 m c_0 N^{-3} \| \rho \|_{L^{\infty}(\mathbb R^3)} ) \mathbf 1_{(z_1, \dots, z_m) \in \mathcal O^m[\frac{1}{N}]} \, f^{\otimes m} (z_1, \dots, z_m) \, \mathrm d z_{1} \dots \mathrm d z_{m},
\end{aligned}
$$
We note here that, to pass from the second to the last line, we only remark that the indicator functions deletes at most $m$ balls of radius $2/N$ in $\mathbb R^3.$  
From the last inequality, we deduce $\mathcal W_{m+1}(f) \ge \mathcal W_{m}(f) (1 -8 m c_0 N^{-3} \| \rho \|_{L^{\infty}(\mathbb R^3)} ) $. We conclude the proof of the first claimed inequality by induction. 

For the second inequality, observe that $x \mapsto 2x + \log(1-x)$ is nonnegative for $0\le x \le 1/2$, therefore for $N$ large enough (so that $16 c_0 m N^{-2} \| \rho \|_{L^{\infty}(\mathbb R^3)} \le 1$) we get 
$$
(1 - 8 c_0 \,  N^{-2} \| \rho \|_{L^{\infty}(\mathbb R^3)} )^{-m} \le e^{16 c_0 m N^{-2} \| \rho \|_{L^{\infty}(\mathbb R^3)}}.
$$
\end{proof}

As a consequence we obtain the following bounds on $(\Pi^N[f])_{N \in \N^*}$:
\begin{lemma}
Given $N$ sufficiently large, for any $1\leq m  \leq N$ there holds:
\begin{align*} 
& \| \Pi^N_m[f] \|_{L^\infty_x L^1_v \left(\mathcal O^m[\frac{1}{N}] \right)}
\le e^{16c_0 m N^{-2} \| \rho \|_{L^\infty(\R^3)}} \, \| \rho \|_{L^\infty(\R^3)}^m,\\
&  \| |z_1|^{k_0} \Pi^N_1[f] \|_{L^1_x L^1_v (\mathbb R^3 \times \mathbb R^3)}\le e^{16c_0  N^{-2} \| \rho \|_{L^\infty(\R^3)}} \int_{\mathbb R^3 \times \mathbb R^3} 
|z_1|^{k_0} f(z_1){\rm d}z_1  ,\\
& \| |v_1|^{k_0} \Pi^N_2[f] \|_{L^{\infty}_x L^1_v \left(\mathcal O^2[\frac{1}{N}]\right)}  \le e^{32 c_0  N^{-2} \| \rho \|_{L^\infty(\R^3)}}  \|\rho\|_{L^{\infty}} \sup_{x_1 \in \mathbb R^3} \int_{\mathbb R^3} 
|v_1|^{k_0} f(x_1,v_1){\rm d}v_1  ,
\end{align*}
where $\Pi^N_m[f]$ denotes the $m$-marginal of $\Pi^N[f]$.

\end{lemma}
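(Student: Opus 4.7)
The plan is to write each marginal $\Pi^N_m[f]$ explicitly, factor off the constraint involving only the first $m$ particles, bound the remaining cross-constraints trivially by $1$, and close the estimate using the preceding Lemma~\ref{WN}.

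More precisely, for $\mathbf{z} \in \mathcal O^m[\tfrac{1}{N}]$, I would write
\[
\mathbf 1_{(\mathbf z,\mathbf z')\in\mathcal O^N} \;=\; \mathbf 1_{\mathbf z\in\mathcal O^m[\frac1N]}\,\mathbf 1_{\mathbf z'\in\mathcal O^{N-m}[\frac1N]}\,\prod_{\substack{1\le i\le m \\ m< j\le N}} \mathbf 1_{|x_i-x_j|>\frac2N},
\]
which upon integration against $f^{\otimes(N-m)}(\mathrm d\mathbf z')$, using the trivial bound $\prod \le 1$, yields
\[
\Pi^N_m[f](\mathbf z) \;\le\; \mathcal W_N^{-1}(f)\,\mathcal W_{N-m}(f)\, f^{\otimes m}(\mathbf z) \;\le\; e^{16 c_0 m N^{-2}\|\rho\|_{L^\infty}} f^{\otimes m}(\mathbf z)
\]
thanks to Lemma~\ref{WN}. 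The bound~(1) follows immediately by integrating in $\mathbf v$: indeed $\int f^{\otimes m}(\mathbf z)\,\mathrm d\mathbf v = \prod_{i=1}^m \rho(x_i) \le \|\rho\|_{L^\infty(\R^3)}^m$.

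For the weighted estimate (2), taking $m=1$ in the same pointwise bound gives $\Pi^N_1[f](z_1) \le e^{16 c_0 N^{-2}\|\rho\|_{L^\infty}} f(z_1)$; multiplying by $|z_1|^{k_0}$ and integrating over $\mathbb R^3\times\mathbb R^3$ yields the claim directly. For (3), take $m=2$ and integrate $|v_1|^{k_0} \Pi^N_2[f](z_1,z_2)$ in $(v_1,v_2)$ to obtain
\[
\int_{\R^6} \mathbf 1_{(z_1,z_2)\in\mathcal O^2[\frac1N]} |v_1|^{k_0}\Pi^N_2[f](z_1,z_2)\,\mathrm dv_1\mathrm dv_2 \;\le\; e^{32 c_0 N^{-2}\|\rho\|_{L^\infty}} \!\left(\int_{\R^3}\!|v_1|^{k_0} f(x_1,v_1)\,\mathrm dv_1\right)\!\rho(x_2),
\]
and taking the supremum in $(x_1,x_2)$ produces the stated bound with the factor $\|\rho\|_{L^\infty}$ coming from the $x_2$-supremum of $\rho(x_2)$.

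There is no real obstacle: the only nontrivial input is the partition-function ratio bound, which has just been established in Lemma~\ref{WN}. The remaining manipulations are a bookkeeping of indicator functions and Fubini, with the minor care that the cross-indicator $\prod_{i\le m<j}\mathbf 1_{|x_i-x_j|>2/N}$ must be dropped (not kept) in order to recognize $\mathcal W_{N-m}(f)$ in the leftover integral.
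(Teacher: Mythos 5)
Your proof is correct and coincides in substance with the paper's: both drop the cross-indicator $\prod_{i\le m<j}\mathbf 1_{|x_i-x_j|>2/N}$ to obtain the pointwise bound $\Pi^N_m[f](\mathbf z) \le \mathcal W_N^{-1}(f)\mathcal W_{N-m}(f)\,\mathbf 1_{\mathbf z\in\mathcal O^m[\frac1N]}\,f^{\otimes m}(\mathbf z)$ and then invoke Lemma~\ref{WN} to control the partition-function ratio, after which the three estimates are just integrations in $\mathbf v$ (or $z_1$) together with $\int f(x,\cdot)\,\mathrm dv = \rho(x)\le\|\rho\|_{L^\infty}$.
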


\begin{proof}
We write
$$
\begin{aligned}
f^N_{m}(z_1,\dots, z_m) 
&\le \mathcal W_N^{-1}(f) \mathbf 1_{(z_1,\dots, z_m) \in \mathcal O^m[\frac{1}{N}]} \, f^{\otimes m} (z_1,\dots, z_m)
\int_{(\mathbb R^3 \times \mathbb R^3)^{N-m}} \prod_{m+1 \le i < j \le N} \mathbf 1_{|x_i - x_{j}| > \frac2N} \, \prod_{j=m+1}^N f(z_j) \, \mathrm d z_{j}  \\
&\le \mathcal W_N^{-1}(f) \mathcal W_{N-m}(f) \mathbf 1_{(z_1,\dots, z_m) \in \mathcal O^m[\frac{1}{N}]} \, f^{\otimes m} (z_1,\dots, z_m).
\end{aligned}
$$
Each estimate then follows easily by using the bound of Lemma~\ref{WN}.
\end{proof}

This lemma shows that $(\Pi^N[f])_{N \in \N^*}$ satisfies Assumption~\ref{assump:A1}. We shall prove now that $(\Pi^N[f])_{N \in \N^*}$ is $f$-chaotic with quantitative estimates, which hence implies that it satisfies Assumption~\ref{assump:A2}.
To this end, we recall that we denote $(\mathbf{Z}^N)_{N \in \mathbb N}$ a sequence of random variables on $\mathcal O^N$ with corresponding laws $(\Pi^N[f])_{N \in \N^*}$ and that proving that $(\Pi^N[f])_{N \in \N^*}$ is $f$-chaotic reduces to measuring the expectation of the Wasserstein $W_1$-distance between the empirical measure $\mu^N[\mathbf Z^N]$
and $f.$ This is the content of the following lemma, from which Corollary~\ref{cor:main} follows straightforwardly.

\begin{lemma}\label{lem:W1}
Consider the framework of Corollary~\ref{cor:main}.
Let $(\mathbf Z^{N})_{N \in \N^*}$ be a sequence of random variables on $\mathcal O^N$ with laws $(\Pi^N[f])_{N \in \N^*}$ defined by \eqref{eq:cor-fN}. There holds
\[
\mathbb E[W_1(\rho^N[\mathbf Z^N],\rho)] \lesssim \dfrac{1}{N^{1/3}}
\quad
and
\quad
\mathbb E[W_1(\mu^N[\mathbf Z^N],f)] \lesssim \dfrac{1}{N^{1/6}}.
\]
\end{lemma}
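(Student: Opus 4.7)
\medskip

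\noindent\textbf{Proof plan for Lemma~\ref{lem:W1}.} The plan is to reduce both estimates to classical rates of convergence of the empirical measure of i.i.d.\ samples, exploiting the fact that $\Pi^N[f]$ is essentially a conditioning of the product measure $f^{\otimes N}$. To this end, let $\mathbf Y^N = (Y_1, \dots, Y_N)$ denote an auxiliary i.i.d.\ sequence with common law $f$, and write $Y_i = (X'_i, V'_i)$. The definition \eqref{eq:cor-fN} of $\Pi^N[f]$ yields, for any nonnegative measurable functional $G$ on $(\mathbb R^3 \times \mathbb R^3)^N$,
\[
\mathbb E_{\Pi^N[f]}[G(\mathbf Z^N)] = \mathcal W_N^{-1}(f)\, \mathbb E[\mathbf 1_{\mathbf Y^N \in \mathcal O^N}\, G(\mathbf Y^N)] \leq \mathcal W_N^{-1}(f)\, \mathbb E[G(\mathbf Y^N)].
\]
Lemma~\ref{WN} applied with $m = N$ (and the convention $\mathcal W_0(f) = 1$) then gives $\mathcal W_N^{-1}(f) \leq e^{16 c_0 N^{-1}\|\rho\|_{L^\infty}}$, which is bounded uniformly in $N$ for $N$ large enough.

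Second, I would apply this inequality with the two choices $G(\mathbf Z^N) = W_1(\rho^N[\mathbf Z^N], \rho)$ and $G(\mathbf Z^N) = W_1(\mu^N[\mathbf Z^N], f)$. Since $\rho^N$ and $\mu^N$ are the usual empirical measures associated respectively to the positions and to the full states of the configuration, and since both functionals are nonnegative, the lemma reduces to two classical quantitative results on the Wasserstein distance between the empirical measure of i.i.d.\ samples and the underlying law: one on $\mathbb R^3$ (with law $\rho$, which is the $x$-marginal of $f$) and one on $\mathbb R^3 \times \mathbb R^3 \simeq \mathbb R^6$ (with law $f$).

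The last step is to invoke the classical Fournier--Guillin rates (or earlier works of Dereich--Scheutzow--Schottstedt, Horowitz--Karandikar): for a probability measure $\nu$ on $\mathbb R^d$ with $d \geq 3$ and $M_q(\nu) < \infty$ for some $q > d/(d-1)$, there holds $\mathbb E[W_1(\nu_N, \nu)] \lesssim M_q^{1/q}(\nu)\, N^{-1/d}$, where $\nu_N = \tfrac{1}{N} \sum_{i=1}^{N} \delta_{\xi_i}$ and $(\xi_i)$ are i.i.d.\ of law $\nu$. Applied to $\rho \in L^\infty(\Omega_0)$—compactly supported, so all moments are finite—this yields the $N^{-1/3}$ estimate on $\mathbb E[W_1(\rho^N[\mathbf Y^N], \rho)]$. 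Applied to $f$ on $\mathbb R^6$, using the assumption $M_k(f) < \infty$ for some $k \geq 5$, which comfortably exceeds the dimensional threshold $6/5$, one obtains the $N^{-1/6}$ estimate on $\mathbb E[W_1(\mu^N[\mathbf Y^N], f)]$. Combining with the uniform control of $\mathcal W_N^{-1}(f)$ from the first step completes the proof.

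I do not anticipate any serious obstacle here: the argument is essentially a routine pairing of the product-measure structure of $\Pi^N[f]$ with off-the-shelf empirical-measure rates. The only slightly delicate point is a moment-book-keeping verification in dimension six to ensure that the moment assumption $k \geq 5$ of Corollary~\ref{cor:main} is enough to activate the Fournier--Guillin rate, which is transparent.
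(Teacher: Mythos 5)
Your proof is correct and takes a genuinely simpler route than the paper's.

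The paper splits $W_1(\mu^N[\mathbf Z^N],f)$ by the triangle inequality through an independent i.i.d.\ copy $\mathbf W^N$, further splitting on the event $\{\mathbf W^N\in\mathcal O^N\}$, and then handles three separate pieces (the first via \cite{FG}, the second by recognizing the conditioned i.i.d.\ expectation and another triangle inequality, the third by Cauchy--Schwarz together with $\mathbb P[\mathbf W^N\notin\mathcal O^N]\lesssim N^{-1}$). Your argument sidesteps all of that: since $\Pi^N[f]$ is absolutely continuous with respect to $f^{\otimes N}$ with density $\mathcal W_N^{-1}(f)\,\mathbf 1_{\mathcal O^N}$, any nonnegative functional satisfies $\mathbb E_{\Pi^N[f]}[G]\le\mathcal W_N^{-1}(f)\,\mathbb E_{f^{\otimes N}}[G]$, and Lemma~\ref{WN} (applied with $m=N-1$, using $\mathcal W_1(f)=1$, which avoids even having to invoke a convention for $\mathcal W_0$) gives $\mathcal W_N^{-1}(f)\le e^{16c_0 N^{-1}\|\rho\|_{L^\infty}}\to 1$. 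After that, both estimates reduce directly to \cite[Theorem~1]{FG} for i.i.d.\ samples, in dimension $3$ (rate $N^{-1/3}$, using compact support of $\rho$) and dimension $6$ (rate $N^{-1/6}$, using $k\ge 5>1$ moments, compact support in $x$). Both approaches rely on exactly the same two ingredients — the partition-function bound of Lemma~\ref{WN} and the Fournier--Guillin rates — but yours packages them more economically; in particular it avoids introducing an independent copy $\widetilde{\mathbf W}^N$ and the three-way case split. There is no gap.
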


%

\begin{proof}
We shall only prove the second estimate, the first one being similar arguing with the random variable $\mathbf X^N$ on $\mathcal O^N_x$ (coming from $\mathbf Z^N = (\mathbf X^N,  \mathbf V^N)$).

Let $(\mathbf W^N)_{N \in \N^*}$ be a i.i.d.\ sequence of random variables on $(\R^3 \times \R^3)^N$ with common law $f$, and $\mu^N[\mathbf W^N]$ be the associated empirical measure. We split
$$
W_1(\mu^N[\mathbf Z^N] , f) 
\le W_1(\mu^N[\mathbf W^N] , f) 
+\mathbf 1_{\mathbf W^N \in \mathcal O^N} \, W_1 (\mu^N[\mathbf Z^N] , \mu^N[\mathbf W^N])
+ \mathbf 1_{\mathbf W^N \not\in \mathcal O^N} \, W_1 (\mu^N[\mathbf Z^N] , \mu^N[\mathbf W^N]),
$$
which implies
$$
\begin{aligned}
\mathbb E \left[ W_1(\mu^N[\mathbf Z^N] , f)  \right]
&\le \mathbb E \left[ W_1(\mu^N[\mathbf W^N] , f) \right]
+ \mathbb E \left[  \mathbf 1_{\mathbf W^N \in \mathcal O^N} \, W_1 (\mu^N[\mathbf Z^N] , \mu^N[\mathbf W^N])  \right] \\
&\quad
+ \mathbb P \left[ \mathbf W^N \not\in \mathcal O^N \right]^{\frac12} \mathbb E \left[ W_1 (\mu^N[\mathbf Z^N] , \mu^N[\mathbf W^N])   \right]^{\frac12} .
\end{aligned}
$$
The first term on the right-hand side can be controlled by $N^{-1/6}$ thanks to \cite[Theorem 1]{FG}, since $\mathbf W^N$ is a i.i.d.\ sequence of common law $f$ and using the fact that $f$ has support included in $\Omega_0 \times \R^3$ as well as a finite moment of order $5$. The second term is bounded (up to a constant) by the first one, indeed
$$
\begin{aligned}
\mathbb E \left[  \mathbf 1_{\mathbf W^N \in \mathcal O^N} \, W_1 (\mu^N[\mathbf Z^N] , \mu^N[\mathbf W^N])  \right]
&= \int_{\mathcal O^N} \int_{\mathcal O^N} W_1 (\mu^N[\mathbf z^N] , \mu^N[\mathbf w^N]) \, \frac{\mathbf 1_{\mathbf z^N \in \mathcal O^N} f^{\otimes N} (\d \mathbf z^N) }{\mathcal W_N(f)} \,\mathbf 1_{\mathbf w^N \in \mathcal O^N} f^{\otimes N} (\d \mathbf w^N) \\
&\le \mathcal W_N(f)^{-1} \, \mathbb E \left[ W_1 (\mu^N[\widetilde{\mathbf W}^N] , \mu^N[\mathbf W^N])   \right] \\
&\lesssim \mathbb E \left[ W_1 (\mu^N[{\mathbf W}^N] , f)   \right] 
+ \mathbb E \left[ W_1 (\mu^N[\widetilde{\mathbf W}^N] , f)   \right],
\end{aligned}
$$
where $\widetilde{\mathbf W}^N$ is an independent copy of ${\mathbf W}^N$.
Finally the third term is bounded by $N^{-1/2}$ since $\mathbb P \left[ \mathbf W^N \not\in \mathcal O^N   \right]\lesssim N^{-1}$ (thanks to a similar argument as in Lemma~\ref{lem_sp2}) and  
$$
\mathbb E \left[ W_1 (\mu^N[\mathbf Z^N] , \mu^N[\mathbf W^N])   \right] 
\lesssim  \mathbb E \left[ M_2 (\mu^N[\mathbf Z^N]) \right] +  \mathbb E  \left[ M_2(\mu^N[\mathbf W^N])   \right] 
= M_2 ( \Pi^N_1[f] ) +   M_2(f)  ,
$$
which are uniformly bounded.
\end{proof}


\appendix

\section{Construction of $w_i$} \label{app_wi}
This section is devoted to the proof of {\bf Lemma~\ref{lem_wi}} and {\bf Lemma~\ref{lem_geometrie}}. 
We recall first the frame of these results. We assume that $N \in \mathbb N$ is given and strictly positive in the whole section and
we drop the parameter $N$ in most of notations.
We consider $N$ balls $B_i,$ $i=1,\ldots,N,$ of centers $(X_1,\ldots,X_N) \in \mathbb R^{3N}$
and common radii $1/N.$ We assume that $|X_i-X_j| > 2/N$ for $j\neq i$ so that these balls are disjoint. 

We begin with {\bf Lemma~\ref{lem_geometrie}} on the possible intersections of $(B(X_i,\frac{3}{2N}))_{i=1,\ldots,N}$.
We recall the statement of this lemma and give a proof:
\begin{lemma} \label{lem_geometrie_app}
Let $i\in \{1,\ldots,N\}.$ Setting  
$$
\mathcal I_i := \{j \in \{1,\ldots,N\} \text{ s.t. }B(X_i,\tfrac{3}{2N} ) \cap B(X_j,\tfrac{3}{2N}) \neq \emptyset \}, 
$$ 
we have that $\mathcal I_i$ contains at most $64$ distinct indices.
\end{lemma}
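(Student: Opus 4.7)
The plan is a standard packing argument. First I would translate the intersection condition into a distance condition: two balls $B(X_i, \tfrac{3}{2N})$ and $B(X_j, \tfrac{3}{2N})$ meet if and only if $|X_i-X_j| < \tfrac{3}{N}$. So $\mathcal I_i = \{ j \in \{1,\dots,N\} \;\text{s.t.}\; |X_i - X_j| < \tfrac{3}{N}\}$, which in particular contains the index $i$ itself.

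Next I would use the original non-overlapping condition $|X_j - X_k| > \tfrac{2}{N}$ for $j \neq k$: it says precisely that the open balls $B(X_j, \tfrac{1}{N})$ are pairwise disjoint. For $j \in \mathcal I_i$ the triangle inequality gives
\[
B(X_j,\tfrac{1}{N}) \subset B(X_i, |X_i-X_j| + \tfrac{1}{N}) \subset B(X_i, \tfrac{4}{N}).
\]
Hence all the balls $B(X_j,\tfrac{1}{N})$, $j \in \mathcal I_i$, sit inside the larger ball $B(X_i, \tfrac{4}{N})$ while being pairwise disjoint.

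Finally I would compare volumes: summing the volumes of the disjoint interior balls and comparing with the volume of $B(X_i,\tfrac{4}{N})$ yields
\[
\card(\mathcal I_i) \cdot \tfrac{4}{3}\pi \bigl(\tfrac{1}{N}\bigr)^3 \le \tfrac{4}{3}\pi \bigl(\tfrac{4}{N}\bigr)^3 ,
\]
whence $\card(\mathcal I_i) \le 64$. There is no real obstacle here; the whole argument is a one-line volume-packing estimate, and the only care needed is in fixing the enlarged containing radius ($\tfrac{3}{N} + \tfrac{1}{N} = \tfrac{4}{N}$) so that the $64$ in the statement comes out cleanly as $4^3$.
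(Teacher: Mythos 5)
Your proof is correct and is essentially the same volume-packing argument used in the paper (which cites this as an idea from \cite{OJ}): the intersection condition forces $|X_i-X_j| \leq 3/N$, the separation assumption makes the $B(X_j,1/N)$ pairwise disjoint and contained in $B(X_i,4/N)$, and comparing volumes gives the bound $4^3 = 64$. No gap, and no meaningful divergence from the paper's route.
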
 
\begin{proof}
The idea of this proof is adapted from \cite{OJ}. 

Let $i \in \{1,\ldots,N\}$ be fixed. Without restriction we may assume that $i=1$ and  $X_1 =0.$ 
For arbitrary $j \in \mathcal I_1$ we have that $B(X_j,\frac{3}{2N} ) \cap B(0, \frac{3}{2N} ) \neq \emptyset.$
This entails that $|X_j| \leq 3/N$ and  $B(X_j,\frac1N) \subset B(0,\frac4N).$
As the $B(X_j,\frac1N)$ are disjoint by assumption, we have then:
$$
\dfrac{4\pi}{3N^{3}}|\mathcal I_1|  = |\bigcup_{j \in \mathcal I_1} B(X_j,\tfrac1N)| \leq |B(0,\tfrac4N)| \leq \dfrac{4\pi}{3N^3} 64   . 
$$
This completes the proof.
\end{proof}

We proceed with {\bf Lemma~\ref{lem_wi}} that we recall with the notations of {\bf Section~\ref{sec:UN-prop}}:
\begin{lemma} \label{lem_wiapp}
Given $i\in \{1,\ldots,N\},$ there exists $w_i \in D(\mathbb R^3)$ satisfying 
\begin{align}
\label{eq_wi1_app}& w_i = V_i \text{ on $B_i$ and $w_i =0$ on $B_j$ for $j \neq i\,,$}\\[4pt]
\label{eq_wi2_app}& \mathrm{Supp}(w_i) \subset B(X_i, \tfrac{3}{2N}), \\[4pt]
&  \label{eq_bornewiapp}
\|\nabla  w_i \|_{L^2(\mathbb R^3)}^2  \leq C \dfrac{|V_i|^2}{N} \left( 1
 + \dfrac 1N   \sum_{j \neq i} \dfrac{\mathbf{1}_{|X_i- X_j| < \frac 5{2N}}}{{|X_i-X_j| - \dfrac{2}{N}}}    \right) \,,
\end{align}
for a universal constant $C.$
\end{lemma}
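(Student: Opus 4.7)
My plan is to build $w_i$ as a sum of a global background field that vanishes outside $B(X_i,3/(2N))$ and realises $V_i$ inside $B_i$, plus a universally bounded number of local correctors in the narrow ``lubrication'' regions between $B_i$ and each close neighbour $B_j$.

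After translating so that $X_i=0$, I would use the identity $V_i=\nabla\times A_0$ with $A_0(x):=\tfrac12V_i\times x$ and, given a radial cutoff $\chi\in C^\infty_c(\mathbb R^3)$ with $\chi\equiv 1$ on $B(0,1/N)$, $\chi\equiv 0$ outside $B(0,3/(2N))$ and $|\nabla^k\chi|\lesssim N^k$, take the background
\[
w^{\mathrm b}:=\nabla\times(\chi A_0).
\]
This is automatically divergence-free, equal to $V_i$ on $B_i$, and supported in $B(0,3/(2N))$. Since $|A_0|\lesssim|V_i|/N$ and $|\nabla A_0|\lesssim|V_i|$ on $\mathrm{supp}\,\chi$, the bound $\|\nabla w^{\mathrm b}\|^2_{L^2(\mathbb R^3)}\lesssim|V_i|^2/N$ follows directly and accounts for the ``$1$'' in \eqref{eq_bornewiapp}. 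Moreover $w^{\mathrm b}$ automatically vanishes on every $B_j$ with $|X_j|\ge 5/(2N)$; only the indices $j\in\mathcal J_i:=\{j\neq i:|X_j|<5/(2N)\}$, whose cardinality is universally bounded by $64$ thanks to Lemma~\ref{lem_geometrie_app}, require correction.

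For each such $j$, I would construct a corrector $c_j$ supported in a thin region around the segment $\overline{X_iX_j}$ (made disjoint from the regions attached to other neighbours using the separation $|X_k-X_\ell|>2/N$), divergence-free, equal to $-w^{\mathrm b}$ on $B_j\cap B(X_i,3/(2N))$ and vanishing on its outer boundary. Following the construction of \cite{HillairetTakfarinas}, the natural ansatz uses cylindrical coordinates $(r,\theta,z)$ aligned with $\overline{X_iX_j}$: decomposing $V_i=V_i^{\parallel}\hat z+V_i^{\perp}$, the axial piece reads
\[
c_j^{\parallel}:=V_i^{\parallel}\Bigl(g(z)\hat z-\tfrac{r}{2}g'(z)\hat r\Bigr),
\]
which is divergence-free by construction, for a scalar profile $g$ depending only on $z$, with $|g'|\lesssim 1/d_j$ across the gap of width $d_j:=|X_j|-2/N$; a similar ansatz using a stream function handles $V_i^{\perp}$. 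Setting $w_i:=w^{\mathrm b}+\sum_{j\in\mathcal J_i}c_j$, the properties \eqref{eq_wi1_app} and \eqref{eq_wi2_app} are immediate from the construction. For the energy, integrating $|\nabla c_j|^2$ over the lubrication region of volume $\sim d_j^2/N$ yields
\[
\|\nabla c_j\|^2_{L^2(\mathbb R^3)}\;\lesssim\;\frac{|V_i|^2}{N}+\frac{|V_i|^2}{N^2\,d_j},
\]
where the first term (from $|g'|^2$) is absorbed into the background count and the second (from the curvature term $r^2|g''|^2$) yields the lubrication correction; summing over the at most $64$ neighbours produces exactly \eqref{eq_bornewiapp}.

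The hard part will be obtaining the sharp per-neighbour estimate $|V_i|^2/(N^2d_j)$: a naive curl-of-truncated-potential construction with a radial cutoff of $A_0$ in the gap produces a parasitic contribution $\int|\nabla^2\chi|^2|A_0|^2\sim|V_i|^2/(N^3d_j^2)$ that blows up faster than $1/(N^2d_j)$ as $d_j\to 0$. The cure, borrowed from \cite{HillairetTakfarinas}, is to parameterise $c_j$ directly through a scalar profile $g(z)$ depending only on the gap-axis coordinate, so that the dangerous curvature contribution is replaced by $r^2|g''(z)|^2$ whose integral over the lubrication region scales as $(d_j/N)\cdot(1/d_j^4)\cdot(d_j^2/N)=1/(N^2d_j)$ precisely.
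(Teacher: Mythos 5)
Your overall route matches the paper's in spirit: rescale, use the curl of a truncated potential as a background (your $w^{\mathrm b}$ is exactly the paper's $\tilde w_{1,0}$), add lubrication correctors only in the lens regions near the $\le 64$ close neighbours supplied by Lemma~\ref{lem_geometrie_app}, and aim at a per-neighbour contribution $|V_i|^2/(N^2 d_j)$. The background bound $\|\nabla w^{\mathrm b}\|^2_{L^2}\lesssim |V_i|^2/N$ is correct. The gap is in the corrector.

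The profile $g(z)$ depending only on $z$ with a fixed transition width $d_j$ does \emph{not} give the claimed estimate. If you integrate $r^2|g''|^2\sim r^2/d_j^4$ over the $z$-slab of height $d_j$ out to the radial extent $r_{\max}\sim 1/N$ needed to reach the lateral boundary of the lens (and to cover the portion of $B_j$ where $w^{\mathrm b}\ne 0$), you obtain $r_{\max}^4/d_j^3\sim 1/(N^4 d_j^3)$, which is \emph{worse} than $1/(N^2 d_j)$ precisely in the dangerous regime $d_j\ll 1/N$; your computation $(d_j/N)(1/d_j^4)(d_j^2/N)=1/(N^2d_j)$ only holds if you stop the corrector at $r_{\max}\sim\sqrt{d_j/N}$, but then it does not reach the lateral boundary of the lens region and you have not matched it to the background there. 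The actual cure in \cite{HillairetTakfarinas} and in the paper is a profile normalized to the \emph{variable} gap width: $P\bigl((\gamma_t(r)-z)/(\gamma_t(r)-\gamma_b(r))\bigr)$ with $\gamma_t(r)-\gamma_b(r)\approx h_j+cr^2$, which makes the curvature contribution $\int r^3/(\gamma_t-\gamma_b)^3\,\mathrm dr$ integrable over the whole lens with the right $1/h_j$ scaling. Moreover, your corrector is required to equal $-w^{\mathrm b}=-\chi V_i-\nabla\chi\times A_0$ on $B_j$, a non-constant field your rigid ansatz cannot match; the paper sidesteps this by defining $\tilde w_1$ \emph{piecewisely} (constant on the obstacles, a lubrication field on $\mathfrak C_j$, the background elsewhere), so the boundary condition on $\partial\tilde B_j$ becomes the clean one $w_j=0$. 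Even then, the radial splice $\zeta_0(r)$ with the background introduces a residual error $-\tfrac12\zeta_0'(r)(V_1\times e_3)\times e_r$ on $\partial\tilde B_1$ that the paper repairs with a Bogovskii operator on a fixed Lipschitz annulus $\mathfrak A_j$; both this radial splicing and the Bogovskii repair are needed to get a universal constant and are absent from your sketch.
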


The remainder of this section is devoted to the proof of this result. Without loss of generality, we assume that 
$i=1$ and $X_1=0.$ We look for $w_1$ of the form:
\begin{equation} \label{eq_scaling}
w_1(x) = \tilde{w}_1(Nx), \quad \forall \, x \in \mathbb R^3.
\end{equation}
To define the constraints to be satisfied by $\tilde{w}_1,$ we introduce notations for the shape of the fluid
domain after dilation. Namely, we denote:
$$
\tilde{X}_i = NX_i, \qquad 
\tilde{B}_i= B(NX_i,1), \quad \forall \, i =1,\ldots,N.
$$
In particular, $\tilde{B}_1= B(0,1).$ We want now to construct $\tilde{w}_1 \in D(\mathbb R^3)$ such that:
\begin{align}
\label{eq_twi1}& \tilde{w}_1 = V_1 \text{ on $\tilde{B}_1$ and $\tilde{w}_1 =0$ on $\tilde{B}_j$ for $j >1\,,$}\\[4pt]
\label{eq_twi2}& \mathrm{Supp}(\tilde{w}_1) \subset B(0, \tfrac32),
\end{align}
A natural candidate for $\tilde{w}_1$ is obtained by focusing on \eqref{eq_twi2}.
Indeed, introducing a truncation function $\chi_0 \in C^{\infty}(\mathbb R)$ which satisfies:
$$
\chi_0(t) = \left\{
\begin{array}{rl}
1 &\text{ if $t < 1,$} \\
0 &\text{ if $t> 1+h_{0},$} 
\end{array}
\right.
$$ 
with $h_{0} \in (0,1/2)$ to be fixed later on, we may set:
$$
\tilde{w}_{1,0}= \nabla \times \left[ \dfrac{V_1 \times x}{2} \chi_0(|x|) \right].
$$
This candidate satisfies indeed $\tilde{w}_{1,0} \in \mathcal D(\mathbb R^3)$ with 
$$
 \tilde{w}_{1,0} = V_1 \text{ on $\tilde{B}_1$}, \qquad
\mathrm{Supp}(\tilde{w}_{1,0}) \subset B(0,1+h_{0}) \subset B(0,\tfrac32),
$$
However, it does not take into account the balls that are too close to $\tilde{B}_1.$ To match the further condition on these balls,
we modify our candidate.   

For this, let fix $j \in \{1,\ldots,N\}.$ To describe the geometry between $\tilde{B}_1$ and $\tilde{B}_j$
we introduce a system of coordinates $(x_1,x_2,x_3)$ such that $x_3$ corresponds to the coordinates directed along $e_3 = \tilde{X}_j/|\tilde{X}_j|.$ The associated cylindrical coordinates read:
$$
r= \sqrt{x_1^2+x_2^2},  \quad e_{r} = \dfrac{1}{\sqrt{x_1^2+x_2^2}} (x_1,x_2,0), \quad \forall \, (x_1,x_2,x_3) \in \mathbb R^3 \setminus \{x_3=0\}.
$$
We remark that, in these coordinates, close to $(0,0,1)$ the boundary $\partial \tilde{B}_1$ 
is the graph of the function $(x_1,x_2) \mapsto \gamma_b(\sqrt{x_1^2+x_2^2})$ where:
$$
\gamma_b(r) = \sqrt{1-r^2}\,, \quad \forall \, r \in (0,1)  .
$$
Furthermore, denoting by $h_j = {\rm dist}(\tilde{B}_1,\tilde{B}_j),$ we have also that close to $(0,0,1+h_j),$
the boundary $\partial \tilde{B}_j$ is the graph of the function $(x_1,x_2) \mapsto \gamma_t(\sqrt{x_1^2+x_2^2})$ where:
$$
\gamma_t(r) = 2+h_j-\sqrt{1-r^2}\,, \quad \forall\, r \in (0,1) .
$$
Given $\delta >0$ we set, in these cylindrical coordinate:
\begin{align*}
&\mathfrak{C}_{j}[\delta] := \{(x_1,x_2,x_3) \in \mathbb R^3 \; \text{ s.t. } \; r \in (0,\delta) \text{ and } x_3 \in (\gamma_b(r),\gamma_t(r))\},\\
&\mathfrak{A}_{j}[\delta] := \{(x_1,x_2,x_3) \in \mathbb R^3 \; \text{ s.t. } \; r \in (\delta/2,\delta) \text{ and } x_3 \in (\gamma_b(r),\gamma_b(\delta/2))\}.
 \end{align*}
 These notations are illustrated by Figure~\ref{fig_notations}.
 
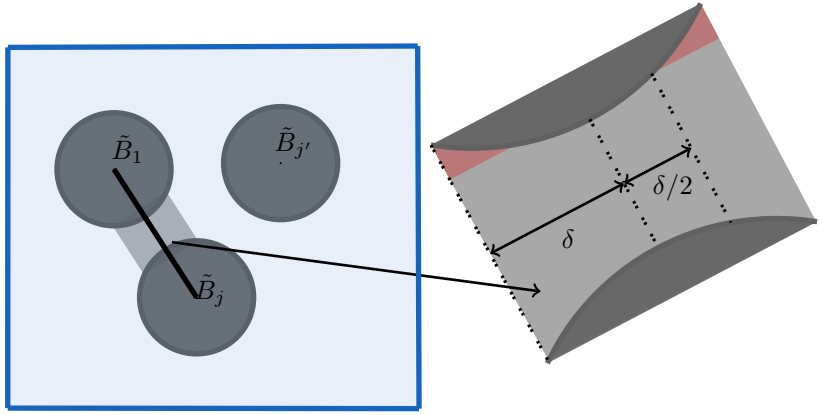
\begin{figure}[h] 
\begin{center}
\begin{tikzpicture}[x=1cm,y=1cm,scale=.3]\clip(-21.039281008231345,-10.218504191485415) rectangle (40.34141744287439,10.778196142565328);
\draw [line width=2pt,color=wrwrwr,fill=wrwrwr,fill opacity=0.9] (-4.595824107573774,0.8640117555213342) circle (2.5cm);
\draw [line width=2pt,color=wrwrwr,fill=wrwrwr,fill opacity=0.9] (-0.9745887201721779,-4.796741801584462) circle (2.5cm);
\fill[line width=0pt,color=wrwrwr,fill=wrwrwr,fill opacity=0.45] (-4.661961086269871,-1.6351132713728687) -- (-2.2991776536107604,-0.12361928160696241) -- (-0.9084517414760809,-2.2976167746902587) -- (-3.2712351741351915,-3.8091107644561655) -- cycle;
\fill[line width=2pt,color=wrwrwr,fill=wrwrwr,fill opacity=0.55] (9.298106723114913,1.9355612843359054) -- (14.458918076020574,-7.5927283369227485) -- (26.18677586866712,-1.447158774228276) -- (21.121386485239107,8.19274052900534) -- cycle;
\fill[line width=0pt,color=dtsfsf,fill=dtsfsf,fill opacity=0.4] (18.969627161303347,5.091948618936739) -- (21.121386485239107,8.19274052900534) -- (21.9326215186126,6.648886060766328) -- cycle;
\fill[line width=0pt,color=dtsfsf,fill=dtsfsf,fill opacity=0.4] (13.07219355775615,1.9708773186411401) -- (9.298106723114913,1.9355612843359058) -- (10.11842224205613,0.3965124391531907) -- cycle;
\draw [line width=2pt,color=wrwrwr,fill=wrwrwr,fill opacity=0.9] (2.708923226939263,1.1359703526133895) circle (2.5cm);
\fill[line width=2pt,color=rvwvcq,fill=rvwvcq,fill opacity=0.10000000149011612] (-9.248554012919069,6.25114916789872) -- (8.715696570118672,6.311634860097501) -- (8.776182262317452,-9.717073572579345) -- (-9.255481878114473,-9.693433714517724) -- cycle;
\draw [line width=2pt] (-4.595824107573774,0.8640117555213342)-- (-0.9745887201721779,-4.796741801584462);
\draw [->,line width=1pt] (-2.055628403349913,-2.3577021654999744) -- (14.103868922620816,-4.533859381648242);
\draw [shift={(11.083073766897053,12.861723188699337)},line width=2pt,color=wrwrwr,fill=wrwrwr,fill opacity=0.9]  plot[domain=4.55045314332096:5.8478320320272825,variable=\t]({1*11.071003626941463*cos(\t r)+0*11.071003626941463*sin(\t r)},{0*11.071003626941463*cos(\t r)+1*11.071003626941463*sin(\t r)});\draw [shift={(24.44149003551045,-12.379729316801221)},line width=2pt,color=wrwrwr,fill=wrwrwr,fill opacity=0.9]  plot[domain=4.55045314332096:5.8478320320272825,variable=\t]({0.5624030091464867*11.071003626941463*cos(\t r)+0.8268632627605225*11.071003626941463*sin(\t r)},{0.8268632627605225*11.071003626941463*cos(\t r)+-0.5624030091464867*11.071003626941463*sin(\t r)});
\draw [<->,line width=1pt] (17.762281901203753,0.24099693594905608) -- (20.71099870297735,1.8015325860968543);
\draw [<->,line width=1pt] (17.762281901203753,0.24099693594905608) -- (11.897144012753698,-2.862982653563806);
\draw [line width=1.2pt,dash pattern=on 1pt off 1pt on 1pt off 4pt] (18.969627161303347,5.091948618936739)-- (22.452370244651355,-1.4888834467430256);
\draw [line width=1.2pt,dash pattern=on 1pt off 1pt on 1pt off 4pt] (9.298106723114913,1.9355612843359058)-- (14.446012400884083,-7.619734761821659);
\draw [line width=1.2pt,dash pattern=on 1pt off 1pt on 1pt off 4pt] (16.26163551036644,3.0765497320442954)-- (19.262928292041064,-2.5945558601461847);
\draw [line width=2pt,color=rvwvcq] (-9.248554012919069,6.25114916789872)-- (8.715696570118672,6.311634860097501);
\draw [line width=2pt,color=rvwvcq] (8.715696570118672,6.311634860097501)-- (8.776182262317452,-9.717073572579345);
\draw [line width=2pt,color=rvwvcq] (8.776182262317452,-9.717073572579345)-- (-9.255481878114473,-9.693433714517724);
\draw [line width=2pt,color=rvwvcq] (-9.255481878114473,-9.693433714517724)-- (-9.248554012919069,6.25114916789872);
\draw [fill=black] (-4.595824107573774,0.8640117555213342) circle (0.5pt);
\draw[color=black] (-3.995080032353128,1.7566455989510392) node {$\tilde{B}_1$};
\draw [fill=black] (-0.9745887201721779,-4.796741801584462) circle (0.5pt);
\draw[color=black] (-0.38127792670806276,-4.5067756113882806) node {$\tilde{B}_j$};
\draw [fill=wrwrwr] (17.762281901203753,0.24099693594905608) circle (2.5pt);
\draw[color=black] (19.8991637407926,0.055460115766171) node {$\delta/2$};
\draw[color=black] (15.314489427660801,-2.209908368369557) node {$\delta$};
\draw [fill=black] (2.708923226939263,1.1359703526133895) circle (0.5pt);
\draw[color=black] (3.2864615237973767,2.0263323232529116) node {$\tilde{B}_{j'}$};
\end{tikzpicture}
\caption{Notations $\mathfrak{A}_j[\delta]$ and $\mathfrak{C}_j[\delta]$}
\label{fig_notations}
\vskip 2pt
\begin{minipage}{.8\textwidth}
On the left a typical configuration is presented (in 2D). The gray zone corresponds to the set $\mathfrak{C}_{j}[\delta]$. On the right is a zoom on $\mathfrak{C}_j[\delta]$
where the subset $\mathfrak{A}_j[\delta]$ appears in the red color. We emphasize that the 3D geometry is obtained by revolution around the axis of the figure so that $\mathfrak{A}_j[\delta]$ is indeed connected.
\end{minipage}
\end{center}
\end{figure} 
 
We note that, whatever the value of $\delta \in (0,1)$ we have that $\mathfrak{C}_{j}[\delta]$ and $\mathfrak{A}_{j}[\delta]$ are Lipschitz,
and that $\mathfrak{A}_{j}[\delta] \subset  \mathfrak{C}_{j}[\delta].$  We have also the following technical property:
\begin{proposition} \label{prop_geometrie}
There exists  $h_{max} \in (0,1/2)$ and $\delta_0 \in (0,1/2)$ such that, if $h_j < h_{max}$ the following holds true:
\begin{enumerate}
\item[$i)$] $\mathfrak C_j[\delta_0] \subset B(0,\tfrac32),$\\
\item[$ii)$] $\overline{\mathfrak{C}_{j}[\delta_0]} \subset \mathbb R^3 \setminus \overline{\bigcup_{i \neq 1,j}^N \tilde{B}_i},$\\
\item[$iii)$] for arbitrary $j' \neq j$ such that $h_{j'} < h_{max},$ there holds $\mathfrak C_j[\delta_0] \cap \mathfrak C_{j'}[\delta_0] = \emptyset.$
\end{enumerate}   
\end{proposition}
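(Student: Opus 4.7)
The key observation is that, when $\delta$ and $h_j$ are small, the lens $\mathfrak C_j[\delta]$ is concentrated near the near-contact point $P := (0,0,1)$ of $\tilde B_1$ and $\tilde B_j$. A direct computation using $1 - \sqrt{1-r^2} \le r^2/2$ for $r \in [0,1]$ shows that, for $\delta, h_j \le 1/2$,
\[
\mathfrak C_j[\delta] \subset \overline{B(P,\rho(\delta,h_j))},
\qquad
\rho(\delta,h_j) := \sqrt{\delta^2 + (h_j+\delta^2/2)^2},
\]
with $\rho(\delta,h_j) \to 0$ as $(\delta,h_j)\to 0$. Property $(i)$ is then immediate: $|x| \le 1 + \rho(\delta_0,h_{max}) < 3/2$ as soon as $\delta_0, h_{max}$ are small enough.

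For $(ii)$, the main step — and the principal difficulty of the proposition — is the covering statement: for all sufficiently small $\rho$ and $h_j$,
\[
\overline{B(P,1+\rho)} \subset B(0,2)\cup B(\tilde X_j,2), \qquad \tilde X_j = (0,0,2+h_j).
\]
Writing a point $q$ of the small ball in cylindrical coordinates $(q_r,q_3)$, the inequality $q_r^2+(q_3-1)^2 \le (1+\rho)^2$ yields both $|q|^2 \le (1+\rho)^2 + 2q_3-1$ and $|q-\tilde X_j|^2 \le (1+\rho)^2 - (1+h_j)(2q_3-3-h_j)$. Consequently $|q|<2$ whenever $q_3 < (5-(1+\rho)^2)/2$ and $|q-\tilde X_j|<2$ whenever $q_3 > ((3+h_j)(1+h_j)+(1+\rho)^2-4)/(2(1+h_j))$. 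These two half-lines cover the admissible range $q_3\in[-\rho,\,2+\rho]$ provided $\rho+h_{max}$ is small enough. Now if some $\tilde B_i$ with $i\neq 1,j$ meets $\mathfrak C_j[\delta_0]$, any common point $x$ lies in $\overline{B(P,\rho)}$, hence $|\tilde X_i-P|\le 1+\rho$; by the covering statement $\tilde X_i$ would overlap either $\tilde B_1$ or $\tilde B_j$, contradicting $|\tilde X_i-\tilde X_1|\ge 2$ and $|\tilde X_i-\tilde X_j|\ge 2$. This proves $(ii)$.

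For $(iii)$, assume $x \in \mathfrak C_j[\delta_0]\cap\mathfrak C_{j'}[\delta_0]$. Applying the key inclusion in both reference frames — aligned respectively with $\tilde X_j$ and $\tilde X_{j'}$ — gives $|x - \tilde X_j/|\tilde X_j||\le \rho$ and $|x - \tilde X_{j'}/|\tilde X_{j'}||\le \rho$, so the two unit-direction points are within distance $2\rho$. Using $|\tilde X_j|,|\tilde X_{j'}|\in[2,2+h_{max}]$ and the triangle inequality,
\[
|\tilde X_j-\tilde X_{j'}| \le (2+h_{max})\cdot 2\rho + \bigl||\tilde X_j|-|\tilde X_{j'}|\bigr| \le 2(2+h_{max})\rho + 2h_{max},
\]
which is strictly less than $2$ once $\rho$ and $h_{max}$ are sufficiently small, contradicting $|\tilde X_j-\tilde X_{j'}|\ge 2$.

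To finish, I would choose $h_{max}\in(0,1/2)$ first, then $\delta_0\in(0,1/2)$, small enough for the three conclusions to hold simultaneously. The main obstacle is the covering estimate in $(ii)$: the unit-radius ball $\overline{B(P,1)}$ sits exactly on the boundary of $B(0,2)\cup B(\tilde X_j,2)$ in the degenerate limit $h_j=0$, and one has to verify that enough room remains after the perturbation $\rho,h_j>0$.
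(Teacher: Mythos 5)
Your proof is correct in substance, and it takes a genuinely different (and arguably tighter) route than the paper's. The paper's argument is pictorial: it exhibits a ball $\mathcal V_j$ centered at the contact point $X_{j1}=(0,0,1+h_j)$ containing $\mathfrak C_j[\delta]$, and then handles $(ii)$ and $(iii)$ by an extremal-configuration discussion (a particle $\tilde B_{j'}$ simultaneously tangent to $\tilde B_1$ and $\tilde B_j$, and the monotonicity in $h_j$ of the resulting distances $r^{(j)}_{\min}$ and $r^{(j,j')}$), illustrated with figures rather than estimates. You instead work with the ball around $P=(0,0,1)$ and reduce everything to two elementary algebraic facts: the covering $\overline{B(P,1+\rho)}\subset B(0,2)\cup B(\tilde X_j,2)$ (so that any center $\tilde X_i$ close enough to the lens would have to overlap $\tilde B_1$ or $\tilde B_j$), and the fact that $|P_j-P_{j'}|\le 2\rho$ forces $|\tilde X_j - \tilde X_{j'}|$ to be small. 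This buys a self-contained proof that is fully explicit, whereas the paper essentially delegates the key covering step to Figure 3; the trade-off is that the paper's extremal-configuration picture gives additional geometric insight (e.g.\ that $r^{(j)}_{\min}$ persists up to $h_j = 2(\sqrt3-1)$) that your estimate does not recover.

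One small slip to fix: the inequality $1-\sqrt{1-r^2}\le r^2/2$ is false for $r>0$ (the reverse holds, with equality only at $r=0$); what is true and what you need is $1-\sqrt{1-r^2}=\tfrac{r^2}{1+\sqrt{1-r^2}}\le r^2$. Replacing $h_j+\delta^2/2$ by $h_j+\delta^2$ in your $\rho(\delta,h_j)$ costs nothing downstream, since all that matters is that $\rho\to 0$ as $(\delta,h_j)\to 0$. Similarly, $\bigl||\tilde X_j|-|\tilde X_{j'}|\bigr|=|h_j-h_{j'}|\le h_{max}$, so the $2h_{max}$ in your final display can be sharpened to $h_{max}$; harmless either way.
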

\begin{proof}

We compute restrictions on the values for $\delta_0$ and $h_{max}$
in order to fulfill the three conditions $i),$ $ii)$ and $iii).$ This will yield an open set of admissible values for $\delta_0$ and $h_{max}.$ 

For the proof, we only give two draws which explain where the restrictions come from. 
Let $j \in \{1,\ldots,N\}$  such that ${\rm dist}(\tilde{B}_1,\tilde{B}_j)=: h_j < h_{max}.$ In Figure
\ref{fig_voisinage}, we illustrate that there exists a ball $\mathcal V_j$  centered in $X_{j1}$ (the unique point in the closure of $\tilde{B}_j$ realizing the distance between $\tilde{B}_1$ and $\tilde{B}_j$) such that $\mathfrak{C}_j[\delta]$ (in blue on the figure) is contained in  $\mathcal{V}_j$ (empty circle on the figure). The radius $r_0$ of this neighborhood is controlled by $h_{max}$ and $\delta.$ In particular, for $h_{max}$ and $\delta_0$ sufficiently small we have $B(X_{j1},r_0) \subset B(0,1+h_{max} + r_0) \subset B(0,3/2)$ and $i)$ is realized.

\begin{figure}[h] 
\begin{center}
\begin{tikzpicture}[line cap=round,line join=round,x=1cm,y=1cm,scale=.65]
\clip(-8.881592901922332,-6.963745343136491) rectangle (26.672284700605292,8.757431759240362);
\draw [line width=2pt,color=wrwrwr,fill=wrwrwr,fill opacity=0.7] (-0.9415117661267175,3.5118087138602228) circle (4cm);
\draw [line width=2pt,color=wrwrwr,fill=wrwrwr,fill opacity=0.7] (7.36314204508927,-2.748700223320726) circle (4cm);
\fill[line width=2pt,color=rvwvcq,fill=rvwvcq,fill opacity=0.10000000149011612] (1.1226319993463245,0.0855409900156936) -- (2.92058322521458,2.470548766557696) -- (5.298998279616228,0.677567500523804) -- (3.501047053747974,-1.7074402760181997) -- cycle;
\draw [line width=1.2pt,dash pattern=on 1pt off 1pt on 1pt off 4pt,color=cqcqcq] (2.92058322521458,2.470548766557696)-- (5.298998279616228,0.677567500523804);
\draw [line width=2pt] (4.169062839084269,-0.3408260518228783) circle (3.0761225131116956cm);
\draw [<->,line width=1pt] (2.2525674398782853,1.103934542362374) -- (4.169062839084269,-0.3408260518228783);
\draw [<->,line width=1pt] (3.210815139481276,0.38155424526974846) -- (4.109790752415404,1.57405813354075);
\draw [<->,line width=1pt] (4.169062839084269,-0.3408260518228783) -- (1.821067398499832,-2.328147648008888);
\draw [fill=black] (-0.9415117661267175,3.5118087138602228) circle (0.5pt);
\draw[color=black] (-0.49961728836554065,4.1569686296904464) node {$\tilde{B}_1$};
\draw [fill=rvwvcq] (7.36314204508927,-2.748700223320726) circle (0.5pt);
\draw[color=black] (7.802908319375547,-2.0798568268422866) node {$\tilde{B}_j$};
\draw [fill=wrwrwr] (4.169062839084269,-0.3408260518228783) circle (1.5pt);
\draw[color=black] (4.982433112918048,-0.331956698896807) node {$X_{j1}$};
\draw [fill=wrwrwr] (2.2525674398782853,1.103934542362374) circle (1.5pt);
\draw[color=black] (3.234532984972556,0.08309335344452544) node {$h_{j}$};
\draw[color=black] (4.0701330314981894,0.6392433941544507) node {$\delta$};
\draw[color=black] (2.678382944262627,-2.223318006716202) node {$r_0$};
\end{tikzpicture}
\caption{Construction of a neighborhood of $X_{j1}$ containing $\mathfrak C_{j}.$ }
\label{fig_voisinage}
\vskip 2pt
\begin{minipage}{.8\textwidth}
\end{minipage}
\end{center}
\end{figure}
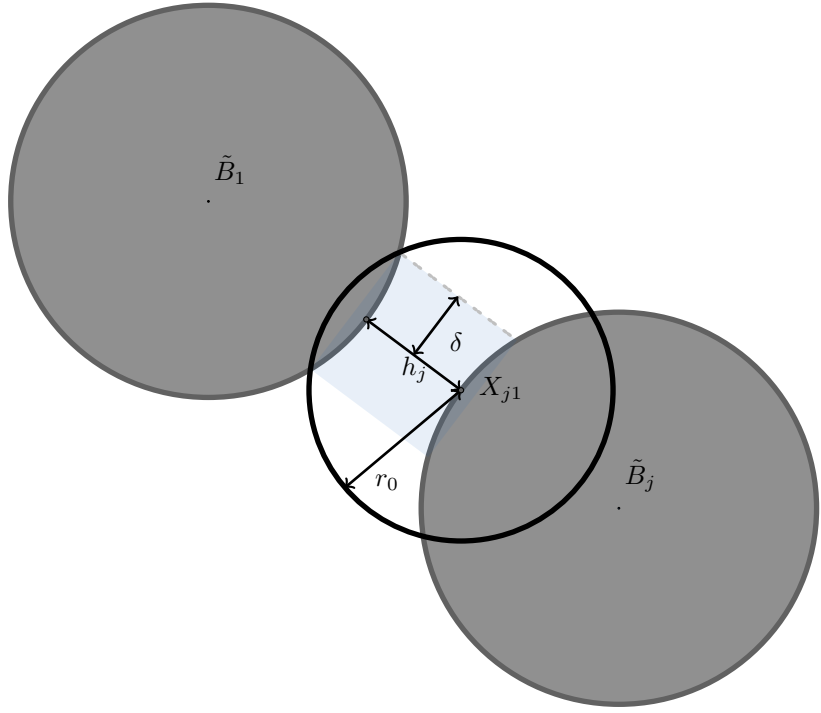 
 
Second, we illustrate with Figure~\ref{fig_optimal}, that given another particle $\tilde{B}_{j'},$ the distance between $\tilde{B}_{j'}$ and the segment $[\tilde{X}_1,\tilde{X}_j]$ joining the centers of $\tilde{B}_1$ and $\tilde{B}_j$ is minimal when $\tilde{B}_{j'}$ is in simultaneous contact with $\tilde{B}_1$ and $\tilde{B}_j$ (several configurations are provided in red, the optimal one is the most opaque one). The minimal distance $r^{(j)}_{min}$ between $\tilde{B}_{j'}$ and $[\tilde{X}_1,\tilde{X}_j]$ 
is then a decreasing function of $h_{j}$ vanishing when $h_{j} = 2(\sqrt{3}-1).$ 
The minimal distance $r^{(j,j')}$ between the point $X_{j'1}$ (the point in the closure of $\tilde{B}_{j'}$ realizing the distance with $\tilde{B}_1$) and $X_{j1}$ is also realized with this configuration. It is then a continuous function of $h_{j}$ which converges to $1$ when $h_j \to 0.$ So, with the notations of the proof, for $h_{max}$ and $\delta_0$ small we have that $r_0 < r^{(j)}_{min}$  and $2 r_0 < r^{(j,j')}$ so that $ii)$ and $iii)$ hold true.
\begin{figure}[h] 
\begin{center}
\begin{tikzpicture}[x=1cm,y=1cm,scale=.45]
\clip(-13.905821720640294,-10.241032905446307) rectangle (33.22709411904434,13.833937096891017);
\draw [line width=2pt,color=wrwrwr,fill=wrwrwr,fill opacity=0.8] (2,2.81) circle (4cm);
\draw [line width=2pt,color=wrwrwr,fill=wrwrwr,fill opacity=0.8] (2,-6) circle (4cm);
\draw [line width=2pt,color=dtsfsf,fill=dtsfsf,fill opacity=0.25] (-6.4101894543320315,5.5844521533833875) circle (4cm);
\draw [line width=2pt,color=ffqqqq,fill=ffqqqq,fill opacity=0.25] (10,-4) circle (4cm);
\draw [line width=2pt,color=dtsfsf,fill=dtsfsf,fill opacity=0.8] (8.647484661864283,-1.6373069352127023) circle (4cm);
\draw [line width=2pt,color=ffqqqq,fill=ffqqqq,fill opacity=0.25] (10.275919426418579,6.0781240137606245) circle (4cm);
\draw [line width=2pt] (2,2.81)-- (2,-6);
\draw [<->,line width=1.5pt,dash pattern=on 1pt off 1pt] (2,-1.595) -- (4.647708402717666,-1.595);
\draw [<->,line width=1.5pt,dash pattern=on 1pt off 1pt] (2,-2) -- (5.273690741171852,0.5115333521746095);
\draw [fill=black] (2,2.81) circle (0.5pt);
\draw[color=black] (1.313633785134967,3.433098126681305) node {$\tilde{B}_1$};
\draw [fill=black] (2,-6) circle (0.5pt);
\draw[color=black] (1.7349335915232096,-6.2881600896585375) node {$\tilde{B}_j$};
\draw [fill=ttqqqq] (-6.4101894543320315,5.5844521533833875) circle (0.5pt);
\draw [fill=black] (10,-4) circle (1pt);
\draw [fill=ttqqqq] (5.273690741171852,0.5115333521746095) circle (1.5pt);
\draw[color=ttqqqq] (5.658875078685793,0.7265990947400979) node {$X_{j'1}$};
\draw [fill=wrwrwr] (8.647484661864283,-1.6373069352127023) circle (2.5pt);
\draw [fill=black] (10.275919426418579,6.0781240137606245) circle (1.5pt);
\draw [fill=black] (2,-2) circle (1pt);
\draw[color=black] (1.3881831074938157,-2.501786783761425) node {$X_{j1}$};
\draw[color=black] (3.4201328170761798,-0.6224995499775923) node {$r^{(j,j')}$};
\draw[color=black] (3.368057381449725,-2.221912509805519) node {$r^{(j)}_{min}$};
\end{tikzpicture}
\caption{Minimizing configuration}
\label{fig_optimal}
\vskip 2pt
\begin{minipage}{.8\textwidth}
\end{minipage}
\end{center}
\end{figure}
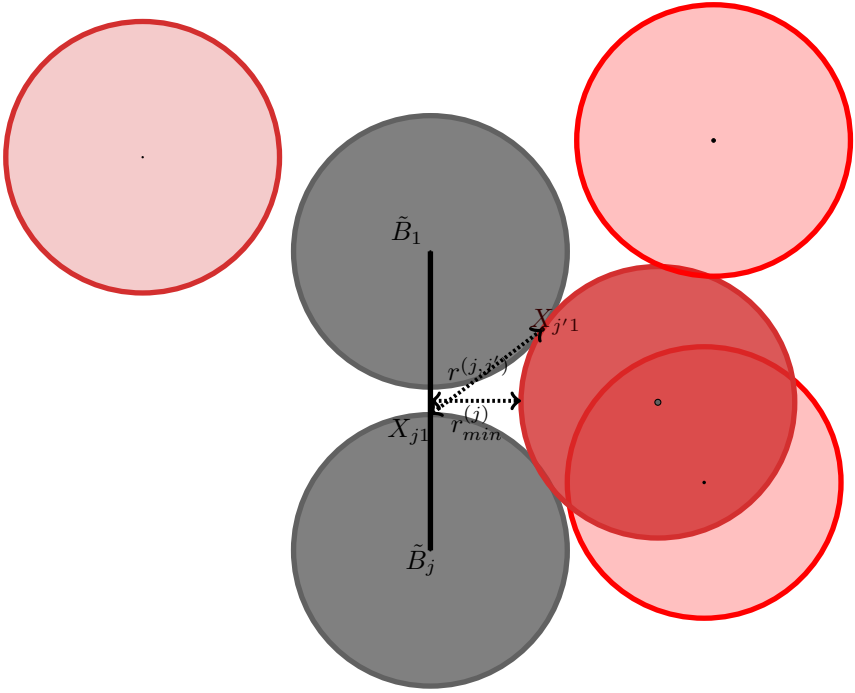 
\end{proof}

With the proposition above, we can now fix $h_{max},\delta_0$ sufficiently small so that the conclusion of the proposition above holds true. Associated with $\delta_0$ we set:
\[
h_0 = \sqrt{\dfrac{\delta_0^2}{4} + \left(2 - \sqrt{1-\left|\frac{\delta_0}{2}\right|^2}\right)^2} - 1. 
\]
If necessary, we restrict the size of $\delta_0$ so that $h_0 < \min(1/2,h_{max}).$ Associated with $h_{max}$ we introduce:
$$
\mathcal J := \left\{ j \in \{2,\ldots,N\} \text{ s.t.\ } {\rm dist}(\tilde{B}_1,\tilde{B}_j) < h_{max} \right\}\,,
$$
We note that, by construction, we do have $h_0 >0$ and that:
\begin{itemize}
\item since $h_0 < h_{max},$ $\tilde{w}_{1,0}$ vanishes on $\tilde{B}_i$ for $i \notin \mathcal J.$\\[-8pt]
\item for $j \in \mathcal J,$ $\chi_0$ vanishes on $\partial \mathfrak C_j \cap \tilde{B}_j$ at a distance larger than $\delta_0/2$ from the axis $\mathbb Re_3.$ 
\end{itemize}

Furthermore, the $(\mathfrak{C}_j)_{j \in \mathcal J}$ are disjoint and do not intersect the $(\tilde{B}_i)_{i=1,\ldots,N}.$ So,  in what follows, we construct $\tilde{w}_1$ on 
the $(\mathfrak{C}_j)_{j\in \mathcal J}.$ We shall then extend 
$\tilde{w}_{1}$ by $\tilde{w}_{1,0}$ on the remaining fluid domain
and by the expected values on the $(\tilde{B}_i)_{i=1,\ldots,N}.$

Let $j \in \mathcal J$ and make precise ${w}_j = (\tilde{w}_{1})_{|_{\mathfrak C_j}}.$ We decompose $w_{j} = w_{j}^{(1)} - w_{j}^{(2)}.$ 
For $w_{j}^{(1)},$ we set:
$$
w^{(1)}_j(x) = \nabla \times \left[  \dfrac{V_1}{2} \times \left(x-e_3 \right) \zeta_0(r) P\left( \dfrac{\gamma_t(r) - z }{\gamma_t(r) - \gamma_b(r)}\right)  + (1- \zeta_0(r))\chi_0(|x|) \dfrac{V_1}{2} \times x  \right] 
$$
where  $P(t) = (3t^2- 2t^3)$ for $t \in \mathbb R$ and $\zeta_0 \in C^{\infty}(\mathbb R)$ is a truncation function such that:
$$
\zeta_0(t) = 
\left\{
\begin{array}{rl}
1 & \text{ if $t < \delta_0/2,$} \\[4pt]
0 & \text{ if $t > 3\delta_0/4.$}
\end{array}
\right.
$$

Clearly, we have that $w_{j}^{(1)} \in C^{\infty}(\overline{\mathfrak C_j})$ is divergence-free. Expanding the curl operator, we obtain:
\begin{equation} \label{eq_boundarywj1}
w_{j}^{(1)}(x) =
\left\{
\begin{array}{cl}
0 & \text{ if $x \in \partial \mathfrak{C}_j \cap \partial \tilde{B}_j$ ({\em i.e.}$z = \gamma_t(r)$),} \\
V_1 - \dfrac{\zeta_0'(r)}{2} (V_1 \times e_{3}) \times e_r   & \text{ if $x \in \partial \mathfrak{C}_j \cap \partial \tilde{B}_1$ ({\em i.e.} $z= \gamma_b(r)$),}\\
w_{1,0}(x) & \text{ if $x \in \partial \mathfrak C_j \setminus \left(\partial \tilde{B}_1 \cup \partial \tilde{B}_j \right) $ ({\em i.e.} $r=\delta$)}.
\end{array}
\right.
\end{equation}
All these identities derive from the choices for $\chi_0,\zeta_0$ and $P.$ To obtain the first of these identities, it is worth noting that, with our choice for $h_0,\delta_0$ the function $x \mapsto (1-\zeta_0(r))\chi_0(r)$ vanishes on $\partial \tilde{B}_j \cap \partial \mathfrak{C}_j.$
 
Finally, we obtain that there exists a constant $C_{max}$ depending only on $(h_{max},\delta_0)$ such that:
\begin{equation} \label{eq_boundwj1}
\|\nabla w_{j}^{(1)} \|_{L^2(\mathfrak C_j)}^2 \leq \dfrac{C_{max}|V_1|^2}{h_j} .
\end{equation}
Indeed, away from the axis ({\em i.e.} on $\mathfrak C_j \cap \{r > \delta_0/2\}$), $w_j^{(1)}$ depends smoothly on the parameter $h_j.$ Hence, the contribution to $\|\nabla w_{j}^{(1)}\|_{L^2}$ is bounded by $C |V_1|^2$ where $C$ is independent of $h_j$ and depends only on $\delta_0,h_{max}.$ When $r<\delta_0/2,$ we have: 
$$
w_{j}^{(1)}(x) =  \nabla \times \left[  \dfrac{V_1}{2} \times \left(x- e_3 \right) P\left( \dfrac{z-\gamma_b(r)}{\gamma_t(r) - \gamma_b(r)}\right) \right]
$$
Explicit computations show that, the worst term in $|\nabla w_{j}^{(1)}|$ corresponds to two differentiations of the $P$-term w.r.t.\ $z,$ which we may bound by
\begin{align*}
|\partial_z w_j^{(1)}| \leq  \dfrac{|V_1|}{2} \left|x-e_3  \right|  \left| \partial_{zz}  P\left( \dfrac{z-\gamma_b(r)}{\gamma_t(r) - \gamma_b(r)}\right) \right|
\leq C |V_1| (r + |z - \gamma_b(0)|) \dfrac{1}{(\gamma_t(r) - \gamma_b(r))^2}.
\end{align*}
Remarking that $|z-\gamma_b(0)| \leq C|h_j + r|$ on $\mathfrak C_j$, we derive
$$
\int_{\mathfrak C_j \cap \{r<\delta_0/2\}} |\nabla w_{j}^{(1)}(x)|^2 {\rm d}x \leq  C |V_1|^2 \int_{0}^{\delta_0}   \dfrac{|h_j+r|^2 r {\rm d}r}{(\gamma_t(r) - \gamma_b(r))^3}.
$$
Combining then that $\gamma_t(r) - \gamma_b(r) \geq h_j + cr^{2}$ on $(0,\delta_0)$ for some $c>0$ (since $\delta_0 < 1/2$) 
and a change of variable $r= \sqrt{h}_j s$ in the integral, we obtain
\eqref{eq_boundwj1}. More details on these computations can be found in  \cite{HillairetTakfarinas}.

In order that $w_{j}$ fits the right boundary condition on $\partial \tilde{B}_1$, we add a corrector $w_{j}^{(2)}$ that compensate the error term that appears on the second line of 
\eqref{eq_boundarywj1}, namely:
$$
w_{j}^*(x) = \dfrac{\zeta_0'(r)}{2}\left[{V_1} \times e_3 \right]  \times e_r =\dfrac{\zeta_0'(r)}{2}({V_1} \cdot e_r )  e_3 , 
$$
To construct $w_j^{(2)},$ we note that $w_{j}^{*}$
is smooth and has compact support in $\partial \mathfrak A_j \cap \partial \tilde{B}_1.$ Hence, we may extend $w_j^*$ by 
$0$ on  $\partial \mathfrak A_j \setminus\partial \tilde{B}_1 .$ We obtain then a vector field $w_j^* \in C^{\infty}(\partial \mathfrak A_j)$ such that, by symmetry: 
\[
\int_{\partial \mathfrak A_j} w_{j}^* \cdot n {\rm d}\sigma =\int_{\partial \mathfrak A_j \cap \partial \tilde{B}_1} w_j^*(x) \cdot n {\rm d}\sigma = 0 
\]
Since, there exists a Bogovskii operator on the Lipschitz domain $\mathfrak A_j,$ we construct $w_{j}^{(2)} \in H^1(\mathfrak A_j)$ such that:
\begin{equation} \label{eq_boundarywj2}
{\rm div} \, w_{j}^{(2)} = 0 \text{ in $\mathfrak A_j$} \quad w_{j}^{(2)} = w_{j}^* \text{ on $\partial \mathfrak A_j$}.
\end{equation}
and such that: 
$$
\|w_{j}^{(2)} \|_{H^1(\mathfrak A_j)} \leq C \|w_j^* \|_{H^{1/2}(\partial \mathfrak A_j)}. 
$$
We note here that all the $\mathfrak A_j$ are isometric so that  this last constant $C$ is fixed by the values of $\delta_0$ only and does not depend on $j$. Hence, there exists $C_{max}$ depending only on $\delta_0$ for which:
\begin{equation} \label{eq_boundwj2}
\|w_{j}^{(2)} \|_{H^1(\mathfrak A_j)} \leq C_{max} |V_1|. 
\end{equation}
We note also that, on $\partial \mathfrak A_j,$ $w_j^*$ vanishes outside $\partial \mathfrak A_j \cap \partial \tilde{B}_1$ so that we may extend it by $0$ on $\mathfrak C_j \setminus \mathfrak A_j.$
We keep the same notations for simplicity. 
This yields a divergence-free vector-field $w_{j}^{(2)} \in H^1(\mathfrak C_j)$ defined on $\mathfrak C_j.$

By combination, it is then straightforward that $w_j = w_{j}^{(1)}- w_{j}^{(2)} \in H^1(\mathfrak C_j)$ satisfies:
\begin{itemize}
\item[$i)$] ${\rm div} \, w_{j} = 0$  on $\mathfrak C_j$ 
\item[$ii)$] the following boundary conditions on $\partial \mathfrak C_j$: 
$$
w_{j}(x) =
\left\{
\begin{array}{cl}
0 & \text{ if $x \in \partial \mathfrak{C}_j \cap \partial \tilde{B}_j$} \\
V_1 & \text{ if $x \in \partial \mathfrak{C}_j \cap \partial \tilde{B}_1$} \\
w_{1,0}(x) & \text{if $x \in \partial \mathfrak{C}_j \setminus ( \partial \tilde{B}_1 \cup \partial \tilde{B}_j)$}
\end{array}
\right.
$$
\item[$iii)$] the bounds (with a constant $C_{max}$ depending only on $\delta_0,h_{max}$):
\[
\|\nabla w_{j}^{(1)} \|_{L^2(\mathfrak C_j)}^2 \leq C_{max}|V_1|^2 \left[1 + \dfrac{1}{h_j}\right] .
\]
\end{itemize}

In particular, the above construction of $\tilde{w}_{1}$ on $\mathfrak{C}_j$
for fixed $j \in \mathcal J,$ satisfies the right boundary conditions in order to extend
it by $\tilde{w}_{1,0}$ on the remaining fluid domain. So, we set:
\begin{equation} \label{eq_tw1formula}
\tilde{w}_1 (x) =
\left\{
\begin{array}{rl}
V_1 & \text{ if $x \in \tilde{B}_1$} \\[4pt]
w_j(x) & \text{ if $x \in \mathfrak C_j,$ $j \in \mathcal J $}\\[4pt]
0 	 & \text{ if $x \in \tilde{B}_j,$ $j \neq 1$} \\[4pt]
w_{1,0}(x) & \text{ else}.
\end{array}
\right. 
\end{equation}
Combining \eqref{eq_boundarywj1}-\eqref{eq_boundarywj2} we obtain that 
$\tilde{w}_{1} \in H^1(\mathbb R^3)$ is divergence-free and satisfies the required
conditions on the obstacles $(\tilde{B}_i)_{i=1,\ldots,N}$. Furthermore, combining
\eqref{eq_boundarywj1}-\eqref{eq_boundwj2}, we obtain a constant $C_{max}$ depending only on $\delta_0,h_{max}$ such that:
\[
\|\nabla \tilde{w}_1\|^2_{L^2(\mathbb R^3)} \leq C_{max} |V_1|^2  \left( 1 + \sum_{j \in \mathcal J} \dfrac{1}{h_j} \right) \leq C_{max} |V_1|^2 \left( 1 + \sum_{j=2}^N \dfrac{\mathbf{1}_{|\tilde{X}_j| < 5/2}}{|\tilde{X}_j| -2} \right) 
\]
The associated vector-field $w_1$ ({\em via} the scaling \eqref{eq_scaling}) satisfies then all the requirements of Lemma~\ref{lem_wiapp}.

\section{Analysis of the cell problem} \label{app_stokes}
In this appendix, we fix $(N,M,\lambda) \in (\mathbb N \setminus \{0\})^2 \times (0,\infty),$ and a divergence-free $w \in C^{\infty}_c(\mathbb R^3)$.
We denote $T$ an open cube of width $\lambda$  and $B_i = B(X_i,\tfrac1N) \subset T$ for  $i=1,\ldots,M.$  We assume further that there exists $d_{m}$ satisfying 
\begin{equation} \label{eq_hypdm}
\min_{i=1,\ldots,M} \left\{ \text{dist}(X_{i}, \partial T), \min_{j \neq i} \left( |X_i - X_j |\right)\right\} \geq  d_{m} > \dfrac{4}{N}\,.
\end{equation}
We consider the Stokes problem: 
\begin{equation} \label{eq_stokesledeux}
\left\{
\begin{array}{rcl}
- \Delta u + \nabla p &=& 0\,, \, \\
{\rm div}\,  u &= & 0 \,,
\end{array}
\right.
\quad \text{ in $\mathcal F = T \setminus \bigcup_{i=1}^M \overline{B_i}$}\,,
\end{equation}
completed with boundary conditions 
\begin{equation} \label{cab_stokesledeux}
\left\{
\begin{array}{rcll}
u(x) &=& w(x)\,, &\text{ in $B_i\,,$ } \forall \, i =1,\ldots, M\,,\\[4pt]
u(x) &=& 0\,, & \text{ on $\partial T\,.$}
\end{array}
\right.
\end{equation}
Assumption \eqref{eq_hypdm} entails that the $B_i$ do not intersect and do not meet the boundary $\partial T.$ So, the set
$T \setminus \bigcup_{i=1}^M \overline{B_i}$ has a Lipschitz boundary that one can decompose in $M+1$ connected components
corresponding to $\partial T$ and $\partial B_i$ for $i=1,\ldots,M.$ 

For any $i=1,\ldots,M,$ direct computations show that:
$$
\int_{\partial B_i} w \cdot n {\rm d}\sigma = \int_{B_i} {\rm div} \, w =0.
$$
Hence, the problem \eqref{eq_stokesledeux}-\eqref{cab_stokesledeux} is solved by applying \cite[Theorem 3]{Hil17}
and it admits a unique generalized solution $u \in H^1(\mathcal F).$  We want to compare this solution 
with:
$$
u_s(x) = \sum_{i=1}^M G^N[w(X_i)](x-X_i) ,
$$
where, for arbitrary $v \in \mathbb R^3,$ $G^N[v]$ is the unique vector-field that can be associated to a pressure $P^N[v]$
in order to form a pair solution to the Stokes problem outside  $B(0,1/N)$. Explicit formulas for these solutions can be found in standard textbooks:
\begin{eqnarray}
G^N[v](x) &:=& \dfrac{1}{4N} \left( \dfrac{3}{|x|} +  \dfrac{1}{N^2|x|^3} \right) v + \dfrac{3}{4N}  \left( \dfrac{1}{|x|} - \dfrac{1}{N^2|x|^3}  \right)  \dfrac{v \cdot x}{|x|^2} x \,,  \label{eq_Stokeslet}\\
 P^N[v](x) &:=& \dfrac{3}{2N} \dfrac{v \cdot x}{|x|^3} \,. \label{eq_presslet}
\end{eqnarray}
The main result of this appendix section reads:
\begin{proposition} \label{prop_truncationprocess} 
There exists a constant $K$ independent of $(N,M,d_m,w,\lambda)$ for which:
\[
\|(u-u_s)\|_{L^6(\mathcal F)} +
\|\nabla (u-u_s)\|_{L^2(\mathcal F)} \leq K \Big[ \|w\|_{C^{0,1/2}(\overline{T})} + \|\nabla w\|_{L^6(T)} \Big]\sqrt{\dfrac{M}{N}} \left( \dfrac{1}{\sqrt{N}} +   \sqrt{\dfrac{M}{N{d_m}}}\right).   
\]
\end{proposition}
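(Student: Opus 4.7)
The plan is to recast the difference $e := u - u_s$ as the weak solution of an inhomogeneous Stokes system in $\mathcal F$: since the Stokeslet sum $u_s$ is smooth on $\mathcal F$ and solves Stokes there, $e$ satisfies
$$\int_{\mathcal F} \nabla e : \nabla \psi = 0 \qquad \forall\, \psi \in H^1_0(\mathcal F) \text{ divergence-free},$$
with boundary values $e = w - u_s$ on each $\partial B_i$ and $e = -u_s$ on $\partial T$. The standard energy estimate then gives
$$\|\nabla e\|_{L^2(\mathcal F)} \leq \|\nabla \phi\|_{L^2(\mathcal F)}$$
for any divergence-free lift $\phi \in H^1(\mathcal F)$ of these boundary data, and the $L^6$ bound follows from Sobolev--Poincar\'e applied to $e-\phi \in H^1_0(\mathcal F)$ together with a local Sobolev estimate for $\phi$ on each component of its support. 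The whole problem thus reduces to constructing an efficient lift $\phi$.

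I would build $\phi = \sum_{i=1}^M \phi_i + \phi_T$ with disjoint supports (guaranteed by \eqref{eq_hypdm}): $\phi_i$ supported in the annulus $B(X_i, d_m/4)\setminus B_i$, and $\phi_T$ in the $(d_m/4)$-tubular neighborhood of $\partial T$. Split further $\phi_i = \phi_i^{(a)} + \phi_i^{(b)}$. The local piece $\phi_i^{(a)}$, supported in the narrow annulus $B(X_i,2/N)\setminus B_i$, lifts only the H\"older residual $w-w(X_i)$ on $\partial B_i$, using the classical ansatz $\chi_i(w-w(X_i))$ (with $\chi_i$ a cutoff of scale $1/N$) corrected by a Bogovskii operator. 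Combining $\|w-w(X_i)\|_{L^\infty(\partial B_i)} \lesssim [w]_{C^{0,1/2}}N^{-1/2}$ with $\|\nabla w\|_{L^2(B(X_i,2/N))} \lesssim N^{-1}\|\nabla w\|_{L^6(B(X_i,2/N))}$ yields
$$\|\nabla \phi_i^{(a)}\|_{L^2}^2 \lesssim N^{-2}\bigl([w]_{C^{0,1/2}}^2 + \|\nabla w\|_{L^6(B(X_i,2/N))}^2\bigr);$$
after summation over $i$ and one further application of H\"older on the $L^6$-contributions, this produces the $\sqrt{M/N}\cdot N^{-1/2}$ term of the target estimate.

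The second piece $\phi_i^{(b)}$ handles the cross-Stokeslet contribution $\tilde g_i := \sum_{j\neq i} G^N[w(X_j)](\cdot-X_j)$ on $\partial B_i$. Since $|x-X_j|\geq 3d_m/4$ for every $x\in B(X_i,d_m/4)$ and every $j\neq i$, the vector-field $\tilde g_i$ is smooth there, and I would set $\phi_i^{(b)} = -\eta_i \tilde g_i$ plus a Bogovskii correction, with $\eta_i$ a cutoff of scale $d_m$. An analogous construction defines $\phi_T = -\chi_T u_s$ plus correction, with $\chi_T$ a cutoff of scale $d_m/4$ near $\partial T$. The explicit Stokeslet decay $|G^N[v](y)|\lesssim |v|/(N|y|)$ and $|\nabla G^N[v](y)|\lesssim |v|/(N|y|^2)$, valid on these supports, combined with Minkowski's inequality $\|\tilde g_i\|_{L^2(B(X_i,d_m/4))} \leq \sum_{j\neq i}\|G^N[w(X_j)](\cdot-X_j)\|_{L^2(B(X_i,d_m/4))}$ and explicit evaluation of each single-Stokeslet integral, gives pairwise bounds whose structure reflects the distance $|X_i-X_j|$.

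The main technical obstacle will be assembling these pairwise bounds into the aggregate $M^2/(N^2 d_m)$ scaling. Naively pulling the pointwise estimate $\sum_{j\neq i}|X_i-X_j|^{-1} \lesssim M^{2/3}/d_m$ out of the $L^2$-norm before squaring and summing over $i$ loses a factor $M^{1/3}$. The idea is to keep the sum $\sum_i(\sum_{j\neq i}\dots)^2$ intact, expand it symmetrically as a sum over pairs, and then invoke the packing constraint $\#\{j: |X_i-X_j|\leq r\} \lesssim (r/d_m)^3$ — a direct consequence of $|X_k-X_\ell|\geq d_m$ — to evaluate each dyadic shell separately. Adding the two resulting contributions $\sqrt{M}/N$ and $M/(N\sqrt{d_m})$ and invoking the $L^6$ bound for the lift yields the announced estimate for both norms.
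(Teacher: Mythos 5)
Your framework — recasting $e=u-u_s$ as a Stokes minimizer and bounding its energy by that of a divergence-free lift of its boundary data — is sound, and your treatment of $\phi_i^{(a)}$ matches the paper's first reduction (comparing $u$ with the solution $u_c$ having constant data $w(X_i)$ on each $\partial B_i$). But the lift $\phi_i^{(b)}$ of the cross-Stokeslet residual is where your argument breaks down, and the symmetric-pairs expansion you propose to save it does not close the gap.

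Concretely, a cutoff lift of $\tilde g_i$ at scale $d_m$ costs energy $\|\nabla\phi_i^{(b)}\|_{L^2}^2 \lesssim d_m \|w\|_\infty^2 \bigl(\sum_{j\neq i}\frac{1}{N|X_i-X_j|}\bigr)^2$, so after summing over $i$ (disjoint supports) you need
$\sum_i \bigl(\sum_{j\neq i}\frac{1}{|X_i-X_j|}\bigr)^2 \lesssim \frac{M^2}{d_m^2}$.
This is false. For a uniform array of $M$ points at spacing $d_m$ in a cube of side $\lambda\sim M^{1/3}d_m$, the far field dominates and $\sum_{j\neq i}|X_i-X_j|^{-1}\sim M^{2/3}/d_m$ for \emph{every} $i$, so the left-hand side is genuinely of order $M^{7/3}/d_m^2$. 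Expanding symmetrically as $\sum_i\sum_j\sum_k \frac{1}{|X_i-X_j|\,|X_i-X_k|}$ and evaluating dyadic shell by dyadic shell gives exactly the same $M^{7/3}/d_m^2$: the packing bound $\#\{j:|X_i-X_j|\le 2^m d_m\}\lesssim 2^{3m}$ makes each of the two inner sums $\sim M^{2/3}/d_m$, and there is no cancellation to exploit. Your route therefore delivers $M^{7/6}\|w\|_\infty/(N\sqrt{d_m})$, a factor $M^{1/6}$ short of the target.

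The ingredient you are missing is the Stokes orthogonality that the paper uses to avoid the cross-term sum entirely. Having reduced to constant data, the paper writes $u_c=\sum_i u_{c,i}$ with $u_{c,i}$ the solution taking value $w(X_i)$ on $\partial B_i$ and zero on $\partial T$ and on the other $\partial B_j$. After extending $u_{c,i}$ by zero to $\mathbb R^3\setminus\overline{B_i}$, it has the \emph{same} trace $w(X_i)$ on $\partial B_i$ as the single Stokeslet $G_i$, so $u_{c,i}-G_i$ lies in $D_0(\mathbb R^3\setminus\overline{B_i})$, and since $G_i$ solves Stokes there one gets the Galerkin identity
\[
\|\nabla(u_{c,i}-G_i)\|_{L^2}^2 \;=\; \|\nabla u_{c,i}\|_{L^2}^2 - \|\nabla G_i\|_{L^2}^2.
\]
The right-hand side is then estimated from above by comparing $u_{c,i}$ with a truncation of $G_i$ at scale $d_m/4$; the resulting difference of energies is controlled by the tail $\int_{|x-X_i|>d_m/4}|\nabla G_i|^2\lesssim\|w\|_\infty^2/(N^2 d_m)$, a single-Stokeslet quantity with no $j$-sum at all. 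The plain triangle inequality $\|\nabla(u_c-u_s)\|\le\sum_i\|\nabla(u_{c,i}-G_i)\|$ then gives $M\|w\|_\infty/(N\sqrt{d_m})$. In short: do not \emph{lift} the interaction $\tilde g_i$ — exploit the fact that $u_{c,i}$ and $G_i$ are energy-orthogonal modulo $\partial B_i$, so the interaction never needs to be written down.
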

\begin{proof}
The proof is an adaptation to our notations and assumptions of \cite[Proposition 7]{Hil17}. 
We split the error term into two pieces. First, we reduce the boundary conditions of the Stokes problem \eqref{eq_stokesledeux}-\eqref{cab_stokesledeux} to constant
boundary conditions. Then, we compare the solution to the Stokes problem with constant boundary conditions to the combination of Stokeslets 
$u_s.$ In the whole proof, the symbol $\lesssim$ is used when the implicit constant in the written inequality does not depend on $N,M,d_m,w$ and $\lambda.$

\medskip

So, we introduce  ${u}_c$ the unique generalized solution to the Stokes problem on $\mathcal F$ with boundary conditions:
\begin{equation} \label{cab_Stokesletrois}
\left\{
\begin{array}{rcll}
u_c &=& w(X_i)\,, & \text{ in $B_i\,,$ } \forall \, i =1,\ldots, M\,,\\[4pt]
u_c &=& 0\,, & \text{ on $\partial T\,.$}
\end{array}
\right.
\end{equation}
Again, existence and uniqueness of this velocity-field holds by applying \cite[Theorem 3]{Hil17}.
We split then:
\begin{eqnarray*}
\| (u-u_s)\|_{L^6(\mathcal F)} &\leq& \| (u - u_c)\|_{L^6(\mathcal F)} + \| (u_c-u_s)\|_{L^6(\mathcal F)}\,, \\
\|\nabla (u-u_s)\|_{L^2(\mathcal F)} &\leq& \|\nabla (u - u_c)\|_{L^2(\mathcal F)} + \|\nabla (u_c-u_s)\|_{L^2(\mathcal F)}.
\end{eqnarray*}
To control the first term on the right-hand sides, we note that $(u-u_c)$ is the unique generalized solution to the Stokes problem on $\mathcal F$
with boundary conditions:
$$
\left\{
\begin{array}{rcll}
(u - u_c) (x) &=& w(x) - w(X_i)\,, & \text{ in $B_i\,,$ } \forall \, i =1,\ldots, M\,,\\[4pt]
(u - u_c)(x) &=& 0\,, & \text{ on $\partial T\,.$}
\end{array}
\right.
$$
Hence, by the variational characterization of \cite[Lemma 4]{Hil17}, 
$\|\nabla (u - u_c)\|_{L^2(\mathcal F)}$ realizes the minimum of $\|\nabla \tilde{w}\|_{L^2(\mathcal F)}$ amongst  
$$
\left\{\tilde{w} \in H^1(\mathcal F) \text{ s.t. }  {\rm div}\, \tilde{w} = 0\,, \;  \tilde{w}_{|_{\partial T}} = 0\,, \; \tilde{w}_{|_{\partial B_i}} = w(\cdot) - w(X_i)\,,  \; \forall \, i =1,\ldots,M \right\}\,.
$$
We construct thus a suitable $\tilde{w}$ in this space. We set:
$$
\tilde{w} = \sum_{i=1}^M  \tilde{w}_i
$$
with, for $i=1,\ldots,M:$ 
$$
\tilde{w}_i =  \left( \chi^{N}(\cdot-X_i) (w(\cdot) - w(X_i)) - \mathfrak B_{X_i,\frac1N,\frac2N} \left[ x \mapsto (w(x) - w(X_i)) \cdot \nabla \chi^N(x -X_i)\right] \right)\,. 
$$
In this definition $\chi^{N}$ is again chosen truncation function that between $B(0,\frac1N)$ and $B(0,\frac2N).$  We assume further that $\chi^N$ is obtained from $\chi^1$ by dilation. The operator $\mathfrak B_{X_i,\tfrac1N,\tfrac2N}$ denotes the Bogovskii operator on the annulus 
$$
A(X_i,\frac1N,\frac2N) = B(0,\tfrac2N) \setminus \overline{B(0,\tfrac1N)}.
$$ 
The properties of this operator are analyzed in \cite[Appendix A]{Hil17} (though these results are nowadays classical and can also be found in \cite{Allaire} for instance).
It is straightforward to verify  that the mean of
$x \mapsto (w(x) - w(X_i)) \cdot \nabla \chi^N(x -X_i)$ vanishes so that the above vector-field $\tilde{w}_i$ is well-defined. We note that $\tilde{w}_i$ has support in $B(X_i,\frac2N)$ so that, as $d_{m} > 4/N,$  the $\tilde{w}_i$
have disjoint supports inside $T.$ This yields that $\tilde{w}$ is indeed divergence-free and fits the required boundary conditions. 
Furthermore, there holds:
$$
\|\nabla \tilde{w}\|_{L^2(\mathcal F)} \leq \left[ \sum_{i=1}^M \|\nabla \tilde{w}_i\|^2_{L^2(B(X_i,\frac2N))}\right]^{\frac 12}\,.
$$
For $i\in \{1,\ldots,M\}$ we have by direct computations:
\begin{eqnarray*}
\|\nabla \chi^N (\cdot- X_i) (w(\cdot) - w(X_i))\|^2_{L^2(B_{\infty}(X_i^N,\frac2N))}  &\lesssim & \dfrac{\|w\|^2_{C^{0,1/2}}}{N^2}\,, \\
\| \chi^N (\cdot- X_i) \nabla (w(\cdot) - w(X_i))\|^2_{L^2(B_{\infty}(X_i,\frac2N))} & \lesssim & \dfrac{\|w\|^2_{W^{1,6}(T)}}{N^2} \,, \\
\end{eqnarray*}
and, by applying  \cite[Lemma 20]{Hil17}:
\begin{multline*} 
\|\nabla \mathfrak B_{X_i^N,\frac1N,\frac2N} \left[ x \mapsto (w(x) - w(X_i)) \cdot \nabla \chi^N(x -X_i)\right]\|^2_{L^2(B_{\infty}(X_i,\frac2N))}  \\[6pt]
\begin{array}{rcl}&\lesssim& \|x \mapsto (w(x) - w(X_i)) \cdot \nabla \chi^N(x -X_i)\|^2_{L^2(B(X_i,\frac2N))}  \\
& \lesssim & \dfrac{\|w\|^2_{C^{0,1/2}(\overline{T})}}{N^2}.
\end{array}
\end{multline*}
Gathering all these inequalities in the computation of $\tilde w$  yields finally:
$$
\|\nabla \tilde{w}\|_{L^2(\mathcal F)} \lesssim \sqrt{M}  \dfrac{\|w\|_{C^{0,1/2(\overline{T})}} +\|w\|_{W^{1,6}(T)} }{N}\,.
$$
The variational characterization of generalized solutions to Stokes problems entails that we have the same bound for $(u-u_c).$ 
At this point, we argue that the straightforward extension of $u$ and $u_c$ (by $w$ and $w(X_i)$ on the $B_i$ respectively) satisfy $(u-u_c) \in H^1_0(T) \subset L^6(T)$ so that 
\begin{eqnarray*}
\|u - u_c\|_{L^6(\mathcal F)} &\leq & \|u - u_c\|_{L^6(T)} \lesssim   \|\nabla (u-u_c)\|_{L^2(T)} \\
			              &\lesssim&   \left(  \|\nabla (u-u_c)\|^2_{L^2(\mathcal F)} + {M} \dfrac{\|w\|^2_{W^{1,6}(T)}}{N^{2}} \right)^{\frac 12} \\
			              & \lesssim & \sqrt{M}  \dfrac{\|w\|_{C^{0,1/2}(\overline{T})} +\|w\|_{W^{1,6}(T)}}{N}.
\end{eqnarray*}
We emphasize that, by a scaling argument, the constant deriving from the embedding $H^1_0(T) \subset L^6(T)$ does not
depend on $\lambda$ so that it is not significant to our problem.

\medskip

We turn to estimating $u_c - u_s.$ Due to the linearity of the Stokes equations, we split 
$$
u_c  = \sum_{i=1}^M u_{c,i},
$$ 
where $u_{c,i}$ is the generalized solution to the Stokes problem on $\mathcal F$ with boundary conditions:
$$
\left\{
\begin{array}{rcll}
u_{c,i}  &=& w(X_i)\,, & \text{ on $\partial B_i$\,,}\\[4pt]
u_{c,i}  &=& 0\,, & \text{ on $\partial T \cup \bigcup_{j\neq i} \partial B_{j}\,.$}
\end{array}
\right.
$$
We have then 
\begin{equation} \label{eq_bounderrorlepremier}
\|\nabla (u_c - u_s)\|_{L^2(\mathcal F)} \leq \sum_{i=1}^M \|\nabla (u_{c,i} - G^{N}[w(X_i)](\cdot-X_i))\|_{L^2(\mathcal F)}.
\end{equation}
Similarly, we expand :
$$
u_s = \sum_{i=1}^M G_i \,,  \text{ where } G_i(x) = G^N[w(X_i)](x-X_i)\,,   \quad \forall \, x \in \mathbb R^3.
$$
For $i \in \{1,\ldots,M\}$ we extend $u_{c,i}$ by $0$ on $\mathbb R^3 \setminus T$ and  ${B_j}$ for $j \neq i.$ 
The extension we still denote by $u_{c,i}$ satisfies  $u_{c,i} \in H^1(\mathbb R^3 \setminus \overline{B_i}),$ it is divergence-free and constant on $\partial B_i.$ 
In particular, we have
$u_{c,i} \in D(\mathbb R^3 \setminus \overline{B_i})$.
Consequently,  $u_{c,i} - G_i \in D(\mathbb R^3 \setminus \overline{B_i})$ and:
\begin{eqnarray*}
\|\nabla (u_{c,i} - G_i)\|^2_{L^2(\mathcal F)}  & \leq&   \displaystyle \int_{\mathbb R^3 \setminus \overline{B_i}} |\nabla u_{c,i} - \nabla G_i|^2  \\
	&\leq & \displaystyle \int_{\mathbb R^3 \setminus  \overline{B_i}} |\nabla u_{c,i}|^2 - 2 \int_{\mathbb R^3 \setminus  \overline{B_i}} \nabla u_{c,i} : \nabla G_i + \int_{\mathbb R^3 \setminus  \overline{B_i}} |\nabla G_i|^2 \,.					
\end{eqnarray*}
To compute the product term, we apply that $u_{c,i}$ and $G_i = G^N[w(X_i)](\cdot -X_i)$ have the same trace on $\partial B_i$
and that $U_i$ is a generalized solution to the Stokes problem on $\mathbb R^3 \setminus  \overline{B_i}.$ So, integrals of the form $\int_{\mathbb R^3 \setminus  \overline{B_i}} \nabla G_i : \nabla w$
(for $w \in D(\mathbb R^3 \setminus \overline{B_i})$)  depend  only on the trace of $w$ on $\partial B_i.$ This entails that:
$$
\int_{\mathbb R^3 \setminus  \overline{B_i}}  \nabla u_{c,i} : \nabla G_i = \int_{\mathbb R^3 \setminus  \overline{B_i}} |\nabla G_i|^2\,,
$$
and we have:
\begin{equation} \label{eq_bounderror2}
\|\nabla (u_{c,i} - G_i)\|^2_{L^2(\mathcal F)} \leq  \int_{\mathbb R^3 \setminus  \overline{B_i}} |\nabla u_{c,i}|^2 - \int_{\mathbb R^3 \setminus  \overline{B_i}} |\nabla G_i|^2\,. 
\end{equation}
To conclude, we find a bound from above for 
$$
\int_{\mathbb R^3 \setminus  \overline{B_i}} |\nabla u_{c,i}(x)|^2 {\rm d}x = \int_{\mathcal F} |\nabla u_{c,i}(x)|^2{\rm d}x.
$$ 
As $u_{c,i}$ is a generalized solution to a Stokes problem on $\mathcal F,$ this can be done by constructing  a divergence-free $\bar{w}_i$ satisfying the same boundary condition as $u_{c,i}$. We define:
$$
\bar{w}_i =  \chi_{d_{m}/4}(\cdot-X_i) G_i - \mathfrak B_{X_i,\frac{d_{m}}{4},\frac{d_{m}}{2}} \left[ x \mapsto G_i(x) \cdot \nabla \chi_{d_{m}/4}(x -X_i)\right]
$$
where $\chi_{d_m/4}$ truncates between $B(0,d_{m}/4)$ and $B(0,d_m/2)$. As previously, we have here a divergence-free function which satisfies the right boundary conditions because  $\chi_{d_{m}/4}(\cdot-X_i)=1$ on $B_i$
(since $d_m/4 > 1/N$) and vanishes on all the other boundaries of $\partial \mathcal F$ (since the distance between one hole center and the other holes or $\partial T$ is larger than $d_m - 1/N > d_m/2$). 
Again, similarly as in the computation of $\tilde{w}_i$  we apply the properties of the Bogovskii operator $ \mathfrak B_{X_i,\frac{d_{m}}{4},\frac{d_{m}}{2}}$ and there exists an
absolute constant $K$ for which:
\begin{multline*}
\|\nabla \bar{w}_i\|^2_{L^2(\mathcal F)} \leq  \int_{\mathbb R^3 \setminus  \overline{B_i}} |\chi_{d_{m}/4}(\cdot-X_i)  \nabla G_i |^2 \\
			+ K \left(\int_{A(X_i,\frac{d_{m}}{4},\frac{d_m}{2})} |\nabla G_i(x) |^2 + |\nabla \chi_{d_{m}/4}(x-X_i) \otimes G_i(x)|^2  {\rm d}x  \right).
\end{multline*}
As we have the same bound for $u_{c,i},$ we plug the right-hand side above into \eqref{eq_bounderror2} and get:
$$
\|\nabla (u_{c,i} - G_i)\|^2_{L^2(\mathcal F)} \lesssim \int_{\mathbb R^3 \setminus B(X_i,\frac{d_{m}}{4})}  |\nabla G_i(x)|^2 {\rm d}x 
+ \int_{A(X_i,\frac{d_{m}}{4},\frac{d_m}{2})} |\nabla \chi_{d_{m}/4}(x-X_i)\otimes G_i(x)|^2  {\rm d}x  \,.
$$
With the explicit decay properties for $G_i$ (see \eqref{eq_Stokeslet}) and $\nabla \chi_{d_m/4}$ we derive:
$$
\int_{\mathbb R^3 \setminus B(X_i,\frac{d_{m}}{4})} |\nabla G_i(x)|^2 {\rm d}x  + \int_{A(X_i,\frac{d_{m}}{4}, \frac{d_m}{2})}  |\nabla \chi_{d_{m}/4}(x-X_i) \otimes G_i(x)|^2  {\rm d}x \lesssim \dfrac{\|w\|^2_{L^{\infty}(T)}}{N^{2}d_{m}}\,. \\
$$
Combining these bounds for $i=1,\ldots,M$ in \eqref{eq_bounderrorlepremier} we get: 
$$
\|\nabla (u_{c}-  u_s) \|_{L^2(\mathcal F)} \leq  \dfrac{ M \|w\|_{L^{\infty}(T)}}{N\sqrt{d_{m}}}. 
$$
By similar arguments, we also have:
$$
\|u_{c}-  u_s\|_{L^6(\mathcal F)} =  \|u_{c}-  u_s\|_{L^6(T)}  \leq \sum_{i=1}^M \|u_{c,i}-  G_i\|_{L^6(\mathbb R^3 \setminus  \overline{B_i})}. 
$$
As $u_{c,i},G_i \in D(\mathbb R^3 \setminus \overline{B_i})$ and $u_{c,i},G_i$ share the same value
on $\partial B_i,$ there holds $u_{c,i}- G_i \in D_0(\mathbb R^3 \setminus  \overline{B_i})$ and we may use the classical inequality (see \cite[(II.6.9)]{Galdi}):
$$
\|u_{c,i}-  G_i\|_{L^6(\mathbb R^3\setminus  \overline{B_i})} \lesssim \|\nabla u_{c,i}- \nabla G_i\|_{L^2(\mathbb R^3\setminus  \overline{B_i})} \,, \quad \forall \, i =1,\ldots,M\,,
$$
(again the constant arising from this embedding does not depend on $N$ by a standard scaling argument).
This yields again the bound:
$$
\| (u_{c}-  u_s) \|_{L^6(\mathcal F)} \leq  \dfrac{ M \|w\|_{L^{\infty}(T)}}{N\sqrt{d_{m}}}.
$$

Finally, combining the error terms between $u_c$ and $u_s$ and between $u$ and $u_c$ we obtain
$$
\|(u-u_s)\|_{L^6(\mathcal F)} +
\|\nabla (u-u_s)\|_{L^2(\mathcal F)} \leq K \sqrt{\dfrac{M}{N}}\left( \dfrac{1}{\sqrt{N}}  + \sqrt{\dfrac{M}{Nd_m}}\right) \Big[ \|w\|_{C^{0,1/2}(\overline{T})} + \|\nabla w\|_{L^6(T)} \Big].   
$$
This ends the proof.
\end{proof}

We note that, when we apply Proposition~\ref{prop_truncationprocess} in this article, we will choose $M\geq 1$ and $d_m$ that has to be small. 
In that case we have that
$$
\dfrac{1}{\sqrt{N}}  \leq 2  \sqrt{\dfrac{M}{Nd_m}} ,
$$
and the result of Proposition~\ref{prop_truncationprocess} reads:
\[
\|(u-u_s)\|_{L^6(\mathcal F)} +
\|\nabla (u-u_s)\|_{L^2(\mathcal F)} \leq K \Big[ \|w\|_{C^{0,1/2}(\overline{T})} + \|\nabla w\|_{L^6(T)} \Big]\dfrac{M}{N\sqrt{d_m}}.   
\]

\section{Analysis of some constants} \label{sec_constants}
In this section, we consider the problem of finding constants for the Poincar\'e-Wirtinger inequality and
the Bogovskii operator on a cubic annulus $A(0,1-1/\delta,1) :=]-1,1[^3 \setminus [-(1-1/\delta),1-1/\delta]^3.$ In both proofs, we proceed by change of variables
(since only the asymptotics of the constant when $\delta \to \infty$ is needed). For this, we fix $\delta >2.$ 
We introduce  a odd strictly increasing application $\chi_{\delta} \in C^{2}([-1,1])$ such that 
\[
\chi_{\delta}([0,1/2]) = [0,1-1/\delta]  , \quad \chi_{\delta}(1) = 1.
\]
For this, we introduce an even $\zeta \in C^{\infty}(\mathbb R)$ such that:
\[
\mathbf 1_{[-1/4,1/4]} \leq \zeta \leq \mathbf 1_{[-1/2,1/2]}.
\]
We  fix a constant $k$ to be chosen later on and we define $\chi_{\delta}'$ 
as the interpolation between $2(1-1/\delta)$ on [0,1/2] and $k$ on $[1/2+1/\delta,1]$ that we integrate
from $t=0.$ This means:
$$
\chi_{\delta}(x) = \int_0^{x} 2(1-{1}/{\delta})\zeta(\delta(s-1/2)_{+}) +  k(1-\zeta(\delta(s-1/2)_{+}){\rm d}s.  
$$ 
With this choice, we fix $k$ so that $\chi_{\delta}(1)= 1$ yielding:
$$
k =  \dfrac{1- 2(1-1/\delta)\int_0^{1} \zeta(\delta(s-1/2)_+){\rm d}s}{\int_0^{1} (1-\zeta(\delta(s-1/2)_+){\rm d}s} =  \dfrac{1- 2(1-1/\delta)(1/2+ \int_0^{1/\delta} \zeta(\delta s){\rm d}s )}{\int_0^{1/2} (1-\zeta(\delta s)){\rm d}s} = O\left(\frac{1}{\delta}\right).
$$
We emphasize that, due to our choice for $\zeta,$ we have $\int_0^{1} \zeta(s){\rm d}s < 1/2.$ This entails that we have also $k >0$ and $\chi_{\delta}$ is indeed strictly increasing.

Consequently, we have that:
\begin{itemize}
\item $\chi_{\delta}$ realizes a $C^2$-diffeomorphism from $[-1,1]$ to $[-1,1]$ such that
$\chi_{\delta}([-1/2,1/2]) = [-(1-1/\delta),1-1/\delta],$
\item  ${1}/{\delta} \lesssim \chi_{\delta}'(y) \leq 2$ and $|\chi''_{\delta}(y)| \lesssim \delta$ for any $y \in [-1,1].$
\end{itemize}
We introduce $\sigma_{\delta}$ its converse mapping. It satisfies:
\begin{itemize}
\item  $\sigma_{\delta}([-(1-1/\delta),1-1/\delta]) = [-1/2,1/2],$
\item  $1/2 \leq \sigma_{\delta}'(x) \leq \delta$ and $|\sigma''_{\delta}(x)| \lesssim \delta^4$ for any $x \in [-1,1].$
\end{itemize}
Finally, we denote $X_{\delta}$ and $Y_{\delta}$ the corresponding $C^{2}$-diffeomorphisms between $A(0,1/2,1)$
and $A(0,1-1/\delta,1):$
\[
\begin{array}{rrcl}
X_{\delta} : & A(0,1/2,1) & \longrightarrow & A(0,1-1/\delta,1) \\
		& (y_1,y_2,y_3) &  \longmapsto & (\chi_\delta(y_1),\chi_{\delta}(y_2),\chi_{\delta}(y_3)))  
\end{array}
\begin{array}{rrcl}
Y_{\delta} : & A(0,1-1/\delta,1) & \longrightarrow & A(0,1/2,1) \\
		& (x_1,x_2,x_3) &  \longmapsto & (\sigma_\delta(x_1),\sigma_{\delta}(x_2),\sigma_{\delta}(x_3)))  
\end{array}
\]

We start with the Poincar\'e-Wirtinger inequality. Our main result reads:
\begin{proposition}
There holds $C_{PW}[\delta] \lesssim \delta.$ Namely, given $f \in L^2_0(A(0,1-1/\delta,1)) \cap H^1(A(0,1-1/\delta,1)),$
we have:
\begin{equation} \label{eq_PW}
\int_{A(0,1-1/\delta,1)} |f(x)|^2{\rm d}x \lesssim \delta^2 \int_{A(0,1-1/\delta,1)} |\nabla f(x)|^2 {\rm d}x.
\end{equation}
\end{proposition}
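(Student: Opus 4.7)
The plan is to transport the problem from the thin cubic annulus $A(0,1-1/\delta,1)$ onto the fixed (non-degenerate) annulus $A(0,1/2,1)$ via the diffeomorphism $X_\delta$ already constructed, on which the classical Poincaré-Wirtinger inequality holds with an absolute constant $C_0$. Given $f \in L^2_0(A(0,1-1/\delta,1)) \cap H^1(A(0,1-1/\delta,1))$, define $g := f \circ X_\delta$ on $A(0,1/2,1)$, and denote by $J(y) = \chi_\delta'(y_1)\chi_\delta'(y_2)\chi_\delta'(y_3)$ the Jacobian of $X_\delta$, so that $1/\delta^3 \lesssim J(y) \lesssim 1$. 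Then $\int |f|^2\,\mathrm{d}x = \int |g|^2 J\,\mathrm{d}y$ and $\int f\,\mathrm{d}x = \int g J\,\mathrm{d}y = 0$.

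The first subtlety is that $g$ has zero mean against the weighted measure $J\,\mathrm{d}y$, but not against the flat Lebesgue measure, which is the one adapted to the standard Poincaré-Wirtinger inequality on $A(0,1/2,1)$. To get around this, let $\bar g$ denote the Lebesgue mean of $g$ on $A(0,1/2,1)$ and expand
\[
\int |g|^2 J\,\mathrm{d}y = \int (g-\bar g)^2 J\,\mathrm{d}y + 2\bar g \int (g-\bar g) J\,\mathrm{d}y + \bar g^2 \int J\,\mathrm{d}y.
\]
Because $\int g J\,\mathrm{d}y = 0$, the cross term equals $-2\bar g^2 \int J\,\mathrm{d}y$, so the two $\bar g^2$ contributions combine with the correct sign and give $\int |g|^2 J\,\mathrm{d}y \le \int (g-\bar g)^2 J\,\mathrm{d}y$. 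Since $J \lesssim 1$, this in turn is bounded by $\int(g-\bar g)^2\,\mathrm{d}y$, to which the standard Poincaré-Wirtinger inequality on the fixed domain $A(0,1/2,1)$ applies, yielding $\int |f|^2\,\mathrm{d}x \lesssim \int |\nabla_y g|^2\,\mathrm{d}y$.

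The remaining step, where the factor $\delta^2$ appears, is to pull the gradient integral back to $x$-variables. Using $\partial_{y_i} g(y) = \chi_\delta'(y_i)\,\partial_{x_i} f(X_\delta(y))$ and changing variables, one finds
\[
\int |\nabla_y g|^2\,\mathrm{d}y = \sum_{i=1}^3 \int_{A(0,1-1/\delta,1)} \frac{\chi_\delta'(\sigma_\delta(x_i))^2}{J(Y_\delta(x))}\, |\partial_{x_i} f|^2\,\mathrm{d}x = \sum_{i=1}^3 \int \frac{\chi_\delta'(\sigma_\delta(x_i))}{\prod_{j\ne i}\chi_\delta'(\sigma_\delta(x_j))}\,|\partial_{x_i} f|^2\,\mathrm{d}x.
\]
The coefficient in front of each $|\partial_{x_i}f|^2$ has numerator $\chi_\delta'(\sigma_\delta(x_i)) \le 2$ and denominator bounded below by $(c/\delta)^2$ for some absolute $c>0$, hence is at most $O(\delta^2)$ uniformly in $x$. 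This is precisely where the asymmetry of the diffeomorphism (stretching by $O(1)$ in one direction while squeezing by $O(1/\delta)$ in the others) is absorbed, and the $\delta^2$ of the claim is recovered. Combining the three estimates yields \eqref{eq_PW}.

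The main obstacle is this last step: one might naively fear a $\delta^3$ factor (one from each direction squeezed) since $1/J$ itself can blow up like $\delta^3$ on the thin part of the annulus. What saves us is that only two of the three Jacobian factors survive in the denominator after cancellation with $\chi_\delta'(y_i)^2$ from the chain rule, which is exactly the anisotropy one expects for a Poincaré-Wirtinger inequality on a shell of thickness $1/\delta$ and tangential extent $O(1)$.
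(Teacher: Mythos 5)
Your proposal is correct and follows essentially the same route as the paper: change variables via $X_\delta$ to the fixed annulus $A(0,1/2,1)$, handle the mismatch between the $x$-mean-zero condition and the $y$-Lebesgue-mean by showing that subtracting the Lebesgue mean in $y$ can only decrease the weighted $L^2$ norm, apply the standard Poincaré--Wirtinger inequality on the fixed domain, and pull the gradient back using the chain rule, with the $\delta^2$ coming from the ratio $\chi_\delta'(y_i)/\prod_{j\ne i}\chi_\delta'(y_j)$. The only cosmetic difference is that the paper carries the mean subtraction inside the definition of $\tilde f$ and applies the mean-zero identity in the $x$-variable, whereas you expand directly in the $y$-variable; the two computations are algebraically equivalent.
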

\begin{proof}
We fix $f \in L^2_0(A(0,1-1/\delta,1)) \cap H^1(A(0,1-1/\delta,1))$ and, with
the previous notations, let us consider:
\[
\tilde{f}(y) = f(X_{\delta}(y)) - \fint \tilde{f}\,,  \qquad \forall \, y \in A(0,1/2,1),
\]
with
\[
 \fint \tilde{f} :=  \int_{A(0,1/2,1)} f(X_{\delta}(y)){\rm d}y.
\]
Standard computations show that $\tilde{f} \in L^2_0(A(0,1/2,1)) \cap H^1(A(0,1/2,1))$ so that, by the Poincar\'e-Wirtinger inequality
we have:
\[
\int_{A(0,1/2,1)} |\tilde{f}(y)|^2{\rm d}y \lesssim \int_{A(0,1/2,1)} |\nabla \tilde{f}(y)|{\rm d}y.
\]
Conversely, there holds:
\[
f(x) = \tilde{f}(Y_{\delta}(x)) + \fint \tilde{f} \,,  \qquad \forall \, x \in A(0,1-1/\delta,1).
\]
Hence, because $f$ is mean-free on $A(0,1-1/\delta,1),$ there holds: 
\begin{align*}
\int_{A(0,1-1/\delta,1)} |f(x)|^2 {\rm d}x& \leq  \int_{A(0,1-1/\delta,1)} |f(x)|^2 +  |A(0,1/2,1)| \left[\fint \tilde{f} \right]^2 \\
							& \leq    \int_{A(0,1-1/\delta,1)} \left|f(x) - \fint \tilde{f} \right|^2{\rm d}x \\
							& \leq  \int_{A(0,1-1/\delta,1)} |\tilde{f}(Y_{\delta}(x))|^2{\rm d}x
\end{align*}
We can then transform the geometry to go back in the $A(0,1/2,1)$  and apply the previous inequalities on $\sigma'_{\delta}:$
\begin{align*}
\int_{A(0,1-1/\delta,1)} |f(x)|^2 {\rm d}x& \leq  \left(\prod_{i=1}^{3}\max_{x_i \in[0,1]} \frac{1}{\sigma'_{\delta}(x_i)} \right) \int_{A(0,1-1/\delta,1)} |\tilde{f}(Y_{\delta}(x))|^2 \prod_{i=1}^{3} \sigma_{\delta}'(x_i){\rm d}x_i \\
& \lesssim    \int_{A(0,1/2,1)} |\tilde{f}(y)|^2 {\rm d}y   \\
& \lesssim  \int_{A(0,1/2,1)} |\nabla \tilde{f}(y)|^2 {\rm d}y.
\end{align*}
At this point, we compute $\nabla \tilde{f}$ with respect to  $\nabla f$ and apply the previous inequalities on $\chi_{\delta}'$:
\begin{align*}
\int_{A(0,1/2,1)} |\nabla \tilde{f}(y)|^2 {\rm d}y & \lesssim \int_{A(0,1/2,1)} \sum_{i=1}^3 \chi_{\delta}'(y_i)^2| \partial_i f(X_{\delta}(y))|^2 {\rm d}y \\
						& \lesssim \int_{A(0,1/2,1)}  \sum_{i=1}^3 \dfrac{\chi_{\delta}'(y_i)}{\prod_{j\neq i}\chi_{\delta}'(y_j)}  |\partial_i f(X_{\delta}(y))|^2 \prod_{j=1}^3 \chi_{\delta}'(y_j){\rm d}y_j   \\
						&\lesssim \delta^2 \int_{A(0,1-1/\delta,1)} |\nabla f(x)|{\rm d}x.
\end{align*}
This ends the proof.
\end{proof}

Finally, we consider the Bogovskii operator on the annulus:
\begin{proposition}
There holds $C_{\mathfrak B}[\delta] \lesssim \delta^{9/2}$. Namely, given $f \in L^2_0(A(0,1-1/\delta,1))$ there exists $u \in H^1_0(A(0,1-1/\delta,1))$
such that
\begin{align*}
& {\rm div} \, u = f  \text{ on $A(0,1-1/\delta,1)$} \\
& \|\nabla u\|_{L^2(A(0,1-1/\delta,1))} \lesssim \delta^{9/2} \|f\|_{L^2(A(0,1-1/\delta,1))}
\end{align*}
\end{proposition}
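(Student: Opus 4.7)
The plan is to mimic the change-of-variables strategy used for the Poincar\'e-Wirtinger inequality above: reduce the Bogovskii problem on the thin shell $A(0,1-1/\delta,1)$ to the same problem on the fat reference annulus $A(0,1/2,1)$, where the classical Bogovskii operator has an absolute constant, and transport the solution back via a Piola transform that preserves the divergence. The diffeomorphisms $X_\delta$ and $Y_\delta$ constructed before the Poincar\'e-Wirtinger proposition are precisely the right change of variables.

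Concretely, given $f \in L^2_0(A(0,1-1/\delta,1))$, I would define
\[
\tilde f(y) := f(X_\delta(y))\,\det DX_\delta(y), \qquad y \in A(0,1/2,1).
\]
A direct change of variables (using $\det DY_\delta=1/\det DX_\delta$) yields $\int_{A(0,1/2,1)} \tilde f\,\mathrm{d}y = \int_{A(0,1-1/\delta,1)} f\,\mathrm{d}x = 0$, and $\|\tilde f\|_{L^2(A(0,1/2,1))} \le \|f\|_{L^2(A(0,1-1/\delta,1))}$ since $\det DX_\delta=\chi_\delta'(y_1)\chi_\delta'(y_2)\chi_\delta'(y_3) \le 2^3$. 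On the fixed Lipschitz domain $A(0,1/2,1)$ the classical Bogovskii operator \cite[Section III.3]{Galdi} then produces $\tilde u \in H^1_0(A(0,1/2,1))$ with $\mathrm{div}_y \tilde u = \tilde f$ and $\|\nabla_y \tilde u\|_{L^2(A(0,1/2,1))} \lesssim \|\tilde f\|_{L^2(A(0,1/2,1))} \lesssim \|f\|_{L^2}$, with a constant independent of $\delta$. I would then set
\[
u(x) := \frac{DX_\delta(Y_\delta x)}{\det DX_\delta(Y_\delta x)}\,\tilde u(Y_\delta x),
\]
which is the Piola transform of $\tilde u$. Because $X_\delta$ maps $\partial A(0,1/2,1)$ onto $\partial A(0,1-1/\delta,1)$, and $\tilde u$ vanishes on the former, $u\in H^1_0(A(0,1-1/\delta,1))$; the Piola identity gives $\mathrm{div}_x u(x) = (\det DX_\delta(Y_\delta x))^{-1}\mathrm{div}_y \tilde u(Y_\delta x) = f(x)$, as required.

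The main obstacle is the careful tracking of the powers of $\delta$ in the estimate of $\|\nabla_x u\|_{L^2(A(0,1-1/\delta,1))}$ so as to reach the sharp exponent $9/2$. Since $X_\delta$ acts componentwise, $DX_\delta$ is the diagonal matrix with entries $\chi_\delta'(y_j)$, and the Piola transform reduces to
\[
u_k(x) = \frac{\tilde u_k(Y_\delta x)}{\prod_{\ell \neq k}\chi_\delta'((Y_\delta x)_\ell)}, \qquad k=1,2,3,
\]
so $\partial_{x_j} u_k(x)$ is a linear combination of $\partial_{y_j}\tilde u_k / \prod_{\ell\neq k}\chi_\delta'$ and, when $j\ne k$, of $\chi_\delta''(y_j)\tilde u_k/(\chi_\delta'(y_j))^2\prod_{\ell \ne k}\chi_\delta'$, each multiplied by the extra factor $\sigma_\delta'(x_j)=1/\chi_\delta'(y_j)$ coming from the chain rule. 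Using the pointwise bounds $\chi_\delta' \gtrsim 1/\delta$, $\chi_\delta'\lesssim 1$, and $|\chi_\delta''|\lesssim \delta$ (and the fact that $\chi_\delta''$ is supported in the transition region of width $1/\delta$, where $\chi_\delta'$ is bounded below by an absolute constant), together with the change of variable $\mathrm{d}x = \det DX_\delta\,\mathrm{d}y$, I would bound each of the nine contributions $\|\partial_{x_j}u_k\|_{L^2}$ independently; the key point is that the Jacobian $\det DX_\delta = \prod_\ell \chi_\delta'$ partially compensates the powers of $1/\chi_\delta'$ appearing in the denominators, so that the diagonal contributions ($j=k$) are controlled by $\delta^{3/2}\|\nabla \tilde u\|_{L^2}$ while the off-diagonal ones are the bottleneck. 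The Poincar\'e inequality on the fixed annulus (with absolute constant) handles the $\tilde u$-terms in terms of $\nabla_y\tilde u$, and balancing all the powers of $\delta$ in the worst off-diagonal term yields
\[
\|\nabla_x u\|_{L^2(A(0,1-1/\delta,1))} \lesssim \delta^{9/2}\,\|\nabla_y \tilde u\|_{L^2(A(0,1/2,1))} \lesssim \delta^{9/2}\,\|f\|_{L^2(A(0,1-1/\delta,1))},
\]
which is the claimed bound on $C_\mathfrak{B}[\delta]$.
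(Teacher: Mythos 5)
Your proposal follows essentially the same route as the paper: map the thin cubic annulus to the fixed one via the componentwise diffeomorphism $X_\delta$, push $f$ forward with the Jacobian factor, solve the Bogovskii problem on $A(0,1/2,1)$ with $\delta$-independent constant, and transport the solution back with a divergence-preserving transform. In fact the paper's formula $u_i(x) = \prod_{\ell\neq i}\sigma_\delta'(x_\ell)\,\hat u_i(Y_\delta x)$ is exactly your Piola transform $u = (\det DX_\delta)^{-1} DX_\delta\, \tilde u \circ Y_\delta$ written out for a diagonal $DX_\delta$, so your framing is a helpful naming of the same construction. Your breakdown of the nine contributions and identification of the off-diagonal term carrying $\chi_\delta''$ (equivalently $\sigma_\delta''$) as the bottleneck is also correct, and the Poincar\'e inequality on the fixed reference annulus is indeed what handles the undifferentiated $\tilde u$-term.

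One parenthetical in your writeup is factually wrong, though it turns out not to be used: you assert that on the support of $\chi_\delta''$ the derivative $\chi_\delta'$ is bounded below by an absolute constant. This is not true with the paper's construction: the transition region is precisely where $\chi_\delta'$ drops from $2(1-1/\delta)$ down to $k = O(1/\delta)$, so $\chi_\delta'$ can be as small as $O(1/\delta)$ exactly where $\chi_\delta''$ is $O(\delta)$. The correct worst-case bound is $|\sigma_\delta''| = |\chi_\delta''|/|\chi_\delta'|^3 \lesssim \delta^4$, which is what produces the prefactor $\delta^9$ on the squared term and hence $\delta^{9/2}$ on the norm. Had your parenthetical been right, you would have gotten $|\sigma_\delta''| \lesssim \delta$ and a substantially better constant; the fact that your final answer matches the paper's shows you did not actually rely on it, but it is worth removing to avoid confusion.
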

\begin{proof}
We provide a proof by change of variable as for the previous proposition. Given $f \in L^2_0(A(0,1-1/\delta,1))$ we define
\[
\hat{f}(y) = \prod_{i=1}^{3} \chi_{\delta}'(y_i) f(X_{\delta}(y))\,, \quad \forall \, y \in A(0,1/2,1).
\]
Straightforward computations show that $\hat{f} \in L^2_0(A(0,1/2,1)).$ Consequently, there exists $\hat{u} \in H^1_0(A(0,1/2,1))$
such that:
\begin{align*}
& {\rm div} \, \hat{u} = \hat{f}  \text{ on $A(0,1/2,1)$} \\
& \|\nabla \hat{u}\|_{L^2(A(0,1/2,1))} \lesssim\|\hat{f}\|_{L^2(A(0,1/2,1))}.
\end{align*}
We set then:
\[
u(x) = \left( \prod_{\ell \neq i} \sigma_{\delta}'(x_\ell) \hat{u}_i(Y_{\delta}(x))\right)_{i=1,2,3} \quad \forall \, x \in A(0,1-1/\delta,1).
\]
Since $\sigma_{\delta}'(x_\ell) \chi_{\delta}'(\sigma_{\delta}(x_\ell)) = 1,$ we may expand the divergence to prove:
\[
{\rm div} u(x) = f(x)\,, \quad \forall \, x \in A(0,1-1/\delta,1).
\]
It is straightforward that $u=0$ on the boundaries of $A(0,1-1/\delta,1)$, and we are left with computing the size of its gradient.
We note that (introducing ${\rm Kron}$ the Kronecker symbol)
\[
\partial_{j} u_{i}(x) =  \sigma_{\delta}'(x_{j}) \left[\prod_{\ell \neq i} \sigma_{\delta}'(x_\ell)  \right] \partial_j \hat{u}_i(Y_{\delta}(x)) +  (1 - {\rm Kron}[j,i]) \sigma_{\delta}^{"}(x_{j}) \left[ \prod_{\ell \neq i,j}   \sigma_{\delta}'(x_\ell)\right]  \hat{u}_i(Y_{\delta}(x)).
\]
Consequently:
\begin{align*}
\int_{A(0,1-1/\delta,1)} |\partial_{j}u_i(x)|^2 & \lesssim \int_{A(0,1-1/\delta,1)} \left( \delta^{4}  |\partial_j \hat{u}_i(Y_{\delta}(x))|^2 +  \delta^9 |\hat{u}_i(Y_{\delta}(x))|^2 \right) \prod_{\ell=1}^3 \sigma'_{\delta}(x_\ell){\rm d}x_{\ell} \\
& \lesssim  \delta^{9}  \int_{A(0,1/2,1)} [ |\partial_j \hat{u}(y)|^2 + |\hat{u}(y)|^2] {\rm d}y.
\end{align*}
Here we apply the classical Poincar\'e inequality in $H^{1}_0(A(0,1/2,1))$ and the definition of $\hat{u}$, which yields
\[
\int_{A(0,1-1/\delta,1)} |\partial_{j}u_i(x)|^2  \lesssim \int_{A(0,1/2,1)} |\hat{f}(y)|^{2}{\rm d}y.
\]
We end up by dominating the right-hand side w.r.t. $f$ recalling the bound above for $\chi_{\delta}':$
\begin{align*}
 \int_{A(0,1/2,1)} |\hat{f}(y)|^{2}{\rm d}y & =  \int_{A(0,1/2,1)} \prod_{i=1}^{3} \chi_{\delta}'(y_i)|f(X_{\delta}(y_i))|^2 \prod_{i=1}^{3} \chi_{\delta}'(x_i){\rm d}x_i\,, \\
&   \lesssim \int_{A(0,1-1/\delta,1)} |f(x)|^2{\rm d}x.
\end{align*}
\end{proof}



\begin{thebibliography}{100}


\bibitem{Allaire}
G. Allaire. 
\newblock Homogenization of the Navier-Stokes equations in open sets perforated with tiny holes. I. Abstract framework, a volume distribution of holes. 
\newblock {\em Arch. Rational Mech. Anal.}, 113(3):209--259, 1990.

\bibitem{BarangerDesvillettes}
C.~Baranger and L.~Desvillettes.
\newblock Coupling {E}uler and {V}lasov equations in the context of sprays: the
  local-in-time, classical solutions.
\newblock {\em J. Hyperbolic Differ. Equ.}, 3(1):1--26, 2006.

\bibitem{BDGM}
L.~Boudin, L.~Desvillettes, C.~Grandmont, and A.~Moussa.
\newblock Global existence of solutions for the coupled {V}lasov and
  {N}avier-{S}tokes equations.
\newblock {\em Differential Integral Equations}, 22(11-12):1247--1271, 2009.


\bibitem{ChambrionMunnier}
T.~Chambrion and A.~Munnier.
\newblock Generic controllability of 3{D} swimmers in a perfect fluid.
\newblock {\em SIAM J. Control Optim.}, 50(5):2814--2835, 2012.


\bibitem{Desjardins&Esteban99}
B.~Desjardins and M.~J. Esteban.
\newblock Existence of weak solutions for the motion of rigid bodies in a
  viscous fluid.
\newblock {\em Arch. Ration. Mech. Anal.}, 146(1):59--71, 1999.

\bibitem{Desjardins&Esteban00}
B.~Desjardins and M.~J. Esteban.
\newblock On weak solutions for fluid-rigid structure interaction: compressible
  and incompressible models.
\newblock {\em Comm. Partial Differential Equations}, 25(7-8):1399--1413, 2000.

\bibitem{Desvillettes10}
L. Desvillettes.
\newblock Some aspects of the modeling at different scales of multiphase flows.
\newblock {\em Comput. Methods Appl. Mech. Engrg.}, 199(21-22):1265--1267,
  2010.

\bibitem{DGR}
L.~Desvillettes, F.~Golse, and V.~Ricci.
\newblock The mean-field limit for solid particles in a {N}avier-{S}tokes flow.
\newblock {\em J. Stat. Phys.}, 131(5):941--967, 2008.


\bibitem{FG}
N. Fournier and A. Guillin.
\newblock On the rate of convergence in Wasserstein distance of the empirical measure.
\newblock {\em Probab. Theory Relat. Fields}, 162(3-4):707--738, 2015.



\bibitem{Galdi}
G.~P. Galdi.
\newblock {\em An introduction to the mathematical theory of the
  {N}avier-{S}tokes equations}.
\newblock Springer Monographs in Mathematics. Springer, New York, second
  edition, 2011.
\newblock Steady-state problems.

\bibitem{GlassSueur}
O.~Glass and F. Sueur.
\newblock Uniqueness results for weak solutions of two-dimensional fluid-solid
  systems.
\newblock {\em Arch. Ration. Mech. Anal.}, 218(2):907--944, 2015.


\bibitem{HM}
M. Hauray and S. Mischler.
\newblock On Kac's chaos and related problems.
\newblock {\em J. Funct. Anal.}, 266(10):6055--6157, 2014.




\bibitem{Hil17}
M. Hillairet.
\newblock On the homogenization of the Stokes problem in a perforated domain.
\newblock {\em Arch Rational Mech Anal}  230: 1179--1228, 2018. https://doi.org/10.1007/s00205-018-1268-7



\bibitem{HillairetTakfarinas}
M.~Hillairet and T.~Kela\"\i.
\newblock Justification of lubrication approximation: an application to
              fluid/solid interactions.
\newblock {\em Asymptot. Anal.}, 95(3-4):187--241, 2015.



\bibitem{HMS}
M. Hillairet, A. Moussa and F. Sueur.
\newblock On the effect of polydispersity and rotation on the Brinkman force induced by a cloud of particles on a viscous incompressible flow.
\newblock {\em Kinetic \& Related Models}, 12(4): 681-701, 2019. doi:10.3934/krm.2019026.



\bibitem{Ishii}
M.~Ishii and T.~Hibiki.
\newblock {\em Thermo-fluid dynamics of two-phase flow}.
\newblock Springer, New York, 2006.
\newblock With a foreword by Lefteri H. Tsoukalas.


\bibitem{OJ}
P.-E.~ Jabin and F. Otto.
\newblock Identification of the dilute regime in particle sedimentation.
\newblock {\em Comm. Math. Phys.}, 250(2):415--432, 2004.


\bibitem{Kac}
M. Kac.
\newblock Foundations of kinetic theory.
\newblock In {\em Proceedings of the {T}hird {B}erkeley {S}ymposium on
              {M}athematical {S}tatistics and {P}robability, 1954--1955,
              vol. {III}}, pages 171--197, Berkeley and Los Angeles, 1956. University of California Press.



\bibitem{MH}
A. Mecherbet and M. Hillairet.
\newblock $L^p$ estimates for the homogenization of Stokes problem in a perforated domain.
\newblock {\em Journal of the Institute of Mathematics of Jussieu}, 1-28, 2018. doi:10.1017/S1474748018000014.



\bibitem{Rubinstein}
J. Rubinstein.
\newblock On the macroscopic description of slow viscous flow past a random
  array of spheres.
\newblock {\em J. Stat. Phys.}, 44(5-6):849--863, 1986.

\bibitem{Simon90}
J.~Simon.
\newblock Domain variation for drag in {S}tokes flow.
\newblock In {\em Control theory of distributed parameter systems and
  applications ({S}hanghai, 1990)}, volume 159 of {\em Lecture Notes in Control
  and Inform. Sci.}, pages 28--42. Springer, Berlin, 1991.



\bibitem{Sznitman}
A.-S.~Sznitman.
\newblock Topics in propagation of chaos.
\newblock In {\em \'{E}cole d'\'{E}t\'e de {P}robabilit\'es de {S}aint-{F}lour
              {XIX}---1989}, volume 1464 of {\em Lecture Notes in Math.}, pages 165--251. Springer, Berlin, 1991.

\bibitem{Takahashi03}
T.~Takahashi.
\newblock Analysis of strong solutions for the equations modeling the motion of
  a rigid-fluid system in a bounded domain.
\newblock {\em Adv. Differential Equations}, 8(12):1499--1532, 2003.



\bibitem{Villani-OT}
C.~Villani.
\newblock {\em Optimal transport}, volume 338 of {\em Grundlehren der Mathematischen Wissenschaften [Fundamental Principles of Mathematical Sciences]}. Springer-Verlag, Berlin, 2009. Old and new.


\end{thebibliography}
\end{document}